\DeclareMathAlphabet{\curly}{U}{rsfs}{m}{n}
\theoremstyle{remark}
\newtheorem{remark}{Remark}
\theoremstyle{plain}
\newtheorem{lem}{Lemma}[section]
\newtheorem{thm}{Theorem}
\newtheorem{cor}[thm]{Corollary}
\newtheorem{lemcor}[lem]{Corollary}
\newtheorem{conj}{Conjecture}
\newtheorem{defn}{Definition}
\numberwithin{equation}{section}
\newcommand{\RR}{{\mathbb R}}
\newcommand{\NN}{{\mathbb N}}
\newcommand{\be}{\begin{equation}}
\newcommand{\ee}{\end{equation}}
\newcommand{\benn}{\begin{equation*}}
\newcommand{\eenn}{\end{equation*}}
\newcommand{\bal}{\begin{align*}}
\newcommand{\ea}{\end{align*}}
\newcommand{\eal}{\ensuremath{\end{align*}}}
\newcommand{\bea}{\begin{eqnarray}}
\newcommand{\eea}{\end{eqnarray}}
\newcommand{\lam}{\ensuremath{\lambda}}
\renewcommand{\a}{\ensuremath{\alpha}}
\renewcommand{\b}{\ensuremath{\beta}}
\newcommand{\del}{\ensuremath{\delta}}
\newcommand{\eps}{\ensuremath{\varepsilon}}
\newcommand{\e}{\ensuremath{\varepsilon}}
\renewcommand{\(}{\left(}
\renewcommand{\)}{\right)}
\newcommand{\ds}{\displaystyle}
\newcommand{\pfrac}[2]{\left(\frac{#1}{#2}\right)}
\newcommand{\fl}[1]{{\ensuremath{\left\lfloor {#1} \right\rfloor}}}
\newcommand{\cl}[1]{{\ensuremath{\left\lceil #1 \right\rceil}}}
\newcommand{\fancyA}{{\curly{A}}}
\newcommand{\fancyC}{{\curly C}}
\newcommand{\fancyS}{{\curly S}}
\newcommand{\fancyT}{{\curly T}}
\newcommand{\fancyV}{\curly V}
\newcommand{\fancyU}{\curly U}
\newcommand{\fancyB}{{\curly{B}}}
\renewcommand{\le}{\leqslant}
\renewcommand{\leq}{\leqslant}
\renewcommand{\ge}{\geqslant}
\renewcommand{\geq}{\geqslant}
\newcommand{\vone}{{\mathbf{1}}}
\newcommand{\vzz}{\mathbf{z}}
\renewcommand{\d}{\delta}
\renewcommand{\rho}{\varrho}
\newcommand{\om}{\Omega}
\newcommand{\bz}{\boldsymbol\zeta}
\newcommand{\vta}{\boldsymbol\theta}
\newcommand{\bxi}{\boldsymbol\xi}
\newcommand{\vxi}{\bxi}
\newcommand{\order}{\asymp}
\newcommand{\vx}{\mathbf{x}}
\newcommand{\bx}{\mathbf{x}}
\newcommand{\vy}{\mathbf{y}}
\newcommand{\vm}{\mathbf{m}}
\DeclareMathOperator{\vvol}{Vol}
\newcommand{\vv}{\mathbf{v}}
\newcommand{\vu}{\mathbf{u}}
\newcommand{\vz}{\mathbf{0}}
\newcommand{\vp}{\mathbf{p}}
\newcommand{\ve}{\mathbf{e}}
\renewcommand{\th}{\ensuremath{\theta}}
\begin{document}

\title{The distribution of totients}
\author{Kevin Ford}
\begin{abstract}
This paper is a comprehensive study of the set of totients, i.e.
the set of values taken by Euler's $\phi$-function. 
We fist determine the true order of magnitude of $V(x)$, the
number of totients $\le x$.  We also show that if there is a totient
with exactly $k$ preimages under $\phi$ (a totient with ``multiplicity`` $k$), 
then the counting function for such totients,
$V_k(x)$, satisfies $V_k(x)\gg_k V(x)$.
Sierpi\'nski conjectured that every multiplicity $k\ge 2$
is possible, and we deduce this from the Prime $k$-tuples Conjecture.
We also make some progress toward an
older conjecture of Carmichael, which states that no totient has
multiplicity 1. The
lower bound for a possible counterexample is extended to $10^{10^{10}}$ and the
bound $\liminf_{x\to \infty} V_1(x)/V(x) \le 10^{-5,000,000,000}$ is shown.
Determining the order of $V(x)$ and $V_k(x)$ also provides a description
of the ``normal'' multiplicative structure of totients.  This takes 
the form of bounds on the sizes of the prime factors
of a pre-image of a typical totient.  One corollary is that the
normal number of prime factors of a totient $\le x$ is $c\log\log x$,
where $c\approx 2.186$.  Similar results are proved for 
the set of values taken by a general multiplicative arithmetic function,
such as the sum of divisors function, whose behavior is similar to that of
Euler's function.
\end{abstract}

\subjclass[1991]{Primary 11A25, 11N64}

\thanks{
Keywords: Euler's function, totients, Carmichael's Conjecture,
Sierpi\'nski's Conjecture}

\thanks{
Much of the early
work for this paper was completed while the author was enjoying the
hospitality of the Institute for Advanced Study, supported by National
Science Foundation grant DMS 9304580.}

\dedicatory{Dedicated to the memory of Paul Erd\H os (1913--1996)}

\maketitle


\section{Introduction}


Let $\fancyV$ denote the set of values taken by Euler's
$\phi$-function (totients), i.e.
$$
\fancyV = \{ 1,2,4,6,8,10,12,16,18,20,22,24,28,30, \cdots \}.
$$
Let
\be\label{basic def}
\begin{split}
\fancyV(x) &= \fancyV \cap [1,x], \\
V(x) &= |\fancyV(x)|, \\
\phi^{-1}(m) &= \{n:\phi(n)=m\}, \\
A(m) &= |\phi^{-1}(m)|, \\
V_k(x) &= |\{m \le x : A(m)=k \}|.
\end{split}
\ee

We will refer to $A(m)$ as the multiplicity of $m$.  This paper
is concerned with the following problems.
\bigskip

{
\parindent .5in
1.  What is the order of $V(x)$?

2.  What is the order of $V_k(x)$ when the multiplicity $k$ is possible?

3.  What multiplicities are possible?

4.  What is the normal multiplicative structure of totients?
}
\bigskip

\subsection{The order of $V(x)$} 
 The fact that $\phi(p)=p-1$ for primes $p$
implies $V(x) \gg x/\log x$ by the Prime Number Theorem.
Pillai \cite{Pi} gave the first non-trivial
upper bound on $V(x)$, namely
$$
V(x) \ll \frac{x}{(\log x)^{(\log 2)/e}}.
$$
Using sieve methods, Erd\H os \cite{E1} improved this to
$$
V(x) \ll_\e \frac{x}{(\log x)^{1-\e}}
$$
for every $\e>0$.  Upper and lower bounds for $V(x)$ were sharpened
in a series of papers by  Erd\H os \cite{E2}, 
Erd\H os and Hall \cite{EH1, EH2}, Pomerance \cite{P1}, and finally by
Maier and Pomerance \cite{MP},  who showed that
\be\label{MP}
V(x) = \frac{x}{\log x} \exp\{ (C+o(1))(\log_3 x)^2 \}
\ee
for a constant $C$ defined below.
Here $\log_k x$ denotes the $k$th iterate of the logarithm.  Let
\be\label{F def}
F(x) = \sum_{n=1}^\infty a_nx^n, \qquad a_n = (n+1)\log(n+1)-n\log n-1.
\ee
Since $a_n \sim \log n$ and $a_n>0$, it follows that $F(x)$ is defined and
strictly increasing on $[0,1)$, $F(0)=0$ and $F(x) \to\infty$ as $x \to 1^-$.
Thus, there is a unique number $\rho$ such that
\be\label{rho def}
F(\rho) = 1 \qquad (\rho = 0.542598586098471021959\ldots).
\ee
In addition, $F'(x)$ is strictly increasing, and
$$
F'(\rho) = 5.69775893423019267575 \ldots
$$
Let
\be\label{C def}
C = \frac{1}{2|\log \rho|} = 0.81781464640083632231\ldots 
\ee
and
\be\label{D def}
\begin{split}
D &= 2C(1+\log F'(\rho) - \log(2C)) - 3/2 \\
&= 2.17696874355941032173 \ldots
\end{split}
\ee

Our main result is a determination of the true order of $V(x)$.
\begin{thm}\label{V(x)} We have
$$
V(x) = \frac{x}{\log x} \exp\{ C(\log_3 x-\log_4 x)^2 +D \log_3 x
- (D+1/2-2C)\log_4 x + O(1) \}.
$$
\end{thm}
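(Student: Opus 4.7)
My plan is to determine $V(x)$ by parametrizing totients via the multiplicative structure of a canonical pre-image. For a totient $m = \phi(n) \le x$, select a distinguished $n \in \phi^{-1}(m)$ and factor $n = q_0 q_1 \cdots q_k$, where $q_1 > q_2 > \cdots > q_k$ are the ``large'' prime factors of $n$ (above a suitable cutoff) and $q_0$ is the smooth part. Encode the chain by $\xi_i = \log_2 q_i / \log_2 x \in [0,1]$. The constraint $\phi(n) \le x$ translates, after taking iterated logarithms, into a linear inequality on the $\xi_i$, which is the combinatorial heart of the counting.

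For the upper bound, I would show that apart from an exceptional set of cardinality $O(x/\log x)$, every totient $m \le x$ admits such a canonical pre-image with profile $\vxi = (\xi_1, \ldots, \xi_k)$ confined to a compact region. By Mertens- and PNT-type estimates, the number of prime chains with profile near $\vxi$ is approximately $\exp(\log_2 x \cdot H(\vxi))$ for an entropy-like functional $H$, and each chain produces at most $x/\phi(q_1 \cdots q_k)$ totients. Summing over $\vxi$ reduces the estimation of $V(x)$ to a finite-dimensional constrained maximization of $H$; the coefficients $a_n$ from \eqref{F def} arise via Stirling estimates counting the valid realizations of each chain prime.

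For the matching lower bound, I would construct totients by fixing the conjectural extremal profile $\vxi^*$ and selecting a prime $q_i$ in each prescribed dyadic interval. A Brun-type sieve enforces pairwise coprimality of the $q_i - 1$, so that distinct chains yield distinct totients $\phi(q_0 q_1 \cdots q_k)$. A second-moment estimate on the multiplicity function $A(m)$, combined with the fact that a typical totient has a controlled number of pre-images, then produces the requisite number of chains.

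The crux of the proof is the constrained optimization identifying $\vxi^*$: Lagrange multipliers give a stationarity condition that reduces precisely to $F(\rho) = 1$, with $\rho$ as in \eqref{rho def} governing the optimal geometric decay $\xi_{i+1}/\xi_i \approx \rho$ along the chain and the optimal chain length $k_0 \approx 2C \log_3 x$. Evaluating $H$ at the maximizer recovers the leading term $C(\log_3 x - \log_4 x)^2$. The principal obstacle is matching upper and lower bounds \emph{to within $O(1)$} in the exponent, which is what pins down the exact coefficient $D$: this forces a saddle-point analysis around $\vxi^*$ (producing a $-\tfrac{3}{2}\log_3 x$ contribution inside $D$), careful tracking of the $\log_4 x$ corrections from the normalization of $H$ at the extremum, and an inclusion--exclusion in the lower-bound construction precise enough to avoid overcounting across neighboring profiles. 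These second-order features are absent from Maier--Pomerance's treatment, which recovers only the leading $C(\log_3 x)^2$ from \eqref{MP}.
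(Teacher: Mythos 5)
Your proposal captures the right high-level skeleton — a profile parametrization $\xi_i = \log_2 q_i/\log_2 x$, an upper bound by counting chains, a lower bound by constructing them, and the appearance of $\rho$ from an optimization — but there are three places where it departs from the paper in ways that look like genuine gaps rather than alternative routes.

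First, the lower bound as you sketch it does not go through. Enforcing pairwise coprimality of the $q_i-1$ by a Brun-type sieve ensures the products $(q_0-1)\cdots(q_k-1)$ are distinct as you vary the chain, but that is far from showing $\phi(q_0\cdots q_k)$ is a totient of bounded multiplicity: you must rule out $\phi(n') = \phi(q_0\cdots q_k)$ for \emph{arbitrary} $n'$, not just other chains in your construction. The paper's whole lower-bound machinery (the set $\fancyB$, the dichotomy between trivial and nontrivial solutions of $d\phi(n)=\phi(n_1)$, and especially Lemma \ref{prodpiqi} which bounds solutions of $(p_0-1)\cdots(p_{k-1}-1)f_1\cdots f_l d = (q_0-1)\cdots(q_{k-1}-1)e$ by an iterative interval-by-interval analysis) exists precisely to handle this. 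A second-moment bound on $A(m)$ would indeed suffice, but proving it is essentially equivalent to Lemma \ref{prodpiqi}; calling it ``a second-moment estimate'' hides the hardest part of the theorem.

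Second, the profile heuristic you take as extremal is the Maier--Pomerance one, $\xi_{i+1}/\xi_i\approx\rho$, i.e.\ $\xi_i\approx\rho^i$. The paper explicitly identifies this as the heuristic that falls short of the truth: the correct normal profile is $\xi_i\approx\rho^i(1-i/L_0)$ (see \eqref{NS} versus \eqref{NS2} and the discussion of the $L$th largest prime factor, $\log_2 p_L\approx\frac{1}{L}\rho^L\log_2 x$, in the Heuristic arguments subsection). A Lagrange-multiplier stationarity condition that delivers pure geometric decay would therefore recover the Maier--Pomerance exponent $C(\log_3 x)^2 + o((\log_3 x)^2)$, not the $O(1)$-precision statement; the linear factor $(1-i/L_0)$ is what encodes the boundary of the feasible region near the minimal prime factor and is the source of the correct $\log_3 x$, $\log_4 x$ secondary terms.

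Third, and relatedly, the $O(1)$ precision in the paper is not obtained by a saddle-point expansion of an entropy functional. The argument is discrete and exact: the admissible profiles are confined to an explicit simplex $\fancyS_L$ defined by $2L-2$ linear inequalities in the $x_i$, its volume $T_L$ is computed to a constant factor via a tetrahedron-volume identity (Lemma \ref{volume}) and the recurrence $g_i=\sum_j a_j g_{i-j}$ whose generating function is $1/(1-F(z))$ (Lemma \ref{gh}), giving $T_L\asymp\rho^{L(L+3)/2}(F'(\rho))^L/L!$. The constant $D$ then falls out of Stirling applied to $L!$ together with optimizing over the integer $L$ near $L_0(x)$; there is no continuous saddle to expand around. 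If you want the $O(1)$ conclusion, you will need either this simplex-volume computation or something of comparable rigidity; the functional $H$ you describe is too coarse unless you make its Stirling corrections exact, at which point you are re-deriving $T_L$.
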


\subsection{The order of $V_k(x)$}

Erd\H os \cite{E3} showed by sieve methods that
if $A(m)=k$, then for most primes $p$, $A(m(p-1))=k$.
If the multiplicity $k$ is possible, then
$V_k(x) \gg x/\log x$.  Applying the machinery used to prove
Theorem \ref{V(x)}, we show that if there exists $m$ with
$A(m)=k$, then a positive proportion of totients have multiplicity $k$.

\begin{thm}\label{V_k(x)} 
For every $\eps>0$, if $A(d)=k$, then 
$$
V_k(x) \gg_\e d^{-1-\e} V(x) \qquad (x\ge x_0(d)).
$$
\end{thm}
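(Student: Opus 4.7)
The plan is to construct a nearly-injective map $v \mapsto dv$ from a large subset $\mathcal{S} \subseteq \fancyV(x/d)$ into the set of multiplicity-$k$ totients in $[1,x]$. Let $\phi^{-1}(d) = \{n_1,\dots,n_k\}$ and set $N := \max_i n_i$, a constant depending only on $d$. I would take $\mathcal{S}$ to be the set of totients $v \le x/d$ admitting a preimage $w$ whose prime factors all exceed $N$, and which is moreover the \emph{canonical} preimage of $v$ in the sense developed in the proof of Theorem \ref{V(x)} (so that $v$ has no preimage built from primes $\le N$). For such $v$, each of $n_1 w, \dots, n_k w$ is a distinct preimage of $dv$, since $\gcd(n_i, w) = 1$ and $\phi(n_i w) = \phi(n_i)\phi(w) = dv$.

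The central step is the converse: showing these are the \emph{only} preimages of $dv$. Given $\phi(m) = dv$, split $m = ab$ with $a$ the $N$-smooth part and $b$ the $N$-rough part; then $\gcd(a,b)=1$, so $\phi(a)\phi(b) = dv$ and $\phi(a)$ has all prime factors $< N$. The structural conditions defining $\mathcal{S}$ are designed to force $\phi(a) = d$ exactly (rather than a proper divisor of $d$ absorbing small prime factors of $v$), whence $a \in \{n_1,\dots,n_k\}$; the rigidity of the canonical preimage then forces $b = w$, so $\phi(b)=v$. This gives $A(dv) = k$, and injectivity of $v \mapsto dv$ is immediate, so $V_k(x) \ge |\mathcal{S}|$.

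To bound $|\mathcal{S}|$, the proof of Theorem \ref{V(x)} shows that for any fixed $N$ a positive proportion of totients $v \le y$ admit a canonical preimage with smallest prime factor $> N$, once $y \ge y_0(N)$. Applied with $y = x/d$, this gives $|\mathcal{S}| \gg_d V(x/d)$, and the asymptotic formula in Theorem \ref{V(x)} yields $V(x/d) \gg_\eps d^{-1-\eps}\, V(x)$ for $x \ge x_0(d,\eps)$. The hardest component is the rigidity step in the uniqueness argument: ruling out alternate factorizations $\phi(a)\phi(b) = dv$ with $\phi(a)$ a proper divisor of $d$. Such alternates can genuinely occur when $v$ shares small-prime structure with $d$ (as in the minimal example $d=8$, $v=2$, where $A(16)=6 \ne 5$), so the conditions defining $\mathcal{S}$ must explicitly exclude this interference. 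This is precisely the information packaged by the detailed sieve machinery underlying Theorem \ref{V(x)}, so the proof reuses rather than reproves that analysis.
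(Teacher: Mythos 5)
Your high-level plan mirrors the paper's: construct many objects with a rigid multiplicative structure, multiply by $d$, and show the resulting totient has multiplicity exactly $k$. The paper works at the level of integers $n$ (constructing the set $\fancyB$ of squarefree $n = p_0\cdots p_L$ with prime factors constrained to the fundamental simplex, a spacing condition \eqref{xi spacing}, and $p_L \ge \max(d+2,17)$), then considers $d\phi(n)$; you work at the level of totients $v$ and consider $dv$. These are dual parameterizations of the same construction. The difficulty is that the rigidity/uniqueness step—the part you explicitly flag as "the hardest component"—is exactly the bulk of the paper's Section 5 (Lemma \ref{prodpiqi} and the estimation of $B_j(y)$), and your proposal does not supply it. Appealing to "the detailed sieve machinery underlying Theorem \ref{V(x)}" is circular: in this paper the lower bound of Theorem \ref{V(x)} is obtained precisely as the $d=1$, $k=2$ case of Theorem \ref{V_k(x)}, so there is no independent black box producing the rigidity you need.

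Two concrete sub-steps are missing. First, from the factorization $m = ab$ with $a$ the $N$-smooth part, the equation $\phi(a)\phi(b) = dv$ with $\phi(a)$ $N$-smooth does not force $\phi(a) = d$: the totient $v = \phi(w)$ typically has abundant $N$-smooth factors (from $p-1$ for the large primes $p \mid w$), and $\phi(b)$ can contribute $N$-smooth factors as well, so the $N$-smooth part of $dv$ is much larger than $d$ and $\phi(a)$ need not equal $d$. Second, even granting $\phi(a) = d$ (so $a \in \{n_1,\dots,n_k\}$), you still need $b = w$; but nothing in your definition of $\mathcal{S}$ guarantees $v$ has a \emph{unique} $N$-rough preimage, and if $v$ has two rough preimages $w \ne w'$ then $A(dv) \ge 2k$. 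The paper handles this by counting, over $n\in\fancyB$, the non-trivial solutions of $d\phi(n)=\phi(n_1)$ (which simultaneously controls both collision types), a delicate enumeration made possible by the explicit prime-factor constraints on $n$. Your reformulation in terms of $v$ discards that explicit structure without replacing it, and the statement $|\mathcal{S}| \gg_d V(x/d)$ remains unjustified until the uniqueness question is resolved.
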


\begin{conj}
For $k \ge 2$,
$$
\lim_{x\to \infty} \frac{V_k(x)}{V(x)} = C_k.
$$
\end{conj}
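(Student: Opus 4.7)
The plan is to promote the lower bound in Theorem \ref{V_k(x)} to an asymptotic of the form $V_k(x) = (C_k + o(1)) V(x)$. The guiding picture is Erd\H{o}s's seed-propagation phenomenon cited just before Theorem \ref{V_k(x)}: if $A(d) = k$ and $p$ is a generic prime, then $A(d(p-1)) = k$, and the $k$ preimages of $d(p-1)$ are exactly the products $ap$ with $a \in \phi^{-1}(d)$. Iterating, one expects a typical $m \le x$ with $A(m) = k$ to factor essentially uniquely as $m = d \cdot t$, where $d$ is a small seed with $A(d) = k$ and $t$ is a ``generic'' totient whose single preimage $s$ is coprime to every element of $\phi^{-1}(d)$, so that the preimages of $m$ are precisely $\{as : a \in \phi^{-1}(d)\}$.

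Adopting this picture, one would partition
\[
V_k(x) = \sum_{d : A(d) = k} W_d(x),
\]
where $W_d(x)$ counts those $m \le x$ of multiplicity $k$ whose minimal seed equals $d$, with an inclusion–exclusion to correct for competing decompositions. For each fixed $d$, the sieve apparatus developed for Theorems \ref{V(x)} and \ref{V_k(x)} should yield
\[
\frac{W_d(x)}{V(x)} \longrightarrow \alpha(d)
\]
for a local density $\alpha(d) > 0$ measuring the probability that a totient of size $\le x/d$ remains ``generic relative to $d$'' after multiplication by $d$. The $d^{-1-\e}$ decay from Theorem \ref{V_k(x)} ensures $\sum_d \alpha(d)$ converges absolutely, so summation and limit may be interchanged to give $C_k = \sum_{A(d) = k} \alpha(d)$.

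The main obstacle is the ratio convergence itself. Theorem \ref{V(x)} only determines $V(x)$ up to a bounded multiplicative error --- the $O(1)$ in the exponent conceals genuine oscillations of fixed order of magnitude --- so $V_k(x)/V(x)$ cannot be controlled merely by proving separate asymptotics for the numerator and denominator. One must instead show that $V_k(x)$ and $V(x)$ oscillate \emph{in parallel}, which is tantamount to a structural theorem asserting that, once the seed factor $d$ is stripped off, the anatomy of a multiplicity-$k$ totient is statistically indistinguishable from that of a generic totient of comparable size. Establishing such a parallelism appears to require sharpening the sieve estimates underlying Theorem \ref{V(x)} to a precision that captures the fine oscillatory structure of $V(x)$ itself, and is almost certainly why the result is left conjectural; a conditional proof assuming an asymptotic form of Theorem \ref{V(x)} should however be within reach of the present methods.
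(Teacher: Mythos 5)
The statement you were given is labeled a \emph{Conjecture}, and the paper offers no proof of it; you correctly recognize this, and what you have written is a heuristic sketch of why the statement should be true together with an accurate diagnosis of why it is not proved. Your main observation --- that Theorem~\ref{V(x)} determines $V(x)$ only up to a bounded multiplicative factor (the $O(1)$ in the exponent), so that convergence of the ratio $V_k(x)/V(x)$ cannot follow from separate asymptotics for numerator and denominator, and one must instead establish that the two quantities oscillate in parallel --- is exactly the obstruction. It also matches the author's own remarks in Section~7, where he suggests that obtaining an asymptotic for $V(x)$ would likely require simultaneously determining the asymptotics of $V_k(x)/V(x)$ and $V(x;k)/V(x)$, i.e.\ the problems are coupled rather than one reducing to the other. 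Your ``seed-propagation'' decomposition $\phi^{-1}(m) = \{as : a \in \phi^{-1}(d)\}$ with a small seed $d$ and a generic cofactor is essentially the mechanism the paper exploits in the lower-bound construction for Theorem~\ref{V_k(x)} in Section~5, so the picture is well-founded; the caveats you'd need to handle (competing decompositions, non-minimal seeds, preimages not of the expected shape) are real but secondary to the ratio-convergence issue you flagged. In short: there is nothing to compare your proposal against because the paper proves nothing here, and your assessment that a conditional proof assuming an asymptotic form of Theorem~\ref{V(x)} is plausible while an unconditional one is out of reach of the present methods is a fair reading of the state of the art as presented in the paper.
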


\begin{table}
\begin{tabular}{|l|l|l|l|l|l|l|l|}
\hline
$x$ &  $V(x)$  &  $V_2/V$ & $V_3/V$ & $V_4/V$   &
   $V_5/V$   & $V_6/V$    & $V_7/V$  \\
\hline
 1M &      180,184 & 0.380727 & 0.140673 & 0.098988 &
                              0.042545 & 0.062730 & 0.020790 \\
  5M &     840,178 & 0.379462 & 0.140350 & 0.102487 &
                              0.042687 & 0.063193 & 0.020373 \\
 10M &   1,634,372 & 0.378719 & 0.140399 & 0.103927 &
                              0.042703 & 0.063216 & 0.020061 \\
 25M &   3,946,809 & 0.378198 & 0.140233 & 0.105466 &
                              0.042602 & 0.063414 & 0.019819 \\
 125M & 18,657,531 & 0.377218 & 0.140176 & 0.107873 &
                              0.042560 & 0.063742 & 0.019454 \\
 300M & 43,525,579 & 0.376828 & 0.140170 & 0.108933 &
                              0.042517 & 0.063818 & 0.019284 \\
 500M & 71,399,658 & 0.376690 & 0.140125 & 0.109509 &
                             0.042493 & 0.063851 & 0.019194 \\
\hline
\end{tabular}
\smallskip
\caption{$V_k(x)/V(x)$ for $2\le k\le 7$}
\end{table}

Table 1 lists values of $V(x)$ and the ratios $V_k(x)/V(x)$ for
$2\le k\le 7$.
Numerical investigations seem to
indicate that $C_k \order 1/k^2$.   In fact,
 at $x=500,000,000$ we have
$1.75 \le V_k(x)/V(x)  \le 2.05$ for $20\le k\le 200$.
This data is very misleading, however. 
Erd\H os \cite{E1} showed that there are infinitely many totients for which
$A(m) \ge m^{c_4}$
for some positive constant $c_4$.  The current record is
$c_4=0.7039$ \cite{BH}.  Consequently, by 
Theorem \ref{V_k(x)},  for infinitely many $k$ we have
$$
\frac{V_k(x)}{V(x)} \gg k^{-1/c_4+\e} \gg k^{-1.42} \qquad (x>x_0(k)).
$$
Erd\H os
has conjectured that every $c_4<1$ is admissible.

We also show that most totients have ``essentially bounded'' multiplicity.

\begin{thm}\label{A(m) bounded}
Uniformly for $x\ge 2$ and $N\ge 2$, we have
$$
\frac{| \{ m\in \fancyV(x) : A(m)\ge N \}| }{V(x)} =
\sum_{k\ge N} \frac{V_k(x)}{V(x)} \ll \exp\{ - \tfrac14 (\log_2 N)^2 \}.
$$
\end{thm}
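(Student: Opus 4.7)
The plan is to combine a straightforward first-moment bound, valid when $N$ is very large compared to a power of $\log x$, with an iterative extraction of structural constraints on $m$ from the hypothesis $A(m) \ge N$.

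As a starting point I would use the identity $\sum_{m\le x} A(m) = |\{n : \phi(n) \le x\}| \ll x$, which gives $N \cdot \sum_{k \ge N} V_k(x) \le \sum_{m \le x} A(m) \ll x$, hence $\sum_{k \ge N} V_k(x) \ll x/N$. Against the lower bound $V(x) \gg (x/\log x)\exp\{C(\log_3 x)^2 + O(\log_3 x)\}$ from Theorem \ref{V(x)}, this already establishes the stated inequality once $N$ exceeds roughly $\log x \cdot \exp\{C(\log_3 x)^2\}$, so it remains to handle the intermediate range $2 \le N \ll \log x \cdot \exp\{C(\log_3 x)^2\}$.

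For intermediate $N$, the key idea is that if $A(m) \ge N$ then the $N$ distinct preimages of $m$ under $\phi$ force a chain of structural constraints on the factorization of $m$. By a greedy/pigeonhole procedure on the preimage set, one should be able to extract a sequence of primes $p_1, \ldots, p_J$ with $J = \lfloor (\log_2 N)/2 \rfloor$, each $p_j - 1$ a divisor of $m$, each $p_j$ appearing in a positive fraction of the preimages, while the residual multiplicity after dividing out $\prod_j(p_j - 1)$ remains bounded below. This reduces the count of $m \le x$ with $A(m) \ge N$ to a sum over admissible chains of the number of totients divisible by $\mathrm{lcm}_j(p_j - 1)$. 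The anatomy estimates underlying Theorem \ref{V(x)} and Theorem \ref{V_k(x)} give, for each stage, a density loss of roughly $(\log N)^{-1/2}$ on $\fancyV(x)$; multiplied across the $J$ stages this yields the claimed $\exp\{-\tfrac14(\log_2 N)^2\}$.

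The principal obstacle is the chain-extraction step: one must produce structurally independent primes $p_1, \ldots, p_J$ so that the per-stage density losses genuinely compound rather than being charged to a single bad event. Here one needs the normal distribution of prime factors of preimages of a typical totient---the same statistics that underpin the constant $c \approx 2.186$ in the normal order of $\omega(m)$ on $\fancyV(x)$---to ensure that the divisibility constraints $(p_j-1) \mid m$ obtained from different preimages are genuinely independent and can be iterated to the full depth $\asymp \log_2 N$.
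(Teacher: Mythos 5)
Your first-moment bound for large $N$ is essentially the same as the paper's opening step, though you write $|\{n:\phi(n)\le x\}|\ll x$ where the correct bound is $\ll x\log_2 x$; this costs nothing since the trivial bound still disposes of all $N\ge\log^2 x$ (the paper's cutoff). The real issue is the intermediate range, where your proposal is a sketch with an acknowledged gap rather than a proof.

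The ``chain extraction'' step is the entire content of the argument and you have not shown how to carry it out. The hypothesis $A(m)\ge N$ gives $N$ integers $n$ with $\phi(n)=m$, and each prime $p\mid n$ contributes a constraint $(p-1)\mid m$, but nothing forces these constraints to be distinct, independent, or to persist through an iteration of depth $\asymp\log_2 N$: the preimages can share all but one or two prime factors, and the per-stage density loss of $(\log N)^{-1/2}$ is asserted without any mechanism that would produce it uniformly across stages and compound across them. You flag this yourself as the principal obstacle, which is correct; without it the proof does not close.

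The paper's actual route is structurally different and avoids this issue entirely. It sets $\Psi=\lceil\log\log N\rceil$ and $L=L_0(x)-\Psi$, applies Theorem \ref{vx norm} to show that all but $O(V(x)e^{-\Psi^2/4})$ totients have \emph{every} preimage $n$ with $(x_1(n;x),\dots,x_L(n;x))\in\fancyS_L(\vxi)$, then uses the simplex volume estimates (Lemmas \ref{sumvol}, \ref{xL most}, Corollary \ref{TL cor}) to bound the total \emph{number of preimages} of that structured form by roughly $V(x)\exp\{-\Psi^2/4C+O(\Psi\log\Psi)\}$. Dividing this preimage count by $N$ bounds the number of totients with $\ge N$ such preimages, and since $\Psi\asymp\log_2 N$ this delivers the claimed $\exp\{-\tfrac14(\log_2 N)^2\}$. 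In short: the paper gets the compounding $(\log_2 N)^2$ exponent by shrinking the simplex dimension $L$ by $\Psi\asymp\log_2 N$ (which costs $\Psi^2$ in the exponent of the volume) and then dividing by $N$ once; it never needs to extract or iterate divisibility constraints on $m$. If you want to salvage your approach, you would need a lemma showing that $A(m)\ge N$ forces $m$ to be divisible by $\gg\log_2 N$ ``independent'' shifted-prime factors with controlled sizes, and no such statement is available in the paper or obviously true.
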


{\bf Remark.}  The proof of \cite[Theorem 3]{F98} contains an error, and the corrected
proof (in Sec. 7.1 below) gives the weaker estimate given in  Theorem \ref{A(m) bounded}.

In contrast, the average value of $A(m)$ over totients $m\le x$
is clearly $\ge x/V(x) = (\log x)^{1+o(1)}$.
The vast differences between the ``average'' behavior and the
``normal'' behavior is a result of some
totients having enormous multiplicity.

A simple modification of the proof of Theorems \ref{V(x)} and \ref{V_k(x)}
also gives bounds for totients in short intervals.  A real number $\theta$ is
said to be admissible if $\pi(x+x^\theta)-\pi(x) \gg x^\theta/\log x$
 with $x$ sufficiently large.  Here, $\pi(x)$ is the number of primes $\le x$.
 The current record is due to Baker,
Harman and Pintz \cite{BHP}, who showed that $\theta=0.525$ is admissible.

\begin{thm}\label{short interval}
If $\theta$ is admissible, $y\ge x^\theta$
and the multiplicity $k$ is possible, then
$$
V_k(x+y) - V_k(x) \order V(x+y)-V(x) \order \frac{y}{x+y} V(x+y).
$$
Consequently, for every fixed $c>1$, $V(cx)-V(x) \order_c V(x)$.
\end{thm}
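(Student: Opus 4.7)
The plan is to mimic the proofs of Theorems \ref{V(x)} and \ref{V_k(x)} while tracking the constraint that the totient lies in $[x,x+y]$, invoking the admissibility of $\theta$ in place of the prime number theorem wherever the original argument produces or counts primes in a short interval.

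For the upper bound, the proof of Theorem \ref{V(x)} dominates $V(x+y)$ by a sum over multiplicative types of canonical pre-images $n=pn_0$, with each type contributing $\ll \pi((x+y)/\phi(n_0))$ totients through admissible choices of the largest prime factor $p$. Imposing $\phi(pn_0)\in[x,x+y]$ pins $p$ to a sub-interval of length $y/\phi(n_0)$, and Brun--Titchmarsh then trims each type by the factor $\ll y/(x+y)$; summing reproduces $V(x+y)-V(x)\ll \frac{y}{x+y}V(x+y)$. The bound $V_k(x+y)-V_k(x)\le V(x+y)-V(x)$ is immediate from $V_k\le V$.

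For the lower bound I would localize the construction underlying Theorems \ref{V(x)} and \ref{V_k(x)}. That construction (starting, in the multiplicity-$k$ case, from a totient $d$ with $A(d)=k$) produces $\gg V(x+y)$ distinct totients of the form $m=(p-1)\phi(n_0)$, with $n_0$ drawn from a structured family of seeds and $p$ a prime in an interval of length $\asymp (x+y)/\phi(n_0)$. I would then restrict $p$ to the sub-interval $(x/\phi(n_0),(x+y)/\phi(n_0)]$ of length $y/\phi(n_0)$ to pick out the totients in $[x,x+y]$. When $\phi(n_0)\le (y/x^\theta)^{1/(1-\theta)}$ this sub-interval has length $\ge (x/\phi(n_0))^\theta$, and admissibility furnishes $\gg y/(\phi(n_0)\log x)$ primes in it---a factor $\asymp y/(x+y)$ of the unrestricted count. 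Restricting the sum to the admissible stratum of seeds and inheriting the distinctness from the unrestricted proofs yields $V(x+y)-V(x)\gg \frac{y}{x+y}V(x+y)$ and the analogous $V_k(x+y)-V_k(x)\gg_d \frac{y}{x+y}V(x+y)$. The closing ``consequently'' follows by applying the theorem with $y=(c-1)x\ge x^\theta$ for all large $x$, together with $V(cx)\asymp_c V(x)$, which is immediate from Theorem \ref{V(x)}.

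The hard part will be the stratification step in the lower bound: arranging the construction of Theorems \ref{V(x)} and \ref{V_k(x)} so that a positive proportion of its produced totients already arises from seeds $n_0$ with $\phi(n_0)\le (y/x^\theta)^{1/(1-\theta)}$. In the extremal case $y=x^\theta$ this forces $\phi(n_0)$ into a very thin range, which is delicate; one must either verify that the chain-of-primes construction behind Theorem \ref{V(x)} can be tuned to concentrate its output there (by making the ``free'' prime $p$ dominate the totient's size), or introduce an auxiliary truncation that reallocates the mass lost. This is the only place where the hypothesis $y\ge x^\theta$ actively enters the argument.
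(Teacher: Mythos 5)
Your plan matches what the paper intends: Theorem \ref{short interval} is stated as a corollary of the Section 4--5 arguments, and you have correctly located the single place where admissibility and the hypothesis $y\ge x^\theta$ must interact. Let me make the ``hard part'' you flag precise, because it is genuinely the crux. In the lower bound construction, the seed $n_0=p_1\cdots p_L$ typically satisfies $\log_2 p_1\approx \rho(1-1/L_0)\log_2 x$ (Theorem \ref{qi normal}, Lemma \ref{xL most}), so $\phi(n_0)=\exp\{(\log x)^{\rho+o(1)}\}$ --- of size $x^{o(1)}$ but unbounded. Your truncation $\phi(n_0)\le (y/x^\theta)^{1/(1-\theta)}$ therefore keeps a positive proportion of the construction precisely when $y\ge x^{\theta}\exp\{(1-\theta)(\log x)^{\rho+o(1)}\}$. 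At $y=x^\theta$ it demands $\phi(n_0)\le 1$, which is empty; neither of your two fallbacks (re-tuning the chain or re-allocating mass) can repair this, because no seed is that small. The real resolution sits outside the construction: concrete admissibility results (Baker--Harman--Pintz's $\theta=0.525$ included) have slack, i.e.\ some $\theta-\delta$ is also admissible, and running your stratified argument with $\theta-\delta$ replaces the threshold by $(y/x^{\theta-\delta})^{1/(1-\theta+\delta)}\ge x^{\delta/(1-\theta+\delta)}$, which dwarfs $\exp\{(\log x)^{\rho}\}$. So the statement as written is fine under the natural reading that $\theta$ is not the extremal admissible exponent; with a literal extremal $\theta$, the lower bound would only yield $y\ge x^{\theta+o(1)}$. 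The ``consequently'' with $y=(c-1)x$ is of course immune.

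A smaller caveat on the upper bound: Brun--Titchmarsh gives $\ll H/\log H$ with $H=y/\phi(n_0)$, and to pass from $\log H$ to $\log x$ you need $\phi(n_0)\le y^{1-c}$, not just boundedness. This is supplied by the Section 4 reduction itself: once $(I_0)$ holds, $x_2\le \xi_0/(a_1+a_2)<0.8$, so $q_2(n)\le \exp\{(\log x)^{0.8}\}$ and the cofactor $q_2q_3\cdots$ is $x^{o(1)}$; since $n\ge x$ this forces $q_0(n)\ge x^{1/2-o(1)}$, hence $\phi(n_0)\le x^{1/2+o(1)}\le y^{1-c}$ because $\theta>1/2$. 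The exceptional classes absorbed into $M_0(x)$ and the $O(x/\log x)$ term (too many prime factors, non-$S$-normal primes, smooth-part contributions) must also be re-estimated over $(x,x+y]$ rather than $[1,x]$, which means localizing the sieve steps inside Lemmas \ref{AB} and \ref{Yk}. That is routine for $\theta>1/2$ but is more than ``Brun--Titchmarsh trims each type,'' and is worth flagging as part of the work.
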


Erd\H os has asked if $V(cx) \sim cV(x)$ for each fixed $c>1$, which would
follow from an asymptotic formula for $V(x)$.  The method of proof of
Theorem 1, however, falls short of answering Erd\H os' question.

It is natural to ask what the maximum totient gaps are, in other words
what is the behavior of the function $M(x)=\max_{v_i\le x} (v_i-v_{i-1})$
if $v_1, v_2, \cdots$ denotes the sequence of totients?  Can it be 
shown, for example,
 that for $x$ sufficiently large, that there is a totient between
$x$ and $x+x^{1/2}$?

\subsection{The conjectures of Carmichael and Sierpi\'nski}

 In 1907, Carmichael \cite{C1} announced that for every $m$, the equation
$\phi(x)=m$ has either no solutions $x$ or at least two solutions.  In
other words, no totient can have multiplicity 1.  His proof of this
assertion was flawed, however, and the existence of such numbers
remains an open problem.  In \cite{C2}, Carmichael did show that no
number $m<10^{37}$ has multiplicity 1, and conjectured that no such $m$
exists (this is now known as Carmichael's Conjecture).  Klee \cite{K}
improved the lower bound for a counterexample to $10^{400}$,
Masai and Valette \cite{MV} improved it to $10^{10,000}$ and recently
Schlafly and Wagon \cite{SW} showed that a counterexample must exceed
$10^{10,000,000}$.
An immediate corollary of Theorem \ref{V_k(x)} 
(take $d=1, k=2$ for the first part) is

\begin{thm}\label{Car Conj}
We have
$$
\limsup_{x\to\infty} \frac{V_1(x)}{V(x)} < 1.
$$
Furthermore, Carmichael's Conjecture
is equivalent to the bound
$$
\liminf_{x\to\infty} \frac{V_1(x)}{V(x)} = 0.
$$
\end{thm}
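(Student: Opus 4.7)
The plan is to deduce both parts of Theorem \ref{Car Conj} directly from Theorem \ref{V_k(x)}, whose content is that every realized multiplicity accounts for a positive proportion of all totients. The two small observations I will need are: first, that $A(1) = 2$ (since $\phi^{-1}(1) = \{1,2\}$), and second, that the sets $\{m \le x : A(m) = k\}$ for $k = 1, 2, 3, \ldots$ form a partition of $\fancyV(x)$, so that $\sum_{k \ge 1} V_k(x) = V(x)$.

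For the $\limsup$ statement, I would feed $d = 1$ and $k = 2$ into Theorem \ref{V_k(x)} (with any fixed $\eps > 0$). This produces an absolute constant $c > 0$ and a threshold $x_0$ such that $V_2(x) \ge c\, V(x)$ for all $x \ge x_0$. Combined with the partition identity, this yields $V_1(x) \le V(x) - V_2(x) \le (1 - c) V(x)$ for $x \ge x_0$, from which $\limsup_{x\to\infty} V_1(x)/V(x) \le 1 - c < 1$.

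For the equivalence, I would argue in both directions. If Carmichael's Conjecture holds, no $m$ satisfies $A(m) = 1$, so $V_1(x) \equiv 0$ and the $\liminf$ is trivially $0$. Conversely, suppose the conjecture fails, so some $d$ satisfies $A(d) = 1$. Applying Theorem \ref{V_k(x)} with this particular $d$ and $k = 1$ gives $V_1(x) \gg_\eps d^{-1-\eps} V(x)$ for $x \ge x_0(d)$, whence $\liminf_{x\to\infty} V_1(x)/V(x) > 0$, contradicting the hypothesis. There is no real obstacle in the present argument: all the analytic work is absorbed into Theorem \ref{V_k(x)}, and the deduction here is purely combinatorial, amounting to the bookkeeping observation that a single totient of multiplicity $1$ would, by the propagation built into Theorem \ref{V_k(x)}, force a positive proportion of totients to have multiplicity $1$.
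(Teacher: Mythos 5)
Your proposal is correct and matches the paper's approach exactly: the paper itself explains that Theorem \ref{Car Conj} is an immediate corollary of Theorem \ref{V_k(x)} obtained by taking $d=1$, $k=2$ for the $\limsup$ bound, and for the equivalence by noting that a counterexample $d$ with $A(d)=1$ would force $V_1(x)\gg_d V(x)$. Both your $\limsup$ deduction via the partition $\sum_k V_k(x)=V(x)$ and your two-directional argument for the equivalence are precisely what the paper intends.
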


Although this is a long way from proving Carmichael's Conjecture, Theorem
\ref{Car Conj} show that the set of counterexamples
cannot be a ``thin'' subset of $\fancyV$.  Either
there are no counterexamples or a positive fraction of totients are
counterexamples.

The basis for the computations of lower bounds for
a possible counterexample is a lemma of Carmichael
and Klee (Lemma \ref{CK} below), which allows one to show that if $A(m)=1$ then $x$
must be divisible by the squares of many primes. Extending the method
outlined in \cite{SW}, we push the lower bound for a
counterexample to Carmichael's Conjecture further.

\begin{thm}\label{CC lower}  If $A(m)=1$,
then $m \ge 10^{10^{10}}$.
\end{thm}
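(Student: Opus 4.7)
The plan is to extend the Schlafly--Wagon computational attack using Lemma \ref{CK} (Carmichael--Klee). Suppose for contradiction that $\phi(n) = m$ with $A(m) = 1$ and $m < 10^{10^{10}}$. The Carmichael--Klee lemma forces, for every prime $p$ satisfying an explicit divisibility condition relating $p-1$ to $m$ (and the hypothetical preimage $n$), that $p^2 \mid n$. The argument then runs as a closure process: each new prime $p$ with $p^2 \mid n$ contributes the factor $p(p-1)$ to $m = \phi(n)$, which typically unlocks still more primes $q$ with $q - 1$ dividing the updated $m$, forcing $q^2 \mid n$, and so on. The lower bound arises from showing that, however one branches, the accumulating product $\prod_{p^2 \mid n} p$ exceeds $10^{10^{10}}$.

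Concretely, I would organize the search as a rooted tree. The root encodes the elementary forced structure (e.g.\ that $2 \mid n$, yielding small primes like $3, 5, 7$ unlocked via $p - 1 \in \{2, 4, 6\}$, whose squares then must divide $n$); at each interior node, Lemma \ref{CK} is applied to the current partial factorization of $m$ to enumerate the next wave of primes forced into $n^2$, and each admissible choice of how to distribute the remaining ``budget'' of $m$ spawns a child. Branches whose accumulated product already exceeds $10^{10^{10}}$ are pruned immediately. The target is to verify that every surviving leaf of the resulting finite tree has running product exceeding $10^{10^{10}}$, contradicting $m < 10^{10^{10}}$. This is the same skeleton as in \cite{SW}, but executed to considerably greater depth.

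The principal obstacle is the combinatorial explosion of the tree: to push the record from $10^{10^{7}}$ to $10^{10^{10}}$, naive enumeration is hopeless. Two refinements will be needed. First, analytical bounds that handle whole families of primes at once: for instance, a sieve or Brun--Titchmarsh estimate giving a lower bound on the number of primes $p \le x$ with $p - 1$ dividing a given highly composite $M$, which shows that an entire subtree contributes at least the required magnitude to $\log n$ without enumerating its leaves. Second, sharper pruning via congruence obstructions: many candidate branches can be eliminated because the 2-adic or small-prime structure of the forced part of $m$ is incompatible with the remaining factor. The bulk of the proof is then the systematic design, implementation, and rigorous verification of these tree computations, with Lemma \ref{CK} and elementary factorization identities as the only conceptual inputs; the hard part is ensuring that every pruning rule is provably correct so that the resulting lower bound is unconditional.
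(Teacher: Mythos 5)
Your proposal correctly identifies the crucial tool (Lemma \ref{CK}, the Carmichael--Klee lemma) and the correct global strategy: use it to force $P^2\mid x$ for an ever-growing collection of primes $P$, and show the accumulated product exceeds $10^{10^{10}}$. But your description of the computation diverges from what the paper actually does, and one of your proposed refinements would not work.

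The paper does not run a deep branching tree with pruning. The only branching is a single binary case split --- case (I) $3^2\parallel x$ versus case (II) $3^3\mid x$ --- inherited from Carmichael's original treatment. After fixing small forced squares ($2^2 3^2 7^2 43^2\mid x$, plus $13^2\mid x$ in case I), the generation of new primes is essentially linear, not combinatorial: one takes $k$ to be a squarefree product of primes $P$ already known to have $P^2\mid x$, and tests whether $6k+1$, $12k+1$ (case I) or $6k+1$, $18k+1$ (case II) are prime; when they are, their squares divide $x$ and they join the pool. There is no ``remaining budget of $m$'' to distribute among children, and no admissible-choices branching: Lemma \ref{CK} forces every such $P^2$ into $x$ unconditionally. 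The computational bottleneck the paper actually faces is not a combinatorial explosion but the sheer cost of primality certification for hundreds of millions of 40--52-digit candidates; the paper handles this by fixing a base set of $1000$ primes, restricting $k$ to products of three or four of them, pre-screening with strong pseudoprime tests, and then certifying survivors with the Lucas--Lehmer--Brillhart--Selfridge lemma (which is efficient here precisely because the full factorization of $P-1$ is known by construction).

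Your proposed analytical shortcut --- a sieve or Brun--Titchmarsh bound showing an entire family of primes $p$ with $p-1\mid M$ contributes enough to $\log x$ --- is the genuinely problematic step. Brun--Titchmarsh gives \emph{upper} bounds for primes in a progression; what you would need is an unconditional effective \emph{lower} bound on $\#\{p\le x : p-1\mid M\}$ for a specific highly composite $M$, and no such bound with the required uniformity exists. Even averaged or conditional results (Bombieri--Vinogradov, GRH) would not yield the unconditional, explicitly computable certificate the theorem requires. Any proof along these lines has to certify individual primes, as the paper does.
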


As a corollary, a variation of an argument of Pomerance \cite{P2} gives the
following.

\begin{thm}\label{liminf V_1/V} We have
$$
\liminf_{x\to\infty} \frac{V_1(x)}{V(x)} \le 10^{-5,000,000,000}.
$$
\end{thm}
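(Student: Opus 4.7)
The plan is to combine Theorem~\ref{CC lower} with a variant of the density argument of Pomerance~\cite{P2}. Set $M:=10^{10^{10}}$. By Theorem~\ref{CC lower} every counterexample $m$ to Carmichael's Conjecture satisfies $m\ge M$, so if no counterexample exists then $V_1(x)\equiv 0$ and the liminf is trivially zero. Assuming at least one counterexample exists, the target is to exhibit a sequence $x_k\to\infty$ along which
\[
 V_1(x_k)\ll M^{-1/2}\,V(x_k),
\]
which suffices, since $M^{-1/2}=10^{-5{,}000{,}000{,}000}$.

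The structural input comes from the Carmichael--Klee lemma (Lemma~\ref{CK}): if $A(\phi(n))=1$, then $p^2\mid n$ for every prime $p$ with $p-1\mid \phi(n)$ and $p\nmid n$. This forces the unique preimage $n$ of any counterexample to carry a large squarefull part; combined with the lower bound $n\ge m\ge M$ supplied by Theorem~\ref{CC lower}, it pins such $n$ (and therefore the counterexample $m=\phi(n)$) to a very sparse subset of the integers with squarefull core of size $\gg\sqrt M$.

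To convert this structural constraint into the quantitative bound, I would adapt Pomerance's scheme: partition the counterexamples $m\le x$ according to the list of primes that must appear squared in their preimages, estimate the contribution of each class using the same sieve upper bounds that drive Theorem~\ref{V(x)}, and sum over classes. The lower bound $m\ge M$ controls how small the required squarefull core can be, which in turn provides a saving of a factor $M^{1/2}$ against the unrestricted count $V(x)$. Once $V_1(x)\ll M^{-1/2}V(x)$ is established along a sequence $x_k\to\infty$, taking the liminf yields the theorem.

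The main obstacle is extracting the explicit saving $M^{-1/2}$ rather than the qualitative $V_1(x)=o(V(x))$ that Pomerance's original argument gives. This requires grafting the Carmichael--Klee squarefull-core information onto the sieve upper bound developed for Theorem~\ref{V(x)} and tuning the class decomposition carefully enough that each class loses at least a factor $\sqrt M$ relative to its unconstrained totient counterpart. The admissibility of intervals furnished by Theorem~\ref{short interval} is also needed, to ensure $V(x_k)$ grows at the correct rate along the chosen sequence so that the saving in $V_1$ translates into a saving in the ratio $V_1/V$.
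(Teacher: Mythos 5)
The proposal does not follow the paper's route and has two substantive gaps.

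First, the logical direction of the squarefull-core deduction is reversed. You write that the lower bound $n\ge m\ge M$ from Theorem~\ref{CC lower} ``pins such $n$ to a very sparse subset of the integers with squarefull core of size $\gg\sqrt M$.'' That does not follow: an integer $n\ge M$ need not have any large squarefull divisor. In the paper, the computations behind Theorem~\ref{CC lower} produce two explicit integers $a,b>10^{5{,}001{,}850{,}000}$ (built from the Carmichael--Klee primes in cases (I) and (II)) such that every preimage $x$ of a counterexample has $a^2\mid x$ or $b^2\mid x$; the bound $m\ge 10^{10^{10}}$ is a \emph{consequence} of that squarefull-core information, not a substitute for it. You cannot recover the needed core size from the black-box statement $m\ge M$.

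Second, even granting the squarefull-core bound, the conversion to a ratio bound is missing. The paper does not attempt to bound $V_1(x)/V(x)$ directly by sieve methods; instead it proves the elementary Lemma~\ref{V a^2} ($V(x;a^2)\le V(x/a)$, by a simple injection from totients in $(x/a,x]$ to totients with a preimage not divisible by $a^2$), giving $V_1(x)\le V(x/a)+V(x/b)\le 2V(x/a)$ for all $x$. The crux is then Lemma~\ref{liminf}: if $V_1(x)\le bV(x/a)$ for all $x$ and $\liminf V_1/V=c>b/a$, a telescoping product forces $V(x)\ll x^{-\log((c-\e)/b)/\log a}$, contradicting $V(x)\gg x/\log x$. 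Your proposal never supplies this step; ``grafting the squarefull-core information onto the sieve upper bound for Theorem~\ref{V(x)}'' and ``tuning the class decomposition'' is where all the real work would have to live, and it is not clear it can be made to produce a uniform factor-$\sqrt M$ saving. The invocation of Theorem~\ref{short interval} is also a red herring --- the paper needs no short-interval information here, precisely because Lemma~\ref{liminf} turns a per-$x$ inequality directly into a liminf statement.

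Summarizing: the paper's argument is purely elementary (one injective map plus one telescoping iteration), while your proposal proposes a much heavier sieve-theoretic route whose central step is unsupported, and it takes as input the weaker statement $m\ge M$ where the stronger explicit squarefull-core bound $a,b>10^{5{,}001{,}850{,}000}$ is what is actually needed.
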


The proof of these theorems
motivates another classification of totients.  Let $V(x;k)$
be the number of totients up to $x$, all of whose pre-images are divisible
by $k$.  A trivial corollary to the proof of Theorem \ref{V_k(x)} is

\begin{thm}\label{V(x;k)}
If $d$ is a totient, all of whose pre-images are
divisible by $k$, then 
$$
V(x;k) \gg_\e d^{-1-\e} V(x).
$$
Thus, for each $k$, either $V(x;k)=0$ for all $x$ or
$V(x;k)\gg_k V(x)$.
\end{thm}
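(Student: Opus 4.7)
The plan is to read off the conclusion directly from the machinery developed to prove Theorem \ref{V_k(x)}. That proof, starting from any totient $d$ with $A(d)=k$, constructs a family of totients $\le x$ of the form $m = d\cdot\phi(N)$, where $N$ runs over a carefully selected set of integers (built from primes chosen to avoid certain arithmetic constraints attached to the pre-images of $d$), and shows that this family has cardinality $\gg_\e d^{-1-\e}V(x)$. The crucial structural feature established in the course of that proof is that every pre-image of each such $m$ has the explicit shape
\[
\phi^{-1}(m) \;=\; \{\, n\cdot N' : n \in \phi^{-1}(d),\ \phi(N')=\phi(N),\ \gcd(n,N')=1\,\},
\]
and that this correspondence is a bijection; this is precisely what forces $A(m)=k$.

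First I would quote this structural description (no new analytic work is required). Then I would make the one-line divisibility observation that, if every $n\in\phi^{-1}(d)$ is divisible by $k$, then every pre-image $nN'$ of $m$ is also divisible by $k$, since $k\mid n$ implies $k\mid nN'$. Consequently every totient $m$ produced by the construction lies in the set counted by $V(x;k)$, and the counting estimate from Theorem \ref{V_k(x)} gives $V(x;k)\gg_\e d^{-1-\e}V(x)$ unchanged. The dichotomy in the second half of the theorem is then immediate: if $V(x_0;k)>0$ for some $x_0$, pick any totient $d \le x_0$ all of whose pre-images are divisible by $k$, and apply the first part to this $d$ to obtain $V(x;k)\gg_k V(x)$ for all sufficiently large $x$ (and hence for all $x\ge1$ after adjusting the implied constant).

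The only potential obstacle is verifying that the proof of Theorem \ref{V_k(x)} really produces the explicit \emph{parameterization} $\phi^{-1}(m)=\{nN'\}$ rather than merely an equality of cardinalities $|\phi^{-1}(m)|=k$; without such a parameterization one could not propagate divisibility by $k$ from $n$ to the full pre-image $nN'$. However, the standard way the multiplicity-preservation argument works, and the fact that this theorem is explicitly flagged as a \emph{trivial} corollary to that proof, confirm that the required decomposition is visible in the construction. Once this is granted, the proof reduces to the trivial observation that a factor of $k$ in one factor survives into the product.
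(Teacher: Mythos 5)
Your proposal is correct and follows essentially the same route as the paper. In Section 7 the paper notes that the Section~5 construction yields, for at least half of the $b\in\fancyB$, a totient $\phi(b)d$ whose full pre-image set is exactly $\{b m_1, \ldots, b m_\kappa\}$ with $\{m_i\}=\phi^{-1}(d)$; since each $m_i$ is divisible by $k$ by hypothesis, so is each $bm_i$, which is precisely your one-line divisibility observation, and the cardinality estimate for $\fancyB$ gives the stated bound. One small inaccuracy in your wording: you state the pre-image parameterization as $\{nN' : n\in\phi^{-1}(d),\ \phi(N')=\phi(N),\ \gcd(n,N')=1\}$ with $N'$ allowed to vary, whereas what the construction actually establishes (for the ``good'' $b$, i.e.\ those admitting no non-trivial solution to $d\phi(b)=\phi(n_1)$) is the stronger statement that $\phi^{-1}(d\phi(b))=\{b d_i\}$ with $N'=b$ forced; this is harmless for your argument since divisibility by $k$ propagates either way, but the bijection as you wrote it would only hold because the variable $N'$ collapses to a single value. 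The derivation of the dichotomy in the second sentence is also handled correctly.
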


In the 1950's, Sierpi\'nski
conjectured that all multiplicities $k\ge 2$ are possible (see \cite{S1}
and \cite{E3}),
and in 1961, Schinzel \cite{S2}
deduced this conjecture from his well-known Hypothesis H.  Schinzel's
Hypothesis H \cite{SS}, a generalization of Dickson's Prime $k$-tuples
Conjecture \cite{D}, states that any set of polynomials $F_1(n),\ldots,
F_k(n)$, subject to certain restrictions, are simultaneously prime for
infinitely many $n$.  Using a much simpler, iterative argument, we show that
Sierpi\'nski's Conjecture follows from the Prime $k$-tuples Conjecture.

\begin{thm}\label{Sierp Conj} The Prime $k$-tuples Conjecture implies 
that for each $k \ge 2$, there is a number $d$ with $A(d)=k$.
\end{thm}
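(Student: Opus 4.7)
We argue by strong induction on $k$, with base cases $k=2$ and $k=3$ witnessed by $A(1)=2$ (preimages $\{1,2\}$) and $A(2)=3$ (preimages $\{3,4,6\}$) respectively. The engine of the induction is a divisor-sum identity for the multiplicity of $dP$: if $d$ is an even totient with $A(d)=j$ and $P$ is an odd prime satisfying $P>d+1$ and $\gcd(P,d)=1$, then a short case analysis---using that any preimage $y$ of $dP$ must contain a unique prime factor $q$ with $P\mid q-1$, appearing to the first power (higher powers of $q$ and the alternative $P\mid y$ are both ruled out because $P>d$)---gives, writing $q-1=EP$ for some even divisor $E$ of $d$,
\[
A(dP)\;=\;\sum_{\substack{E\mid d,\;E\text{ even}\\ EP+1\text{ prime}}} A(d/E).
\]
The restriction to even $E$ reflects the fact that $EP+1$ is even and greater than $2$ when $E$ and $P$ are both odd.

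To prescribe which divisors contribute to the sum, we invoke the Prime $k$-tuples Conjecture. For each even divisor $E$ that we wish to include, we require $EP+1$ to be prime; for each even divisor we wish to exclude, we place $P$ in an arithmetic progression modulo a distinct auxiliary small prime $\ell_E$ chosen so that $\ell_E\mid EP+1$, which forces $EP+1$ to be composite once $P$ is sufficiently large. With the $\ell_E$ taken distinct and coprime to $d$, the accompanying congruences do not destroy the local admissibility of the primality conditions on $P$ and on each included $EP+1$, so the Prime $k$-tuples Conjecture supplies infinitely many primes $P$ fulfilling every requirement simultaneously, and the lifting identity gives $A(dP)=\sum_{E\in S}A(d/E)$ for the prescribed set $S$ of included divisors.

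To reach multiplicity $k+1$, the plan is to decompose $k+1$ as a sum of values $A(d/E)$ obtainable from some base totient $d$ with $A(d)=k$ (available by the inductive hypothesis). The summand $E=d$ always contributes $A(1)=2$, and the natural decomposition $k+1=2+(k-1)$ requires exhibiting an even divisor $E_0\mid d$ with $A(d/E_0)=k-1$. The main obstacle is this combinatorial matching: a priori the divisor lattice of $d$ need not contain a divisor of every required multiplicity. We overcome this by strengthening the inductive hypothesis to carry, alongside each $d_j$ with $A(d_j)=j$, a divisibility chain $d_2\mid d_3\mid\cdots\mid d_k$ with $d_j/d_{j-1}$ even, which the lifting identity can maintain because the auxiliary primes introduced at each step only enrich, never disturb, the lattice; when the natural choice of $d_k$ lacks the needed divisor, we first replace it by $d_k\cdot P'$ for a suitable auxiliary prime $P'$ that installs the missing structure. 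With the chain in hand, the choice $S=\{d_k,\;d_k/d_{k-1}\}$ yields $A(d_k P)=A(1)+A(d_{k-1})=2+(k-1)=k+1$, completing the induction.
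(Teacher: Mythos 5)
Your lifting identity
\[
A(dP)=\sum_{\substack{E\mid d,\;E\text{ even}\\ EP+1\text{ prime}}} A(d/E)
\]
is correct, and it is in fact a mild generalization of the paper's Lemma~\ref{k to k+2}: the paper takes $d=2m$ and, via the Chinese Remainder Theorem and the Prime $k$-tuples Conjecture, arranges for exactly $E=2$ and $E=2m$ to survive, giving $A(2mp)=A(m)+A(1)=k+2$. Your CRT exclusion mechanism and appeal to the $k$-tuples Conjecture are likewise the same as the paper's, so the basic machinery is sound.

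The gap is in the $k\to k+1$ induction, specifically in the claim that the divisibility chain can be maintained. You need an even $E_0\mid d_k$ with $A(d_k/E_0)=k-1$ and propose $E_0=d_k/d_{k-1}$ from a chain with even quotients. But the lift sets $d_{k+1}=d_kP$ with $P$ an \emph{odd} prime, so $d_{k+1}/d_k=P$ is odd and the chain condition collapses immediately. At the next step, the only divisor of $d_{k+1}$ that "remembers" $d_k$ is $E=P$, which is odd; for odd $E$ the number $EP'+1$ is even and hence composite, so this divisor can never contribute to the sum, no matter how $P'$ is chosen. Multiplying by yet another odd prime $P'$ (your proposed repair) cannot introduce an even quotient, and would in any case require separately controlling $A(d_kP')$, which is a new and unaddressed problem. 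The assertion that the lift "only enriches, never disturbs, the lattice" is not argued and is false in the sense required. There is also a degenerate instance at the very bottom: with $d_3=2$, $d_2=1$ the set $\{d_3,\,d_3/d_2\}$ collapses to $\{2\}$, and the identity then gives $A(2P)\le A(1)=2$, which is not $4$, so the induction never even launches.

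The paper sidesteps all of this by accepting a step of $+2$ rather than $+1$: for $d=2m$ the divisor $E_0=2$ is always available and always gives $A(d/E_0)=A(m)=k$, so $A(2mp)=k+2$ with no chain bookkeeping at all. The price is handling the two parities of $k$ with separate base cases (together with the congruence $m\equiv 1\pmod 3$ needed to keep $p,\,2p+1,\,2mp+1$ admissible, which forces the third base case $A(220)=5$). Your argument would become a correct proof if you adopted this $+2$ step in place of the $+1$ induction.
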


Shortly after \cite{F98} was published, the author and S. Konyagin
proved Sierpi\'nski's conjecture unconditionally for even $k$ \cite{FK}.
The conjecture for odd $k$ was subsequently proved by the author \cite{F99}
using a variant of Lemma \ref{k to k+2} below.

\subsection{The normal multiplicative structure of totients}

Establishing Theorems \ref{V(x)} and \ref{V_k(x)} requires a
determination of what a ``normal''
totient looks like.  This will initially take the form of
a series of linear inequalities in the prime factors of a pre-image of
a totient.  An analysis of these inequalities reveals the normal
sizes of the prime factors of a pre-image of a typical totient.
To state our results, we first define
\be\label{L_0}
L_0 = L_0(x) = \lfloor 2C(\log_3 x - \log_4 x) \rfloor. 
\ee
In a simplified form, we show that for
all but $o(V(x))$ totients $m\le x$, every pre-image $n$ satisfies
\be\label{NS}
\log_2 q_i(n) \sim \rho^i(1-i/L_0) \log_2 x \qquad (0\le i\le L_0),
\ee
where $q_i(n)$ denotes the $(i+1)$st largest prime factor of $n$.
For brevity, we write $V(x;\fancyC)$ for the number of totients $m\le x$
which have a pre-image $n$ satisfying condition $\fancyC$.  Also, let
\[
\b_i = \rho^i (1-i/L_0) \qquad (0\le i\le L_0-1).
\]

\begin{thm}\label{qi normal}
Suppose $1\le i\le L_0$.  (a) If $0 < \eps \le \frac{i}{3L_0}$, then
\[
 V\( x; \left| \frac{\log_2 q_i(n)}{\b_i\log_2 x} - 1 \right| \ge \eps \) 
\ll V(x) \exp \left\{ - \frac{L_0(L_0-i)}{4i} \eps^2 + \log\pfrac{i}{\eps L_0} \right\}.
\]
(b) If $ \frac{i}{3L_0} \le \eps \le \frac18$, then
\[
 V\( x; \left| \frac{\log_2 q_i(n)}{\b_i\log_2 x} - 1 \right| \ge \eps \) 
\ll V(x) \exp \left\{ - \tfrac1{13} L_0 \eps \right\}.
\]
\end{thm}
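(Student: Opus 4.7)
The plan is to reduce the claim to the fine-grained structural estimates developed in the proof of Theorem \ref{V(x)}. For a pre-image $n$ of a totient $m \le x$ set
\[
x_j = \frac{\log_2 q_j(n)}{\log_2 x} \qquad (0 \le j \le L_0),
\]
so that the ``normal'' value is $x_j = \beta_j = \rho^j(1 - j/L_0)$. The apparatus underlying Theorem \ref{V(x)} bounds the number of totients $m \le x$ possessing a pre-image with $\vx(n)$ in a small box about $\vx$ by $V(x)\exp\{-\Psi(\vx)\}\,d\vx$, where $\Psi$ vanishes at $\boldsymbol{\beta}$, is strictly convex, and arises from the coefficients $a_j$ in the generating series $F$ of \eqref{F def}.

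For part (a), I would fix $i$ and the value $x_i = (1 \pm \eps)\beta_i$, then minimize $\Psi$ over the remaining coordinates. A second-order Taylor expansion at $\boldsymbol{\beta}$ produces a tridiagonal quadratic form whose nearest-neighbor couplings reflect the chain of prime divisors in a pre-image (with weights coming from $a_j$). The Schur complement at coordinate $i$ splits into an ``upper'' contribution from the block $\{0, \ldots, i-1\}$ and a ``lower'' contribution from $\{i+1, \ldots, L_0\}$; the defining relation $F(\rho)=1$ and the boundary condition $\beta_{L_0}=0$ combine so that these two contributions produce an effective quadratic coefficient of $L_0(L_0-i)/(4i)$ times $\eps^2$. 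The correction $\log(i/(\eps L_0))$ arises when integrating $x_i$ over its allowed window of length $\asymp \beta_i \eps$.

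For part (b), with $\eps$ outside the quadratic regime, the Taylor approximation is inadequate. Here I would invoke the global strict convexity of $\Psi$, which is inherited from the strict convexity of $F$ on $[0,1)$. The estimate from part (a) evaluated at the crossover $\eps_0 = i/(3L_0)$ already yields a bound $\gg L_0$; extending linearly by convexity along the segment from $\boldsymbol{\beta}$ through the deviated point gives a bound $\gg L_0 \eps$, and the explicit constant $1/13$ is a loose numerical consequence of the values of $F'(\rho)$ and $\rho$ recorded after \eqref{rho def}, with slack absorbed at the crossover.

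The main obstacle is the Schur-complement calculation in part (a): the tridiagonal form mixes the weights $a_j$ from $F$ with the linear boundary factor $1 - j/L_0$, and producing the exact combination $L_0(L_0-i)/(4i)$ rather than a close variant requires careful expansion of $F$ near $\rho$ using $F'(\rho)$ together with the identity $F(\rho) = 1$. Once this is in hand, the remaining packaging (volume factors, passage from the box decomposition back to totient counts, and the union bound over the two signs of the deviation) follows from machinery already built for Theorem \ref{V(x)}.
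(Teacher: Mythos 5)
Your high-level picture — reduce to a volume estimate on the fundamental simplex, get quadratic decay near $\beta_i$ and linear decay farther out — matches the paper's strategy. But the mechanism you propose is quite different from what the paper actually does, and parts of it would not go through as stated.

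The paper does not Taylor-expand any free energy $\Psi$. Instead it proves Lemma \ref{xi lem} by an \emph{exact} simplex volume computation: one substitutes $y_j = x_j - \alpha g_{L-j}$ then $z_j = y_j - \beta' g_{i-j}$, packages the constraint $x_i \lesseqgtr \beta$ as $\pm\ve_i\cdot\vzz \le 0$, introduces a free parameter $A$ via $\vv_0' = \vv_0 + (g_i+A)\vv_i$, and applies the exact volume formula of Lemma \ref{volume} using the resolution $\vv_0' + \sum_{j<i} g_j\vv_j + A\ve_i + \cdots = \vz$. This yields the closed form $T_L^*(x_i\lesseqgtr\beta, x_L\ge\alpha) \le T_L^*\,\frac{g_i}{|A|}(\xi+A\beta')^L\prod(1+Ag_{j-i}/g_j)^{-1}\cdots$, after which one optimizes over $A$ and invokes the asymptotics $g_j \sim \lambda\rho^{-j}$ of Lemma \ref{gh}. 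The factor $L_0(L_0-i)/(4i)$ emerges from choosing $A = L\theta/(i\rho^i)$ and the identity $\frac{(1+\theta L/i)^i}{(1+\theta)^L} = \exp\{-\frac{L(L-i)}{2i}\theta^2(\tfrac12 + \cdots)\}$ in \eqref{thLi}. Theorem \ref{qi normal} then follows from Lemma \ref{qi size}, which plugs these volume bounds into the counting machinery (Theorem \ref{vx norm}, Lemma \ref{sumvol}, Corollary \ref{TL cor}, Lemma \ref{xL most}) with separate handling of $x_L$ in dyadic ranges.

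Two concrete problems with your route. First, the constraints $(I_j)$ are $\sum_{k\ge 1} a_k x_{j+k} \le \xi_j x_j$, which couple $x_j$ to \emph{every} subsequent coordinate, not just $x_{j\pm 1}$; so whatever quadratic form you would get from a second-order approximation is emphatically not tridiagonal, and the Schur-complement bookkeeping you sketch would not decouple into clean ``upper'' and ``lower'' blocks. Second, the object you are expanding is a polytope volume, whose log-marginal density is only piecewise polynomial (it has kinks, not a smooth Hessian at $\boldsymbol\beta$), so the Taylor-expansion framing needs substantial justification to even be well posed; the paper sidesteps this entirely by working with the exact volume formula and only invoking asymptotics of the scalar sequence $g_j$. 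Your part (b) convexity argument is in the right spirit — the paper also gets the linear rate by a separate choice of $A$ in the same closed-form bound — but the constant $1/13$ in the paper arises from explicit numerical inequalities involving $\lambda_i$, $\log(1+\theta')$ and $\theta'-\log(1+\theta')$, not from a global convexity principle applied at a single crossover point.
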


Using Theorem \ref{qi normal}, we obtain a result about simultaneous approximation of 
$q_1(n), q_2(n), \ldots$.

\begin{thm}\label{normal structure}
Suppose $L_0=L_0(x)$, $0\le g \le \frac13 \sqrt{\frac{L_0}{\log L_0}}$ and 
$0\le h\le \frac12 L_0$.
The number of totients $m\le x$ with a pre-image $n$ not satisfying
\be\label{qi interval}
\left| \frac{\log_2 q_i(n)}{\b_i\log_2 x} - 1 \right| \ge  
g \sqrt{\frac{i \log(L_0-i)}{L_0(L_0-i)}} \qquad
(1 \le i \le L_0 - h)
\ee
 is
$$
\ll V(x) \( e^{-h/96} + e^{-\frac12 g^2\log g} + e^{-\frac{1}{14} g\sqrt{\log L_0}} \).
$$
\end{thm}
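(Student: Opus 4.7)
\emph{Proof proposal.} The strategy is a union bound: apply Theorem~\ref{qi normal} separately at each index $i \in [1, L_0-h]$ with the tolerance
\[
\varepsilon_i := g\sqrt{\frac{i\log(L_0-i)}{L_0(L_0-i)}}
\]
coming from \eqref{qi interval}, then sum the per-index estimates over $i$. The central simplification is that, with this $\varepsilon_i$, the exponent in Theorem~\ref{qi normal}(a) collapses to
\[
-\frac{L_0(L_0-i)}{4i}\varepsilon_i^2 + \log\frac{i}{\varepsilon_i L_0} = -\frac{g^2}{4}\log(L_0-i) + \frac12\log\frac{i(L_0-i)}{L_0\log(L_0-i)} - \log g,
\]
so the $i$-th contribution in regime~(a) is essentially $V(x)\,g^{-1}(L_0-i)^{-g^2/4}$ up to a polylogarithmic $\sqrt{L_0}$-type factor.

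I then partition $\{1,\dots,L_0-h\}$ into three blocks dictated by the two regimes of Theorem~\ref{qi normal}: (I)~$\varepsilon_i \le i/(3L_0)$, handled by part~(a); (II)~$i/(3L_0) < \varepsilon_i \le 1/8$, handled by part~(b); (III)~$\varepsilon_i > 1/8$, where the event $\{|\,\cdot\,|\ge\varepsilon_i\}$ is weakened to $\{|\,\cdot\,|\ge 1/8\}$ and part~(b) is applied with $\varepsilon=1/8$. The hypothesis $g \le \tfrac13\sqrt{L_0/\log L_0}$ guarantees that $\varepsilon_i$ is uniformly bounded, so blocks~(II)--(III) only occupy indices $i$ close to $L_0-h$, where $(L_0-i)^{-1}$ peaks. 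Writing $j = L_0-i$ and absorbing polylog factors, the block~(I) sum reduces to $g^{-1}\sqrt{L_0}\sum_{j\ge h}j^{-g^2/4}$; splitting this at a threshold $j \asymp g^2$ isolates a lower-endpoint-dominated piece contributing $e^{-h/96}$ (using $h\le L_0/2$ to swallow the $\sqrt{L_0}$ factor) and a maximum-dominated piece contributing $e^{-\frac12 g^2\log g}$. Blocks~(II)--(III) contribute at most $\exp(-L_0\varepsilon_i/13)$ for a handful of indices with $\varepsilon_i$ as small as $\asymp g\sqrt{\log L_0}/L_0$, which yields the third summand $e^{-\frac{1}{14}g\sqrt{\log L_0}}$.

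The main obstacle is the delicate bookkeeping in block~(I): extracting the two distinct exponentials $e^{-h/96}$ and $e^{-\frac12 g^2\log g}$ from a single tail sum $\sum j^{-g^2/4}$ with polylog weights requires splitting at a threshold precisely tuned to the interplay between $h$, $g$ and $L_0$, and the calibration $g \le \tfrac13\sqrt{L_0/\log L_0}$ is exactly what lets the $\sqrt{L_0}$ factors be absorbed without damaging the constants. A secondary subtlety is verifying that the blocks line up as described---i.e.\ that blocks~(II)--(III) really form a short interval adjacent to $i = L_0-h$---which follows by solving $\varepsilon_i = i/(3L_0)$ for $i$ using the explicit form of $\varepsilon_i$ and the bound on $g$.
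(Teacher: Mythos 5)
The partition into three blocks according to the regimes of Theorem~\ref{qi normal} is the right framework and matches the paper, and your handling of block~(II) is correct. However, the account of blocks~(I) and~(III) contains two genuine errors that together mean the $e^{-h/96}$ term is never actually produced. In block~(I), the constraint $\varepsilon_i\le i/(3L_0)$ forces $j:=L_0-i$ to satisfy $j/\log j\ge 4g^2$ (as one checks directly from the formula for $\varepsilon_i$ and the hypothesis $g\le\tfrac13\sqrt{L_0/\log L_0}$), so the tail sum ranges over $j\ge\max(h,4g^2)$, not $j\ge h$; the binding lower endpoint is the $g^2$-threshold, and the sum is $\ll(4g^2)^{3/2-g^2/4}\ll e^{-\frac12 g^2\log g}$ with no $h$-dependence at all. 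Even setting this aside, the claim that a power-law tail $\sum_{j\ge h}j^{-g^2/4}$ can be split to produce an $e^{-h/96}$ factor is impossible in principle: $h^{-c}$ is not $\ll e^{-h/96}$ as $h\to\infty$ for any fixed $c$, and no choice of threshold changes this.

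The $e^{-h/96}$ term in fact comes from block~(III), specifically from indices $i>3L_0/8$, and your proposed handling of that block breaks down exactly there. You weaken the event to $\{|\cdot|\ge 1/8\}$ and apply Theorem~\ref{qi normal}(b) with $\varepsilon=1/8$, but part~(b) requires $i/(3L_0)\le\varepsilon$, which fails once $i>3L_0/8$; and such $i$ do occur in block~(III), e.g.\ $i=L_0-h>3L_0/8$ since $h\le L_0/2$, with $\varepsilon_{L_0-h}\approx g\sqrt{\log h/h}>1/8$ whenever $h/\log h$ is small compared to $g^2$. What the paper does instead for $i>3L_0/8$ is weaken to $\varepsilon=i/(3L_0)$ (the largest $\varepsilon$ permitted by part~(a)) and apply part~(a), giving $N_i\ll V(x)\exp[-\tfrac{i(L_0-i)}{36L_0}+O(1)]\le V(x)e^{-(L_0-i)/96}$ since $i\ge 3L_0/8$; summing $j=L_0-i\ge h$ produces $e^{-h/96}$. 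For $i\le 3L_0/8$ your choice of $\varepsilon=1/8$ and part~(b) is fine and gives $L_0e^{-L_0/104}\ll e^{-h/96}$, so only the $i>3L_0/8$ range needs the part~(a) argument. You need to replace the block~(III) treatment accordingly and drop the claim that block~(I) yields $e^{-h/96}$.
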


 Notice that the intervals
in \eqref{qi interval} are not only disjoint, but the gaps between them are
rather large.  In particular, this ``discreteness phenomenon'' means
 that for any $\eps>0$ and most totients  $m\le x$, no pre-image $n$ has any prime factors
$p$ in the intervals
$$
1-\eps \ge \frac{\log_2 p}{\log_2 x} \ge \rho+\eps, \quad \rho-\eps \ge \frac{\log_2 p}
{\log_2 x} \ge \rho^2+\eps, \text{ etc.}
$$
This should be compared to the distribution of the prime factors of a
normal integer $n\le x$ (e.g. Theorem 12 of \cite{HaT}; see also subsection 1.5 below).

For a preimage $n$ of a typical totient,
we expect each $q_i(n)$ to be ``normal'', that is, $\omega(q_i(n)-1) \approx
\log_2 q_i(n)$, where $\omega(m)$ is the number of distinct prime factors of $m$.
This suggests that for a typical totient $v\le x$,
$$
\Omega(v) \approx \omega(v) \approx (1+\rho+\rho^2+\cdots)\log_2 x = \frac{\log_2 x}{1-\rho}.
$$

\begin{thm}\label{Omega normal}
Suppose $\eta=\eta(x)$ satisfies $0\le \eta \le 1/3$.  Then
$$
\# \left\{ m\in \fancyV(x) :
 \left| \frac{\om(m)}{\log_2 x} - \frac{1}{1-\rho} \right| \ge
\eta \right\} \ll \frac{V(x)}{(\log_2 x)^{\eta/10}}.
$$
Consequently, if $g(x)\to\infty$ arbitrarily
slowly, then almost all totients $m\le x$ satisfy
$$
\left| \frac{\om(m)}{\log_2 x} - \frac1{1-\rho} \right| \le \frac{g(x)}
{\log_3 x}.
$$
Moreover, the theorem holds with $\om(m)$ replaced by $\omega(m)$.
\end{thm}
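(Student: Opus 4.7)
My plan is to deduce Theorem \ref{Omega normal} from the structural Theorem \ref{normal structure} combined with classical normal-order estimates for $\om(p-1)$ over primes $p$. The heuristic is that a normal totient has a preimage $n$ with $\sim L_0$ large prime factors $q_i(n)$ of predicted size $\log_2 q_i(n)\approx \b_i\log_2 x$, each of which contributes about $\omega(q_i-1)\approx \log_2 q_i$ distinct prime factors to $\phi(n)$; summing gives $\omega(\phi(n))\approx \log_2 x\sum_i\b_i\sim \log_2 x/(1-\rho)$.

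First I would apply Theorem \ref{normal structure} with parameters $g$ and $h$ chosen so the exceptional set has size $\ll V(x)(\log_2 x)^{-\eta/10}$; concretely $h$ of order $\log_3 x$ handles the $e^{-h/96}$ term while $g$ of appropriate modest growth handles the other two while keeping the cumulative width $\sum\delta_i\b_i$ of order $o(\eta)$. For the remaining totients $m\le x$, there is a preimage $n$ with $\log_2 q_i(n)=\b_i\log_2 x(1+o(1))$ uniformly for $1\le i\le L_0-h$, and with total deviation $\sum\delta_i\b_i\log_2 x=o(\eta\log_2 x)$.

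Next, using the factorization $\phi(n)=\prod_{p\mid n}p^{a_p-1}(p-1)$ gives
\[
\om(\phi(n)) = \bigl(\om(n)-\omega(n)\bigr)+\sum_{p\mid n}\om(p-1),
\]
while $\omega(\phi(n))=\omega\bigl(\prod_{p\mid n}(p-1)\bigr)+O(\omega(n))$. The Tur\'an--Kubilius inequality applied to the additive function $\om(p-1)$ over primes $p\le y$ implies that outside a set of $o(\pi(y))$ primes, $\om(p-1)=\log_2 p+O(\sqrt{\log_2 p})$, and the same for $\omega(p-1)$. Applied to each $q_i(n)$ this yields
\[
\sum_{i=0}^{L_0-h}\om(q_i(n)-1)=(1+o(1))\log_2 x\sum_{i=0}^{\infty}\b_i=\frac{(1+o(1))\log_2 x}{1-\rho},
\]
since $\sum_{i\ge 0}\rho^i=(1-\rho)^{-1}$ while $L_0^{-1}\sum i\rho^i=O(1/L_0)$ and $\sum_{i>L_0-h}\rho^i=O(\rho^{L_0-h})$ are both $o(1)$. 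A standard sieve bound controls the over-count from primes $r$ dividing several $q_i-1$, and contributions from small prime factors (indices $i>L_0-h$) are absorbed in the error. The classical bound $\sum_{m\le x}(\om(m)-\omega(m))\ll x$ then transfers the estimate from $\om$ to $\omega$.

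The main obstacle is verifying that the primes $q_i(n)$ arising as factors of preimages of normal totients genuinely behave like ``typical'' primes for the statistic $\om(p-1)$. They are constrained (being the $(i{+}1)$st largest prime factor of a specific $n$ whose totient lies in $[1,x]$), so a direct application of Tur\'an--Kubilius on $[1,y]$ is not immediate. One must either prove a restricted-ensemble version of Tur\'an--Kubilius, or argue that the anomalous preimages form a small subset by a second-moment computation in the spirit of those used to prove Theorems \ref{V(x)} and \ref{V_k(x)}.
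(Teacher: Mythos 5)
Your overall plan follows the right contour --- control the $q_i(n)$ via the structure theorems, then sum the contributions $\Omega(q_i-1)\approx\log_2 q_i$ --- but the technical mechanism you propose is not the one that works, and the gap you identify at the end is precisely where the paper's argument diverges from yours. Two specific problems:

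\textbf{The Tur\'an--Kubilius step.} You correctly observe that the primes $q_i(n)$ are heavily constrained and so a direct Tur\'an--Kubilius estimate over $p\le y$ does not apply. The paper's resolution is the $S$-normality machinery already set up in Section 2: Lemma \ref{normal lem} bounds, via sieve methods, the number of primes $p\le x$ that are \emph{not} $S$-normal, and Lemma \ref{divisor normal} lifts this to a bound on the number of totients $m\le x$ with a preimage containing a non-$S$-normal prime (this is the ``second-moment in the spirit of Theorems \ref{V(x)} and \ref{V_k(x)}'' you allude to). Once one restricts to $S$-nice totients, every prime $q_i$ dividing a preimage satisfies the deterministic inequality \eqref{normal}, and the sum $\sum_i \Omega(q_i-1)$ is pinned down directly without any probabilistic input about the $q_i$. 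You should use Lemma \ref{divisor normal} (with $S$ of size $\exp\{(\log_2 x)^{O(1)}\}$) rather than try to re-prove a restricted Tur\'an--Kubilius.

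\textbf{The $\Omega\to\omega$ transfer.} The classical bound $\sum_{m\le x}(\Omega(m)-\omega(m))\ll x$ gives, by Markov, $\#\{m\le x:\Omega(m)-\omega(m)\ge T\}\ll x/T$. With $T=\eta\log_2 x$ this is $\ll x/(\eta\log_2 x)$, which is far larger than $V(x)/(\log_2 x)^{\eta/10}$ (recall $V(x)=x\exp\{(\log_3 x)^{2+o(1)}\}/\log x$, so $x/\log_2 x$ exceeds $V(x)$ by nearly a factor of $\log x$). So this step, as written, loses the entire saving. The paper instead proves a \emph{pointwise} bound for $S$-nice $m$: every prime $>S^{1/2}$ divides $m$ at most once, while the contribution of primes $\le S$ is bounded by $\sum_{i\le L}\Omega(q_i-1,1,S)+\Omega(r)\ll L\log_2 S+(\log_2 x)^{1/2}\ll(\log_2 x)^{1/2}$ after one also excludes (as the paper does with its condition (4)) the rare totients whose preimage has $\Omega(r)\ge(\log_2 x)^{1/2}$, where $r$ is the part of $\phi(n)$ coming from prime factors of $n$ below $q_L$. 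This last exclusion is also missing from your plan: you absorb ``contributions from small prime factors'' into the error, but $n$ can have many primes below $q_L$ and the resulting factor $r$ of $\phi(n)$ must be controlled explicitly, not waved away.
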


\begin{cor}\label{Omega normal cor} 
If either $g(m)=\omega(m)$ or $g(m)=\om(m)$, then
$$
\sum_{m\in \fancyV(x)} g(m) = \frac{V(x)\log_2 x}{1-\rho} \( 1 + O \(
\frac1{\log_3 x} \) \).
$$
\end{cor}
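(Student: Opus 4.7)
The plan is to bound the error
\[
E(x) := \sum_{m \in \fancyV(x)} g(m) - \frac{V(x)\log_2 x}{1-\rho} = \sum_{m \in \fancyV(x)} h(m), \qquad h(m) := g(m) - \frac{\log_2 x}{1-\rho},
\]
via the layer-cake identity
\[
|E(x)| \le \sum_{m \in \fancyV(x)} |h(m)| = \int_0^\infty M(t)\,dt, \qquad M(t) := \#\{m \in \fancyV(x) : |h(m)| > t\},
\]
the target being $|E(x)| = O(V(x)\log_2 x/\log_3 x)$. I would split the integral at $t_1 = \tfrac13 \log_2 x$ and $t_2 = K\log_2 x$ for a large fixed constant $K$ (say $K = 10$).

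On the main range $[0,t_1]$, Theorem \ref{Omega normal} (in either the $\omega$ or $\Omega$ form) yields $M(t) \ll V(x)(\log_2 x)^{-t/(10\log_2 x)}$. The substitution $t = \eta \log_2 x$ then gives
\[
\int_0^{t_1} M(t)\,dt \ll V(x)\log_2 x \int_0^{1/3} e^{-\eta \log_3 x/10}\,d\eta \ll \frac{V(x)\log_2 x}{\log_3 x},
\]
which matches the desired error exactly. On the middle range $[t_1, t_2]$, monotonicity gives $M(t) \le M(t_1) \ll V(x)(\log_2 x)^{-1/30}$, so the contribution is at most $K(\log_2 x) \cdot V(x)(\log_2 x)^{-1/30} = O\bigl(V(x)(\log_2 x)^{29/30}\bigr)$, which is $o(V(x)\log_2 x/\log_3 x)$ because $\log_3 x = o((\log_2 x)^{1/30})$.

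For the far tail $t > t_2$, Theorem \ref{Omega normal} is out of range, and I would fall back on a standard Hardy--Ramanujan--Sathe--Selberg estimate: $\#\{m \le x : \Omega(m) > K'\log_2 x\} \ll x(\log x)^{1 - K'\log K' + o(1)}$. Since $|h(m)| > K\log_2 x$ together with $g(m) \ge 0$ forces $\Omega(m) > (K + 1/(1-\rho))\log_2 x$ (the negative tail is vacuous once $K > 1/(1-\rho) \approx 2.19$), choosing $K$ sufficiently large makes $M(t) \ll x(\log x)^{-A}$ for any prescribed $A$, contributing at most $(\log x)\cdot x(\log x)^{-A} = o(V(x))$. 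The case $g = \omega$ then follows from $\omega \le \Omega$. The main obstacle is the middle range: one cannot simply apply the trivial bound $|h(m)| \ll \log x$ to all of $\fancyV(x)$, so the $(\log_2 x)^{-1/30}$ savings of Theorem \ref{Omega normal} at $\eta = 1/3$ must be retained and played off against the length $O(\log_2 x)$ of the middle interval.
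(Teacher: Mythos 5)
Your proposal is correct and follows essentially the same approach as the paper: both stratify totients by the size of the deviation $|g(m) - \log_2 x/(1-\rho)|$, apply Theorem \ref{Omega normal} in the range $\eta \le 1/3$ to gain a factor $e^{-\eta\log_3 x/10}$ per layer, and dispose of the extreme tail $\Omega(m) \gg \log_2 x$ with a Hardy--Ramanujan-type bound (the paper's Lemma \ref{Omega lem}). The only difference is bookkeeping: you use a continuous layer-cake integral where the paper uses a discrete stratification into annuli $N \le |\eps_m|\log_3 x < N+1$ and sums $\sum_N (N+1)e^{-N/10}$.
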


By contrast, Erd\H os and Pomerance \cite{EP} showed that the average
of $\om(\phi(n))$, where the average is taken over all $n\le x$, is
$\frac12 (\log_2 x)^2 + O((\log_2 x)^{3/2})$.

\subsection{Heuristic arguments}

As the details of the proofs of these results are very complex,
 we summarize the central ideas here.
For most integers $m$, the
prime divisors of $m$ are ``nicely distributed'', meaning the
number of prime factors of $m$ lying between $a$ and $b$ is about
$\log_2 b - \log_2 a$.  This is a more precise version of the classical result 
of Hardy and Ramanujan \cite{HRa} that most numbers $m$ have
about $\log_2 m$ prime factors.
Take an integer $n$ with prime factorization
$p_0 p_1 \cdots $, where for simplicity we assume $n$ is square-free,
and $p_0>p_1> \cdots$.  By sieve methods it can be shown that for most
primes $p$, the prime divisors of $p-1$ have the same ``nice'' distribution.
If $p_0, p_1, \ldots$ are such ``normal'' primes, it follows
that $\phi(n) = (p_0-1)(p_1-1)\cdots$ has about $\log_2 n - \log_2 p_1$
prime factors
in $[p_1,n]$, about $2(\log_2 p_1 - \log_2 p_2)$ prime factors in
$[p_2,p_1]$, and in general, $\phi(n)$ will have $k(\log_2 p_{k-1} -
\log_2 p_k)$ prime factors in $[p_k,p_{k-1}]$.  That is, $n$ has $k$
times as many prime factors in the interval $[p_k,p_{k-1}]$ as does a
``normal'' integer of its size.
If $n$ has many ``large'' prime divisors, then the prime
factors of $m=\phi(n)$ will be much denser than normal, and the number,
$N_1$, of such
integers $m$ will be ``small''.  On the other hand, the number, $N_2$ of
integers
$n$ with relatively few ``large'' prime factors is also ``small''. 
 Our objective
then is to precisely define these concepts of ``large'' and ``small'' so as
to minimize $N_1+N_2$.

The argument in \cite{MP} is based on the heuristic that a normal totient is
generated from a number $n$ satisfying 
\be\label{NS2}
\log_2 q_i(n) \approx\rho^i \log_2 x 
\ee
for each $i$ (compare with \eqref{NS}).
As an alternative to this heuristic, assuming all prime
factors of a pre-image $n$ of a totient are normal leads to
consideration of a series
of inequalities among the prime factors of $n$.  We show that such
$n$ generate
``most'' totients.  By mapping the $L$ largest prime factors of $n$
(excluding the largest) to a
point in $\RR^L$, the problem of counting the number of such $n\le x$
reduces to the problem of finding the volume of a certain
region of $\RR^L$, which we call the fundamental simplex. 
Our result is roughly
$$
V(x) \approx \frac{x}{\log x} \max_{L} T_L (\log_2 x)^L,
$$
where $T_L$ denotes the volume of the simplex.  It turns out that the maximum
occurs at $L=L_0(x) + O(1)$.
Careful analysis of these inequalities reveals that
``most'' of the integers $n$ for which they are satisfied
satisfy \eqref{NS}.  Thus, the heuristic \eqref{NS2} gives numbers
$n$ for which the smaller prime factors are slightly too large.
The crucial observation that the $L$th largest prime factor
($L=L_0-1$) satisfies
$\log_2 p_L \approx \frac{1}{L}\rho^L \log_2 x$
is a key to determining the true order of $V(x)$.

In Section 2 we define ``normal'' primes and show that most primes
are ``normal''.
The set of linear inequalities used in the aforementioned heuristic are
defined and analyzed in Section 3.
The principal result is a determination of
the volume of the simplex defined by the inequalities, which
requires excursions into linear algebra and complex analysis.
Section 4 is devoted to proving the upper bound for
$V(x)$, and in section 5, the lower bound for $V_k(x)$ is deduced.
Together these bounds establish Theorems \ref{V(x)} and \ref{V_k(x)},
as well as Theorems \ref{short interval}, \ref{Car Conj} and
\ref{V(x;k)} as corollaries.
The distribution of the prime factors of a pre-image of a typical totient
are detailed in Section 6, culminating in the proof of Theorems
\ref{qi normal}--\ref{Omega normal} and Corollary \ref{Omega normal cor}.

In Section 7, we summarize the
computations giving Theorem \ref{CC lower}, present very
elementary proofs of Theorems \ref{liminf V_1/V} and \ref{Sierp Conj},
prove Theorem \ref{A(m) bounded} and discuss 
other problems about $V(x;k)$.  Lastly, Section 8 outlines
an extension of all of these results to more general
multiplicative arithmetic functions such as $\sigma(n)$, the sum of
divisors function.  Specifically, we prove

\begin{thm}\label{general f}
Suppose $f:\NN\to \NN$ is a multiplicative function satisfying
\begin{align}
&\{ f(p)-p:p \text{ prime} \} \text{ is a finite set not containing 0,}
 \label{fp} \\
&\sum_{h\text{ square-full} }
 \frac{h^\del}{f(h)} \ll 1, \qquad \text{ for some } \del>0.
\label{f lower}
\end{align}
Then the analogs of
Theorems \ref{V(x)}--\ref{short interval}, \ref{V(x;k)}, 
\ref{qi normal}--\ref{Omega normal cor} and \ref{vx norm}
hold with $f(n)$ replacing $\phi(n)$, with the exception
of the dependence on $d$ in Theorems \ref{V_k(x)} and
\ref{V(x;k)}, which may be different.
\end{thm}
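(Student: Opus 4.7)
The plan is to reduce the case of a general multiplicative $f$ to the machinery already built for $\phi$, exploiting hypotheses \eqref{fp} and \eqref{f lower}, which together ensure that $f$ is ``close enough'' to $\phi$ at primes and not too small on prime powers. First I would write each $n$ as $n=sh$, where $h$ is the squarefull part of $n$ and $s$ is squarefree with $\gcd(s,h)=1$; by multiplicativity $f(n)=f(s)f(h)$. Condition \eqref{f lower} implies that the contribution of $h\ne 1$ is absolutely convergent (losing only an $O(1)$ factor), so up to lower-order terms the counting of values of $f$ reduces to the squarefree case, exactly as in the $\phi$-analysis.

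For squarefree $n=p_1p_2\cdots$, write $f(p_i)=p_i-a_i$ with $a_i$ in the finite set $\mathcal{A}:=\{p-f(p):p\text{ prime}\}$; by \eqref{fp} we have $0\notin\mathcal{A}$, and $|a|\le A$ for some absolute $A$. I would partition primes into $|\mathcal{A}|$ classes by the value of $p-f(p)$ and, following Section~2, declare $p$ to be \emph{$a$-normal} if the prime factorisation of $p-a$ exhibits the Hardy--Ramanujan distribution. A Brun--Titchmarsh type sieve, with $p-1$ replaced by $p-a$, yields that almost all primes of each type are $a$-normal; the proofs are essentially identical to those for $\phi$, since the shift by $a$ with $|a|\le A$ does not affect the sieve estimates.

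Next I would observe that $\log_2(p-a)=\log_2 p+O(1/((\log p)\log_2 p))$ uniformly for $a\in\mathcal{A}$, so the system of linear inequalities among $\log_2 q_i(n)$ defining the fundamental simplex of Section~3 is unchanged, along with the constants $\rho$, $C$, $D$. The upper bound proof of Section~4 carries over verbatim after one bookkeeping step: at each stage one sums over the finitely many possible types of the prime factor being extracted, which contributes only a bounded multiplicative constant. The lower bound proof of Section~5 likewise transfers: given a pre-existing multiplicity-$k$ value $d$ of $f$, one appends $\gg L_0$ new normal primes of a single fixed type $a\in\mathcal{A}$, disjoint from the prime support of $d$, so that $f$ of the augmented number inherits multiplicity $k$. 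The structural consequences in Section~6 follow by the same averaging, since only the simplex volume and the sieve bounds for normal primes enter.

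The main obstacle will be the $d$-dependence in the analogues of Theorems~\ref{V_k(x)} and \ref{V(x;k)}. For $\phi$, multiplicity propagation from $d$ to $d(p-1)$ is transparent because every prime $p$ contributes the same shift. For general $f$, a pre-image of $f(d)$ may mix primes of several types, so one must argue that distinct choices of types cannot conspire to produce the same $f$-value beyond a controlled multiplicity; this bookkeeping changes the explicit dependence on $d$, which is why the theorem statement flags this exception. Everything else, including the short-interval version (Theorem~\ref{short interval}) and the proofs of Theorems~\ref{qi normal}--\ref{Omega normal cor}, amounts to a substitution $\phi\mapsto f$ in the existing argument, tracking a finite set of admissible shifts $a\in\mathcal{A}$.
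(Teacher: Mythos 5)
Your high-level plan is the same as the paper's: treat the squarefull part via \eqref{f lower}, replace $p-1$ by $f(p)$ in the sieve and normality lemmas, observe that the fundamental simplex and the constants $\rho,C,D$ are unchanged, and then re-run the upper and lower bound arguments. The reduction $n=sh$ with $h$ squarefull, and the observation that $\log_2(p-a)=\log_2 p + O(1)$ makes the simplex analysis independent of the shift, are both correct and both appear in the paper. Your remark that one merely tracks finitely many shift classes is how the paper handles the sieve estimates (it replaces, e.g., ``$rt+1$ and $st+1$ are unequal primes'' by ``$rt+a$ and $st+a'$ are unequal primes for $a,a'$ in the finite set $\curly P$'').

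The genuine gap is in the lower bound for $V_{f,\kappa}(x)$, where your proposal is too vague to control the multiplicity. You say one appends primes ``of a single fixed type $a\in\mathcal{A}$, disjoint from the prime support of $d$,'' but this does not rule out the two failure modes the paper actually has to handle. First, two different primes $p\ne p'$ can have $f(p)=f(p')$ even across types (e.g.\ $p-a=p'-a'$), which lets $n'=np'/p$ be a new preimage of $f(n)d$; the paper excludes this by restricting to \emph{good} primes $p$, those with $f(p)\ne f(p')$ for all $p'\ne p$, and shows via Lemma \ref{sieve linear factors} that bad primes are sparse (so $\sum_{p\ \mathrm{bad}}1/p$ converges). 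Restricting to a single type $a$ only forbids collisions between the appended primes, not collisions with arbitrary other primes. Second, and entirely missing from your sketch, is the issue of small $f(p^k)$ for $k\ge 2$: in the equation $d f(n)=f(n_1)$ one must rule out $n_1$ containing some $p_i$ to a higher power, which is not automatic because $f(p^k)/f(p)$ can be small (contrast with $\phi(p^k)/\phi(p)=p^{k-1}$). The paper introduces $Q(p)=\min_{k\ge2}f(p^k)/f(p)$ and demands $Q(p_i)\ge\max(d+K+1,17)$, using \eqref{f lower} to show $\sum_{Q(p)\le d}1/p=O(d)$, so this extra constraint only costs a $d$-dependent constant. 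Without these two devices your construction does not actually yield totients of multiplicity exactly $k$, and your phrase that ``distinct choices of types cannot conspire'' is a hope rather than an argument. You also leave out the smaller but necessary modifications to Lemma \ref{divisor squares} (the paper replaces $n/\phi(n)\ll\log_2 n$ by an estimate using the squarefull sum to get the $Y^{-2\delta}$ decay) and to the end of Section 4 (the step using $\phi(ab)\ge\phi(a)\phi(b)$, which has no direct analogue for general $f$ and needs a case split on whether $q_L\mid w$).
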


Some functions appearing in the literature which satisfy the conditions of
Theorem \ref{general f} are $\sigma(n)$, the sum of divisors function,
$\phi^*(n)$, $\sigma^*(n)$ and $\psi(n)$.  Here $\phi^*(n)$ and $\sigma^*(n)$
are the unitary analogs of $\phi(n)$ and $\sigma(n)$, defined by
$\phi^*(p^k)=p^k-1$ and $\sigma^*(p^k)=p^k+1$ \cite{Co},
 and $\psi(n)$ is Dedekind's
function, defined by $\psi(p^k)=p^k+p^{k-1}$.
Now consider, for fixed $a\ne 0$, the function defined by $f(p^k)=(p+a)^k$ 
for $p \ge p_0:=\min\{p: p+a\ge 2\}$ and $f(p^k)=(p_0+a)^k$ for $p < p_0$.
Then the range of $f$ is the multiplicative semigroup generated by the shifted primes
$p+a$ for $p>1-a$.  

\begin{cor}
 For a fixed nozero $a$, let $V^{(a)}(x)$ be the counting function of the multiplicative
semigroup generated by the shifted primes $\{p+a : p+a\ge 2\}$.  Then
\[
 V^{(a)}(x) \asymp_a \frac{x}{\log x} \exp\{ C(\log_3 x-\log_4 x)^2 +D \log_3 x
- (D+1/2-2C)\log_4 x \}.
\]

\end{cor}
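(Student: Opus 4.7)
The plan is to derive the Corollary as a direct application of Theorem~\ref{general f} to the multiplicative function $f$ defined in the paragraph preceding the statement. Two items must be checked: that $f$ satisfies hypotheses \eqref{fp} and \eqref{f lower}, and that the image $f(\NN)$ coincides with the multiplicative semigroup generated by $\{p+a : p+a\ge 2\}$.

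For condition \eqref{fp}, the set $\{f(p)-p : p\text{ prime}\}$ equals $\{a\}\cup\{p_0+a-p : p<p_0\}$, which is finite. Since $a\ne 0$, the value $a$ itself is nonzero. If for some small prime $p<p_0$ one happens to have $p_0+a=p$ (so that $0$ would appear in this set), I would redefine $f(p^k)=q^k$ at that single prime with $q$ any shifted prime distinct from $p$; this adjustment affects $f$ at only finitely many primes and does not alter the multiplicative semigroup generated by $f(\NN)$. After such a (possibly empty) modification, \eqref{fp} holds.

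Condition \eqref{f lower} is routine. Since $f(p)/p$ is bounded below by a positive constant $c_a$ depending only on $a$ (because $f(p)=p+a$ for $p\ge p_0$ and $f(p)$ is a fixed positive constant for each of the finitely many small primes), we have $f(h)\ge c_a^{\Omega(h)}\cdot h$ for every $h$. Hence $\sum_{h\text{ square-full}} h^\delta/f(h)\le \sum_{h\text{ square-full}} c_a^{-\Omega(h)}\, h^{\delta-1}$, which, via its Euler product $\prod_p\bigl(1+\sum_{k\ge 2} c_a^{-k} p^{k(\delta-1)}\bigr)$, converges for any $0<\delta<1/2$.

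Since $f$ is multiplicative with $f(p^k)=f(p)^k$ on every prime power, the image $f(\NN)$ is exactly the multiplicative semigroup generated by $\{f(p):p\text{ prime}\}=\{p+a:p+a\ge 2\}$, so $V_f(x)=V^{(a)}(x)$. The analog of Theorem~\ref{V(x)} for $V_f(x)$ supplied by Theorem~\ref{general f} then yields the asymptotic stated in the Corollary, with the $O(1)$ term in the exponent of Theorem~\ref{V(x)} absorbed into the $\asymp_a$ constant. The only minor obstacle is the small-prime bookkeeping described above; all the analytic substance lies in Theorem~\ref{general f} itself.
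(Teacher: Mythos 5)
Your approach is the paper's intended one: apply Theorem~\ref{general f} to the function $f(p^k)=(p+a)^k$ defined just before the Corollary, after verifying \eqref{fp}, \eqref{f lower}, and that $f(\NN)$ is the relevant semigroup. You also caught a genuine wrinkle that the paper glosses over: when $a<0$ it can happen that $p_0+a$ is itself a prime below $p_0$ (for instance $a=-3$ gives $p_0=5$ and $f(2)=2$), in which case $0\in\{f(p)-p\}$ and \eqref{fp} fails for the function as literally defined. Your fix --- redefine $f$ at that one small prime to be some other fixed shifted-prime power --- is correct, since the semigroup generated by the images is unchanged and only finitely many primes are touched.

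The one real gap is in the verification of \eqref{f lower}. Your bound $f(h)\ge c_a^{\Omega(h)}h$ with $c_a=\min_p f(p)/p$ is too lossy: the resulting Euler product $\prod_p\bigl(1+\sum_{k\ge 2}c_a^{-k}p^{k(\delta-1)}\bigr)$ has its $p$-th factor finite only when $p^{1-\delta}>c_a^{-1}$, and for $a$ negative and moderately large in absolute value (say $a=-100$, where $p_0=103$ and $c_a\approx 3/103$), the factor at $p=2$ already diverges for every $\delta\in(0,1)$, so the claim that it converges for all $0<\delta<1/2$ is false. The fix is to work directly with the untouched Euler product
$\prod_p\bigl(1+\sum_{k\ge 2}p^{k\delta}/f(p)^k\bigr)$:
since $f(p^k)=f(p)^k$ and $f(p)\ge 2$ for every $p$, the $p$-factor is a geometric series that converges as soon as $p^\delta<f(p)$, which holds for all $p$ once $\delta<\delta_0(a):=\min_p\log f(p)/\log p$ (a positive constant depending on $a$, because $f(p)\ge 2$ and $f(p)/p\to 1$). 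Combined with the requirement $\delta<1/2$ for the tail of the product, any $0<\delta<\min(\delta_0(a),1/2)$ gives \eqref{f lower}. With this adjustment your argument is complete.
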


One further theorem, Theorem \ref{vx norm}, depends on the definition of the fundamental
simplex, and is not stated until Section 6.

\medskip

\noindent
{\bf Acknowledgement:}  The author is grateful to Paul Pollack for carefully proofreading
of the manuscript and for catching a subtle error in the proof of the lower bound in
Theorem \ref{V(x)}.

%
%
%
%

\section{Preliminary lemmata}

Let $P^+(n)$ denote the largest prime factor of $n$ and
let $\om(n,U,T)$ denote the total number of prime factors $p$ of $n$
such that $U < p \le T$, counted according to multiplicity.
Constants implied by the Landau $O$ and Vinogradov $\ll$ and $\gg$
symbols are absolute unless otherwise specified, and
 $c_1, c_2, \ldots$ will denote absolute constants, not depending
on any parameter.  Symbols in boldface type indicate vector quantities.

A small set of additional
symbols will have constant meaning throughout this paper.
These include the constants $\fancyV$, $\rho$, $C$, $D$, $a_i$,
defined respectively in \eqref{basic def}, \eqref{rho def}, \eqref{C def},
\eqref{D def}, and \eqref{F def}, as well as the constants
$\fancyS_L$, $T_L$, $g_i$ and $g_i^*$,
defined in section 3.  Also included are
the following functions:  the functions defined in \eqref{basic def},
$L_0(x)$ \eqref{L_0}, $F(x)$ \eqref{F def}; the functions
$Q(\alpha)$ and $W(x)$ defined respectively in Lemma
\ref{exp partial} and \eqref{W def} below; and 
$\fancyS_L(\bxi)$, $T_L(\bxi)$, $\curly R_L(\vxi;x)$,
$R_L(\vxi;x)$ and $x_i(n;x)$ defined in section 3.
Other variables are considered ``local'' and may change meaning from 
section to section, or from lemma to lemma. 

A crucial tool in the proofs of Theorems \ref{V(x)} and \ref{V_k(x)}
is a more precise version
of the result from \cite{MP} that for most primes $p$, the larger
prime factors of $p-1$ are nicely distributed (see Lemma \ref{normal lem}
below).  We begin with three basic lemmas.

\begin{lem} \label{exp partial} If $z>0$ and $0<\alpha<1<\b$ then
$$
\sum_{k\le \alpha z} \frac{z^k}{k!}
< e^{(1-Q(\alpha))z}, \qquad
\sum_{k\ge \b z} \frac{z^k}{k!}
< e^{(1-Q(\b))z},
$$
where $Q(\lambda) = \int_1^\lambda \log t\, dt =
\lambda \log(\lambda) - \lambda + 1$.
\end{lem}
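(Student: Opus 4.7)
The plan is to prove both inequalities by a standard Chernoff-style exponential shift argument, which is elementary and avoids any appeal to probability theory.

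For the upper tail, I would fix the parameter $\lambda = \b > 1$ and exploit the fact that $\b^{k-\b z} \ge 1$ whenever $k \ge \b z$. Then
\[
\sum_{k\ge \b z} \frac{z^k}{k!} \;\le\; \sum_{k\ge \b z} \frac{z^k \b^{k-\b z}}{k!} \;<\; \b^{-\b z}\sum_{k=0}^{\infty} \frac{(\b z)^k}{k!} \;=\; \b^{-\b z} e^{\b z},
\]
with strict inequality because we have enlarged the sum to include all $k\ge 0$ (in particular $k=0$, contributing $1>0$). Taking logarithms, the exponent is $\b z - \b z\log\b = z(1 - (\b\log\b - \b + 1)) = (1-Q(\b))z$, which is exactly what is claimed.

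For the lower tail, I would use the analogous device with $\lambda = \alpha < 1$, noting that now $\alpha^{k-\alpha z} \ge 1$ whenever $k \le \alpha z$ (since the exponent is non-positive and the base is less than $1$). Then
\[
\sum_{k\le \alpha z} \frac{z^k}{k!} \;\le\; \sum_{k\le \alpha z} \frac{z^k \alpha^{k-\alpha z}}{k!} \;<\; \alpha^{-\alpha z}\sum_{k=0}^{\infty} \frac{(\alpha z)^k}{k!} \;=\; \alpha^{-\alpha z} e^{\alpha z},
\]
again with strict inequality because the sum is extended to all $k \ge 0$. The exponent is $\alpha z - \alpha z\log\alpha = (1-Q(\alpha))z$, completing the second bound.

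Since $Q(\lambda) = \lambda\log\lambda - \lambda + 1$ is immediate from integrating $\log t$ from $1$ to $\lambda$, no real obstacle arises; the only thing to be careful about is ensuring the inequalities are strict (which is why I explicitly extend the summation to the full tail $k\ge 0$), and checking the sign of $k-\b z$ or $k-\alpha z$ to justify that the inserted factor $\lambda^{k-\lambda z}$ is $\ge 1$. The choices $\lambda = \b$ and $\lambda = \alpha$ are the optimal ones; this can be seen by differentiating $\lambda^{-\b z} e^{\lambda z}$ in $\lambda$, but for the statement as written no optimization argument is even required.
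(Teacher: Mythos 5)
Your argument is correct and is essentially the same as the paper's: inserting the factor $\lambda^{k-\lambda z}\ge 1$ and then extending to the full exponential series is algebraically identical to the paper's step of writing $z^k/k!=(\alpha z)^k(1/\alpha)^k/k!$ and bounding $(1/\alpha)^k\le (1/\alpha)^{\alpha z}$. The only difference is cosmetic — you spell out the $\beta$ case explicitly, which the paper dismisses as ``follows in the same way.''
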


\begin{proof} We have
$$
\sum_{k\le \alpha z} \frac{z^k}{k!} =
\sum_{k\le \alpha z} \frac{(\alpha z)^k}{k!} \pfrac{1}{\alpha}^k \le
\pfrac{1}{\alpha}^{\alpha z} \sum_{k \le \alpha z} \frac{(\alpha z)^k}{k!}
< \pfrac{e}{\alpha}^{\alpha z} = e^{(1-Q(\alpha))z}.
$$
The second inequality follows in the same way.
\end{proof}

\begin{lem} \label{Omega lem}
The number of
integers $n \le x$ for which $\om(n) \ge \a \log_2 x$ is
$$
\ll_\a \begin{cases} x (\log x)^{-Q(\a)} & 1<\a<2 \\
x (\log x)^{1-\a \log 2}\log_2 x & \a \ge 2. \end{cases}
$$
\end{lem}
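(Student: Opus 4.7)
The plan is to apply Rankin's trick: for any real $z>1$,
\[
 \#\{n\le x:\Omega(n)\ge \alpha\log_2 x\}\le z^{-\alpha\log_2 x}\sum_{n\le x}z^{\Omega(n)},
\]
and then choose $z$ optimally in each of the two ranges of $\alpha$. The bulk of the work is an auxiliary estimate for the weighted sum that is effective as $z\to 2^-$.

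To control $\sum_{n\le x}z^{\Omega(n)}$, I would peel off the power of $2$: writing $n=2^am$ with $m$ odd,
\[
 \sum_{n\le x}z^{\Omega(n)} = \sum_{a\ge 0}z^a\sum_{\substack{m\le x/2^a\\m\text{ odd}}}z^{\Omega(m)}.
\]
The inner Dirichlet series $\prod_{p\ge 3}(1-zp^{-s})^{-1}$ factors as $\zeta(s)^z G_z(s)$ with $G_z$ analytic for $\mathrm{Re}(s)>1/2$ and uniformly bounded for $z\in[1,2]$, so Selberg--Delange (or a Mertens-type bootstrap) yields $\sum_{m\le y,\,m\text{ odd}}z^{\Omega(m)}\ll y(\log y)^{z-1}$ uniformly for $1\le z\le 2$. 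Summing the geometric series $\sum_{a\ge 0}(z/2)^a=2/(2-z)$ then gives the key estimate
\[
 \sum_{n\le x}z^{\Omega(n)}\ll \frac{x(\log x)^{z-1}}{2-z},\qquad 1\le z<2.
\]

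For $1<\alpha<2$ I would take $z=\alpha$, getting
\[
 \alpha^{-\alpha\log_2 x}\cdot \frac{x(\log x)^{\alpha-1}}{2-\alpha} \ll_\alpha x(\log x)^{\alpha-1-\alpha\log\alpha}=x(\log x)^{-Q(\alpha)},
\]
using $Q(\alpha)=\alpha\log\alpha-\alpha+1$. For $\alpha\ge 2$ I would take $z=2-1/\log_2 x$; then $z^{-\alpha\log_2 x}=2^{-\alpha\log_2 x}\bigl(1-\tfrac{1}{2\log_2 x}\bigr)^{-\alpha\log_2 x}\ll_\alpha(\log x)^{-\alpha\log 2}$ and $(2-z)^{-1}(\log x)^{z-1}\ll\log_2 x\cdot\log x$, yielding the stated $\ll_\alpha x(\log x)^{1-\alpha\log 2}\log_2 x$.

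The hardest step is the uniform bound on $\sum_{n\le x}z^{\Omega(n)}$ with the explicit $(2-z)^{-1}$ dependence as $z\nearrow 2$, which is exactly what produces the extra $\log_2 x$ factor in the second range. This is routine via the Dirichlet-series factorization above, but demands care with the constants; the alternative elementary route is to bootstrap the odd sum from Mertens' theorem and partial summation, inducting on $y$.
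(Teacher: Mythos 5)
Your proof is correct and fills in the standard argument behind the citation: the paper simply refers to Chapter~0 of Hall--Tenenbaum's \emph{Divisors}, and the results there are proved by exactly the Rankin-trick plus mean-value-of-$z^{\Omega(n)}$ method you use, with the prime~$2$ peeled off to handle the range $z\nearrow 2$ (which is what produces the $\log_2 x$ factor when $\alpha\ge 2$). So you have taken essentially the same route as the cited source, just written out in full.
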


\begin{proof}  This can be deduced from the Theorems in Chapter 0 of
\cite{HaT}. \end{proof}

\begin{lem} \label{large prime small} The number of $n \le x$ divisible
by a number $m \ge \exp\{(\log_2 x)^2\}$ with $P^+(m) \le
m^{2/\log_2 x}$ is $\ll x/\log^2 x$.
\end{lem}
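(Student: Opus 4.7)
The plan is to bound the number of such $n$ by $x \sum 1/m$ over admissible $m$ and then show this sum is tiny, exploiting the fact that every such $m$ is forced to be ``self-smooth.'' Let $M = \exp\{(\log_2 x)^2\}$ and $S = \{m \ge M : P^+(m) \le m^{2/\log_2 x}\}$. Since $m \le n \le x$, the quantity we want to bound is at most
\[
\sum_{m \in S,\, m \le x} \fl{x/m} \le x \sum_{m \in S,\, m \le x} \frac{1}{m}.
\]
The central structural observation is that every $m \in S$ with $m \le T$ satisfies $P^+(m) \le m^{2/\log_2 x} \le T^{2/\log_2 x}$, so $m$ lies among the $T^{2/\log_2 x}$-smooth integers up to $T$. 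Crucially, the corresponding smoothness ratio $u = \log T / \log(T^{2/\log_2 x}) = (\log_2 x)/2$ depends only on $x$ (not on $T$) and tends to infinity with $x$.

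I would then estimate $\sum_{m \in S,\, m \le x} 1/m$ dyadically. On a range $T \le m \le 2T$ with $T \ge M$, applying the classical Dickman--de Bruijn bound
\[
\Psi(X,Y) \le X \cdot u^{-u(1+o(1))} \quad (u = \log X/\log Y \to \infty)
\]
with $X = 2T$ and $Y = (2T)^{2/\log_2 x}$ yields
\[
|S \cap [T,2T]| \le \Psi\bigl(2T,\, (2T)^{2/\log_2 x}\bigr) \ll T \exp\bigl\{-\tfrac12 (\log_2 x)(\log_3 x)(1+o(1))\bigr\}.
\]
Bounding $1/m \le 1/T$ on this range and summing over the $O(\log x)$ dyadic values of $T$ in $[M, x]$ gives
\[
\sum_{m \in S,\, m \le x} \frac{1}{m} \ll (\log x) \exp\bigl\{-\tfrac12 (\log_2 x)(\log_3 x)(1+o(1))\bigr\}.
\]
Once $\log_3 x$ exceeds a suitable absolute constant, the right-hand side is smaller than any fixed negative power of $\log x$, and in particular is $\ll (\log x)^{-2}$, which yields the claimed bound on the count.

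The only delicate point is the uniform applicability of the Dickman--de Bruijn estimate in our regime, but this is not a real obstruction: we have $u = (\log_2 x)/2 \to \infty$ and the smoothness threshold $Y = (2T)^{2/\log_2 x}$ always satisfies $Y \ge M^{2/\log_2 x} = (\log x)^2$, so the parameters sit comfortably inside the classical range of validity (see, e.g., Tenenbaum's book). Small values of $x$ can be absorbed into the implicit constant.
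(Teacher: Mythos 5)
Your argument is correct and follows essentially the same route as the paper's: bound the count of admissible $m$ in a range by the smooth-number counting function $\Psi(z, z^{2/\log_2 x})$, invoke the standard Dickman--de Bruijn/Hildebrand--Tenenbaum upper bound with $u = (\log_2 x)/2 \to\infty$, and sum over the range of $m$. The only cosmetic difference is that you sum dyadically while the paper uses partial summation; the key structural point (that $u$ is constant across the whole range $[M,x]$, and the parameters stay uniformly within the valid range for the smooth-number bound) is identical.
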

\begin{proof}
Let $\Psi(x,y)$ denote the number of integers $\le x$ which have no prime
factors $> y$.  For $x$ large, standard estimates
(\cite{HiT}, Theorem 1.1 and Corollary 2.3) give
$$
\Psi(z,z^{2/\log_2 x}) \ll z \exp\{-(\log_2 x  \log_3 x)/3 \}
$$
uniformly for $z\ge \exp\{(\log_2 x)^2 \}$.
The lemma follows by partial summation.
\end{proof}

We also need basic sieve estimates (\cite{HRi}, Theorems 4.1, 4.2).

\begin{lem} \label{basic sieve} 
Uniformly for $1.9\le y\le z\le x$, we have
$$
|\{ n\le x : p|n \implies p\not\in (y,z] \}| \ll x \frac{\log y}{\log z}.
$$
\end{lem}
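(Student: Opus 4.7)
The plan is to recognize this count as a classical sifting problem. Writing $P_{y,z} = \prod_{y < p \le z} p$, the quantity to be bounded is
\[
N(x; y, z) := |\{ n \le x : \gcd(n, P_{y,z}) = 1 \}|,
\]
a dimension-$1$ sieve problem in which each prime $p \in (y, z]$ excludes a single residue class modulo $p$. The target is the bound $N(x;y,z) \ll x \prod_{y < p \le z}(1 - 1/p)$; once this is established, Mertens' theorem
\[
\prod_{y < p \le z}\(1 - \frac{1}{p}\) \asymp \frac{\log y}{\log z} \qquad (1.9 \le y \le z)
\]
delivers the stated shape.

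First, I would apply Selberg's $\Lambda^2$-sieve to $\{1, \ldots, \lfloor x \rfloor\}$ sifted by the primes in $(y, z]$. Since the sieve dimension is $1$, the standard Selberg estimate reads
\[
N(x;y,z) \ll \frac{x}{\sum_{\substack{d \text{ squarefree} \\ p \mid d \Rightarrow y < p \le z \\ d \le z^{2}}} g(d)} + O(z^2),
\]
where $g$ is the multiplicative function with $g(p) = 1/(p-1)$. Evaluating the denominator as a truncated Euler product of order $\prod_{y < p \le z}(1 - 1/p)^{-1} \asymp \log z/\log y$, the main term is $\ll x \log y / \log z$, and the remainder $O(z^2)$ is harmless as long as $z \le x^{1/2 - \eps}$.

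The main obstacle is the complementary range $z > \sqrt{x}$, where the Selberg remainder explodes. Here I would switch to a Buchstab-type decomposition. Every $n$ counted by $N(x;y,z)$ factors uniquely as $n = ab$ with $a$ the $y$-smooth part of $n$ and $P^-(b) > z$ (or $b = 1$). Since $z > \sqrt{x}$, such a $b$ is either $1$ or a single prime, so
\[
N(x;y,z) \le \Psi(x, y) + \sum_{\substack{a \le x/z \\ P^+(a) \le y}} \pi(x/a).
\]
The sum over $a$ is $\ll (x/\log z) \sum_{P^+(a) \le y} 1/a \ll x \log y/\log z$, using $\log(x/a) \ge \log z$ together with the Mertens estimate $\sum_{P^+(a) \le y} 1/a \le \prod_{p \le y}(1-1/p)^{-1} \ll \log y$. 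The remaining $\Psi(x, y)$ term I would handle via Rankin's upper bound combined with the classical asymptotic $\Psi(x,y)/x \ll 1/u$ for $u = \log x/\log y \ge 1$, yielding $\Psi(x, y) \ll x\log y/\log x \le x\log y/\log z$. Alternatively, and more cleanly, one may simply invoke Theorems~4.1 and~4.2 of Halberstam--Richert~\cite{HRi} directly, which treat this uniform sifting estimate over the entire range $1.9 \le y \le z \le x$ in one stroke.
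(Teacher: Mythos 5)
The paper itself offers no proof of this lemma---it is quoted as a standard sieve bound with the citation to Theorems~4.1 and 4.2 of Halberstam--Richert, exactly the route you flag at the end of your write-up. Your self-contained argument is correct in outline. For $z\le x^{1/2-\eps}$ the Selberg sieve together with Mertens does give $\ll x\log y/\log z$, although the sum defining the Selberg denominator should run over $d\le z$ rather than $d\le z^2$ if the remainder is to be $O(z^2)$. For $z>\sqrt x$ your Buchstab splitting together with $\pi(t)\ll t/\log t$ and the uniform bound $\Psi(x,y)\ll x\log y/\log x$ is valid, but there is a shortcut that avoids smooth-number estimates entirely: sifting by a smaller set of primes only enlarges the sifted set, so
$$
|\{n\le x : p\mid n\Rightarrow p\notin(y,z]\}| \le |\{n\le x : p\mid n\Rightarrow p\notin(y,\min(z,\sqrt x)]\}|,
$$
and since $\log\min(z,\sqrt x)\ge\tfrac12\log z$ whenever $z\le x$, the large-$z$ case collapses to the already-handled range $z\le\sqrt x$ where the sieve remainder is negligible. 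Either way you arrive at the stated bound, and simply citing Halberstam--Richert as the paper does is of course also adequate.
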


\begin{lem} \label{sieve linear factors}
 Suppose $a_1,\ldots,a_h$ are positive integers and
$b_1,\ldots,b_h$ are integers such that
$$
E = \prod_{i=1}^h a_i \prod_{1\le i<j\le h} (a_ib_j-a_jb_i) \ne 0.
$$
Then
$$
\#\{ n\le x: a_in+b_i \text{ prime } (1\le i\le h) \}
\ll_h \frac{x(\log_2 (|E|+10))^h}{(\log z)^{h}}.
$$
\end{lem}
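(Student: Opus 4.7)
The plan is to apply a standard upper-bound sieve (for instance Selberg's $\Lambda^2$ sieve as in Theorem~4.2 of \cite{HRi}) to the polynomial $F(n):=\prod_{i=1}^h(a_in+b_i)$, which has degree $h$. For each prime $p$, let $\nu(p)$ denote the number of residues $n\pmod p$ with $p\mid F(n)$. The hypothesis $E\ne 0$ is precisely the non-degeneracy needed to count these roots: when $p>h$ and $p\nmid E$, the $h$ putative roots $n\equiv -b_ia_i^{-1}\pmod p$ are each well defined (since $p\nmid a_i$) and pairwise distinct (since $p\nmid a_ib_j-a_jb_i$), so $\nu(p)=h$; for the finitely many remaining exceptional primes $p\mid E$ or $p\le h$, the trivial bound $\nu(p)\le h$ suffices.

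With this sieve density and sieve level $z=x^{1/(2h+2)}$, the upper-bound sieve yields
\[
\#\bigl\{n\le x : \bigl(F(n),\textstyle\prod_{p\le z}p\bigr)=1\bigr\} \ll_h x\prod_{p\le z}\left(1-\frac{\nu(p)}{p}\right),
\]
with remainder $O_h(z^2)=o(x/(\log x)^h)$. Any $n$ for which every $a_in+b_i$ is prime either belongs to this sifted set (if each $a_in+b_i>z$) or satisfies $a_in+b_i\le z$ for some $i$; the latter contributes only $O_h(z)$ values of $n$, which is absorbed into the main term.

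It remains to extract the explicit form claimed. Split the product as
\[
\prod_{\substack{p\le z\\ p\nmid E,\,p>h}}\left(1-\frac{h}{p}\right)\ \cdot\ \prod_{\substack{p\le z\\ p\mid E\text{ or }p\le h}}\left(1-\frac{\nu(p)}{p}\right)\left(1-\frac{h}{p}\right)^{-1}.
\]
Mertens' theorem handles the first factor: it is $\asymp_h(\log z)^{-h}\asymp_h(\log x)^{-h}$. Since $\nu(p)\le h$, the second factor is at most $\prod_{p\mid E}(1-h/p)^{-1}\ll_h\prod_{p\mid E}(1-1/p)^{-h}=(|E|/\phi(|E|))^h$, and the classical bound $N/\phi(N)\ll\log_2(N+3)$ completes the estimate.

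The main obstacle is purely bookkeeping: selecting the sieve level so that the truncation error is absorbed, and then tracking how the primes dividing $E$ enter the singular series via Mertens. Once the factor $(\log_2(|E|+10))^h$ is recognized as arising solely from $N/\phi(N)\ll\log_2 N$ applied to $N=|E|$, the lemma is immediate from the cited sieve theorems.
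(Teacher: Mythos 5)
Your plan is the standard sieve argument, and it is exactly what the paper intends: Lemma~\ref{sieve linear factors} is stated as a direct application of the sieve estimates from \cite{HRi} (Theorems 4.1, 4.2) and carries no separate proof in the paper. Your identification of $\nu(p)=h$ for $p>h$, $p\nmid E$ via the hypothesis $E\ne 0$, the choice of sieve level, and the final appeal to $N/\phi(N)\ll\log_2(N+3)$ are all as they should be.

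Two slips in the bookkeeping deserve repair, though neither affects the final bound. The displayed split of the singular product is not an identity: multiplied out it equals $\prod_{p\le z}(1-\nu(p)/p)$ times an extraneous $\prod_{p\mid E\text{ or }p\le h}(1-h/p)^{-1}$. The correct decomposition is
\[
\prod_{p\le z}\Bigl(1-\frac{\nu(p)}{p}\Bigr)=\prod_{\substack{p\le z\\p>h}}\Bigl(1-\frac hp\Bigr)\cdot\prod_{\substack{h<p\le z\\p\mid E}}\Bigl(1-\frac hp\Bigr)^{-1}\cdot\prod_{\substack{p\le z\\p\mid E\text{ or }p\le h}}\Bigl(1-\frac{\nu(p)}{p}\Bigr),
\]
where the last factor is $\le 1$, the first is $\asymp_h(\log z)^{-h}$ by Mertens, and only the middle carries the dependence on $E$. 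Second, the pointwise inequality $(1-h/p)^{-1}\le(1-1/p)^{-h}$ is in fact false: Bernoulli gives $(1-1/p)^h\ge 1-h/p$, hence the reverse. The product comparison you need nonetheless survives, since $\prod_{p>h}(1-1/p)^h(1-h/p)^{-1}=\exp\bigl(O_h(\sum_{p>h}p^{-2})\bigr)=O_h(1)$; after multiplying by this bounded ratio one passes to $\prod_{p\mid E}(1-1/p)^{-h}=(|E|/\phi(|E|))^h\ll(\log_2(|E|+10))^h$ as you intend. Finally, the upper-bound sieve you invoke requires $\nu(p)<p$; when $\nu(p)=p$ for some prime, every admissible $n$ has $a_in+b_i=p$ for some $i$, so at most $h$ values of $n$ occur, a count the right-hand side absorbs trivially.
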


Next, we examine the normal multiplicative structure of shifted primes
$p-1$.

\begin{defn}
When $S\ge 2$, a prime $p$ is said to be $S$-normal if 
\be\label{1S}
\om(p-1,1,S) \le 2\log_2 S 
\ee
and, for every pair of real numbers $(U,T)$ with $S \le U<T\le p-1$, we have
\be\label{normal}
|\om(p-1,U,T) - (\log_2 T - \log_
2 U)| < \sqrt{\log_2 S \log_2 T}. 
\ee
\end{defn}

We remark that \eqref{1S} and \eqref{normal} imply that
for an $S$-normal prime $p\ge S$,
\be\label{omp-1}
\om(p-1) \le 3\log_2 p.
\ee
This definition is slightly weaker than, and also simpler than, the definition of 
$S$-normal given in \cite{F98}.

\begin{lem} \label{normal lem} Uniformly for $x\ge 3$ and $S\ge 2$,
the number of primes $p \le x$ which are not $S$-normal is 
\[
\ll \frac{x(\log_2 x)^5}{\log x} (\log S)^{-1/6}.
\]
\end{lem}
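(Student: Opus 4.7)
\textbf{Proof plan for Lemma \ref{normal lem}.} The plan is to split the count of non-$S$-normal primes $p\le x$ as $N(x,S) \le N_1(x,S) + N_2(x,S)$, where $N_1$ counts those violating \eqref{1S} and $N_2$ counts those satisfying \eqref{1S} but violating \eqref{normal} at some admissible pair $(U,T)$. For both parts the uniform tool will be an exponential moment (Chernoff-type) bound for $\om(p-1,U,T)$. Fix $t\geq 1$ and $1 \le U < T \le x$. Expanding $t^{\om(p-1,U,T)} = \sum_{d}(t-1)^{\om(d)}$ over squarefree $d\mid p-1$ whose prime factors all lie in $(U,T]$, swapping the order of summation, and invoking Brun--Titchmarsh ($\pi(x;d,1)\ll \pi(x)/\phi(d)$ for $d\le x^{1/4}$) gives
\benn
\sum_{p\le x} t^{\om(p-1,U,T)} \ll \pi(x) \prod_{U<q\le T}\( 1 + \frac{t-1}{q-1} \) \ll \pi(x) \exp\bigl( (t-1)(\log_2 T - \log_2 U + O(1)) \bigr),
\eenn
with the contribution of squarefree smooth $d > x^{1/4}$ bounded by a short Rankin-type sieve estimate on $\sum_{d>x^{1/4},\,P^+(d)\le T}(t-1)^{\om(d)}/d$.

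For $N_1$, take $U=1$, $T=S$, and apply Markov at $t=2$:
\benn
N_1(x,S) \le 2^{-2\log_2 S} \sum_{p\le x} 2^{\om(p-1,1,S)} \ll \pi(x)(\log S)^{\,1-2\log 2} = \pi(x)(\log S)^{-Q(2)},
\eenn
which is acceptable since $Q(2)=2\log 2-1>1/6$. For $N_2$, discretize log-logarithmically: put $T_j=\exp(\exp(j))$ for integers $j\in[\lfloor\log_2 S\rfloor,\lceil\log_2 x\rceil+1]$, giving $\ll(\log_2 x)^2$ candidate pairs $T_j<T_k$. For each such pair, set $\mu=k-j$ and $\sigma=\sqrt{\log_2 S\cdot k}$, and optimize the MGF bound above. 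In the Gaussian regime $\sigma\le\mu$, take $t=1+\sigma/\mu$ and obtain a saving $\exp(-c\sigma^2/\mu)\le \exp(-c\log_2 S)=(\log S)^{-c}$; in the Poisson regime $\sigma>\mu$ (very short windows), take $t=\sigma/\mu$ for an even better saving $\exp(-c\sigma\log(\sigma/\mu))$. Monotonicity of $\om(p-1,U,T)$ in each endpoint transfers the bound from the discrete grid to arbitrary $(U,T)$: the perturbations $|\log_2 U - j|,|\log_2 T-k|\le 1$ are absorbed by the tolerance $\sqrt{\log_2 S\log_2 T}\ge \sqrt{\log_2 S}$ once $S$ is large (the lemma is trivial for bounded $S$). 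Summing contributions yields $N_2(x,S)\ll \pi(x)(\log_2 x)^2(\log S)^{-c}$, comfortably within the target.

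The main obstacle is maintaining a uniform exponential saving in $\log_2 S$ across the full range of $(U,T)$: the short-window Poisson regime and the long-window Gaussian regime require different optimal choices of $t$, and both savings must survive the discretization slack. The only genuinely delicate calculation is the truncation of the moment expansion: Brun--Titchmarsh is valid only up to $d\le x^{1/2}$, so one must verify, via a small-sieve or Rankin-style bound on squarefree $S$-smooth $d>x^{1/4}$, that the omitted tail does not swamp the main term. Once these ingredients are in place, the combined bound $N(x,S)\ll \pi(x)(\log_2 x)^2(\log S)^{-c}$ with $c>1/6$ immediately yields the stated inequality (the stated $(\log_2 x)^5$ factor leaves ample room).
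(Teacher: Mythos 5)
Your discretization (to $T_j=\exp(\exp(j))$), your Chernoff/Markov machinery via Lemma~\ref{exp partial}, and your case split between the ``Gaussian'' and ``Poisson'' regimes all match the structure of the paper's proof of~\eqref{normal}. The genuine gap is in the single step you flag as ``delicate'' and then wave off: the claimed moment bound
$\sum_{p\le x} t^{\om(p-1,U,T)}\ll \pi(x)\exp\bigl((t-1)(\log_2T-\log_2U+O(1))\bigr)$ is not available by Brun--Titchmarsh plus a Rankin tail. After expanding $t^{\om(p-1,U,T)}=\sum_d (t-1)^{\om(d)}$ over squarefree $d\mid p-1$ with all prime factors in $(U,T]$, the modulus $d$ is \emph{$T$-smooth}, not $S$-smooth (you write ``$S$-smooth'' in the last paragraph, but the primes of $d$ run up to $T$, and $T$ can be as large as $x$). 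When $T$ is close to $x$ and $U$ is much smaller, the divisor $d$ of $p-1$ generated by this range is typically of size $x^{1-o(1)}$, so the ``tail'' $d>x^{1/4}$ is not a correction term --- it carries the bulk of the sum. Brun--Titchmarsh for $d$ near $x$ costs a factor $\log x$ (or fails outright near $d=x$), and the Rankin factor $x^{-\alpha/4}$ with the natural choice $\alpha\asymp 1/\log T$ gives only a bounded, not a decaying, saving. The resulting loss of a power of $\log x$ cannot be absorbed by the $(\log_2 x)^5$ in the statement, since $\log_2 x$ is an iterated logarithm.

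The paper circumvents exactly this. It first discards primes $p$ with $P^+(p-1)\le x^{2/\log_2 x}$ (cost $O(x/\log^2 x)$ by Lemma~\ref{large prime small}), and for the remainder writes $p-1=qb$ with $q=P^+(p-1)$, so that $b\le x^{1-2/\log_2 x}$ is bounded well away from $x$. The prime $q$ is then counted for each fixed $b$ by the two-linear-forms sieve bound (Lemma~\ref{sieve linear factors}), producing a factor $x(\log_2 x)^3/(b\log^2 x)$; the Chernoff moment argument is then applied \emph{to $b$}, where the relevant generating sum $\sum_{\om(b,t_j,t_k)=l}1/b$ is an unconditional Euler-product/Poisson estimate over all integers up to $x$, with no Brun--Titchmarsh, no tail truncation, and no large-modulus issue. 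This $p-1=qb$ decomposition is the missing idea in your plan, not a routine technicality; without it (or some substitute, such as restricting the moment to a bounded divisor of $p-1$ from the outset) the MGF bound you need is simply not true uniformly in $(U,T)$.

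One smaller remark: your $N_1$ estimate is essentially the paper's handling of the regime $\log S>(\log x)^6$ and of case~(i), and is fine, but for the same reason as above it should be carried out as a sum over integers (as the paper does) rather than over shifted primes directly.
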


\begin{proof}
Assume $x$ is sufficiently large and $S \ge \log^{1000} x$, otherwise the
lemma is trivial.  Also, if $\log S > (\log x)^6$, then \eqref{1S} implies
that the number of $p$ in question is
$$
\le x \sum_{n\le x} \frac{(3/2)^{\om(n)-2\log_2 S}}{n}
\ll x \frac{(\log x)^{3/2}}{(\log S)^{2\log(3/2)}} \ll \frac{x}{(\log x)(\log S)^{0.3}}.
$$  
Next, assume $\log S \le (\log x)^6$.
By Lemmas \ref{Omega lem} and \ref{large prime small},
the number of
primes $p\le x$ with either $p<\sqrt{x}$, $q:=P^+(p-1)\le x^{2/\log_2 x}$,
$\om(p-1) \ge 10\log_2 x$ or $p-1$ divisible by
the square of a prime $\ge S$, is $O(x/\log^2 x)$.
Let $p$ be a prime not in these categories, which is also not $S$-normal.
Write $p-1=qb$.  By \eqref{1S}
and \eqref{normal}, either (i) $\om(b,1,S)\ge 2\log_2 S-1$ or (ii) for some
$S\le U<T\le x$, $|\om(b,U,T)-(\log_2 T-\log_2 U)|\ge \sqrt{\log_2 S\log_2 T}
-1$.
By Lemma \ref{sieve linear factors}, for each $b$, the number of $q$ is
$$
\ll \frac{x}{\phi(b)\log^2 (x/b)} \ll \frac{x(\log_2 x)^3}{b\log^2 x}.
$$

If $S\le x$, the sum of $1/b$ over $b$ satisfying (i) is
\begin{align*}
&\le \sum_{\substack{P^+(b')\le S \\
  \om(b')\ge 2\log_2 S-1 }} \frac1{b'} \prod_{S<p\le x} \(1+\frac{1}{p} \)
  \ll \frac{\log x}{\log S} \pfrac32^{1-2\log_2 S} \sum_{P^+(b')\le S}
  \frac{(3/2)^{\om(b')}}{b'} \\
&\ll (\log x)(\log S)^{1/2-2\log(3/2)} \ll (\log x)(\log S)^{-0.3},
\end{align*}
and otherwise the sum is
$$
\le \sum_{\substack{b'\le x \\ \Omega(b')\ge 2\log_2 S-1}} \frac{1}{b'}
\ll \pfrac32^{1-2\log_2 S} \sum_{b'\le x} \frac{(3/2)^{\om(b')}}{b'}
\ll \frac{(\log x)^{3/2}}{(\log S)^{2\log (3/2)}} \ll \frac{\log x}{(\log S)^{0.3}}.
$$

Consider $b$ satisfying (ii).  In particular, $S\le x$.
For positive integers $k$, let $t_k=e^{e^k}$.
For some integers $j,k$ satisfying
$\log_2 S-1 \le j<k \le \log_2 x+1$, we have
\be\label{omtjtk}
|\om(b,t_j,t_k)-(k-j+1)| \ge \sqrt{(k-1)\log_2 S} - 4,
\ee
for otherwise if $t_{j} \le U \le t_{j+1}$ and $t_k
\le T < t_{k+1}$, then $\om(b,t_{j+1},t_k) \le \om(b,U,T) \le \om(b,t_j,t_{k+1}),$ 
implying \eqref{normal}.  Now fix $j,k$ and let 
$h=\sqrt{(k-1)\log_2 S} - 4$.  For any integer $l\ge 0$,
\[
\sum_{\om(b,t_j,t_k)=l} \frac{1}{b} \le \prod_{p\le t_j} \(1+\frac{1}{p}\)
  \prod_{t_k<p\le x} \(1+\frac{1}{p}\) \frac{1}{l!} \Bigg( \sum_{t_j<p\le t_k}  
  \frac{1}{p} \Bigg)^l \ll e^{j-k} \log x \, \frac{(k-j+1)^l}{l!}.
\]
Summing over $|l-(k-j+1)| \ge h$ using Lemma \ref{exp partial}, 
we see that for each pair $(j,k)$, there are
\[
 \ll \frac{x(\log_2 x)^3}{\log x} e^{-(k-j)Q(\beta)}
\]
primes satisfying (ii),  where $\beta=1+\frac{h}{k-j+1}$.
Here we used the fact that $Q(1-\lam)>Q(1+\lam)$ for $0 < \lam \le 1$.
By the integral representation of $Q(x)$, we have $Q(1+\lam)\ge \frac{\lam}{2}
\log(1+\lam)$.  Also, $h\ge 0.99 \sqrt{k\log_2 S} \ge 990.$
If $h\ge k-j+1$, then 
$$
(k-j)Q(\b) \ge \frac{h(k-j)\log 2}{2(k-j+1)} \ge \frac{h\log 2}{4} \ge 
\frac{\log_2 S}{6},
$$
and if $h<k-j+1$, then
$$
(k-j) Q(\b) \ge \frac{(k-j)\log 2}{2} \pfrac{h}{k-j+1}^2 \ge
\frac{h^2}{3(k-j+1)} \ge \frac{\log_2 S}{4}.
$$
 As there are 
$\le (\log_2 x)^2$ choices for the pair $(j,k)$, the proof is complete.
\end{proof}

\begin{lem} \label{divisor squares} There are $O(\frac{x\log_2 x}{Y})$ numbers
$m \in \fancyV(x)$ with either $m$ or some $n \in \phi^{-1}(m)$ divisible by $d^2$
for some $d>Y$.
\end{lem}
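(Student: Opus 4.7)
The plan is to split the bad totients into two classes and bound each by a crude divisor count. For the class of $m \in \fancyV(x)$ with $d^2 \mid m$ for some $d > Y$, a direct union bound over the square divisors yields
\[
\#\{ m \le x : \exists d > Y,\ d^2 \mid m \} \le \sum_{d > Y} \lfloor x/d^2 \rfloor \le x \sum_{d > Y} \frac{1}{d^2} \ll \frac{x}{Y},
\]
which is already sharper than the claimed bound and disposes of the first half of the lemma.

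For the second class---totients $m$ possessing some preimage $n \in \phi^{-1}(m)$ with $d^2 \mid n$ for some $d > Y$---I would reduce to a count of bad preimages by exploiting that $\phi$ is a map, so that
\[
\#\{ m \in \fancyV(x) : \exists n \in \phi^{-1}(m)\ \exists d > Y,\ d^2 \mid n \} \le \#\{ n : \phi(n) \le x,\ \exists d > Y,\ d^2 \mid n \};
\]
each such $m$ has, by definition, at least one witness preimage $n$, and distinct $n$'s map to the same $m$ only reducing, not increasing, the count. The key input is then the classical lower bound $\phi(n) \ge n/(e^{\gamma}\log_2 n + O(1))$ for $n \ge 3$: combined with $\phi(n) \le x$, this forces $n \le C_0 \, x \log_2 x$ for some absolute $C_0$ once $x$ is sufficiently large. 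With the range for $n$ thus restricted, the same divisor sum as before, now applied with $C_0 x \log_2 x$ in place of $x$, gives
\[
\sum_{d > Y} \frac{C_0 x \log_2 x}{d^2} \ll \frac{x \log_2 x}{Y}.
\]
Adding the two contributions yields the stated bound.

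There is essentially no obstacle here; the whole argument is a union bound over squares $d^2$ with $d > Y$. The $\log_2 x$ factor in the conclusion appears precisely because preimages of totients $\le x$ can be as large as $\Theta(x \log_2 x)$, inflating the interval over which one must count integers with a large square divisor. If one only cared about $m$ itself, the sharper bound $O(x/Y)$ would suffice.
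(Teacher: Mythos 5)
Your proposal is correct and is essentially the paper's own argument: both rest on the standard bound $\phi(n)\le x \implies n \ll x\log_2 x$, followed by a union bound $\sum_{d>Y} z/d^2 \ll z/Y$ applied to $z=x$ for the totients and $z\ll x\log_2 x$ for the preimages.
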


\begin{proof}  If $\phi(n)\le x$, then from a standard estimate,
$n\ll x\log_2 x$.  Now $\sum_{d>Y} z/d^2 \ll z/Y$. 
\end{proof}

Our next result says roughly that most totients have a preimage which is $S$-normal 
for an appropriate $S$, and that neither the totient nor preimage has a large square factor or a large number of prime factors.

\begin{defn}
 A totient $m$ is said to be $S$-nice if
\begin{enumerate}
 \item[(a)] $\Omega(m)\le 5\log_2 m$,
\item[(b)] $d^2|m$ or $d^2|n$ for some $n\in \phi^{-1}(m)$ implies $d\le S^{1/2}$,
\item[(c)] for all $n\in \phi^{-1}(m)$, $n$ is divisible only by $S$-normal primes.
\end{enumerate}
\end{defn}

Now let
\be\label{W def}
W(x) = \max_{2\le y\le x} \frac{V(y) \log y}{y}.
\ee

\begin{lem} \label{divisor normal} Uniformly for $x\ge 3$ and $2\le S\le x$, the
number of $m \in \fancyV(x)$  which are not $S$-nice is
$$
O \( \frac{x W(x) (\log_2 x)^6}{\log x} (\log S)^{-1/6} \).
$$
\end{lem}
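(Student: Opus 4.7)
The plan is to decompose the set of non-$S$-nice totients into three subsets according to which of conditions (a), (b), (c) in the definition fails, and bound each separately. Conditions (a) and (b) are straightforward. For (a), Lemma~\ref{Omega lem} with $\alpha=5$ yields that the number of $m\le x$ with $\Omega(m)>5\log_2 m$ is $O(x/\log^2 x)$, which is negligible. For (b), Lemma~\ref{divisor squares} with $Y=S^{1/2}$ gives $O(x\log_2 x / S^{1/2})$ exceptional totients, which is also small enough. At this point one may assume $\log S \ge (\log_2 x)^{36}$, since otherwise the target bound already exceeds $V(x)$ (as $V(x)\le W(x)x/\log x$) and the lemma is trivial; in that non-trivial regime, the $S^{1/2}$ in the denominator of the bound for (b) is enormous.

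For condition (c), suppose $m\in \FF V(x)$ has a preimage $n$ and a non-$S$-normal prime $p$ with $p^a\|n$. The key observation is that $\phi(n/p)$ equals $m/(p-1)$ when $a=1$ (as $\phi(n)=(p-1)\phi(n/p)$) and equals $m/p$ when $a\ge 2$ (from $\phi(p^a)/\phi(p^{a-1})=p$); in both cases $m/(p-1)$ (resp.\ $m/p$) is itself a totient of size at most $x/(p-1)$ (resp.\ $x/p$). In the $a\ge 2$ case with $p>S^{1/2}$, the preimage is divisible by $p^2$ and $m$ already lies in the set handled by (b). Hence the number of totients violating (c) is bounded by
\[
\sum_{\substack{p\le x+1 \\ p\text{ not }S\text{-normal}}} V\!\(\tfrac{x}{p-1}\) \;+\; \sum_{\substack{p\le S^{1/2} \\ p\text{ not }S\text{-normal}}} V\!\(\tfrac{x}{p}\).
\]
The second sum is dominated by the first after a minor rescaling (since $p\le S^{1/2}\le x^{1/2}$ gives $\log(x/p)\ge \tfrac12\log x$ and Lemma~\ref{normal lem} applied to primes $\le S^{1/2}$ yields an acceptable $\sum 1/p$ bound).

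For the main sum, use $V(y)\le W(x)\, y/\log y$ on $2\le y\le x$ and treat separately $p>x/2$ (where $V(x/(p-1))=O(1)$ and Lemma~\ref{normal lem} directly bounds the count of non-$S$-normal $p\in(x/2,x+1]$). For $p\le x/2$ one is reduced to estimating
\[
W(x)\,x\sum_{\substack{p\le x/2 \\ p\text{ not }S\text{-normal}}} \frac{1}{(p-1)\log(x/(p-1))}.
\]
Decompose dyadically, $p\in(2^k,2^{k+1}]$ for $1\le k\le K:=\lfloor\log_2(x/2)\rfloor\asymp\log x$. Lemma~\ref{normal lem} bounds the number of non-$S$-normal primes in such a block by $\ll 2^k(\log(k+1))^5/(k(\log S)^{1/6})$, contributing
\[
\frac{(\log(k+1))^5}{k\,(K-k+1)\,(\log S)^{1/6}}.
\]
Splitting the outer sum over $k$ at $K/2$ (using $\sum_{k\le K/2}(\log k)^5/k\ll(\log K)^6$ and $\sum_{k>K/2} 1/(K-k+1)\ll \log K$) gives a total of $\ll (\log_2 x)^6/(\log x\cdot (\log S)^{1/6})$, yielding the claimed bound. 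The main technical obstacle lies precisely in this dyadic balance: the $(\log_2 x)^6$ factor arises from combining the $(\log_2 x)^5$ exponent in Lemma~\ref{normal lem} with the extra harmonic factor coming from primes $p$ close to $x$, and one must verify that neither end of the dyadic range dominates unfavorably.
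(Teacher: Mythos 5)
Your proof is correct and follows essentially the same strategy as the paper: conditions (a) and (b) are disposed of via Lemmas \ref{Omega lem} and \ref{divisor squares} after reducing to $\log S \ge (\log_2 x)^{36}$, and condition (c) is reduced to $\sum_p V(x/(p-1))$ over non-$S$-normal primes $p$, estimated using $V(y)\ll W(x)y/\log y$ and Lemma \ref{normal lem}. Your explicit splitting into the cases $p\parallel n$ versus $p^2\mid n$, and your dyadic decomposition in place of the paper's partial-summation integral, are minor technical variations that lead to the same $(\log_2 x)^6/\log x$ factor.
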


\begin{proof} We may suppose $S\ge \exp \{ (\log_2 x)^{36} \}$,
for otherwise the lemma is trivial. 
By Lemmas \ref{Omega lem} and \ref{divisor squares}, the number of totients 
failing (a) or
failing (b) is $O(x/\log^2 x)$.
Suppose $p$ is a prime divisor of $n$ for some $n\in
\phi^{-1}(m)$.  If $n=n'p$ then either $\phi(n)=(p-1)\phi(n')$ or
$\phi(n)=p\phi(n')$, so in either case $\phi(n') \le x/(p-1)$.
Let $G(t)$ denote the number of primes $p\le t$ which are not
$S$-normal.  By Lemma \ref{normal lem},
the number of $m$ failing (c) is at most
\begin{align*}
2 \sum_p V\pfrac{x}{p-1} &\ll \sum_p \frac{x W(x/(p-1))}{(p-1)\log(x/p)} \\
&\ll x W(x) \int_{2}^{x/2} \frac{G(t) dt}{t^2\log(x/t)}
\ll \frac{x W(x) (\log_2 x)^6}{\log x} (\log S)^{-1/6}.
\qedhere
\end{align*}
\end{proof}

%
%
%
\section{The fundamental simplex}
%
%
%

For a natural number $n$, write $n=q_1 q_2 \cdots$, where $q_1 \ge q_2 \ge \cdots$,
$q_i$ are prime for $i\le \om(n)$ and $q_i=1$ for $i>\om(n)$.
For $\fancyS \subseteq [0,1]^L$, let
$\curly R_L(\fancyS;y)$ denote the set of integers $n$ with $\om(n)\le L$ and
\[
\( \frac{\max(0,\log_2 q_i)}{\log_2 y}, \ldots, \frac{\max(0,\log_2 q_L)}{\log_2 y}
\)  \in \fancyS,
\]
where $\max(0,\log_2 1)$ is defined to be 0.  Also set
\be\label{RL def}
R_L(\fancyS;y) = \sum_{n \in \curly R_L(\fancyS;y)} \frac{1}{\phi(n)}.
\ee
Heuristically, $R_L(\fancyS;x) \approx
(\log_2 y)^L \vvol(\fancyS).$
Our result in this direction relates $R_L(\fancyS;y)$ to the volume of
perturbations of $\fancyS$.  Specifically, letting $\fancyS + \vv$ denote the translation of $\fancyS$ by the vector $\vv$, for $\e>0$ let
$$
\fancyS^{+\eps} = \bigcup_{\vv \in [-\e,\e]^L} \( \fancyS + \vv \),
\qquad \fancyS^{-\eps} = \bigcap_{\vv \in [-\e,\e]^L} \( \fancyS + \vv \).
$$

\begin{lem}\label{sumvol}
Let $y\ge 2000$, $\e = 1/\log_2 y$ and suppose $\fancyS \subseteq
\{ \vx\in \RR^L : 0\le x_L \le \cdots \le x_1\le 1\}$. 
Then
$$
(\log_2 y)^L \vvol\(\fancyS^{-\e}\) \ll R_L(\fancyS;y) \ll (\log_2 y)^L \vvol\(\fancyS^{+\e}\).
$$
\end{lem}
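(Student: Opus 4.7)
My plan is to approximate the discrete sum $R_L(\fancyS;y)$ by a Riemann-type integral for $\vvol(\fancyS)$, using a decomposition of $[0,1]^L$ into cubes of side $\e=1/\log_2 y$ and Mertens' theorem on each cube.

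First I reduce to squarefree $n=p_1 p_2\cdots p_L$ with $p_1>p_2>\cdots>p_L$ distinct primes. For non-squarefree $n$, summing $1/\phi(n)$ over the possible multiplicities with fixed squarefree kernel $m$ inflates the contribution by at most $\prod_{p|m}(1-1/p)^{-1}=O(1)$, which is absorbed into the implicit constants. For $\omega(n)=k<L$, the padding $q_{k+1}=\cdots=q_L=1$ forces $\xi_{k+1}=\cdots=\xi_L=0$, so such terms form a $k$-dimensional sub-problem on the slice $\fancyS\cap\{\xi_{k+1}=\cdots=\xi_L=0\}$. Inductively on $L$, they contribute $O((\log_2 y)^k)$ times the slice volume; since $\fancyS^{+\e}$ contains an $\e$-thickening of this slice, this is absorbed into $(\log_2 y)^L\vvol(\fancyS^{+\e})$. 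It therefore suffices to estimate
\[
\Sigma := \sum_{\substack{p_1>\cdots>p_L\\ \vxi(\vp)\in\fancyS}}\prod_{i=1}^L\frac{1}{p_i-1}, \qquad \vxi(\vp)=\left(\frac{\log_2 p_1}{\log_2 y},\ldots,\frac{\log_2 p_L}{\log_2 y}\right).
\]

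Next, partition $[0,1]^L$ into half-open cubes $B_{\vm}=\prod_i[m_i\e,(m_i+1)\e)$ for $\vm\in\ZZ^L_{\ge 0}$. For each coordinate, Mertens' theorem gives
\[
\sum_{\substack{p\text{ prime}\\ \xi(p)\in[m_i\e,(m_i+1)\e)}}\frac{1}{p-1}=\e\log_2 y+O(e^{-m_i\e\log_2 y})=1+o(1),
\]
provided $m_i$ is bounded away from $0$. Multiplying over the $L$ coordinates, each such cube contributes $\asymp 1$ to $\Sigma$, and the strict ordering $p_1>\cdots>p_L$ holds automatically off the diagonals $m_i=m_{i+1}$. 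For the upper bound, I sum over cubes meeting $\fancyS$: their union lies in $\fancyS^{+\e}$, so their number is at most $\vvol(\fancyS^{+\e})/\e^L$, yielding $\Sigma\ll(\log_2 y)^L\vvol(\fancyS^{+\e})$. For the lower bound, I sum only over cubes fully contained in $\fancyS$: these exhaust $\fancyS^{-\e}$, yielding $\Sigma\gg(\log_2 y)^L\vvol(\fancyS^{-\e})$.

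The hardest step is handling cubes adjacent to the coordinate hyperplanes, where $m_i$ is small and the Mertens error dominates the main term. These correspond to tuples $\vp$ with some $p_i$ of bounded size; for such $p_i$ the factor $1/(p_i-1)$ is $O(1)$, and the sum over a single small-$m_i$ coordinate is $\sum_{p\le \exp(\exp O(1))}1/(p-1)=O(1)$. Restricting one coordinate to a bounded range reduces the effective dimension by one, so the total contribution of such cubes is $O((\log_2 y)^{L-1})$, which is absorbed into the $\pm\e$ thickening or into the lower-dimensional inductive estimates. Diagonal cubes $\{m_i=m_{i+1}\}$ carry only $O(L^2\e)$ in volume and are similarly negligible.
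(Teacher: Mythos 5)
Your tiling of $[0,1]^L$ by $\e$-cubes and the per-cube Mertens estimate is exactly the strategy the paper uses, so the core of the argument is right. The gap is in your preliminary reduction to squarefree $n$. You claim that summing $1/\phi(n)$ over all $n$ with a fixed squarefree kernel $m$ inflates $1/\phi(m)$ by at most $\prod_{p\mid m}(1-1/p)^{-1}=O(1)$; this factor is in fact not uniformly bounded (when $m$ is the product of the first $L$ primes it is $\asymp\log L$, and $L$ is free in the lemma, with absolute implied constants required), and --- more fundamentally --- replacing $n$ by its squarefree kernel changes the coordinate vector $\vxi(n)$: a repeated prime occupies consecutive slots $q_i=q_{i+1}$, and removing the repeat slides all later coordinates one position to the left and appends a zero, so the condition $\vxi(n)\in\fancyS$ does not transfer to the squarefree kernel for a general $\fancyS$.

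The paper avoids the issue by never making a global squarefree reduction. For each cube $B(\vm)$ it bounds $U(m,j)=\sum_{r_1\le\cdots\le r_j,\;r_i\in I_m}1/\phi(r_1\cdots r_j)$ by extracting the squarefull part $h$ of $r_1\cdots r_j$ inside that single box, obtaining $U(m,j)\le s_m^j/j!+t_m e^{s_m}\le 1+O(e^{-m})$, where $t_m$ is the (tiny) sum of $1/\phi(h)$ over squarefull $h$ supported on primes in $I_m$. The product $\prod_{m\ge1}(1+O(e^{-m}))$ converges, so each cube contributes $O(1)$ with an absolute constant uniformly in $L$; that convergent structure is the whole point. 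The same concern applies to your lower bound near the coordinate hyperplanes, where the Mertens error term can exceed the main term; the paper's remedy is the two-sided lower bound $\max(\exp\{-e^{m_i}\},1+O(e^{-m_i}))$ for the per-interval sum, and since the $m_i$ for a cube contained in $\fancyS$ are distinct positive integers, the product over $i$ of these lower bounds is again a fixed convergent product rather than merely $c^L$. Your ``cubes adjacent to the coordinate hyperplanes'' paragraph does not close this on the lower-bound side.
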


\begin{proof}
For positive integers $m_1,\ldots,m_L$, let $B(\vm)=\prod_{i=1}^L
 [(m_i-1)\e,m_i\e)$.  If $\fancyB$ is the set of boxes $B(\vm)$ entirely
 contained in $\fancyS$, then the union of these boxes contains
 $\fancyS^{-\e}$.  Moreover, for each box, $m_1>m_2>\ldots>m_L\ge 1$.
For $m\ge 1$, there is at least one prime 
in $I_m := [\exp (e^{m-1}),\exp (e^m))$, thus
\begin{align*}
R_L(\fancyS;y)  &\ge \sum_{B(\vm)\in \fancyB} \prod_{i=1}^L \;\;
\sum_{m_i-1 \le \log_2 p < m_i} \frac{1}{p-1} \\
&=\sum_{B(\vm)\in \fancyB} \prod_{i=1}^L \max\(\exp \{-e^{m_i}\}, 1+O(e^{-m_i})\) 
\gg|\fancyB| \ge (\log_2 y)^L \vvol(\fancyS^{-\e}).
\end{align*}
For the second part, suppose $\fancyS$ is nonempty and let $\fancyB$ 
be the set of boxes $B(\vm)$ which
intersect $\fancyS$, so that their union is contained in $\fancyS^{+\e}$.
For $B(\vm)\in \fancyB$,
let $j_m=|\{i : m_i=m\}|$.  Then
\[
R_L(\fancyS;y)  \le \sum_{B(\vm)\in \fancyB} \prod_{m\ge 1} U(m,j_m),
\qquad U(m,j)=\sum_{r_1\le \cdots \le r_j, r_i\in I_m}
\frac{1}{\phi(r_1\cdots r_j)}.
\]
Here each $r_i$ is prime, except that when $m=0$ we allow $r_i=1$ also.
We have $U(0,j)\le \sum_{P^+(n)\le 13} 1/\phi(n) \ll 1$.  Now suppose $m\ge 1$ and let
$j=j_m$.  For each $r_1,\ldots,r_j$, write $r_1\cdots r_j=kh$, where $(k,h)=1$,
$k$ is squarefree and $h$ is squarefull.  Also let  $\ell = \omega(k)$.
Setting
\[
 t_m=\sum_{\substack{h\text{ squarefull}\\ p|h\implies p\in I_m}} \frac{1}{\phi(h)},
\qquad s_m=\sum_{p\in I_m} \frac{1}{p-1}=1+O(e^{-m}),
\]
we have
\[
 U(m,j)\le \frac{s_m^j}{j!} + t_m \sum_{\ell=0}^{j-2} \frac{s_m^\ell}{\ell!}
\le  \frac{s_m^j}{j!}+ t_m e^{s_m} \le 1 + O(e^{-m}).
\]
We conclude that
\[
 R_L(\fancyS;y) \ll \sum_{B(\vm)\in \fancyB} \prod_{m\ge 1} (1+O(e^{-m})) \ll
|\fancyB| \le (\log_2 y)^L \vvol(\fancyS^{+\e}). \qedhere
\]
\end{proof}

Suppose $\xi_i > 0$ for $0\le i\le L-1$.
Recall \eqref{F def} and
let $\fancyS_L^*(\bxi)$ be the set of $(x_1, \ldots, x_L) \in\RR^L$ satisfying
\begin{align*}
(I_0) & &a_1x_1 + a_2x_2 + \cdots + a_L x_L &\le \xi_0, \hfill \\
(I_1) & &a_1x_2 + a_2x_3 + \cdots + a_{L-1}x_L &\le \xi_1 x_1, \\
\vdots\;\;\; && &\vdots\\
(I_{L-2}) & &a_1 x_{L-1} + a_2 x_{L} &\le \xi_{L-2} x_{L-2}, \\
(I_{L-1}) & & 0\le x_{L} &\le \xi_{L-1} x_{L-1}.
\end{align*}
and let $\fancyS_L(\bxi)$ be the subset of $\fancyS_L^*(\bxi)$ satisfying
$0 \le x_L \le \cdots \le x_1 \le 1$.
Define
$$
T_L^*(\bxi) = \vvol(\fancyS_L^*(\bxi)), \qquad T_L(\bxi) =
\vvol(\fancyS_L(\bxi)).
$$
For convenience, let $\vone=(1,1,\ldots,1)$,
$\fancyS_L = \fancyS_L(\vone)$ (the
``fundamental simplex''), $T_L = \vvol(\fancyS_L)$,
$\fancyS_L^*=\fancyS_L^*(\vone)$, and $T_L^*=\vvol ( \fancyS_L^*)$.
We first relate
$\fancyS_L(\vxi)$ to $\fancyS_L$.  The next lemma is trivial.

\begin{lem}\label{xi1}  
If $\xi_i\ge 1$ for all $i$, and $\vx\in\fancyS_L(\vxi)$,
then $\vy\in\fancyS_L$, where $y_i = (\xi_0\cdots \xi_{i-1})^{-1} x_i$.
If $0<\xi_i\le 1$ for all $i$ and $\vy\in \fancyS_L$, then $\vx\in\fancyS_L(\vxi)$, where $x_i = (\xi_0 \cdots \xi_{i-1})y_i$.
\end{lem}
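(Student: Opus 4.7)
The plan is pure substitution: I will verify each defining linear inequality of the target simplex by rewriting it in terms of the source coordinates and then invoking the corresponding inequality for the source simplex, using the size hypotheses on the $\xi_i$ to absorb the leftover scaling factors.

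For the first statement, assume $\xi_i \ge 1$ for all $i$ and $\vx \in \fancyS_L(\vxi)$, and set $y_i = (\xi_0 \cdots \xi_{i-1})^{-1} x_i$. To verify inequality $(I_j)$ for $\fancyS_L$ with $1 \le j \le L-2$, I multiply the target inequality $\sum_{k=1}^{L-j} a_k y_{j+k} \le y_j$ through by $\xi_0 \cdots \xi_{j-1}$ to obtain the equivalent statement
\[
\sum_{k=1}^{L-j} a_k (\xi_j \xi_{j+1} \cdots \xi_{j+k-1})^{-1} x_{j+k} \le x_j.
\]
Since each $\xi_i \ge 1$, the prefactor $(\xi_j \cdots \xi_{j+k-1})^{-1}$ is bounded by $\xi_j^{-1}$, so the left side is at most $\xi_j^{-1} \sum_{k=1}^{L-j} a_k x_{j+k}$, which is $\le x_j$ by the inequality $(I_j)$ for $\fancyS_L(\vxi)$. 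The case $(I_0)$ and the boundary condition $y_L \le \xi_{L-1}^{-1} \cdot \xi_{L-1} y_{L-1}$ from $(I_{L-1})$ are handled identically. The chain $0 \le y_L \le \cdots \le y_1 \le 1$ then follows from $y_i/y_{i-1} = x_i/(\xi_{i-1} x_{i-1}) \le 1$ (using $x_i \le x_{i-1}$ and $\xi_{i-1} \ge 1$) together with $y_1 = \xi_0^{-1} x_1 \le 1$.

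The second statement is entirely symmetric. With $\xi_i \le 1$ and $x_i = (\xi_0 \cdots \xi_{i-1}) y_i$, substitution into $(I_j)$ for $\fancyS_L(\vxi)$ reduces, after dividing by $\xi_0 \cdots \xi_{j-1}$, to
\[
\sum_{k=1}^{L-j} a_k (\xi_j \xi_{j+1} \cdots \xi_{j+k-1}) y_{j+k} \le \xi_j y_j,
\]
and now the bound $\xi_j \cdots \xi_{j+k-1} \le \xi_j$ (each factor $\le 1$) plays the role that $(\xi_j \cdots \xi_{j+k-1})^{-1} \le \xi_j^{-1}$ played above, reducing matters to $\sum_{k=1}^{L-j} a_k y_{j+k} \le y_j$, which is $(I_j)$ for $\fancyS_L$. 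The ordering $x_i \le x_{i-1}$ comes from $x_i/x_{i-1} = \xi_{i-1} (y_i/y_{i-1}) \le 1$, and $x_1 = \xi_0 y_1 \le 1$.

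There is no real obstacle: the lemma is a book-keeping statement about how the diagonal scaling $y_i \mapsto (\xi_0 \cdots \xi_{i-1}) y_i$ interacts with the nested system of inequalities defining $\fancyS_L$, and the direction of the hypothesis $\xi_i \gtrless 1$ is exactly what is needed to make the factor $\xi_j \cdots \xi_{j+k-1}$ in each $(I_j)$ control itself by its leading term $\xi_j$. The only thing to watch is the asymmetry between the two directions of the hypothesis, which is why the two statements of the lemma require separate (but parallel) calculations.
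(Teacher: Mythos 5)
Your proof is correct and is exactly the straightforward substitution-and-monotonicity verification that the paper implicitly invokes when it declares the lemma trivial. All the key observations — that $(\xi_j \cdots \xi_{j+k-1})^{\mp 1}$ is dominated by $\xi_j^{\mp 1}$ in the appropriate direction, and that the chain and $(I_{L-1})$ conditions survive the diagonal rescaling — are present and correctly used.
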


\begin{lemcor}\label{TLxi} Define
$H(\vxi) = \xi_0^L \xi_1^{L-1} \cdots \xi_{L-2}^2 \xi_{L-1}.$
 We have $T_L \le T_L(\vxi) \le H(\vxi) T_L$
when $\xi_i\ge 1$ for all $i$, and
$H(\vxi) T_L \le T_L(\vxi) \le T_L$ when $0<\xi_i\le 1$ for all $i$.
\end{lemcor}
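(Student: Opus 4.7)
The plan is to deduce this corollary directly from Lemma~\ref{xi1} by combining two ingredients: (i) easy set-theoretic inclusions between $\fancyS_L$ and $\fancyS_L(\vxi)$, which immediately give one of the two bounds in each case, and (ii) the linear change of variables supplied by Lemma~\ref{xi1}, whose Jacobian is diagonal and evaluates to exactly $H(\vxi)$.

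\textbf{Case $\xi_i \ge 1$ for all $i$.} For the lower bound $T_L \le T_L(\vxi)$, I would simply observe that enlarging each $\xi_i$ from $1$ to something $\ge 1$ weakens every defining inequality of the simplex while leaving the box constraint $0\le x_L\le \cdots\le x_1\le 1$ unchanged; hence $\fancyS_L\subseteq \fancyS_L(\vxi)$ and the volumes compare accordingly. For the upper bound $T_L(\vxi)\le H(\vxi)T_L$, I would apply the substitution $y_i=(\xi_0\cdots\xi_{i-1})^{-1}x_i$ from Lemma~\ref{xi1}, which sends $\fancyS_L(\vxi)$ into $\fancyS_L$. The map is linear with diagonal Jacobian, so
\[
 \left|\frac{\partial \vx}{\partial \vy}\right|=\prod_{i=1}^L \xi_0\cdots\xi_{i-1}
=\xi_0^{L}\xi_1^{L-1}\cdots\xi_{L-1}=H(\vxi),
\]
and the change-of-variables formula gives $T_L(\vxi)=\int_{\fancyS_L(\vxi)}d\vx=H(\vxi)\int_{\text{image}}d\vy\le H(\vxi)T_L$.

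\textbf{Case $0<\xi_i\le 1$ for all $i$.} The two bounds are obtained in exactly the symmetric way. The upper bound $T_L(\vxi)\le T_L$ is immediate because shrinking the $\xi_i$ tightens every inequality, so $\fancyS_L(\vxi)\subseteq \fancyS_L$. For the lower bound, I would invoke the second half of Lemma~\ref{xi1}: the map $x_i=(\xi_0\cdots\xi_{i-1})y_i$ sends $\fancyS_L$ into $\fancyS_L(\vxi)$, and the same Jacobian computation yields $T_L(\vxi)\ge H(\vxi)T_L$.

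There is no genuine obstacle in this argument; the corollary is essentially a packaging of Lemma~\ref{xi1} with a one-line determinant computation. The only subtlety worth checking carefully is that the box constraint $0\le x_L\le\cdots\le x_1\le 1$ behaves correctly under the rescaling $y_i=(\xi_0\cdots\xi_{i-1})^{-1}x_i$: in the case $\xi_i\ge 1$ the inequality $x_{i+1}\le x_i$ becomes $\xi_i y_{i+1}\le y_i$, which implies $y_{i+1}\le y_i$, and $\xi_0 y_1\le 1$ implies $y_1\le 1$; in the case $\xi_i\le 1$ the implications run the other direction, $y_{i+1}\le y_i$ implying $x_{i+1}\le x_i$ and $y_1\le 1$ implying $x_1\le 1$. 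This verifies that the maps indeed land in the claimed simplex in each case, completing the proof.
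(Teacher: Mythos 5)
Your proof is correct and is exactly the argument the paper intends; the paper states Corollary~\ref{TLxi} without a written proof, treating it as an immediate consequence of Lemma~\ref{xi1}, and your derivation (two trivial set inclusions for the easy bounds, plus the diagonal linear change of variables with Jacobian $\prod_{i=1}^{L}\xi_0\cdots\xi_{i-1}=H(\vxi)$ for the other two) is precisely that deduction, including the routine verification that the box constraint $0\le x_L\le\cdots\le x_1\le 1$ is preserved under the rescaling.
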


In applications, $H(\vxi)$ will be close to 1, so we
concentrate on bounding $T_L$.

\begin{lem}\label{TL}
We have
$$
T_L^* \order T_L \order  \frac{\rho^{L(L+3)/2}}{L!}(F'(\rho))^L.
$$
\end{lem}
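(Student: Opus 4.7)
The plan is to compute $T_L^*$ up to constants via a triangular change of variables, then deduce the matching lower bound for $T_L$ by showing the ordering constraint removes at most a constant fraction of the volume. First, I handle the pattern-breaking $(I_{L-1})$: $x_L\le x_{L-1}$ (coefficient $1$ instead of the expected $a_1$) by replacing it with $a_1x_L\le x_{L-1}$, defining the enlarged ``nice'' simplices $\widetilde{\fancyS}_L^*\supseteq \fancyS_L^*$ and $\widetilde{\fancyS}_L\supseteq \fancyS_L$. The linear map $\phi:(x_1,\ldots,x_L)\mapsto(x_1,\ldots,x_{L-1},a_1 x_L)$, with Jacobian $a_1$, sends $\widetilde{\fancyS}_L^*$ into $\fancyS_L^*$: the nice $(I_{L-1})$ for $y$ becomes the original $x_L\le x_{L-1}$ for $\phi(y)$, and each $(I_j)$ with $j<L-1$ becomes a strictly stronger inequality for $\phi(y)$ because the substitution introduces the term $a_{L-j}x_L/a_1\ge a_{L-j}x_L$. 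Hence $T_L^*\ge a_1\widetilde{T}_L^*$, and combined with $T_L^*\le \widetilde{T}_L^*$ we get $T_L^*\asymp \widetilde{T}_L^*$; the same reasoning yields $T_L\asymp \widetilde{T}_L$.

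In $\widetilde{\fancyS}_L^*$ the triangular substitution $u_j=x_j-\sum_{k=1}^{L-j}a_kx_{j+k}$ ($j=1,\ldots,L-1$), $u_L=x_L$, has unit Jacobian, and $(I_1),\ldots,(I_{L-2})$ together with the nice $(I_{L-1})$ become $u_j\ge 0$. Inverting yields $x_j=\sum_{i=j}^L c_{i-j}u_i$ with $c_0=1$ and $c_i=\sum_{k=1}^i a_kc_{i-k}$; the generating function is $C(z):=\sum_{i\ge 0}c_iz^i=1/(1-F(z))$. Constraint $(I_0)$ becomes $\sum_{i=1}^L c_iu_i\le 1$, so $\widetilde{T}_L^*=1/(L!\prod_{i=1}^L c_i)$. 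Since $F$ has positive Taylor coefficients and is strictly increasing on $[0,1)$ with $F(\rho)=1$, $F'(\rho)>0$, and $|F(z)|<1$ for $|z|\le\rho$, $z\ne\rho$, the function $C$ has a simple pole at $\rho$ and is analytic in a larger disc $|z|<\rho+\delta$. Subtracting the polar part $1/(F'(\rho)(\rho-z))$ leaves an analytic function there, yielding $c_i=\frac{1}{F'(\rho)\rho^{i+1}}(1+O(\eta^i))$ for some $\eta\in(0,1)$. The relative error is summable, so $\prod_{i=1}^L c_i\asymp F'(\rho)^{-L}\rho^{-L(L+3)/2}$ and $\widetilde{T}_L^*\asymp \rho^{L(L+3)/2}(F'(\rho))^L/L!$. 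Combined with the reduction, $T_L\le T_L^*\ll \rho^{L(L+3)/2}(F'(\rho))^L/L!$.

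For the matching lower bound $T_L\gg \widetilde{T}_L^*$, I show that the ordering $x_1\ge\cdots\ge x_L$ holds on a positive proportion of $\widetilde{\fancyS}_L^*$. In $u$-coordinates, $x_{j-1}\ge x_j$ reads $u_{j-1}+\sum_{m\ge 2}(c_m-c_{m-1})u_{j-1+m}\ge(1-a_1)u_j$: only the coefficient of $u_j$ is negative, and by the asymptotics above the positive coefficients grow geometrically. Reparametrizing via $\lambda_i=c_iu_i$ makes $(\lambda_1,\ldots,\lambda_L)$ uniform on $\{\lambda_i\ge 0,\sum\lambda_i\le 1\}$, and using $c_{j-1}/c_{j-1+m}\sim\rho^m$ the ordering inequality becomes (to leading order) $\lambda_{j-1}+\alpha\sum_{i>j}\lambda_i\ge\beta\lambda_j$ for positive absolute constants $\alpha,\beta$. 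For $j\le L-K$ (an absolute $K$ chosen so the tail dominates), $\sum_{i>j}\lambda_i$ concentrates around $(L-j)/L$ and each such ordering fails with probability $O(e^{-c(L-j)})$, summing to $o(1)$. For the $O(1)$ indices $j>L-K$, conditioning on $\lambda_1+\cdots+\lambda_{L-K}$ makes the remaining coordinates uniform on a smaller simplex of the same shape, and the finite system of boundary orderings is simultaneously satisfied with probability bounded below by a positive constant depending only on $K$ (not $L$). Combining, $T_L\gg \widetilde{T}_L^*$, completing the proof. The principal technical obstacle is this last step: ensuring that the single negative term $-(1-a_1)u_j$ is outweighed on a positive-measure subset, particularly for the boundary constraints near $j=L$ where the positive tail contribution is small and one must argue via a careful calculation with the Dirichlet-type distribution.
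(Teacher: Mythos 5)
Your upper-bound argument takes a genuinely different, and in fact cleaner, route than the paper's. The paper computes $T_L^*$ exactly via the vector identity \eqref{v0 rep} and Lemma \ref{volume}, getting $T_L^*=1/(L!(g_1\cdots g_{L-1})g_L^*)$ with the awkward correction term $g_L^*$, and then controls $T_L^*-T_L$ by estimating each bad volume $V_i=\vvol(\fancyS_L^*\cap\{x_i<x_{i+1}\})$ through a second application of Lemma \ref{volume} with modified vector systems and explicit numerics, ending with $\sum V_i<0.61\,T_L^*$. Your rescaling of $x_L$ to make $(I_{L-1})$ into $a_1x_L\le x_{L-1}$, followed by the unit-Jacobian triangular substitution, gives the clean formula $\widetilde T_L^*=1/(L!\prod_i c_i)$ directly; your complex-analytic derivation of $c_i=\lambda\rho^{-i}(1+O(\eta^i))$ is the same content as the paper's Lemma \ref{gh} (which instead uses the auxiliary function $k(z)=(1-z)^2l(z)$ to obtain the explicit bound $|g_i-\lambda\rho^{-i}|\le5$; either version suffices for an $\asymp$ statement). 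Your $c_i$ are precisely the paper's $g_i$.

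The lower bound, however, has genuine gaps. First, you never verify that $c_m>c_{m-1}$ for all $m\ge2$, which is required for your claim that only the $u_j$-coefficient is negative; the asymptotics $c_m\sim\lambda\rho^{-m}$ give this only for large $m$, and a finite check is needed for small $m$ (it is true, and follows from the paper's remark after Lemma \ref{gh} that $g_i-\lambda\rho^{-i}$ increases for $i\ge1$, but you do not say so). Second, you omit the constraint $x_1\le1$ from the definition of $\fancyS_L$; this is easy to include (it corresponds to $\lambda_1$ being of constant order rather than $O(1/L)$, and fails with probability decaying geometrically in $L$, paralleling the paper's bound $V_0\ll(7/9)^LT_L^*$), but it is part of the event you must control. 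Third, and most seriously, the treatment of the $K$ boundary indices $j>L-K$ does not close. You correctly note that each such ordering constraint fails with constant probability (so a bare union bound cannot work for them), and propose conditioning plus a positivity argument giving some $c(K)>0$; but you then need $c(K)$ to dominate the accumulated bulk failure probability $O(e^{-cK})$, and both quantities decay with $K$, so the comparison hinges on the actual constants, which you never compute. That computation is exactly what the paper's detailed estimates ($A_i>4$, $g_j+B_ih_{j-i}>1.44\,g_j$, etc.) deliver when it proves $\sum V_i<0.61\,T_L^*$. Your final sentence flags this as the ``principal technical obstacle,'' and you are right: as written, the lower bound is incomplete precisely there.
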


\begin{lemcor}\label{TL cor} If $H(\vxi) \order 1$, then
$$
T_L(\bxi) \order T_L(\bxi) \order  \frac{\rho^{L(L+3)/2}}{L!}(F'(\rho))^L.
$$
Furthermore, if $L=2C (\log_3 x - \log_4 x) - \Psi$, where $0 \le \Psi \ll
\sqrt{\log_3 x}$, then
\begin{multline*}
(\log_2 x)^L T_L(\vxi) = \exp \{ C(\log_3 x-\log_4 x)^2
 + D\log_3 x - (D+1/2-2C) \log_4 x \\
-\Psi^2/4C - (D/2C-1)\Psi + O(1) \}.
\end{multline*}
If $L=[2C (\log_3 x - \log_4 x)] - \Psi > 0$, then
\begin{multline*}
(\log_2 x)^L T_L(\bxi) \ll \exp \{ C(\log_3 x-\log_4 x)^2
 + D\log_3 x - (D+1/2-2C) \log_4 x \\
-\Psi^2/4C - (D/2C-1)\Psi \}.
\end{multline*}
\end{lemcor}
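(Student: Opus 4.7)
The first assertion, $T_L^*(\bxi) \order T_L(\bxi) \order \rho^{L(L+3)/2}(F'(\rho))^L/L!$, follows from combining Lemma~\ref{TL} with Corollary~\ref{TLxi}. Writing each $\xi_i = \xi_i^+ \xi_i^-$ with $\xi_i^+ := \max(\xi_i,1) \ge 1$ and $\xi_i^- := \min(\xi_i,1) \le 1$, componentwise monotonicity $\fancyS_L(\bxi^-) \subseteq \fancyS_L(\bxi) \subseteq \fancyS_L(\bxi^+)$ combined with Corollary~\ref{TLxi} applied on each side yields $H(\bxi^-)T_L \le T_L(\bxi) \le H(\bxi^+)T_L$. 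In the application regime where each $\xi_i$ is itself $\order 1$, we have $H(\bxi^\pm) \order 1$, and Lemma~\ref{TL} then supplies the stated order. The same argument works for $T_L^*(\bxi)$.

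For the asymptotic formula, set $\alpha = \log_3 x - \log_4 x$ and write $L = 2C\alpha - \Psi$. Taking logarithms of the order estimate above, substituting $\log\rho = -1/(2C)$, and applying Stirling in the form $\log L! = L\log L - L + \tfrac12\log L + O(1)$, I obtain
\[
\log\bigl[(\log_2 x)^L T_L(\bxi)\bigr] = L\log_3 x - \tfrac{L(L+3)}{4C} + L\log F'(\rho) - L\log L + L - \tfrac12\log L + O(1).
\]
The plan is to expand each occurrence of $\log L$ via $\log L = \log(2C) + \log\alpha - \Psi/(2C\alpha) + O((\Psi/\alpha)^2)$, together with $\log\alpha = \log_4 x - \log_4 x/\log_3 x + O((\log_4 x/\log_3 x)^2)$, and collect. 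The hypothesis $\Psi \ll \sqrt{\log_3 x}$ is exactly what keeps the accumulated error terms ($L(\Psi/\alpha)^2$, $L(\log_4 x/\log_3 x)^2$, $\Psi^2/\alpha$, $\Psi^3/\alpha^2$) all bounded.

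On collection, the quadratic terms give $C\alpha^2 - \Psi^2/(4C)$; the cross-term $\alpha\Psi$ cancels between $-L^2/(4C)$ and $L\log_3 x - L\log L$; the coefficient of $\alpha$ collapses to $2C(1+\log F'(\rho)-\log(2C)) - 3/2 = D$ by definition of $D$; the coefficient of $\Psi$ collapses to $\log(2C) - \log F'(\rho) + 3/(4C) = -(D/(2C)-1)$ by the same identity; and the coefficient of $\log_4 x$, receiving contributions from three separate sources (the split $\log_3 x = \alpha + \log_4 x$, the second-order term in $\log\alpha$ appearing inside $-L\log L$, and the Stirling remainder $-\tfrac12\log L$), combines to $2C - \tfrac12$. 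Rewriting $D\alpha = D\log_3 x - D\log_4 x$ and $C\alpha^2 = C(\log_3 x - \log_4 x)^2$ then produces the stated equality.

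For the floor version, write $L = \lfloor 2C\alpha\rfloor - \Psi = 2C\alpha - (\Psi+\delta)$ with $\delta \in [0,1)$. When $\Psi \ge 0$ and $\Psi \ll \sqrt{\log_3 x}$, the positivity of $D/(2C)-1$ (numerically $\approx 0.33$), together with $(\Psi+\delta)^2 \ge \Psi^2$ and $\Psi+\delta \ge \Psi$, immediately transfers the equality-case formula to the desired upper bound in terms of $\Psi$ alone. For the remaining range of $\Psi$ (negative, or so large that $\Psi+\delta \gg \sqrt{\log_3 x}$), I fall back on concavity of the function $h(L)$ on the right side of the logarithmic identity: its maximizer satisfies $L^* = 2C\alpha + (D-2C) + O(1)$, so the algebraic identity $(L-L^*)^2/(4C) = \Psi^2/(4C) + (D/(2C)-1)\Psi + O(1)$ combined with the quadratic bound $h(L) \le h(L^*) - (L-L^*)^2/(4C) + O(1)$ (verified by direct estimation of $h''(L) = -1/(2C) - 1/L + O(1/L^2)$) delivers the same upper bound uniformly. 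The main obstacle is the careful bookkeeping of the $\log_4 x$ coefficient — which arises from three independent expansion sources and must combine exactly to $2C - \tfrac12$ — together with verification of the pivotal identity $D/(2C) - 1 = \log F'(\rho) - \log(2C) - 3/(4C)$ underpinning both the $\alpha$ and $\Psi$ collapses.
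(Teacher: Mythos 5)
Your approach matches the paper's: the first claim combines Lemma~\ref{TL} with Corollary~\ref{TLxi}, and the second and third claims are pure Stirling-plus-algebra; the paper itself supplies only the one-line remark ``The second and third parts follow from \eqref{C def}, \eqref{D def} and Stirling's formula,'' so your expansion is filling in legitimately terse material. Your identification of the governing identity $D/(2C)-1 = \log F'(\rho)-\log(2C)-3/(4C)$ and the way the cross-term $\alpha\Psi$ cancels between $-L^2/(4C)$ and $L\log_3 x - L\log L$ are correct, and your bookkeeping of the $\log_4 x$ coefficient (the three contributions summing to $2C-\tfrac12$) also checks out.

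Two points deserve comment. First, for the first claim your splitting $\xi_i = \xi_i^+\xi_i^-$ and the sandwich $\fancyS_L(\bxi^-)\subseteq\fancyS_L(\bxi)\subseteq\fancyS_L(\bxi^+)$ is a valid generalization of Corollary~\ref{TLxi}, but be aware that $H(\bxi)\order 1$ alone does \emph{not} imply $H(\bxi^\pm)\order 1$ (take $\xi_0=e$, $\xi_1=e^{-L/(L-1)}$, all other $\xi_i=1$: then $H(\bxi)=1$ but $H(\bxi^+)=e^L$). You flag this by requiring ``each $\xi_i$ itself $\order 1$'', which is the right additional hypothesis, and which holds in every application of this corollary in the paper (all the $\bxi$ used have every $\xi_i$ on the same side of $1$, so the original Corollary~\ref{TLxi} already applies).

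Second, and more seriously, your stated ``algebraic identity'' $(L-L^*)^2/(4C) = \Psi^2/(4C) + (D/(2C)-1)\Psi + O(1)$ is wrong as written. With $L = [2C\alpha]-\Psi$ and $L^* = 2C\alpha + (D-2C) + o(1)$, one has $L - L^* = -(\Psi + (D-2C) + \{2C\alpha\}) + o(1)$, hence
\[
\frac{(L-L^*)^2}{4C} = \frac{\Psi^2}{4C} + \Bigl(\frac{D}{2C}-1\Bigr)\Psi + \frac{\{2C\alpha\}\Psi}{2C} + O(1),
\]
and the extra term $\{2C\alpha\}\Psi/(2C)$ is $\Theta(|\Psi|)$, not $O(1)$, whenever $\{2C\alpha\}$ is bounded away from $0$. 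What saves you for $\Psi\ge 0$ is that this error is \emph{nonnegative}, so the one-sided estimate
\[
\frac{(L-L^*)^2}{4C} \ge \frac{\Psi^2}{4C} + \Bigl(\frac{D}{2C}-1\Bigr)\Psi + O(1)
\]
does hold and suffices for the claimed upper bound $h(L)\le h(L^*) - (L-L^*)^2/(4C)$. You should state it as an inequality valid for $\Psi\ge 0$, not as an equality. For $\Psi<0$ the sign of the extra term flips and your argument (and, frankly, the corollary's third claim as literally stated) fails when $|\Psi|$ grows; but every invocation of this corollary in the paper has $\Psi\ge 0$, so this is a wart in the statement rather than a gap in the result being used.
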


\begin{proof}
The second and third parts follow from \eqref{C def}, \eqref{D def} and
Stirling's formula.
\end{proof}

To prove Lemma \ref{TL}, we first give a variant of a standard formula for the volume of
tetrahedra, then an asymptotic for a sequence which arises in the proof.

\begin{lem}\label{volume} Suppose $\vv_0, \vv_1, \ldots, \vv_L \in
\RR^L$, any $L$ of which are linearly independent, and
\be\label{v0 sum}
\vv_0 + \sum_{i=1}^L b_i \vv_i = \vz, 
\ee
where $b_i >0$ for every $i$.  Also suppose $\a>0$.
  The volume, $V$, of the simplex
$$
\{ \vx\in \RR^L : \vv_i \cdot \vx \le 0\, (1\le i \le L),
\vv_0 \cdot \vx \le \a \}
$$
is
$$
V = \frac{\a^L}{L! (b_1 b_2 \cdots b_L) |\det(\vv_1, \ldots, \vv_L)|}.
$$
\end{lem}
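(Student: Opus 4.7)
The plan is to identify the $L+1$ vertices of the simplex explicitly and compute its volume as a determinant. Because $\vv_1,\ldots,\vv_L$ are linearly independent by the $L$-wise independence hypothesis, the system $\vv_i\cdot\vx = 0$ for $1\le i\le L$ has $\vz$ as its unique solution, and $\vv_0\cdot\vz = 0\le\alpha$, so $\vz$ is one vertex. For each $j\in\{1,\ldots,L\}$ I would define $\mathbf{w}_j$ as the unique solution to the system obtained by dropping the $j$-th inequality and tightening the remaining $L$, namely
\[
\vv_0\cdot \mathbf{w}_j = \alpha,\qquad \vv_i\cdot \mathbf{w}_j = 0 \text{ for } i\in\{1,\ldots,L\}\setminus\{j\};
\]
uniqueness follows from the linear independence of the $L$ vectors $\{\vv_0\}\cup\{\vv_i:i\ne j,\,1\le i\le L\}$.

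The key computation is to evaluate $\vv_j\cdot \mathbf{w}_j$. Dotting the dependence relation $\vv_0 = -\sum_{i=1}^L b_i\vv_i$ with $\mathbf{w}_j$ and using $\vv_i\cdot \mathbf{w}_j = 0$ for $i\ne j$ yields $\alpha = -b_j(\vv_j\cdot \mathbf{w}_j)$, hence $\vv_j\cdot \mathbf{w}_j = -\alpha/b_j$. Before converting this to a volume, one should verify that the feasible set is really the closed simplex with vertex set $\{\vz,\mathbf{w}_1,\ldots,\mathbf{w}_L\}$; this reduces to boundedness, which uses the positivity of the $b_i$. Indeed, for any feasible $\vx$ we have $\vv_i\cdot\vx\le 0$ for all $i\ge 1$, while $\sum b_i(\vv_i\cdot\vx) = -\vv_0\cdot\vx \ge -\alpha$; isolating the $j$-th term gives $-\alpha/b_j \le \vv_j\cdot\vx \le 0$, and since $\vv_1,\ldots,\vv_L$ form a basis of $\RR^L$ this confines $\vx$ to a bounded box.

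Finally, letting $A$ be the $L\times L$ matrix with rows $\vv_1,\ldots,\vv_L$ and $W$ the matrix with columns $\mathbf{w}_1,\ldots,\mathbf{w}_L$, the relations $\vv_i\cdot \mathbf{w}_j = -(\alpha/b_j)\delta_{ij}$ read $AW = -\alpha\,\mathrm{diag}(1/b_1,\ldots,1/b_L)$. Taking absolute values of determinants gives
\[
|\det W| = \frac{\alpha^L}{b_1\cdots b_L\,|\det(\vv_1,\ldots,\vv_L)|},
\]
and since the volume of a simplex with vertex $\vz$ and remaining vertices $\mathbf{w}_1,\ldots,\mathbf{w}_L$ is $|\det W|/L!$, the stated formula follows. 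The main (minor) obstacle is the convex-geometric verification that the feasible region coincides with the convex hull of these $L+1$ vertices; all of the analytic content of the lemma is then packaged into the single identity $AW = -\alpha\,\mathrm{diag}(1/b_j)$.
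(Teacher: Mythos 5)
Your proof is correct and follows essentially the same route as the paper: identify the $L+1$ vertices $\vz, \mathbf{w}_1,\ldots,\mathbf{w}_L$, use the dependence relation to compute $\vv_j\cdot\mathbf{w}_j$, and package everything into the matrix identity $AW = -\alpha\,\mathrm{diag}(1/b_j)$. The only cosmetic difference is that the paper first reduces to $\alpha=b_1=\cdots=b_L=1$ by rescaling the vectors, whereas you carry $\alpha$ and the $b_j$ through directly; you also spell out the boundedness check (via $-\alpha/b_j\le \vv_j\cdot\vx\le 0$) that the paper leaves implicit.
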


\begin{proof}
We may assume that $\a=b_1=b_2=\cdots=b_L=1$,
for the general case follows by suitably scaling the vectors $\vv_i$.
The vertices of the simplex are $\vz,\vp_1, \cdots, \vp_L$, where $\vp_i$ satisfies
\[
\begin{cases}
\vp_i \cdot \vv_j = 0 \quad (1\le j\le L, j\ne i); &\\
\vp_i \cdot \vv_0 = 1.&
\end{cases}
\]
Taking the dot product of $\vp_i$ with each side of \eqref{v0 sum} yields
$\vv_i \cdot \vp_i = -1,$ so $\vp_i$
lies in the region $\{\vv_i \cdot \vx \le 0 \}$.  Also, $\vz$ lies in 
the half-plane $\vv_0\cdot \vx \le \a$.
  The given region is thus an
$L$-dimensional ``hyper-tetrahedron'' with volume
$|\det(\vp_1, \cdots,\vp_L)|/L!$, and
$(\vp_1, \cdots, \vp_L) (\vv_1, \cdots, \vv_L)^{T} = -I,$
where $I$ is the identity matrix.  Taking determinants gives the lemma.
\end{proof}

Having $2L-2$ inequalities defining $\fancyS_L$ creates complications
estimating $T_L$, so we devise a scheme where only $L+1$ inequalities
are considered at a time, thus allowing the use of Lemma \ref{volume}.
The numbers $b_i$ occurring in that lemma will come from the sequence
$\{g_i\}$, defined by
\be\label{g def}
g_0=1, \qquad g_i = \sum_{j=1}^i a_j g_{i-j} \quad (i\ge 1). 
\ee

\begin{lem}\label{gh} For every $i\ge 1$,
$\displaystyle |g_i - \lam \rho^{-i}| \le 5$, where 
$\lam=\frac1{\rho F'(\rho)}$.
\end{lem}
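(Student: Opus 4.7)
The plan is to use generating functions. Define $G(x) = \sum_{i \geq 0} g_i x^i$. The recurrence $g_i = \sum_{j=1}^i a_j g_{i-j}$ (with $g_0 = 1$) is equivalent to $F(x) G(x) = G(x) - 1$, so $G(x) = 1/(1 - F(x))$. Since $F(\rho) = 1$ and $F'(\rho) \ne 0$, $G$ has a simple pole at $\rho$ with residue $-1/F'(\rho)$. Subtracting this polar part yields
\[
H(x) := G(x) - \frac{\lam}{1 - x/\rho},
\]
an analytic function in a punctured neighborhood of $\rho$, because $\lam = 1/(\rho F'(\rho))$ is exactly chosen so that the pole cancels. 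Matching Taylor coefficients gives $g_i - \lam \rho^{-i} = [x^i] H(x)$, so the lemma reduces to bounding these coefficients uniformly.

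Next I would show that $H$ extends analytically to a disk $|x| < R$ for some $R > \rho$. The key fact is that the Taylor coefficients $a_n$ of $F$ are positive, so $|F(x)| \leq F(|x|)$ with equality only when $x \geq 0$. Combined with the strict monotonicity of $F$ on $[0, 1)$, this implies that $1 - F(x) \ne 0$ for $|x| \leq \rho$ except at $x = \rho$ itself. Since the zeros of $1 - F$ are isolated (as $F$ is analytic and non-constant on $|x| < 1$) and $\rho$ is simple, a compactness argument produces $R > \rho$ such that $1 - F$ has no zero in $\{\rho < |x| \leq R\}$, and hence $H$ is analytic on $|x| < R$.

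With analyticity in hand, Cauchy's inequality gives
\[
|g_i - \lam \rho^{-i}| = |[x^i] H(x)| \leq M (R')^{-i}
\]
for any $\rho < R' < R$, where $M = \max_{|x| = R'} |H(x)|$. This yields geometric decay in $i$; in particular the sequence is bounded, and indeed $g_i - \lam \rho^{-i} \to 0$ exponentially fast.

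The main obstacle is pinning down the explicit constant $5$. The analytic argument shows boundedness cleanly but does not by itself produce a numerical bound. To nail down the constant, I expect one must (i) verify $|g_i - \lam \rho^{-i}| \leq 5$ directly from the recurrence for small $i$ using the numerical values of $\rho$ and $F'(\rho)$ recorded in the introduction, and then (ii) show that for $i$ past some explicit threshold the Cauchy estimate already suffices. Since a quick computation gives $|g_0 - \lam| \approx 0.677$ and the subsequent differences appear to decay rapidly, the constant $5$ has ample room and should follow with only a short numerical check together with an effective choice of $R'$ in the Cauchy bound.
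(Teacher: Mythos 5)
Your generating-function setup is the same as the paper's: you form $G(x)=1/(1-F(x))$, subtract the polar part $\lambda/(1-x/\rho)$ to obtain $H(x)$ whose Taylor coefficients are $g_i-\lambda\rho^{-i}$, and aim to bound those coefficients by Cauchy's estimate. But the step you flag as "the main obstacle" — getting the explicit constant 5 — is precisely the content of the lemma, and your plan for it is not adequate. Applying Cauchy at a radius $R'$ with $\rho<R'<R<1$ requires two explicit numbers: a value of $R'$ for which you know $1-F$ has no zeros in the annulus, and a bound $M$ on $\max_{|x|=R'}|H(x)|$. Your compactness argument only gives the existence of some $R>\rho$; it says nothing about its size, and it certainly does not give a bound on $M$. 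Since $F$ has a power series with slowly growing coefficients $a_n\sim\log n$ and is singular at $x=1$, producing these two numbers explicitly would require nontrivial computation, and then you would still need a separate finite verification for the small $i$ where $M(R')^{-i}>5$. That is not "a short numerical check."

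The paper avoids this two-radius, two-regime structure entirely with a device you do not identify. Writing $1-F(z)=(1-z/\rho)\,l(z)$, one considers $k(z)=(1-z)^2 l(z)=\sum k_n z^n$ and observes the coefficient sign pattern: $k_0=1$, $k_n<0$ for $n\ge 1$, $k_n=O(1/n^2)$, and $\sum_{n\ge 1}k_n=-1$. This gives the lower bound $\Re k(z)\ge |k_1|\,\Re(1-z)$ on the closed unit disk, hence the explicit upper bound $|1/l(z)|\le 2/|k_1|<3.7$ there. Consequently the function $e(z)=H(z)=(1/l(z)-1/l(\rho))/(1-z/\rho)$ is analytic on $|z|<1$, continuous on $|z|\le 1$, and bounded by $(3.7+\lambda)/(1/\rho-1)\le 5$ on $|z|=1$; Cauchy's formula at radius $1$ then bounds every Taylor coefficient by $5$ in one stroke, with no casework. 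Incidentally, this argument also establishes cleanly that $\rho$ is the only zero of $1-F$ in the open disk, a fact your proposal asserts only for $|x|\le\rho$ and then pushes slightly outward by compactness. The missing idea in your write-up is the auxiliary function $k(z)$ and its coefficient structure; with it, the constant falls out, and without it, the lemma remains essentially unproved.
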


\begin{proof}
Write $1-F(z)=(1-z/\rho) l(z)$ and $l(z)=\sum_{n=0}^\infty l_n z^n$.
By \eqref{rho def},
\[
l_n=\rho^{-m}\( 1 - \sum_{k=1}^n a_k \rho^k \) =\sum_{k=1}^\infty \rho^k a_{n+k} >0.
\]
Next consider
$k(z)=(1-z)^2 l(z) = \sum_{n=0}^\infty k_n z^n$.  We have $k_0=1$,
$k_1=l_1-2=\rho^{-1}-a_1-2<0$ and
$$
k_n=l_n-2l_{n-1}+l_{n-2} = \sum_{k=1}^\infty \rho^k \( a_{n+k}-2
 a_{n+k-1}+a_{n+k-2} \) < 0 \qquad (n\ge 2).
$$
Also, $k_n=O(1/n^2)$, and $\sum_{n\ge 1} k_n = -1$.  Thus, $k(z)$ is analytic for
$|z|<1$, continuous on $|z| \le 1$, and nonzero for $|z|\le 1, z\ne 1$.
 Further,
$$
\Re k(z) \ge 1 + k_1 \Re z - (1+k_1) = |k_1| \Re (1-z),
$$
so that for $|z|<1$,
$$
\left|\frac{1}{l(z)}\right| \le \frac{|1-z|^2}{|k_1| \Re (1-z)} \le 
\frac{1}{|k_1|} \max_{|z|=1} \frac{|1-z|^2}{\Re (1-z)} = \frac{2}{|k_1|} < 3.7.
$$
Now let 
$$
e(z) = \sum_{n=0}^\infty \( g_n - \lam \rho^{-i} \) z^n = \frac{1}{1-F(z)} - 
\frac{\lam}{1-z/\rho} = \frac{1/l(z)-1/l(\rho)}{1-z/\rho}.
$$
From the preceding arguments, we see that $e(z)$ is analytic for $|z|<1$ and
continuous on $|z|\le 1$.  By the maximum modulus principle,
$\max_{|z|=1}|e(z)| \le (3.7+\lam)/|1/\rho-1| \le 5$.  By Cauchy's integral formula,
 the Taylor coefficients of $e(z)$ are all
bounded by 5 in absolute value.
\end{proof}

\begin{remark}
The above proof is based on \cite{AMM}, and is much simpler than the original proof 
given in \cite{F98}.
With more work, one can show that
for $i\ge 1$, the numbers $g_i - \lam \rho^{-i}$ are negative, 
increasing and have sum $-1+\lam/(1-\rho)=-0.2938\ldots$
\end{remark}

\begin{proof}[Proof of Lemma \ref{TL}]
 The basic idea
is that $\fancyS_L^*$
is only slightly larger than $\fancyS_L$.  In other words,
the inequalities $1 \ge x_1 \ge \cdots \ge x_{L-1}$ are relatively
insignificant.  Set
\[
\fancyU_0 = \fancyS_L^* \cap \{ x_1 > 1 \}, \qquad
\fancyU_i = \fancyS_L^* \cap \{ x_i < x_{i+1} \} \quad (1\le i\le L-2)
\nonumber
\]
and $V_i = \vvol(\fancyU_i)$.  Evidently
\be\label{TL main}
T_L^* - \sum_{i=0}^{L-2} V_i \le T_L \le T_L^*.
\ee

Let $\ve_1, \cdots, \ve_L$ denote the standard basis for $\RR^L$, i.e.
 $\ve_i \cdot \vx = x_i$.
For $1\le i\le L-2$, set
\be\label{v def}
\vv_i = -\ve_i + \sum_{j=1}^{L-i} a_j \ve_{i+j} 
\ee
and also
$$
\vv_0 = \sum_{j=1}^L a_j \ve_j, \qquad \vv_{L-1} = -\ve_{L-1} +\ve_L,
 \qquad\vv_L = -\ve_L.
$$
For convenience, define
\be\label{gh star}
g_0^*=1,\quad g_i^* = g_i + (1-a_1) g_{i-1} \qquad (i\ge 1).
\ee

Thus, for $1\le j \le L-2$, inequality ($I_j$) may be abbreviated as
$\vv_j \cdot \vx \le 0$.  Also, inequality ($I_0$) is equivalent to
$\vv_0 \cdot \vx \le 1$ and ($I_{L-1}$)
is represented by $\vv_{L-1} \cdot \vx \le 0$ and $\vv_L \cdot \vx \le 0$.
By \eqref{g def}, \eqref{v def}
and \eqref{gh star},
\be\label{e rep}
\ve_i = - \sum_{j=i}^{L-1} g_{j-i} \vv_j - g_{L-i}^* \vv_L.
\ee
It follows that
\be\label{v0 rep}
\vv_0 + \sum_{j=1}^{L-1} g_j \vv_j  + g_L^* \vv_L = \vz.
\ee
Since $|\det(\vv_1, \cdots, \vv_L)| = 1$, Lemma \ref{volume} and
\eqref{v0 rep} give
\be\label{V*}
T_L^* = \frac{1}{L! (g_1 \cdots g_{L-1}) g_L^*}.
\ee
Lemma \ref{gh} now implies the claimed estimate for $T_L^*$.

For the remaining argument, assume $L$ is sufficiently large.
We shall show that
\be\label{Vi sum}
\sum_{i=0}^{L-2} V_i < 0.61 T_L^*,
\ee
which, combined with \eqref{TL main}, \eqref{V*} and Lemma
\ref{gh}, proves Lemma \ref{TL}.

Combining $x_1\ge 1$ with $\vv_0 \cdot \vx \le 1$
gives $\vu \cdot \vx \le 0$, where
$\vu = \vv_0 - \ve_1.$  By
\eqref{e rep} and \eqref{v0 rep},
$$
\vu = \sum_{j=1}^{L-1} (g_{j-1}-g_j) \vv_j + (g_{L-1}^*-g_L^*)\vv_L.
$$
Thus
$$
\vv_0 + \frac{a_1}{1-a_1} \vu + \sum_{j=2}^L b_j \vv_j = \vz,
$$
where
\begin{align*}
b_j &= g_j + \frac{a_1}{1-a_1}(g_j- g_{j-1}) \qquad (2\le j \le L-1), \\
b_L &= g_L^* + \frac{a_1}{1-a_1}(g_L^*-g_{L-1}^*).
\end{align*}
Lemma \ref{gh} implies $b_j > (9/7)g_j$ for large $j$,
In addition, $|\det(\vu, \vv_2, \ldots, \vv_L)| = (1-a_1)$.
By Lemma \ref{volume},
\be\label{V0}
V_0 \ll \frac{1}{L! (b_2 b_3 \cdots b_L)} \ll \pfrac{7}{9}^L T_L^*.
\ee
We next show that
\be\label{Vi}
V_i = \frac{1}{(1-a_1) L! (g_1 \cdots g_{i-1})A_iB_i} \prod_{j=i+2}^{L-1}
\pfrac{1}{g_j+B_ih_{j-i}} \frac{1}{g_L^* + B_i h_{L-i}^*},
\ee
where
$$
A_i = g_i + \frac{g_{i+1}}{1-a_1}, \quad B_i=\frac{g_{i+1}}{1-a_1}, \quad
h_l=g_l-g_{l-1}, \quad h_l^*=h_l+(1-a_1)h_{l-1}.
$$
In $\fancyU_i$ we have ($I_i$) and $x_i \le x_{i+1}$, hence
\[
x_{i+1} \ge \frac{1}{1-a_1} (a_2 x_{i+2} + \cdots + a_{L-i} x_L) 
\ge x_{i+2} + a_2 x_{i+3} + \cdots + a_{L-i-1} x_L.
\]
The condition $\vv_{i+1} \cdot \vx \le 0$ is therefore
implied by the other
inequalities defining $\fancyU_i$, which means
$$
V_i = \vvol\{\vv_0 \cdot \vx \le 1; \vv_j \cdot \vx \le 0\ (1\le
 j\le L, j\ne i+1);\, (\ve_i-\ve_{i+1}) \cdot \vx \le 0 \}.
$$
We note $|\det(\vv_1, \cdots, \vv_i, \ve_i-\ve_{i+1}, 
\vv_{i+2}, \cdots, \vv_L)|=
(1-a_1)$.  It is also easy to show from \eqref{v0 rep} that
$$
\vz = \vv_0 + \sum_{j=1}^{i-1} g_j \vv_j + A_i\vv_i + B_i(\ve_i-\ve_{i+1}) +
\sum_{j=i+2}^L b_j \vv_j,
$$
where $b_j = g_j + B_i h_{j-i}$ for $i+2 \le j \le L-1$, and  $b_L=g_L^*+B_i
h_{L-i}^*.$
An application of Lemma \ref{volume} completes the proof of \eqref{Vi}.

We now deduce numerical estimates for $V_i/T_L^*$.
Using Lemma \ref{gh}, plus
explicit computation of $g_i$ for small $i$, gives $A_i > 4$ for all $i$ and
\begin{align*}
g_j + B_i h_{j-i} &> 1.44 g_j \qquad (i \text{ large, say } i \ge L-100), \\
g_j + B_i h_{j-i} &> 1.16 g_j \qquad (i\ge 1, j\ge i+2), \\
g_L^* + B_i h_{L-i}^* &> 1.44 g_L^* \qquad (i<L-2), \\
g_L^* + B_{L-2} h_2^* &> 1.19 g_L^*.
\end{align*}
From these bounds, plus \eqref{V*} and \eqref{Vi}, it follows that
\begin{align*}
V_{L-2}/T_L^* &< (4 \cdot 1.19)^{-1}, \\
V_i/T_L^* &< (4 \cdot 1.44^{L-i-1})^{-1} \qquad (L-99 \le i\le L-3), \\
V_i/T_L^* &< (4 \cdot 1.44^{99} \cdot 1.16^{L-i-100})^{-1} \qquad
(1 \le  i \le L-100).
\end{align*}
Combining these bounds with \eqref{V0} yields
$$
\sum_{i=0}^{L-2} V_i/T_L^* < O((4/5)^L) + \frac{1}{4} \( \frac{1}{1.19}
 + \frac{1.44^{-2}}{1-1.44^{-1}} +
 \frac{1.44^{-99}}{(1-1/1.16)} \) < 0.61,
$$
which implies \eqref{Vi sum}.  This completes the proof of Lemma
\ref{TL}.
\end{proof}

Important in the study of $\fancyS_L$ and $\fancyS_L^*$ are both global bounds on the
numbers $x_i$ (given below) as well as a determination of where ``most'' of the volume
lies (given below in Lemma \ref{xL most} Section \ref{sec:structure}).

\begin{lem}\label{xz}  Let $x_0=1$.  
If $\vx\in \fancyS_L^*$, then
$x_i \ge g_{j-i} x_j$ for $0\le i\le j\le L$.
If $\vx \in \fancyS_L(\vxi)$
and $\xi_i\ge 1$ for all $i$, then $x_j \le 
4.771 \xi_i \cdots \xi_{j-1} \rho^{j-i}x_i$ for $0\le i<j\le L$. 
\end{lem}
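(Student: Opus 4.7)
Part (a) will be proved by induction on $k = j - i$ using the defining inequalities, after a preliminary step establishing nonnegativity of the coordinates. Specifically, every $\vx \in \fancyS_L^*$ satisfies $x_i \ge 0$ for all $1 \le i \le L$: this follows by downward induction, since $(I_{L-1})$ gives $x_L \ge 0$ directly, and if $x_{i+1}, \ldots, x_L \ge 0$ then $(I_i)$ forces $\xi_i x_i \ge \sum_{\ell=1}^{L-i} a_\ell x_{i+\ell} \ge 0$. The inductive proof of $x_i \ge g_k x_{i+k}$ (with trivial base case $k = 0$ since $g_0 = 1$) then proceeds by combining inequality $(I_i)$, or $(I_0)$ when $i = 0$ using $x_0 = 1$, with the inductive hypothesis:
\[
 x_i \ge \sum_{\ell=1}^{L-i} a_\ell x_{i+\ell} \ge \sum_{\ell=1}^{k} a_\ell x_{i+\ell} \ge \Bigl(\sum_{\ell=1}^{k} a_\ell g_{k-\ell}\Bigr) x_{i+k} = g_k x_{i+k},
\]
where the second inequality discards the nonnegative terms with index beyond $i + k$, and the final equality is the defining recursion for $g_k$.

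For part (b) I would reduce to the case $\vxi = \vone$ already covered by part (a). By Lemma \ref{xi1}, the vector $\vy$ defined by $y_i = (\xi_0 \cdots \xi_{i-1})^{-1} x_i$ lies in $\fancyS_L \subseteq \fancyS_L^*$, with $y_0 = 1 = x_0$. Applying part (a) to $\vy$ and undoing the scaling yields
\[
 x_j \le (\xi_i \cdots \xi_{j-1})\, x_i / g_{j-i},
\]
so it suffices to establish the uniform bound $\rho^{-k}/g_k \le 4.771$ for every $k \ge 1$.

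This uniform bound is the main obstacle, since the supremum of $\rho^{-k}/g_k$ over $k \ge 1$ is attained at $k = 1$ and equals $1/(\rho a_1) = 1/(\rho(2 \log 2 - 1)) = 4.7708\ldots$, leaving the stated constant almost no slack. I would split the verification into two regimes. For $k \ge 7$, Lemma \ref{gh} gives $g_k \ge \lam \rho^{-k} - 5$ and hence $\rho^{-k}/g_k \le 1/(\lam - 5\rho^k)$, which is below $4.771$ once $5\rho^k \le \lam - 1/4.771 \approx 0.114$; one checks $5\rho^7 \approx 0.069$ satisfies this, and $\rho^k$ only shrinks further. For the finitely many small cases $k = 1, \ldots, 6$, one computes $a_1, \ldots, a_6$ and then $g_1, \ldots, g_6$ directly from the recursion $g_k = \sum_{\ell=1}^{k} a_\ell g_{k-\ell}$ and checks the inequality numerically in each case, the tightest being $k=1$.
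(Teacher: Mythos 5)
Your proof is correct and follows the same approach as the paper's: part (a) is an induction on the gap $j-i$ using the defining inequalities (the paper phrases it as a downward induction on the smaller index, but the argument is the same), and part (b) reduces to part (a) via Lemma~\ref{xi1} together with the bound $\sup_{k\ge 1}\rho^{-k}/g_k \le 4.771$. Your explicit verification of that constant is a bit more careful than the paper, which simply asserts the supremum is $4.7709\ldots$; and you correctly identify the maximizer as $k=1$, where $\rho^{-1}/g_1 = 1/(\rho a_1) \approx 4.7709$ — the paper's statement that it occurs at $i=2$ is a typo.
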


\begin{proof}
Fix $i$ and note that the first inequality is trivial
for $j=i$.
Assume $k\le i-2$ and it holds for $j\ge k+1$.  Then by ($I_k$) and the induction
hypothesis,
$$
x_k \ge \sum_{h=1}^{L-k} a_h x_{k+h} \ge \sum_{h=1}^{i-k} a_h g_{i-k-h} x_i
= g_{i-k} x_i.
$$
By Lemma \ref{gh},
the maximum of $\rho^{-i}/g_i$ is $4.7709\ldots$, occurring at $i=2$.
The second inequality follows by Lemma \ref{xi1}.
\end{proof}

Careful analysis of $\fancyS_L$ reveals that most of the volume occurs
with $x_i \approx \frac{L-i}{L}\rho^i$ for each $i$, with the
``standard deviation'' from the mean increasing with $i$.
This observation plays an important role in subsequent arguments.
For now, we restrict our attention to the variable $x_L$,
which will be useful in estimating sums over numbers $n$, whose $L$
largest prime factors lie in a specific set, and whose other prime factors 
are unconstrained.
Results concerning the size of $x_i$ for $i<L$
will not be needed until section 6. 

\begin{lem}\label{SLxieps}
 Let $L\ge 3$, $\a \ge 2\eps > 0$ and $\xi_i\ge 1$ for each $i$.  
If $\vx \in [ \fancyS_L^*(\bxi) \cap \{ x_L \ge \a \}]^{+\eps}$, then
$\vy \in \fancyS_L^* \cap \{y_L \ge \a' \}$, where
$\a'=(\a-\eps)/(\xi_0'\cdots \xi_{L-1}')$, $\xi_{L-1}'=3\xi_{L-1}$ and
for $1\le i\le L-2$,
\[
 y_i=\frac{x_i}{\xi_0'\cdots \xi_{i-1}'}, 
\qquad \xi_i' = \xi_i\(1 + \frac{10\rho^{L-i}(1+a_1+\cdots+a_{L-i})\xi_0
\cdots \xi_{L-1}}{\a/\eps}\).
\]
\end{lem}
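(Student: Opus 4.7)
\emph{Proof plan.} The strategy will be to realize $\vx$ as a point in a mildly enlarged simplex $\fancyS_L^*(\vxi')$, with $\vxi'$ as defined in the statement, and then apply the rescaling argument of Lemma \ref{xi1} --- which works equally well for $\fancyS_L^*$ as for $\fancyS_L$, since it depends only on the linear inequalities $(I_i)$ and the assumption $\xi_j \ge 1$. First I will unpack the definition of $[\,\cdot\,]^{+\varepsilon}$, writing $\vx = \vx' + \vv$ with $\vx' \in \fancyS_L^*(\vxi) \cap \{x_L \ge \alpha\}$ and $|v_j| \le \varepsilon$ for each $j$. Once $\vx \in \fancyS_L^*(\vxi')$ is in hand, the substitution $y_i = x_i/(\xi_0'\cdots\xi_{i-1}')$ will place $\vy$ in $\fancyS_L^*$, and the lower bound $y_L \ge (\alpha - \varepsilon)/(\xi_0' \cdots \xi_{L-1}') = \alpha'$ will be immediate from $x_L \ge x_L' - \varepsilon \ge \alpha - \varepsilon$.

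For each inequality $(I_i)$ with $0 \le i \le L-2$, I will start from $a_1 x_{i+1}' + \cdots + a_{L-i} x_L' \le \xi_i x_i'$ and transfer it to $\vx$, picking up an additive error of at most $\varepsilon \xi_i(1 + a_1 + \cdots + a_{L-i})$ (here $\xi_i \ge 1$ lets me collect both the shift in $x_i$ and the shifts in $x_{i+1},\ldots,x_L$ into a single term). Absorbing this error into $\xi_i' x_i = \xi_i(1+\delta_i) x_i$ reduces to showing $\delta_i \ge \varepsilon(1 + a_1 + \cdots + a_{L-i})/x_i$. The requisite lower bound on $x_i$ will come from the second clause of Lemma \ref{xz} applied to $\vx'$, which yields $x_i' \ge \alpha/(4.771\,\xi_i \cdots \xi_{L-1}\rho^{L-i})$. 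Because $L - i \ge 2$ ensures $g_{L-i} \ge 1$ (direct from the recurrence \eqref{g def} for small $L-i$, and from Lemma \ref{gh} otherwise), we have $x_i' \ge 2\varepsilon$, so $x_i \ge x_i'/2$, and therefore $\varepsilon/x_i \le 10 \rho^{L-i} \xi_0 \cdots \xi_{L-1} \varepsilon/\alpha$ (after using $\xi_0 \cdots \xi_{i-1} \ge 1$ to pad the numerator). The $\delta_i$ prescribed in the statement dominates this bound, so $(I_i)$ with parameter $\xi_i'$ holds for $\vx$.

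The main technical obstacle will be the final inequality $(I_{L-1})$, which becomes $x_L \le 3\xi_{L-1} x_{L-1}$ after rescaling. Here the decay factor $\rho^{L-i}$ that drove the smallness of the earlier $\delta_i$ has disappeared (since $L - i = 1$), and $g_1 = a_1 < 1$ defeats the lower bound argument just used. Instead I will invoke the raw inequality $\xi_{L-1} x_{L-1}' \ge x_L' \ge \alpha$ directly, obtaining
$$
3\xi_{L-1} x_{L-1} - x_L \;\ge\; 3\xi_{L-1} x_{L-1}' - x_L' - (3\xi_{L-1} + 1)\varepsilon \;\ge\; 2\alpha - (3\xi_{L-1} + 1)\varepsilon,
$$
which is non-negative under the hypothesis $\alpha \ge 2\varepsilon$. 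The generous multiplicative factor $3$ in $\xi_{L-1}'$ (rather than a factor of the form $1 + \delta_{L-1}$) is precisely what is needed, because in this last step the perturbation $\varepsilon$ is of the same order as $\alpha$ rather than dominated by it. Once each inequality defining $\fancyS_L^*(\vxi')$ is verified for $\vx$, the scaling argument completes the proof.
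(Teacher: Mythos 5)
Your proposal matches the paper's proof step for step: both establish $\vx\in\fancyS_L^*(\bxi')$ by transferring each inequality $(I_i)$ from $\vx'$ to $\vx$, using the second clause of Lemma \ref{xz} (valid for $\fancyS_L^*(\bxi)$ as you correctly observe) to lower-bound $x_i$ for $i\le L-2$ so that the $\eps$-error is absorbed into $\delta_i$, treating $(I_{L-1})$ separately, and then scaling to $\fancyS_L^*$ via Lemma \ref{xi1}. Your algebra for $(I_{L-1})$ is arranged slightly differently than the paper's (you compute $3\xi_{L-1}x_{L-1}-x_L$ directly, whereas the paper chains through $\xi_{L-1}^{-1}\max(\eps,\,x_L-2\eps)$), but both reduce, at the extremal case $\a=2\eps$, to the requirement $\xi_{L-1}\le 1$: your claim that $2\a-(3\xi_{L-1}+1)\eps$ is nonnegative under $\a\ge 2\eps$ is, taken literally, only true when $\xi_{L-1}=1$. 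The paper's version of this step has exactly the same implicit restriction, so this is a tightness baked into the lemma statement rather than a defect of your argument; in the only place the lemma is invoked (Lemma \ref{xL most}(ii)), the relevant $\xi_{L-1}$ is forced extremely close to $1$, so nothing downstream is affected.
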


\begin{proof}
 By assumption, for some $\bx' \in \fancyS_L^*(\bxi)$ with $x_L'\ge \a$, 
$|x_i-x_i'|\le \eps$ for all $i$.  By Lemma \ref{xz},
\[
 x_i \ge x_i'-\eps \ge \frac{x_i'}{2} \ge \frac{\rho^{i-L} x_L'}{10\xi_1\cdots \xi_{L-1}}
\ge \frac{\rho^{i-L}\a}{10\xi_0\cdots \xi_{L-1}} \qquad (i\le L-1).
\]
Hence, by $(I_i)$, if $i\le L-2$ then
\begin{align*}
 \sum_{j=1}^{L-i} a_j x_{i+j} &\le \sum_{j=1}^{L-i} a_j(x_{i+j}'+\eps)  \le \xi_i x_i' + 
\eps(a_1+\cdots+a_{L-i}) \\
&\le \xi_i(x_i+\eps(a_1+\cdots+a_{L-i})) \le \xi_i' x_i.
\end{align*}
Lastly, 
\[
 x_{L-1} \ge x'_{L-1}-\eps \ge \xi_{L-1}^{-1} x_L' - \eps \ge
\xi_{L-1}^{-1} \max(\eps,x_L-2\eps) \ge \frac{x_L}{3\xi_{L-1}}=\frac{x_L}{\xi'_{L-1}}.
\]
This shows that $\vx \in \fancyS_L^*(\bxi')$ and $x_L\ge \a-\eps$.  Finally, 
by Lemma \ref{xi1}, $\vy \in \fancyS_L^*$ and $y_L\ge \a'$.
\end{proof}

The next lemma shows that $x_L \approx \rho^L/L$ for most of
$\fancyS_L$, significantly smaller than the global upper
bound given by Lemma \ref{xz}.

\begin{lem}\label{xL most}
(i) If $\a\ge 0$, then
\[
 \vvol(\fancyS_L^* \cap \{x_L\le \a\}) \ll T_L \a L \rho^{-L}
\]
and
\[
 \vvol(\fancyS_L^* \cap \{x_L\ge \a\}) \ll e^{-\a L g_L} T_L.
\]
(ii) If $\a\ge 0$, $\xi_i\ge 1$ for each $i$, $H(\bxi)\le 2$ and
$\eps \le 10 \rho^L/L$, then
\[
 \vvol([\fancyS_L^*(\bxi) \cap \{x_L\ge \a\}]^{+\eps}) \ll e^{- C_0 \a L g_L} T_L
\]
for some absolute constant $C_0>0$.
\end{lem}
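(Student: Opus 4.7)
For part (i), the plan is a direct volume computation via Lemma \ref{volume}. The constraint $\{x_L \ge \a\}$ is equivalent to $\vv_L \cdot \vx \le -\a$, so $\fancyS_L^* \cap \{x_L \ge \a\}$ retains the combinatorial type of $\fancyS_L^*$. Let $\vp$ be the unique point with $\vv_i \cdot \vp = 0$ for $1 \le i \le L-1$ and $\vv_L \cdot \vp = -1$; dotting \eqref{v0 rep} with $\vp$ gives $\vv_0 \cdot \vp = g_L^*$. The substitution $\vx = \a\vp + \vy$ then transforms the defining inequalities into those of a scaled copy of $\fancyS_L^*$, with $(I_0)$ replaced by $\vv_0 \cdot \vy \le 1 - \a g_L^*$. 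Hence
\[
\vvol(\fancyS_L^* \cap \{x_L \ge \a\}) = (\max(0, 1 - \a g_L^*))^L\, T_L^*,
\]
from which the second bound in (i) follows via $(1-u)^L \le e^{-Lu}$, the inequality $g_L^* \ge g_L$ (since $g_L^* = g_L + (1-a_1)g_{L-1}$), and $T_L^* \asymp T_L$ (Lemma \ref{TL}). The first bound follows from $1 - (1-u)^L \le Lu$ and $g_L^* \asymp \rho^{-L}$, with the case $\a > 1/g_L^*$ handled trivially.

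For part (ii), the plan is to invoke Lemma \ref{SLxieps} to reduce to part (i), after splitting on the size of $\a$ relative to $\eps$. In the main case $\a \ge 2\eps$, Lemma \ref{SLxieps} furnishes a linear map (with Jacobian $H(\bxi')$) sending $[\fancyS_L^*(\bxi) \cap \{x_L \ge \a\}]^{+\eps}$ into $\fancyS_L^* \cap \{y_L \ge \a'\}$, where $\a' = (\a-\eps)/(\xi_0' \cdots \xi_{L-1}')$. The crucial claim is that both $H(\bxi')$ and $\xi_0' \cdots \xi_{L-1}'$ are bounded by absolute constants. First, a linear-programming argument shows that $H(\bxi) \le 2$ and $\xi_i \ge 1$ force $\prod_{i<L} \xi_i \le 2$: maximizing $\sum \log \xi_i$ under $\sum(L-i)\log\xi_i \le \log 2$ with $\log\xi_i \ge 0$ places all the weight at $i = L-1$. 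Writing $\xi_i' = \xi_i(1+\delta_i)$ with the $\delta_i$ from Lemma \ref{SLxieps}, this reduces the verification to $\sum_{i=0}^{L-2}(L-i)\delta_i \ll \eps/\a$, which after exchanging the order of summation reduces to $\sum_{k\ge 1} k\rho^k(1+a_1+\cdots+a_k) < \infty$; this sum converges by partial summation using $F(\rho)=1$ and $F'(\rho)<\infty$, rewriting it as $\sum_j a_j \sum_{k\ge j} k\rho^k$. With these bounds, $\a' \gg \a$, and the second bound of (i) yields the claim with some absolute $C_0 > 0$.

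In the residual case $\a < 2\eps$, the hypothesis $\eps \le 10\rho^L/L$ forces $\a L g_L = O(1)$, so the exponential factor is $\Theta(1)$ and the task reduces to the unweighted bound $\vvol(\cdot) \ll T_L$. I would decompose $[\fancyS_L^*(\bxi)]^{+\eps}$ as $A_1 \cup A_2$, where $A_1$ is the portion with $x_L \ge 2\eps$ (handled by the main case applied with $\a$ replaced by $2\eps$, giving volume $\ll T_L$) and $A_2$ is the slab portion $\{-\eps \le x_L < 2\eps\}$. The cross-section of $A_2$ at fixed $x_L$ lies inside the $\eps$-neighborhood of the projection onto the first $L-1$ coordinates, itself contained in $\fancyS_{L-1}^*(\bxi^{(L-1)})$ of volume $\asymp T_{L-1}$, and the asymptotic $T_{L-1} \asymp T_L L/\rho^{L+1}$ from Lemma \ref{TL} combined with $\eps \le 10\rho^L/L$ yields $\vvol(A_2) \ll \eps T_{L-1} \ll T_L$. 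The main obstacle is the bookkeeping in the main case: showing $H(\bxi')$ and $\xi_0' \cdots \xi_{L-1}'$ stay bounded by absolute constants. This rests on the LP observation $\prod_{i<L} \xi_i \le 2$ and the convergent series above, both of which reflect the geometry of $\fancyS_L^*$ encoded in the defining relation $F(\rho)=1$.
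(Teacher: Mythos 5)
Your treatment of part~(i) is correct, and in fact a little tighter than the paper's.  The paper shifts by $y_i = x_i - \alpha g_{L-i}$, for which the constraint $(I_{L-1})$ is only preserved as a strict inequality, so one gets a containment into $(1-\alpha g_L)\fancyS_L^*$ and hence an upper bound on the volume.  Your $\vp$, determined by $\vv_j\cdot\vp=0$ for $1\le j\le L-1$ and $\vv_L\cdot\vp=-1$, is $\vp=(g_{L-1}^*,\dots,g_0^*)$; because $g_1^*=g_0^*=1$, this choice also annihilates $\vv_{L-1}$, so the shift is an exact affine bijection onto $(1-\alpha g_L^*)\fancyS_L^*$ and yields the closed form $\bigl(\max(0,1-\alpha g_L^*)\bigr)^L T_L^*$.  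Both routes give the stated bounds.  Your main case of (ii) ($\alpha\ge 2\eps$) is the paper's argument, with the two supporting estimates ($\prod_i\xi_i\le H(\bxi)\le 2$ and the convergence of $\sum_k k\rho^k(1+a_1+\cdots+a_k)$) spelled out where the paper simply asserts ``$H(\bxi')\ll 1$ and hence $\alpha'\gg\alpha$.''

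Your residual case $\alpha<2\eps$, however, contains genuine gaps.  First, $A_1=[\fancyS_L^*(\bxi)\cap\{x_L\ge\alpha\}]^{+\eps}\cap\{x_L\ge 2\eps\}$ is not contained in $[\fancyS_L^*(\bxi)\cap\{x_L\ge 2\eps\}]^{+\eps}$, so the main case does not apply to it.  A witness $\vx'\in\fancyS_L^*(\bxi)$ for $\vx\in A_1$ need only satisfy $x_L'\ge x_L-\eps\ge\eps$, not $x_L'\ge 2\eps$: e.g.\ take $x_L=2\eps$, $x_L'=x_{L-1}'=\eps$ and $x_{L-1}\in[0,\eps)$, in which case no point $\vx''$ with $x_{L-1}''\ge x_L''\ge 2\eps$ lies within $\eps$ of $\vx$.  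The hypothesis $\alpha\ge 2\eps$ in Lemma~\ref{SLxieps} is used in its proof precisely to deduce $x_i'-\eps\ge x_i'/2$, so it cannot be relaxed to $\alpha=\eps$.  Second, your bound on $A_2$ asserts that the cross-section, which you observe lies in $[\fancyS_{L-1}^*(\bxi^{(L-1)})]^{+\eps}$, has volume $\asymp T_{L-1}$; this is the same type of ``fattened volume $\ll T$'' statement you are in the middle of proving, so the step is circular as written (it could be repaired by an explicit induction on $L$, which you do not set up).  The paper instead stratifies by $r=\max\{i\le L:x_i\ge 2\eps\}$ and bounds the volume by $\sum_r (2\eps)^{L-r}\vvol\bigl(\fancyS_r^*(\xi_0,\dots,\xi_{r-1})\bigr)$, summing the resulting geometric-type series via Lemma~\ref{TL}; the box factor $(2\eps)^{L-r}$ absorbs the small coordinates directly rather than delegating them to a lower-dimensional instance of the same claim.
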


\begin{proof}
Consider first $\vx\in \fancyS_L^*\cap\{x_L\le\a\}$.  Since
 $(x_1, \ldots, x_{L-1}) \in \fancyS_{L-1}^*$, 
the volume  is $\le \a T_{L-1}^*$.
Applying Lemma \ref{TL} gives the first part of (i).
Next, suppose $\vx\in\fancyS_L^*\cap\{x_L\ge\a\}$.
If $\a\ge 1/g_L$, the volume is zero by Lemma \ref{xz}.  Otherwise, set
$y_i = x_i - \a g_{L-i}$ for each $i$.
We have $y_{L-1}\ge y_L\ge 0$, $\vv_j\cdot \vy \le 0$
for $1\le j\le L-2$, and $\vv_0 \cdot \vy \le 1 - \a g_L$.
By Lemmas \ref{TL} and \ref{volume}, the volume of such $\vy$ is
$\le (1-\a g_L)^L T_L^* \ll (1-\a g_L)^L T_L$. 
 The second part of (i) now follows.

For (ii), first suppose $\a \ge 2\eps$.  By Lemma \ref{SLxieps}, Corollary 
\ref{TLxi} and part (i),
\[
 \vvol([\fancyS_L^*(\bxi) \cap \{x_L\ge \a\}]^{+\eps}) \le H(\bxi') \vvol(\fancyS_L^*
\cap \{y_L \ge \a'\}) \ll T_L e^{-\a'Lg_L},
\]
where $\a'$ is defined  in Lemma \ref{SLxieps}.
Since $H(\bxi)\le 2$, $H(\bxi')\ll 1$ and hence $\a' \gg \a$.  Next, assume $\a < 2\eps$.
Without loss of generality suppose $\a=0$, since $e^{-2\eps Lg_L} \gg 1$ by Lemma
\ref{gh}, \eqref{gh star} and the assumed upper bound on $\eps$.  For $\vx$ in question, let
$r=\max\{i\le L: x_i \ge 2\eps\}$.  Using Lemma \ref{TL} and part (i),
\begin{align*}
 \vvol([\fancyS_L^*(\bxi) \cap \{x_L\ge \a\}]^{+\eps}) &\ll \sum_{r=0}^L
(2\eps)^{L-r} \vvol\( \fancyS_{r}^*((\xi_0,\ldots,\xi_{r-1})) \) \\
&\ll T_L \sum_{h=0}^L (2\eps)^h \pfrac{T_{L-h}}{T_L} 
\ll T_L  \sum_{h=0}^L \pfrac{2\eps L \rho^{10-L}}{F'(\rho)}^h \ll T_L.
\qedhere
\end{align*}
\end{proof}
%
%
%
\section{The upper bound for $V(x)$}
%
%
%

In this section, we prove that
\be\label{V upper}
V(x) \ll \frac{xZ(x)}{\log x}, \quad Z(x)= \exp\{ C(\log_3 x - \log_4 x)^2
 + D(\log_3 x) - (D+1/2-2C)\log_4 x\}. 
\ee
We begin with the basic tools needed for the proof, which show immediately
the significance of the set $\fancyS_L(\vxi)$.  First, recall the definition
of an $S$-normal prime \eqref{1S}--\eqref{normal}.  Also, factor each positive
integer 
\[ 
n=q_0(n)q_1(n)\cdots, \quad q_0(n)\ge q_1(n) \ge \cdots,
\]
$q_i(n)$ is prime for $i<\om(n)$ and $q_i(n)=1$ for $i \ge \om(n)$.  Define
\be\label{xinx}
x_i(n;x) = \frac{\max(0,\log_2 q_i(n))}{\log_2 x}.
\ee

\begin{lem} \label{AB} Suppose  $y$ is sufficiently large,
$k\ge 2$ and 
$$
1\ge \theta_1 \ge \cdots \ge \theta_{k} \ge \frac{\log_2 S}{ \log_2 y},
$$
where $S\ge \exp\{(\log_2 y)^{36}\}$.
Let $\log_2 E_j = \theta_j \log_2 y$ for
each $j$.  The number
of $S$-nice totients $v\le y$ with a pre-image satisfying
$$
q_j(n) \ge E_j \qquad (1\le j\le k)
$$
is
$$
\ll y(\log y)^{A+B}(\log_2 y)(\log S)^{k\log k} + \frac{y}{(\log y)^2},
$$
where
$$
A = -\sum_{j=1}^k a_j \theta_j, \qquad
B = 4 \sqrt{\frac{\log_2 S}{\log_2 y}}\,
 \sum_{j=2}^{k+1} \theta_{j-1}^{1/2} j\log j.
$$
\end{lem}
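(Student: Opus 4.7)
The plan is to use the $S$-normality of the top prime factors of any preimage $n$ to force the totient $v = \phi(n)$ to have an excess of prime factors in several nested ranges, and then count integers $v \le y$ with this structure via a Tur\'an-type divisor-sum bound.

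Set $\theta_0 = 1$, $\theta_{k+1} = (\log_2 S)/(\log_2 y)$, $E_0 = y$, $E_{k+1} = S$, and $u_j = \log_2 E_{j-1} - \log_2 E_j$.  By Lemmas~\ref{normal lem} and~\ref{divisor normal}, we may discard an exceptional set of size $O(y/(\log y)^2)$ and restrict to $S$-nice totients.  For such $v$, the top $k+1$ prime factors $q_0 \ge q_1 \ge \cdots \ge q_k$ of the chosen preimage are all $\ge E_k \ge S$, hence distinct, and each $q_i - 1$ has a nearly normal distribution of prime factors by \eqref{normal}.  Summing the contributions of $q_0-1,\ldots,q_{j-1}-1$ (each of which has all prime factors of size up to at least $E_{j-1}$) in the interval $(E_j,E_{j-1}]$ yields, for each $j=1,\ldots,k+1$,
\[
 \Omega(v, E_j, E_{j-1}) \ge M_j := j u_j - C_0\, j \sqrt{\log_2 S \cdot \log_2 E_{j-1}},
\]
for an absolute constant $C_0$.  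Since the witnessing divisors $d_j\mid v$ with $\Omega(d_j)=M_j$ and prime factors confined to $(E_j,E_{j-1}]$ are pairwise coprime, a standard sieve bound gives
\[
 \#\{v\} \le y \prod_{j=1}^{k+1} \sum_{d_j} \frac{1}{d_j} \le y \prod_{j=1}^{k+1} \frac{u_j^{M_j}}{M_j!}.
\]

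I would then evaluate this product asymptotically.  By Stirling, $u_j^{M_j}/M_j! \le (eu_j/M_j)^{M_j}/\sqrt{2\pi M_j}$; writing $M_j/u_j = j - \epsilon_j$ with $\epsilon_j = E_j'/u_j$, $E_j'=C_0 j\sqrt{\log_2 S\log_2 E_{j-1}}$, and Taylor-expanding $\log(M_j/u_j)=\log j - \epsilon_j/j + O(\epsilon_j^2/j^2)$, the logarithm $\sum_j M_j(1-\log(M_j/u_j))$ splits into a main term $-\sum_j c_j u_j$ (with $c_j=j\log j - j$) and an error term $\sum_j E_j'\log j$.  Applying Abel summation via the key identity $a_j = c_{j+1}-c_j$ together with the boundary values $\theta_0=1$ and $\theta_{k+1}=\log_2 S/\log_2 y$, the main term evaluates to $A\log_2 y + c_{k+1}\log_2 S$ plus a boundary contribution at $j=1$; the error term matches precisely $B\log_2 y$, since $\sqrt{\log_2 E_{j-1}} = \sqrt{\theta_{j-1}\log_2 y}$ produces the $\theta_{j-1}^{1/2}$ weight appearing in the definition of $B$.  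Since $c_{k+1}=(k+1)\log(k+1)-(k+1)\le k\log k$ for $k\ge 2$ (a direct computation), this gives a factor of $(\log S)^{k\log k}$.

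The main obstacle is the treatment of the $j=1$ endpoint, where the constraint $\Omega(v,E_1,y)\ge M_1 \approx u_1$ is essentially the normal count and the na\"ive divisor bound $u_1^{M_1}/M_1! \asymp e^{u_1}/\sqrt{u_1}$ is too weak by a factor of $\log y$.  I would circumvent this by treating $j=1$ separately: the preimage is divisible by the prime $q_1\ge E_1$, so $(q_1-1)\mid v$, and summing $\sum_{q_1\ge E_1}y/(q_1-1) \ll y u_1 \ll y\log_2 y$ replaces the spurious $\log y$ factor with the desired $\log_2 y$ factor appearing in the lemma, at the cost of a small additional boundary term $(a_1-c_2)\theta_1\log_2 y=\theta_1\log_2 y$ in the Abel summation.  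This extra term is then absorbed into $(\log y)^B$ using the assumption $S\ge \exp\{(\log_2 y)^{36}\}$ (which ensures $B$ dominates $\theta_1$ through the $j=2$ term of its defining sum).  The residual $y/(\log y)^2$ in the lemma accounts for the exceptional totients discarded at the outset.
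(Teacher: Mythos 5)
Your framework for $j \ge 2$ (use $S$-normality of the top prime factors to force $\Omega(v,E_j,E_{j-1}) \ge j u_j - O(j\sqrt{\log_2 S\cdot\log_2 E_{j-1}})$, then apply a divisor-sum/Tur\'an bound and Abel-sum the exponent using $a_j=c_{j+1}-c_j$) is essentially sound and reproduces the main term $A$ and error term $B$. The problem is in your treatment of $j=1$, which as you correctly note loses a factor of $\log y$; your proposed repair does not close this gap.

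There are two concrete problems with fixing $q_1$ and summing $\sum_{q_1\ge E_1} y/(q_1-1)$. First, the bound $\#\{v\le y : (q_1-1)\mid v,\ d_j\mid v\ (j\ge 2)\}\le y/\bigl((q_1-1)\prod_{j\ge 2} d_j\bigr)$ is simply false, because $q_1-1$ and the $d_j$ are not coprime: $q_1-1$ itself has $\approx u_j$ prime factors in $(E_j,E_{j-1}]$. If you instead write $v=(q_1-1)w$ and pass to divisors $d_j$ of $w$, the effective prime count in $(E_j,E_{j-1}]$ drops to $(j-1)u_j - O(\cdot)$, and redoing the Abel summation with exponents $\sum_{j\ge 2}(j-1)(1-\log(j-1))(\theta_{j-1}-\theta_j)$ gives a leading term whose $j=2$ contribution is $\theta_1-\theta_2$. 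This exceeds $A$ by at least $(1+a_1)\theta_1$ plus additional positive contributions $\sum_{i\ge 2}(a_i-a_{i-1})\theta_i$, not merely by $\theta_1$. Second, even the single $\theta_1$ you claim cannot be absorbed into $B$: the $j=2$ term of $B$ is $8\log 2\cdot\theta_1^{1/2}\sqrt{\log_2 S/\log_2 y}$, and this dominates $\theta_1$ only when $\theta_1\ll\log_2 S/\log_2 y$. Since $\theta_1$ may be anywhere in $[\log_2 S/\log_2 y,\,1]$, and in particular close to $1$, the absorption fails — $S\ge\exp\{(\log_2 y)^{36}\}$ gives a lower bound on $\log_2 S/\log_2 y$ but no upper bound on $\theta_1$.

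The paper avoids this difficulty by never running a divisor argument on the top interval $(E_1,y]$. It isolates $m$, the part of $v$ supported on primes in $(S,F]$ with $F=\min(E_1,y^{1/(20\log_2 y)})$, and for each fixed $m$ applies Lemma~\ref{basic sieve} to count $v\le y$ whose $(S,F]$-part equals $m$; this sieve estimate $\ll (y/m)\log S/\log F \le (y/m)(\log y)^{\theta_{k+1}-\theta_1}\log_2 y$ simultaneously yields the factor $(\log y)^{-\theta_1}$ from avoiding large primes and $(\log y)^{\theta_{k+1}}$ from avoiding small ones, producing the required $\log_2 y$ factor and the boundary terms in one step. The remaining sum $\sum_m 1/m$ over $m$ with $\omega(m,E_j,E_{j-1})\ge R_j$ for $2\le j\le k+1$ is then the analogue of your product over $j\ge 2$, and is estimated via Lemma~\ref{exp partial} exactly as you propose. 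To repair your proof you would need to import this sieve step; the pure divisor bound does not give the $(\log y)^{-\theta_1}$ saving in the top range.
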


\begin{proof}
Let $F=\min(E_1,y^{1/(20\log_2 y)})$, $E_{k+1}=S$, and
$\theta_{k+1}=\log_2 S/\log_2 y$.
Let $m$ be the part of $v$ composed of primes in $(S,F]$.  Then
$m \le F^{\om(v)} \le y^{1/2}.$
By Lemma \ref{basic sieve}, the number of totients with a
given $m$ is
$$
\ll \frac{y}{m} \frac{\log S}{\log F} \le \frac{y}{m}(\log y)^
{\theta_{k+1}-\theta_1} (\log_2 y).
$$
Let 
$$
\d_j=\frac{\sqrt{\log_2 S \log_2 E_{j-1}}}{\log_2 y}
$$
for each $j$.
Since the primes $q_i(n)$ are $S$-normal, by \eqref{normal} 
$$
\om(m,E_j,E_{j-1}) \ge j(\theta_{j-1}-\theta_j-\d_j)
\log_2 y =: R_j \qquad (2\le j\le k+1).
$$
Therefore, the total number, $N$, of totients counted satisfies
$$
N \ll y(\log y)^{\theta_{k+1}-\theta_1} (\log_2 y)\prod_{j=2}^{k+1}
 \sum_{r\ge R_j} \frac{t_j^r}{r!},\
$$
where
$$
t_j = \sum_{E_j < p \le E_{j-1}} \frac1{p}\le (\theta_{j-1}-\theta_j)\log_2 y+1 
:= s_j.
$$
If $\d_j \le \frac12 (\theta_{j-1}-\theta_j)$, then
$$
\frac{s_j}{R_j} \le \frac{1}{j} \( 1 + \frac{3\d_j}{\theta_{j-1}-\theta_j}\)
$$
and Lemma \ref{exp partial} implies
\begin{align*}
\sum_{r \ge R_j} \frac{s_j^r}{r!} \le \pfrac{es_j}{R_j}^{R_j}
&\le (\log y)^{j(\theta_{j-1}-\theta_j-\d_j)(1-\log j + 3\d_j/
(\theta_{j-1}-\theta_j))} \\
&\le (\log y)^{(j-j\log j)(\theta_{j-1}-\theta_j)+(j\log j+2j)\d_j}.
\end{align*}
If  $\d_j > \frac12 (\theta_{j-1}-\theta_j)$, then the sum on $r$ is
$$
\le e^{s_j} \le e (\log y)^{(j-j\log j)(\theta_{j-1}-\theta_j)+(2j\log j)\del_j}.
$$
Therefore, 
$$
N \ll y (\log y)^{ A + B}(\log_2 y)(\log S)^{(k+1)\log(k+1)-k} e^k.
$$
\end{proof}

\begin{lem}\label{Yk} 
Recall definitions \eqref{F def}.
Suppose $k\ge 2$, $0<\omega<1/10$ and $y$ is
sufficiently large (say $y\ge y_0$).  Then the number of totients $v\le y$
with a pre-image $n$ satisfying
$$
a_1 x_1(n;y) + \cdots + a_k x_k(n;y) \ge 1+\omega
$$
is
$$
\ll y (\log_2 y)^6 W(y) (\log y)^{-1-\omega^2/(600k^3\log k)}.
$$
\end{lem}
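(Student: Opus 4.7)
\textbf{Proof plan for Lemma \ref{Yk}.}

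The approach splits totients into \emph{non-$S$-nice}, handled directly by Lemma \ref{divisor normal}, and \emph{$S$-nice}, handled by covering the admissible $(\theta_1, \ldots, \theta_k)$-region with a fine grid and applying Lemma \ref{AB} once per grid cell. I set
\[
\log_2 S \;=\; \frac{c\,\omega^2 \log_2 y}{k^3 \log k}
\]
with $c$ a small absolute constant (e.g.\ $c = 1/100$). For $y$ sufficiently large depending on $k$ and $\omega$, $\log_2 S \ge (\log_2 y)^{36}$, so Lemma \ref{AB} is available, and $(\log S)^{-1/6} \le (\log y)^{-\omega^2/(600 k^3 \log k)}$, so Lemma \ref{divisor normal} immediately places the non-$S$-nice totients within the target bound.

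For each $S$-nice totient $v$, I fix a pre-image $n$ whose prime factors are all $S$-normal. Let $k' = \max\{j \le k : q_j(n) \ge S\}$ (the pre-images with few large primes are handled similarly, since when $q_j(n) < S$ one has $a_j x_j(n;y) = O(\omega^2/(k\log k))$, negligible against $\omega$). Set $\theta_j = x_j(n;y)$ for $j\le k'$, lying in $[\log_2 S/\log_2 y,\,1]$, and discretize with step $\Delta = \omega/(10\sum_{j\le k'} a_j)$, assigning $n$ to the box $\theta^{(0)} = \Delta\lfloor\theta/\Delta\rfloor$. The hypothesis forces $\sum_{j\le k'} a_j\theta^{(0)}_j \ge 1 + 9\omega/10$, and Lemma \ref{AB} applied with $\log_2 E_j = \theta^{(0)}_j\log_2 y$ bounds the totients in each box.

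The essential estimate is a Cauchy--Schwarz bound on the error term from Lemma \ref{AB}:
\[
B \;\le\; 4\sqrt{\tfrac{\log_2 S}{\log_2 y}}\, \Bigl(\sum_{i=1}^{k'} \tfrac{(i+1)^2\log^2(i+1)}{a_i}\Bigr)^{1/2} \Bigl(\sum_{i=1}^{k'} a_i\theta^{(0)}_i\Bigr)^{1/2}.
\]
Since $a_i \asymp \log(i+1)$, the middle factor is $O(k^{3/2}\sqrt{\log k})$, so my choice of $S$ yields $B \le (\omega/10)\sqrt{u}$ where $u = \sum a_i \theta^{(0)}_i \ge 1 + 9\omega/10$; hence $A+B \le -u + (\omega/10)\sqrt{u}$.

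To sum $\sum_{\theta^{(0)}}(\log y)^{A+B}$ over the grid, the substitution $u_j = a_j \theta^{(0)}_j \log_2 y$ converts the sum into a Gamma-type integral dominated by $(\log y)^{-(1+9\omega/10) + O(\omega)}/\log_2 y$, scaled by $1/(\Delta^{k'}(\log_2 y)^{k'}\prod a_j)$. Multiplying by the $(\log_2 y)(\log S)^{k\log k} = (\log y)^{1/\log_2 y + O(\omega^2/k^2)}$ factor from Lemma \ref{AB}, by $1/\Delta^{k'} \le (\log y)^{\omega/10}$ for $y$ large, and absorbing the secondary $y/(\log y)^2$ error terms, the $S$-nice contribution is $\ll y(\log y)^{-1 - 7\omega/10 + O(\omega^2)}$; summing over the at-most $k$ choices of $k'$ costs only a factor of $k$. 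This sits comfortably inside the stated bound, since $\omega^2/(600k^3\log k) \le \omega/10$ for $\omega < 1/10$ and $k\ge 2$. The main obstacle is the simultaneous balancing of $S$: large enough for $(\log S)^{-1/6}$ to beat $(\log y)^{-\omega^2/(600k^3\log k)}$, yet small enough that $B \ll \omega$ via $\sqrt{\log_2 S/\log_2 y}\cdot k^{3/2}\sqrt{\log k}$. The scale $\log_2 S \asymp \omega^2\log_2 y/(k^3\log k)$ is precisely where these two constraints coincide, and this is what forces the exponent $\omega^2/(k^3\log k)$ to appear in the conclusion.
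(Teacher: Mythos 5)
Your proposal follows essentially the same route as the paper: the same choice $\log_2 S \asymp \omega^2\log_2 y/(k^3\log k)$ calibrated so that $(\log S)^{-1/6}$ matches the target exponent, the same dispatch of non-$S$-nice totients via Lemma \ref{divisor normal}, the same discretization of $x_1,\ldots,x_k$ into a grid with step $\asymp\omega/(k\log k)$, the same application of Lemma \ref{AB} per cell, and the same Cauchy--Schwarz estimate for the error term $B$. Two points of difference are worth noting.

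First, where you sum $\sum_{\theta^{(0)}}(\log y)^{A+B}$ via a "Gamma-type integral," the paper simply bounds the \emph{number} of grid cells by $(\a/\e)^k \le (10k\log k/\omega)^k \le (\log y)^{\omega/30000}$ and multiplies by $\max_{\vta}T(\vta;y)$. Your approach is sharper in principle but requires more care to make rigorous; the paper's count is cruder yet suffices and is much easier to justify.

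Second, and this is the substantive gap: you handle the case where $\log S < (\log_2 y)^{36}$ (so that Lemma \ref{AB} is unavailable) by saying "$y$ sufficiently large depending on $k$ and $\omega$." But in the application (e.g.\ the proof of \eqref{V upper}), $k$ and $\omega$ both vary with $x$, so the lemma must hold with $y_0$ and the implied constant \emph{absolute}. The paper achieves this by observing that when $\omega^2 \le 3600(\log_3 y/\log_2 y)k^3\log k$, the exponent $\omega^2/(600k^3\log k)$ is at most $6\log_3 y/\log_2 y$, so the claimed bound is $\gg yW(y)/\log y \ge V(y)$ and the lemma is trivially true. Your proof should include this dispatching rather than absorbing it into $y_0(k,\omega)$. (Also a small slip: you wrote "$\log_2 S \ge (\log_2 y)^{36}$" where what's needed, and what holds under the non-degenerate assumption, is $\log S \ge (\log_2 y)^{36}$, i.e.\ $\log_2 S \ge 36\log_3 y$.)
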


\begin{proof}  Assume that
\be\label{omega lower}
\omega^2 > 3600 \frac{\log_3 y}{\log_2 y} k^3 \log k,
\ee
for otherwise the lemma is trivial.  Define $S$ by
\be\label{S def}
\log_2 S = \frac{\omega^2}{100k^3 \log k} \log_2 y,
\ee
so that $S\ge \exp\{(\log_2 y)^{36} \}$.
Let $U(y)$ denote the number of totients in question which are $S$-nice.  
By \eqref{S def} and Lemma \ref{divisor normal}, the number of totients not counted by
$U(y)$ is
\[
\ll \frac{y(\log_2 y)^6 W(y)}{\log y} (\log S)^{-1/6} + \frac{y\log_2 y}{S}
\ll y(\log_2 y)^6 W(y) (\log y)^{-1-\omega^2/(600k^3\log k)}.
\]
Let $\e=\omega/10$, $\a=a_1+\cdots+a_k<k\log k$,
and suppose $n$ is a pre-image of a totient counted
in $U(y)$.
  Let $x_i=x_i(n;y)$ for $1\le i\le k$.  Then there are numbers
$\theta_1, \ldots, \theta_k$ so that $\theta_i\le x_i$ for each $i$,
 each $\theta_i$ is
an integral multiple of $\e/\a$, $\theta_1\ge \cdots \ge \theta_k$, and
\be\label{star}
1+\omega-\e \le a_1 \theta_1 + \cdots + a_k \theta_k \le
1+\omega.
\ee
For each admissible $k$-tuple $\vta$,
let $T(\vta;y)$ denote the number of totients counted in $U(y)$ which
have some pre-image $n$ satisfying $x_i(n;y) \ge \theta_i$ for $1\le i\le k$.
Let $j$ be the largest index with $\theta_j \ge \log_2 S/\log_2 y$.
By Lemma \ref{AB},
$$
T(\vta;y) \ll y(\log y)^{A+B}(\log_2 y)(\log S)^{k\log k} + y(\log y)^{-2},
$$
where, by \eqref{star},
$$
A = - \sum_{i=1}^j a_i \theta_i \le - (1+0.9 \omega)
 + \a \frac{\log_2 S}{\log_2 y}
$$
and, by \eqref{S def}, \eqref{star} and the Cauchy-Schwartz inequality,
\[
B \le 4\(\frac{\log_2 S}{\log_2 y}(1+\omega) \sum_{j=2}^{k+1}
\frac{j^2\log^2 j}{a_{j-1}} \)^{1/2}
\le 6\pfrac{k^3 \log k \log_2 S}{\log_2 y}^{1/2} \le \frac{3\omega}{5}.
\]
Also
$$
(\log S)^{2k\log k}=(\log y)^{\omega^2/(50k^2)} \le (\log y)^{\omega/2000}.
$$
Using \eqref{omega lower}, the number of vectors $\vta$ is trivially at most
\[
\pfrac{\a}{\e}^k \le \pfrac{10k\log k}{\omega}^k \le (\log_2 y)^{k/2} 
\le (\log y)^{\omega^2/3000}
\le(\log y)^{\omega/30000}.
\]
Therefore,
$$
U(y) \le \sum_{\vta} T(\vta;y) \ll y (\log y)^{-1-\omega/4},
$$
which finishes the proof.
\end{proof}

Before proceeding with the main argument, we prove a crude upper bound
for $V(x)$ to get things started using the method
of Pomerance \cite{P1}.  For a large $x$ let $x'\le x$ be such that $V(x')=x'W(x)/\log x'$.
Let $v\le x'$ be a totient with pre-image $n$.
By Lemma \ref{divisor squares}, the
 number of $v$ with  $p^2|n$  for some 
prime $p>e^{\sqrt{\log x'}}$  is $O(x'/\log x')$. 
 By Lemma \ref{Yk}, the number of $v$ with
$a_1 x_1(n;x')+a_2 x_2(n;x') > 1.01$ is $O(x' W(x') (\log x')^{-1-c})$ for some $c>0$.
On the other hand, if $a_1 x_1(n;x')+a_2 x_2(n;x') \le 1.01$, then
$x_2(n;x')\le 0.8$.   Write $v=\phi(q_0 q_1) m$, so that
$m\le \exp\{(\log x')^{0.8} \}$, $p_0^2\nmid n$ and $p_1^2\nmid n$. Therefore,
$$
W(x) \ll 1 + \frac{W(x)}{(\log x')^c} + 
\sum_{q_1} \sum_{m} \frac{1}{(q_1-1)m} \ll
(\log_2 x)^2  W(\exp\{(\log x)^{0.8} \}).
$$
Iterating this inequality yields 
\be\label{crude}
W(x) \ll \exp \{ 9(\log_3 x)^2 \}.
\ee
\medskip

\begin{lem} \label{sum 1/v}  We have
$$
\sum_{\substack{ v\in \fancyV \\ P^+(v)\le y }} \frac1{v} \ll W(y^{\log_2 y})
\log_2 y \ll \exp\{10(\log_3 y)^2\}.
$$
\end{lem}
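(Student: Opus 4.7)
The plan is to split the sum at the threshold $z := y^{\log_2 y}$:
\[
\sum_{\substack{v \in \fancyV \\ P^+(v)\le y}} \frac{1}{v} = \Sigma_1 + \Sigma_2,
\]
where $\Sigma_1$ collects the totients $v \le z$ and $\Sigma_2$ those with $v > z$. The idea is that $\Sigma_1$ is controlled by partial summation and the definition of $W$, while $\Sigma_2$ is negligible by smoothness considerations.

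For $\Sigma_1$, since $W$ is non-decreasing, $V(t)/t \le W(t)/\log t \le W(z)/\log t$ throughout $2 \le t \le z$, and partial summation gives
\[
\Sigma_1 \le \frac{V(z)}{z} + \int_2^z \frac{V(t)}{t^2}\, dt + O(1) \ll W(z)\int_2^z \frac{dt}{t\log t} + O(1) \ll W(z)\log_2 z \ll W(y^{\log_2 y})\log_2 y,
\]
where the last step uses $\log_2 z = \log_2 y + \log_3 y \le 2\log_2 y$ for $y$ large.

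For $\Sigma_2$, I discard the totient restriction and bound by $\sum_{v>z,\, P^+(v)\le y} 1/v$. With $\Psi(t,y)$ as in Lemma \ref{large prime small}, partial summation together with the standard Dickman--de~Bruijn smoothness bound $\Psi(t,y) \ll t\, u^{-u(1+o(1))}$ (with $u = \log t/\log y \ge \log_2 y$; the range condition $y \ge \exp((\log_2 t)^2)$ is easily checked) yields
\[
\Sigma_2 \ll (\log y)\int_{\log_2 y}^{\infty} u^{-u(1+o(1))}\, du \ll (\log y)(\log_2 y)^{-\log_2 y\,(1+o(1))} = o(1),
\]
absorbed into $\Sigma_1$. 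The second inequality then follows from the crude bound \eqref{crude} at $x = y^{\log_2 y}$: since $\log_3 x = \log(\log_2 y + \log_3 y) = (1+o(1))\log_3 y$, we have $W(y^{\log_2 y}) \ll \exp\{(9+o(1))(\log_3 y)^2\}$, and multiplying by $\log_2 y = \exp(\log_3 y) \le \exp(\tfrac12(\log_3 y)^2)$ (for $y$ large) produces $\exp\{10(\log_3 y)^2\}$. The only subtle point is verifying that $\Sigma_2$ is genuinely negligible: the trivial Euler-product bound $\sum_{P^+(v)\le y} 1/v \ll \log y$ vastly exceeds the target, so one really needs the super-polynomial Dickman-type decay of smooth-number densities to close the argument.
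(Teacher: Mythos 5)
Your decomposition at $z=y^{\log_2 y}$, the treatment of $\Sigma_1$ by partial summation against $V(t)\le tW(t)/\log t$, and the use of smooth-number estimates for $\Sigma_2$ are precisely the paper's strategy; the paper simply packages the $\Sigma_2$ bound via Lemma \ref{large prime small} (i.e.\ the Hildebrand--Tenenbaum estimate quoted there, giving $f(z)\ll z/\log^2 z$ for $z\ge y'$), whereas you invoke the de~Bruijn bound directly. One caveat: the ``range condition $y\ge\exp((\log_2 t)^2)$'' you state is not the correct hypothesis for $\Psi(t,y)\ll t\,u^{-u(1+o(1))}$ (the relevant condition is roughly $y\ge(\log t)^{1+\eps}$), and neither condition holds uniformly over the full integration range $t>z$ (both fail once $t$ is doubly-exponentially large in $y$). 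The tail where they fail is harmless -- e.g.\ Rankin's method with $\sigma=1-3/\log y$ already makes $\int_{z}^{\infty}\Psi(t,y)t^{-2}\,dt\ll 1/\log y$ -- and since you only need $\Sigma_2=O(1)$ to absorb it into $\Sigma_1$, the argument goes through, but the cavalier ``easily checked'' hides a real issue that should be addressed by splitting the tail or using Rankin explicitly.
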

\begin{proof}
Let $f(z)$ denote the number of totients $v\le z$ with $P^+(v)\le y$,
and set $y'=y^{\log_2 y}$.  First suppose $z\ge y'$.  If $v>z^{1/2}$,
then $P^+(v) < v^{2/\log_2 y}$, so Lemma \ref{large prime small} gives
$f(z)\ll z/\log^2 z$.  For $z<y'$, use the trivial bound $f(z)\le V(z)$.
The lemma follows from $\log_2 y' = \log_2 y + \log_3 y$, \eqref{crude}
and partial summation.
 \end{proof}

\begin{proof}[Proof of \eqref{V upper}]
Let $L=L_0(x)$ and for $0\le i\le L-1$, let
\be\label{omega xi}
\omega_i = \frac{1}{10000} \exp \left\{ - \frac{L-i}{40} \right\}, \qquad \xi_i=1+\omega_i.
\ee
Then $H(\vxi)\le 1.1$.  Let $v$ be a generic
totient $\le x$ with a pre-image $n$ satisfying
$n\ge x/\log x$ and $\om(n)\le 10\log_2 x$, and set
$x_i=x_i(n;x)$ and $q_i=q_i(n)$ for $i\ge 0$.  By Lemma \ref{Omega lem},
$$
V(x) \le \sum_{j=0}^{L-2} M_j(x) + N(x) + O\pfrac{x}{\log x},
$$
where $M_j(x)$ denotes the number of such totients $\le x$
with a pre-image satisfying
inequality ($I_i$) for $i<j$ but not satisfying inequality ($I_j$), and
$N(x)$ denotes the number of such totients with every pre-image satisfying
 $\vx\in \fancyS_L(\vxi)$.
By Lemma \ref{Yk} (with $\omega=\omega_0$) and \eqref{crude},
$M_0(x) \ll x/\log x.$
Now suppose $1\le j\le L-2$, and set $k=L-j$.
Let $n$ be a pre-image of a totient counted in $M_j(x)$, and set
$w = q_j q_{j+1} \cdots, m=\phi(w).$
Since ($I_0$) holds, $x_2 \le \xi_0/
(a_1+a_2) <0.9$.  It follows that $q_0 > x^{1/3}$, whence $m<x^{2/3}$.
By the definition of $M_j(x)$ and \eqref{omega xi},
$$
x_j \le \xi_j^{-1} (a_1x_{j+1} + a_2x_{j+2} + \cdots) < \xi_{j-1}^{-1}
(a_1 x_j + a_2 x_{j+1} + \cdots) \le x_{j-1},
$$
whence $q_{j-1}>q_j$ and
$\phi(n)=\phi(q_0 \cdots q_{j-1})m$.
For each $m$, the number of choices for $q_0, \ldots, q_{j-1}$ is
$$
\ll \frac{x}{m\log x} 
R_{j-1}(\fancyS_{j-1}(\xi_0, \ldots,\xi_{j-3});x),
$$
where we set $\fancyS_0=\{0\}$, $\fancyS_1=[0,1]$ and $\fancyS_2=[0,1]^2$.
Let $f(y)$ be the number of $m\le y$.
Define $Y_j$ by $\log_3 Y_j = k/20+1000$.
Since $m$ is a totient, we have $f(y) \le V(y)$, but when $y>Y_j$
we can do much better.
First note that $w\ll y\log_2 y$.
By Lemma \ref{large prime small}, the number of such $w$ with
$P^+(w) < y^{1/\log_2 y}$ is $O(y/(\log y)^3)$.
Otherwise, we have $q_j=P^+(w) \ge y^{1/\log_2 y}$ and
$$
x_j \ge \frac{\log_2 y-\log_3 y}{\log_2 x} \ge \frac{\log_2 y}{\log_2 x} 
\(1 - \frac{k/20+1000}{e^{k/20+1000}} \).
$$
For $0\le i\le k$, let
$$
z_i = x_i(w;y) = \frac{\log_2 x}{\log_2 y} x_{i+j}.
$$
Since ($I_j$) fails and $y>Y_j$, it follows that
\[
a_1 z_1 + \cdots + a_k z_k \ge \frac{\log_2 x}{\log_2 y}(1+\omega_j)x_j
\ge (1+\omega_j/2).
\]
By Lemma \ref{Yk} and \eqref{crude},
 when $y\ge \max(y_0,Y_j)$ we have
\[
f(y) \ll \frac{y W(y) (\log_2 y)^6}{\log y} \exp\left\{ - 
\frac{\omega_j^2}{600k^3\log k} \log_2 y\right\}
\ll \frac{y}{\log y (\log_2 y)^2}.
\]
By partial summation and Lemma \ref{sum 1/v}, 
$$
\sum_{m} \frac{1}{m} \ll 1 + \sum_{m\le Y_j}
 \frac1{m} \ll W(Y_j) \log_2 Y_j \ll \exp\{ k^2/40+O(k) \}.
$$
Therefore, by Lemma \ref{sumvol}, Corollary \ref{TL cor} (with $\Psi=k+1$) and Lemma \ref{xL most} 
(ii) with $\a=0$,
\be\label{Mj}\begin{split}
M_j(x) &\ll \frac{x}{\log x} R_{j-1}(\fancyS_{j-1}(\xi_0,\ldots,\xi_{j-1});x)
 \exp \{ k^2/40 \} \\
&\ll \frac{x}{\log x} T_{j-1}(\log_2 x)^{j-1} \exp \{k^2/40\}\\
&\ll \frac{x}{\log x} \exp\{ -k^2/4-((D+1)/2C-1)k \} Z(x).
\end{split}\ee
Thus
\be\label{Mj sum}
\sum_{j=0}^{L-1} M_j(x) \ll \frac{x}{\log x} Z(x).
\ee

Next, suppose $n$ is a pre-image of a totient
counted in $N(x)$. 
 By Lemma \ref{xz},
$x_L \le 5\rho^L \le \frac{20\log_3 x}{\log_2 x}.$
If $b$ is a nonnegative integer, let $N_b(x)$ be the number of totients
counted in $N(x)$ with a pre-image $n>x/\log^2 x$ satisfying
 $b/\log_2 x \le x_L \le (b+1)/\log_2 x$.
Let $w=q_{L+1} \cdots $ and $q=q_1 \cdots q_{L}w$.
Since $x_2<0.9$ we have $q<x^{2/3}$.  As $\phi(q)\ge \phi(q_1\cdots q_L)\phi(w)$,
for a fixed $q$ the number of possibilities for $q_0$ is
$$
\ll \frac{x}{\log x} \frac{1}{\phi(q)} \le \frac{x}{\log x} \, \frac{1}{\phi(q_1
\cdots q_L) v}, \; v=\phi(w).
$$
By Lemma \ref{sumvol} and Lemma \ref{xL most} (ii),
$$
\sum \frac{1}{\phi(q_1 \cdots q_{L})} \ll R_L(\fancyS_L(\bxi) \cap \{ x_L \ge b/\log_2 x\};x)
\ll Z(x) e^{-C_0 b/4}.
$$
By Lemma \ref{sum 1/v} and \eqref{crude},
$\sum \frac1{v} \ll \exp\{ 10\log^2 b \}.$
Combining these estimates gives
\be\label{Nb}
N_b(x) \ll \frac{x}{\log x} Z(x) \exp\{ -C_0 b/4 + 10\log^2 b \}.
\ee
Summing on $b$ gives $N(x)\ll \frac{x}{\log x} Z(x)$, which
together with \eqref{Mj sum} gives \eqref{V upper}.
\end{proof}

%
%
\section{The lower bound for $V_\kappa(x)$}
%
%
%

Our lower bound for $V_\kappa(x)$ is obtained by constructing a set of numbers
with multiplicative structure similar to the numbers counted by $N(x)$ in
the upper bound argument.  At the core is the following estimate, which
is proved using the lower bound method from \cite{MP}.

\begin{lem}\label{prodpiqi}
Let $y$ be large, $k\ge 1$, 
$e^e \le S \le v_k < u_{k-1} < v_{k-1} < 
u_{k-2} < \cdots < u_0 < v_0=y$, $v_1 \le y^{1/10\log_2 y}$, 
$l\ge 0$,  $1\le r\le y^{1/10}$,
$\del=\sqrt{\log_2 S/\log_2 y}$.  Set $\nu_j=\log_2 v_j/\log_2 y$ 
and $\mu_j=\log_2 u_j/\log_2 y$ for each $j$.
Suppose also that $\nu_{j-1}-\nu_j \ge 2\del$ for $2\le j\le k$,
$1\le d\le y^{1/100}$ and $P^+(d)\le v_k$.
The number of solutions of
\be\label{piqi}
(p_0-1)\cdots (p_{k-1}-1)f_1\cdots f_{l}d = (q_0-1)\cdots (q_{k-1}-1)e \le y/r,
\ee
in $p_0,\ldots,p_{k-1},f_1,\ldots,f_l,q_0,\ldots,q_{k-1},e$ satisfying
\begin{enumerate}
\item $p_i$ and $q_i$ are $S$-normal primes,
neither $p_i-1$ nor $q_i-1$ is divisible by $r^2$ for a prime $r\ge v_k$;
\item $p_i\ne q_i$ and $u_i\le P^+(p_i-1), P^+(q_i-1) \le v_i$ for 
$0\le i\le k-1$; 
\item $P^+(ef_1\cdots f_{l})\le v_k$; $\Omega(f_i)\le 10\log_2 v_k$ for all $i$; 
\item $p_0-1$ has a divisor $\ge y^{1/2}$ which is composed of primes $\ge v_1$;
\end{enumerate}
is
$$
\ll \frac{y}{dr} (c_4\log_2 y)^{6k} (k+1)^{\Omega(d)} 
(\log v_k)^{20(k+l)\log(k+l)+1}
(\log y)^{-2+\sum_{i=1}^{k-1} a_i \nu_i + E},
$$
where $c_4$ is a positive constant and
$E=\del \sum_{i=2}^k (i\log i+i) + 2 \sum_{i=1}^{k-1} (\nu_i-\mu_i).$
\end{lem}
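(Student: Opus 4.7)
The plan follows the lower bound strategy of Maier--Pomerance, with explicit error tracking adapted to the present setting. For each choice of $(p_1,\ldots,p_{k-1},q_1,\ldots,q_{k-1},f_1,\ldots,f_l,e)$ compatible with conditions (1)--(3), set
\[
A = (p_1-1)\cdots(p_{k-1}-1) f_1\cdots f_l\, d, \qquad B = (q_1-1)\cdots(q_{k-1}-1)\, e,
\]
let $g=\gcd(A,B)$, and write $A=gA_1$, $B=gB_1$ with $\gcd(A_1,B_1)=1$. The master equation then forces $p_0-1=B_1 t$ and $q_0-1=A_1 t$ for some positive integer $t\le y/(rgA_1B_1)$. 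So the innermost count reduces to a count of integers $t$ such that $B_1 t+1$ and $A_1 t+1$ are simultaneously prime, subject to further local constraints from (1), (2), and (4).

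First I would bound this inner count by a linear sieve for two forms (Lemma \ref{sieve linear factors}), obtaining a contribution
\[
\ll \frac{y}{rgA_1B_1}\cdot\frac{(\log_2 y)^2}{(\log y)^2}
\]
uniformly in the moduli. Condition (4), which demands that $p_0-1$ possess a divisor $h\ge y^{1/2}$ composed of primes $\ge v_1$, is absorbed by first fixing such an $h$ (necessarily $h\mid B_1 t$), then applying the sieve to the residual linear forms; summing over $h$ costs only constants since $h$ is large and its prime factors already lie in a known range.

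Next I would execute the summation over the auxiliary parameters. The smooth factors $e,f_1,\ldots,f_l$ satisfy $P^+(\cdot)\le v_k$ and $\Omega(f_j)\le 10\log_2 v_k$, and Mertens's theorem applied to the resulting Euler product yields a total of $(\log v_k)^{20(k+l)\log(k+l)+1}$ after bounding the relevant sums of reciprocals. For the primes $p_i,q_i$ with $u_i\le P^+(p_i-1)\le v_i$, I would split off the largest prime factor and use the $S$-normality from (1), which via \eqref{normal} constrains the count of prime factors of $p_i-1$ in each dyadic-in-log range to within $\sqrt{\log_2 S\log_2 y}$ of its expected value. The resulting Euler products over nested prime ranges produce both the principal exponent $\sum a_i\nu_i$ in $\log y$ (arising from the combinatorial convolution that defines the coefficients $a_i$) and the secondary exponent $2\sum(\nu_i-\mu_i)$ (arising from summing $1/(p-1)$ once for $p_i$ and once for $q_i$ over $u_i<P^+(\cdot-1)\le v_i$). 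The admissible normality deviations contribute the error term $\del\sum(i\log i+i)$ through Lemma \ref{exp partial}. Finally, the prime factors of $d$ may be freely distributed among $B_1,A_1,g$, producing the factor $(k+1)^{\Omega(d)}$, and the leftover polylogarithmic losses from the sieve and from truncating normality get absorbed into $(c_4\log_2 y)^{6k}$.

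The principal difficulty is the sieve step: one must establish the bound on the count of simultaneous prime pairs $(p_0,q_0)$ uniformly in the potentially enormous parameters $A_1,B_1$ (which can be close to $y$) while simultaneously respecting the large-divisor condition (4). This requires verifying that the admissibility constant coming from Lemma \ref{sieve linear factors} is indeed of size $(\log_2 y)^{O(1)}$ rather than larger, and that summing over the decompositions and over $h$ does not degrade the $1/(rdA_1B_1g)$ prefactor. Beyond the sieve, the combinatorial bookkeeping needed to see that the $S$-normality conversions yield precisely the exponent $\sum a_i\nu_i$ (and not something larger) is delicate, but it mirrors the upper bound analysis and rests on the same generating function $F$ introduced in \eqref{F def}.
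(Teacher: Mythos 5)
Your route---extracting $g=\gcd(A,B)$, parametrizing $p_0-1=B_1t$ and $q_0-1=A_1t$ with $\gcd(A_1,B_1)=1$, and sieving for $t$ all at once---is genuinely different from the paper's proof, which instead peels the prime factors of the shifted primes one $\log\log$-layer at a time via the iterated identity $\sum_{\sigma_{i-1}}1/s_{i-1}=\sum_{\sigma_i}(1/s_i)\sum_{\tau_i}1/t_i$, working from $(v_k,v_{k-1}]$ outward to $(v_1,y]$. The reduction to the two linear forms $B_1t+1$, $A_1t+1$ is sound, but the plan has two genuine gaps. Your handling of condition (4) does not work: fixing a divisor $h\ge y^{1/2}$ of $p_0-1$, sieving the residual, and summing over $h$ both over-counts (a single solution admits many admissible $h$) and loses far more than ``constants,'' since the sieve range $T/h$ can be tiny and $\log(T/h)$ uncontrolled. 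In the paper, condition (4) is used for something else entirely: it forces $t_{1,0}\ge y^{1/2}$, which, combined with $S$-normality via \eqref{omp-1}, gives $Q=P^+(p_0-1)\ge y^{1/(6\log_2 y)}$---this lower bound on the single largest prime (not a fixed $h$) is what keeps the sieve nondegenerate and supplies the extra $(\log y)^{-\nu_1}$ from the sum over the cofactor $t_1'$.

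The second gap is the summation over the auxiliary parameters, which you dismiss as ``bookkeeping that mirrors the upper bound analysis.'' What you actually need to bound is $\sum 1/\mathrm{lcm}(A,B)$ subject to conditions (1)--(3), and the number of ways to realize a given $\mathrm{lcm}(A,B)$ as a product of $k-1$ shifted primes and $l+1$ smooth factors on each side grows like $(k+l)^{\Omega(\cdot)}$. Controlling this is the hard part. The paper does it layer by layer: at layer $i$ the dual-factorization count $h(t)\le i^{2\om(t)}$, combined with the $S$-normality bound $\om(t)\le i(\nu_{i-1}-\nu_i+\delta)\log_2 y$ from \eqref{normal} and Lemma \ref{exp partial}, gives the factor $(\log y)^{(i\log i+i)(\nu_{i-1}-\nu_i+\delta)}$; the $-2\mu_{i-1}$ comes from the two-variable sum over $Q_1,Q_2\ge u_{i-1}$; and Abel summation over $i$ yields $\sum a_i\nu_i+E$. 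A one-shot Euler-product/Mertens argument cannot produce the coefficients $a_i=(i+1)\log(i+1)-i\log i-1$; reproducing them essentially forces you back into the paper's iteration. Until these two steps are made rigorous, the proposal is an outline, not a proof.
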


\begin{proof}
We consider separately the prime factors of each shifted prime lying in
each interval $(v_i,v_{i+1}]$. 
For  $0\le j \le k-1$ and $0\le i\le k$, let
\[
s_{i,j}(n) = \prod_{\substack{ p^a\parallel (p_j-1) \\p\le v_i }} p^a, \qquad
s'_{i,j}(n) = \prod_{\substack{ p^a\parallel (q_j-1) \\p\le v_i }} p^a, \qquad
s_i = d f_1\cdots f_l \prod_{j=0}^{k-1} s_{i,j} = e\prod_{j=0}^{k-1} s'_{i,j}.
\]
Also, for $0\le j\le k-1$ and $1\le i\le k$, let 
\[
t_{i,j} = \frac{s_{i-1,j}}{s_{i,j}}, \qquad t'_{i,j} = \frac{s'_{i-1,j}}{s'_{i,j}},
\qquad t_i = \prod_{j=0}^{k-1} t_{i,j} = \prod_{j=0}^{k-1} t'_{i,j}.
\]
For each solution $\fancyA=(p_0,\ldots,p_{k-1},f_1,\ldots,f_l,q_0,\ldots,q_{k-1},e)$
of \eqref{piqi}, let
\begin{align*}
\sigma_i(\fancyA) &= \{s_i;s_{i,0},\ldots,
s_{i,k-1},f_1,\ldots,f_l; s_{i,0}', \ldots, 
s_{i,k-1}',e\},\\
\tau_i(\fancyA) &= \{t_i; t_{i,0},\ldots,t_{i,k-1},1,\ldots,1;
t'_{i,0},\ldots,t'_{i,k-1},1\}.
\end{align*}
Defining multiplication of $(2k+l+2)$-tuples by component-wise multiplication,
we have 
\be\label{sigtau}
\sigma_{i-1}(\fancyA) = \sigma_i(\fancyA) \tau_i(\fancyA).
\ee
Let $\mathfrak S_i$ denote the set of $\sigma_i(\fancyA)$ arising from solutions
$\fancyA$ of \eqref{main m} and $\mathfrak T_i$ the corresponding set of
$\tau_i(\fancyA)$. 
By \eqref{sigtau}, the number of solutions of \eqref{piqi} satisfying the required
conditions is 
\be\label{S0}
|\mathfrak S_0| =
\sum_{\sigma \in \mathfrak S_1} \sum_{\substack{ \tau \in \mathfrak T_1 \\
\sigma\tau \in \mathfrak S_0 }} 1.
\ee
We will apply an iterative procedure based on the identity
\be\label{st iter}
\sum_{\sigma_{i-1} \in \mathfrak S_{i-1}} \frac1{s_{i-1}} =
\sum_{\sigma_i \in \mathfrak S_i} \frac1{s_i}
 \sum_{\substack{ \tau_i \in \mathfrak T_i \\ \sigma_i\tau_i \in 
\mathfrak S_{i-1} } }  \frac1{t_i}.
\ee

First, fix $\sigma_1\in \mathfrak S_1$.  By assumption (4) in the lemma,
$t_{1,0} \ge y^{1/2}$.  Also, $t_1=t_{1,0}=t_{1,0}' \le y/(rs_1)$, $t_1$
is composed of primes $> v_1$, and also $s_{1,0}t_1+1$ and $s'_{1,0}t_1+1$
are different primes.
Write $t_1=t_1'Q$, where $Q=P^+(t_1)$.  
Since $p_0-1$ is an $S$-normal prime, $Q\ge t_1^{1/\om(t_1)} \ge
y^{1/6\log_2 y}$ by \eqref{omp-1}.
Given $t_1'$, Lemma \ref{sieve linear factors} implies that the number of
$Q$ is $O(y(\log_2 y)^6/(r s_1 t_1' \log^3 y))$.  Using Lemma \ref{basic sieve}
to bound the sum of $1/t_1'$, we have
for each $\sigma_1\in \mathfrak S_1$, 
\be\label{sigma1}
\sum_{\substack{ \tau_1 \in \mathfrak T_1 \\ \sigma_1\tau_1 \in \mathfrak S_0 }}
 1 \ll \frac{y (\log_2 y)^6}{rs_1 (\log y)^{2+\nu_1}}.
\ee

Next, suppose $2\le i\le k$, $\sigma_i\in \mathfrak S_i$,
$\tau_i\in \mathfrak T_i$ and $\sigma_i\tau_i\in \mathfrak S_{i-1}$.
By assumption (2),
$$
t_i=t_{i,0} \cdots t_{i,i-1} = t'_{i,0} \cdots t'_{i,i-1}.
$$
In addition, $s_{i,i-1}t_{i,i-1}+1=p_{i-1}$ and 
 $s'_{i,i-1}t'_{i,i-1}+1=q_{i-1}$ 
are different primes. 
Let $Q_1 = P^+(t_{i,i-1})$, $Q_2 = P^+(t'_{i,i-1})$, 
$b=t_{i,i-1}/Q_1$ and $b'=t'_{i,i-1}/Q_2$.

We consider separately $\mathfrak T_{i,1}$, the set of $\tau_i$ with $Q_1=Q_2$
and $\mathfrak T_{i,2}$, the set of $\tau_i$ with $Q_1 \ne Q_2$.
First,
$$
\Sigma_1 := \sum_{\substack{ \tau_i\in \mathfrak T_{i,1} \\ 
\sigma_i\tau_i \in \mathfrak S_{i-1}
} } \frac1{t_i} \le \sum_t \frac{h(t)}{t}\max_{b,b'} \sum_{Q_1} \frac1{Q_1},
$$
where $h(t)$ denotes the number of solutions of
$t_{i,0} \cdots t_{i,i-2} b = t = t'_{i,0} \cdots t'_{i,i-2} b'$,
and in the sum on $Q_1$,
$s_{i,i-1}bQ_1+1$ and $s'_{i,i-1}b'Q_1+1$ are unequal primes.  
By Lemma \ref{sieve linear factors}, the
number of $Q_1\le z$ is $\ll z(\log z)^{-3} (\log_2 y)^3$ uniformly in
$b,b'$.  By partial summation,
\[
\sum_{Q_1\ge u_{i-1}} \frac{1}{Q_1} \ll (\log_2 y)^3 (\log y)^{-2\mu_{i-1}}.
\]
Also, $h(t)$ is at most the number of dual factorizations of $t$ into
$i$ factors each, i.e. $h(t) \le i^{2\om(t)}$.
By \eqref{normal},
$\om(t) \le i(\nu_{i-1} - \nu_{i} +\d)\log_2 y =: I.$
Also, by assumption (1), $t$ is squarefree.
Thus
$$
\sum_t \frac{h(t)}{t} \le \sum_{j\le I} \frac{i^{2j}H^j}{j!},
$$
where
$$
\sum_{v_{i} < p \le v_{i-1}} \frac{1}{p} \le (\nu_{i-1} -
 \nu_{i})\log_2 y + 1 =: H.
$$
By assumption, $\nu_{i-1}-\nu_{i} \ge 2\d,$
hence $I \le \frac32 iH \le \frac34 i^2H$.
Applying Lemma \ref{exp partial}  (with $\a \le \frac34$) yields
\be\label{sum h(t)}
\sum_t \frac{h(t)}{t} \le \pfrac{eHi^2}{I}^I \le (ei)^I 
= (\log y)^{(i+i\log i)(\nu_{i-1}-\nu_{i} + \d)}.
\ee
This gives
\[
\Sigma_1 \ll (\log_2 y)^3 (\log y)^{-2\mu_{i-1} +
(i+i\log i)(\nu_{i-1}-\nu_i + \d) }.
\]

For the sum over $\mathfrak T_{i,2}$, set $t_{i}=tQ_1Q_2$.  
Note that
$$
t Q_2 = t_{i,0} \cdots t_{i,i-2}b, \qquad tQ_1 = t'_{i,0} \cdots 
t'_{i,i-2} b',
$$
so $Q_1|t'_{i,0} \cdots t'_{i,i-2} b'$ and $Q_2|t_{i,0} \cdots t_{i,i-2}b$.
 If we fix the factors divisible by $Q_1$ and by $Q_2$, then the
number of possible ways to form $t$ is $\le i^{2\om(t)}$ as before.
Then
$$
\Sigma_2 := \sum_{\substack{ \tau_i\in \mathfrak T_{i,2} \\ \sigma_i\tau_i\in \mathfrak S_{i-1}}} \frac{1}{t_i}
 \le \sum_t \frac{i^{2\om(t)+2}}{t} \max_{b,b'} \sum_{Q_1,Q_2} \frac1{Q_1 Q_2},
$$
where $s_{i,i-1}bQ_1+1$ and $s'_{i,i-1}b'Q_2+1$ are unequal primes.
By Lemma \ref{sieve linear factors}, the
number of $Q_1\le z$ (respectively $Q_2\le z$) is $\ll z(\log z)^{-2}
(\log_2 y)^2$. By  partial summation, we have
$$
\sum_{Q_1,Q_2} \frac1{Q_1Q_2} = \sum_{Q_1} \frac1{Q_1} \sum_{Q_2}
\frac1{Q_2} \ll (\log_2 y)^4 (\log y)^{-2\mu_{i-1}}.
$$
Combined with \eqref{sum h(t)} this gives
$$
\Sigma_2 \ll i^2 (\log_2 y)^4 (\log y)^{-2 \mu_{i-1} +
(i+i\log i)(\nu_{i-1}-\nu_{i}+\d) }.
$$
By assumption, $i^2\le k^2 \le (\log_2 y)^2$.  Adding
$\Sigma_1$ and $\Sigma_2$ shows that for each $\sigma_i$,
\be\label{siti}
\sum_{\substack{ \tau_i \in \mathfrak T_i \\  \sigma_i\tau_i \in \mathfrak S_{i-1} }} \frac1{t_i}
\ll (\log_2 y)^6 (\log y)^{-2\mu_{i-1} +
 (i\log i + i)(\nu_{i-1}-\nu_{i}+\d)}.
\ee
Using \eqref{S0} and \eqref{st iter} together with the inequalities
\eqref{sigma1} and \eqref{siti},
the number of solutions of \eqref{piqi} is
\[
\ll \frac{y}{r}(c_4 \log_2 y)^{6k}
(\log y)^{-2-\nu_{1} + \sum_{i=2}^k (\nu_{i-1} - \nu_{i}+\d)(i\log i + i)
 - 2\mu_{i-1} } \sum_{\sigma_k \in \mathfrak S_k}
\frac1{s_k},
\]
where $c_4$ is some positive constant.  Note that the exponent 
of $(\log y)$ is $\le -2+\sum_{i=1}^{k-1} a_i \nu_i + E$.

It remains to treat the sum on $\sigma_k$.
Given $s_k'=s_k/d$, the number of possible $\sigma_k$ is
 at most the number of factorizations of $s_k'$ into $k+l$ factors
times the number of factorizations of $ds_k'$ into $k+1$ factors,
which is at most $(k+1)^{\Omega(ds_k')} (k+l)^{\Omega(s_k')}$.
By assumptions (1) and (3), $\Omega(s_k')\le 10(k+l)\log_2 v_k$.
Thus,
\[
\sum_{\sigma_k \in \mathfrak S_k} \frac1{s_k} \le \frac{(k+1)^{\Omega(d)}
(k+l)^{20(k+l)\log_2 v_k}}{d} \sum_{P^+(s_k')\le v_k} \frac{1}{s_k'}
\ll \frac{(k+1)^{\Omega(d)} (\log v_k)^{20(k+l)\log(k+l)+1}}{d}.
\qedhere
\]
\end{proof}


\begin{lem}\label{RL}
If  $\xi_i=1-\omega_i$, $\omega_i=\frac{1}{10(L_0-i)^3}$
 for each $i\le L-2$, then there is an absolute constant
$M_1$ so that whenever $1\le A\le (\log y)^{1/2}$,
 $M=[M_1+2C \log A]$ and $L\le L_0(y)-M$,
we have
\be\label{RL lower}
R_L(\fancyS;y) \gg (\log_2 y)^L T_L, 
\ee
where $\fancyS$ is the subset of $\fancyS_L(\bxi)$
with the additional restrictions
\be\label{qi}
x_{i+1} \le (1-\omega_i)x_i \quad (i\ge 1), \qquad
x_L \ge \frac{A}{\log_2 y}.
\ee
\end{lem}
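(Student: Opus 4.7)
The plan is to apply Lemma \ref{sumvol} to convert the claim into a volume estimate and then exhibit a subset of $\fancyS^{-\e}$ (with $\e=1/\log_2 y$) of volume $\gg T_L$. Lemma \ref{sumvol} gives $R_L(\fancyS;y)\gg(\log_2 y)^L\vvol(\fancyS^{-\e})$, so it suffices to prove $\vvol(\fancyS^{-\e})\gg T_L$.

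Define a tightened region $\fancyS^\sharp$ by replacing, in the definition of $\fancyS$, each $\xi_i=1-\omega_i$ by $\xi_i'=1-\omega_i-C'\e$, each separation constraint by $x_{i+1}\le(1-\omega_i-C'\e)x_i$, and $x_L\ge A/\log_2 y$ by $x_L\ge A/\log_2 y+\e$, for a suitable absolute constant $C'$. A routine check using $x_i\le 1$ and the convergence of the coefficient sums $\sum a_j$ (up to the relevant cutoff $j=L-i$) shows that any point within $\ell^\infty$-distance $\e$ of $\fancyS^\sharp$ lies in $\fancyS$, so $\fancyS^\sharp\subseteq\fancyS^{-\e}$. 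I would then bound $\vvol(\fancyS^\sharp)$ below by $\vvol(\fancyS_L(\bxi'))$ minus (a) the piece with $x_L<A/\log_2 y+\e$ and (b) the pieces with $x_{i+1}>(1-\omega_i-C'\e)x_i$ for some $1\le i\le L-2$, where $\bxi'=(\xi_i')$. Corollary \ref{TLxi} handles the main term: since $\omega_i=1/(10(L_0-i)^3)$ yields $\sum_{i=0}^{L-2}(L-i)\omega_i=O(1/M)$ and $\sum(L-i)C'\e=O(L_0^2/\log_2 y)=o(1)$, we obtain $H(\bxi')\gg 1$ and hence $\vvol(\fancyS_L(\bxi'))\gg T_L$. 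For (a), Lemma \ref{xL most}(i) gives a bound $\ll T_L(A/\log_2 y)L\rho^{-L}$; using $2C\log\rho=-1$, $\rho^{-L_0}=\log_3 y/\log_4 y$, and $L\le L_0-M_1-2C\log A$, one computes $\rho^{-L}\ll\rho^{M_1}\log_3 y/(A\log_4 y)$, making (a) at most $\ll T_L\rho^{M_1}(\log_3 y)^2/(\log_2 y\log_4 y)=o(T_L)$. For (b), I would adapt the proof of Lemma \ref{TL}: replace the auxiliary basis direction $\ve_i-\ve_{i+1}$ used there by $(1-\omega_i-C'\e)\ve_i-\ve_{i+1}$, and carry through the computation based on Lemma \ref{volume}. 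Since the perturbation is of size $O(\omega_i+\e)$, the coefficients $A_i,B_i,b_j$ change only slightly, and the inequalities $g_j+B_ih_{j-i}>c\,g_j$ driving the geometric decay $V_i'\ll c_1^{-(L-i)}T_L$ (with $c_1>1$) are preserved up to constants. Choosing $M_1$ large enough then makes (b) contribute at most $\tfrac12\vvol(\fancyS_L(\bxi'))$, so $\vvol(\fancyS^\sharp)\gg T_L$.

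The main obstacle is step (b). For $i$ close to $L$, $\omega_i$ can be as large as $1/(10(M+1)^3)$, which is a constant rather than infinitesimal, so the perturbation of the basis direction $\ve_i-\ve_{i+1}$ is not negligible and the numerical estimates in the proof of Lemma \ref{TL} (such as $g_j+B_ih_{j-i}>1.44g_j$ and $A_i>4$) must be rechecked with the perturbed coefficients. The resolution is to isolate the finitely many indices $i$ with $L_0-i$ below some threshold, bound their total contribution by a geometric series in $L-i$ with the refined constants, and absorb the resulting additive constant into the choice of $M_1$.
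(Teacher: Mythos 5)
Your plan to reduce to a volume bound via Lemma \ref{sumvol} and then exhibit a sub-region whose volume is $\gg T_L$ matches the paper's strategy, but there is a genuine gap in the way you shrink the region, and your step (b) is a detour the paper avoids entirely.

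The gap is in the claim that a uniform additive tightening $\xi_i'=1-\omega_i-C'\e$ (with $C'$ an absolute constant) forces $\fancyS^\sharp\subseteq\fancyS^{-\e}$. When you perturb a point by $\e$ in $\ell^\infty$, the ratio constraint $x_{i+1}'\le(1-\omega_i)x_i'$ requires a margin of order $\e/x_i$, not of order $\e$. By Lemma \ref{xz} one only has $x_i\ge g_{L-i}\,x_L\gg \rho^{i-L}A/\log_2 y$, so for $i$ near $L$ the quantity $x_i$ is only $\asymp A\e$, and the needed multiplicative margin is $\asymp 1/A$ — a \emph{constant} when $A=O(1)$, not $\asymp\e$. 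The same issue arises in $(I_j)$, where the perturbation contributes $\e\sum_{k\le L-j}a_k$ on the left and must be compared against a multiple of $x_j$; using $x_i\le 1$ gives control in the wrong direction. The paper resolves this with a \emph{non-uniform} margin $\omega_i'\asymp \frac{(L-i)\log(L-i)\,\rho^{L-i}}{A}$, chosen precisely so that $\omega_i'\,x_i\gg\e\sum_{k\le L-i}a_k$ holds uniformly; the geometric decay of $\rho^{L-i}$ relative to the growth of the coefficient sum keeps $\sum_i(L-i)\omega_i'=O(1/A)$, so $H(\bxi')\gg 1$ is preserved.

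Step (b) — subtracting the volume of the pieces violating $x_{i+1}\le\xi_i'x_i$ — is also unnecessary, and you correctly flag it as the hardest part. The paper sidesteps it with a change of variables: setting $x_i=(\xi_0'\cdots\xi_{i-1}')y_i$ carries $\fancyS_L\cap\{y_L\ge\cdot\}$ into the region $\fancyT$ that carries \emph{all} the chain constraints, because $x_{i+1}\le\xi_i'x_i$ is equivalent to $y_{i+1}\le y_i$, which is already built into $\fancyS_L$. The Jacobian is exactly $H(\bxi')$, so $\vvol(\fancyT)\ge H(\bxi')\vvol(\fancyS_L\cap\{y_L\ge\cdot\})$, and the only remaining loss is the $x_L$ cut, handled by Lemma \ref{xL most}(i) and absorbed by choosing $M_1$ large (here $2C\log\rho=-1$ makes $A\rho^M\ll\rho^{M_1}$). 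No $V_i$-style estimate, and no perturbed re-run of the Lemma \ref{TL} computation, is needed.
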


\begin{proof} 
 By Lemma \ref{sumvol},
$R_L(\fancyS;y) \gg (\log_2 y)^L \vvol(\fancyS^{-\e})$.
For $1\le i\le L-1$, put
$$
\omega_i'=\frac{6(2+(L-i)\log(L-i))\rho^{L-i}}{100+A}, \qquad
\xi_i' = 1 - \omega_i-\omega_i'.
$$
Let $\fancyT$ be the subset of $\fancyS_L(\bxi')$ with the additional
 restrictions $x_{i+1} \le \xi_i' x_i$ for each
$i$ and $x_L\ge(200+A)/\log_2 y$.
Suppose $\vx \in \fancyT$
and $|x_i'-x_i|\le \e$ for each $i$. By Lemma \ref{xz},
$$
x_i' \ge \frac{x_i}{2} \ge \frac{\rho^{L-i}}{6} x_L \ge 
\frac{\rho^{L-i}(A+200)}{6 \log_2 y}.
$$
Thus, for $0\le i\le L-1$,
$$
x_{i+1}' \le x_{i+1}+\e \le \xi_i'(x_i'+\e)+\e \le \(\xi_i'+
\frac{2\e}{x_i'}\)x_i' \le \xi_i x_i'
$$
and
\begin{align*}
a_1 x_{i+1}'+\cdots + a_{L-i} x_L' &\le \xi_i' x_i + \e (a_1+\cdots a_{L-i}) \\
&\le \xi_i'(x_i'+\e)+\e (1+(L-i)\log(L-i)) \le \xi_i x_i'.
\end{align*}
Therefore, $\vx'\in \fancyS$ and hence $\fancyT \subseteq \fancyS^{-\e}$.
Make the substitution $x_i=(\xi_0'\cdots \xi_{i-1}')y_i$ for $1\le i\le L$.
By Lemma \ref{xi1}, $\vy \in \fancyT' := \fancyS_L \cap \{y_L\ge (A+200)/\log_2 y \}$.  By
Lemma \ref{xL most} (i), if $M_1$ is large enough then
\[
\vvol(\fancyS^{-\e}) \ge \vvol(\fancyT) \ge H(\bxi') \vvol(\fancyT')
\ge H(\bxi') \left[ T_L -
O(A \rho^{M} T_L) \right] \gg T_L. \qedhere
\]
\end{proof}


Now we proceed to the lower bound argument for Theorems \ref{V(x)} 
and \ref{V_k(x)}.
Suppose $A(d) = \kappa$ and $\phi(d_i) = d \quad (1\le i \le \kappa)$.  Assume
throughout that $x\ge x_0(d)$.  The variable $k$ is reserved as an index for
certain variables below.
Define
\begin{align}
M  &= M_2 + [(\log d)^{1/9}],  M_2 \text{ is a
sufficiently large absolute constant} \label{M} \\
L &= L_0(x) - M, \label{L def} \\
\xi_i &= 1-\omega_i, \quad \omega_i = \frac{1}{10(L_0-i)^{3}} 
\qquad (0\le i\le L-2).
\label{omega def}
\end{align}
Let $\fancyB$ denote the set of integers $n=p_0 p_1 \cdots p_L>x^{9/10}$
with each $p_i$ prime and
\begin{align}
\phi(n) &\le x/d, \label{n range} \\
(x_1(n;x/d), \cdots, x_L(n;x/d)) &\in \fancyS_L(\vxi), \label{vx} \\
\log_2 p_i &\ge (1+\omega_i) \log_2 p_{i+1} \qquad (0\le i\le L-1),
\label{xi spacing} \\
p_L &\ge \max(d+2, 17). \label{pL lower}
\end{align}
By  Corollary \ref{TL cor} and Lemma \ref{RL} (with $y=x/d$, $A=\log_2 \max(d+2,17)$),
\be\label{B lower}
|\fancyB| \gg \frac{x}{d\log (x/d)} (\log_2 (x/d))^L T_L \gg
\frac{x}{d\log x} (\log_2 x)^L T_L.
\ee

Consider the equation
\be\label{phi eq}
d\phi(n) = \phi(n_1),
\ee
where $n\in \fancyB$.
Let $q_0 \ge q_1 \ge \cdots $ be the prime factors of $n_1$, and
for $j \ge \Omega(n_1)$, put $q_j=1$.  If $n|n_1$,
then none of the primes $q_i$ ($0\le i\le L$)
occur to a power greater than 1, for otherwise
\eqref{pL lower} gives
$\phi(n_1) \ge \phi(n) p_L > \phi(n)d$. 
Also, $P^+(d_i)<p_L$ for all $i$.  Therefore
$\phi(n_1) = \phi(n_1/n) \phi(n) = \phi(n) d,$
which implies $n_1=n d_i$ for some $i$.  These we will call the trivial
solutions to \eqref{phi eq}.  We then have $A(d\phi(n))=\kappa$ for each
$n\in \fancyB$ for which \eqref{phi eq} has no non-trivial solutions, i.e.
solutions with $n\nmid n_1$.  In particular, for such $n$ we have
$\phi(n')\ne \phi(n)$ for $n'\ne n$ and $n'\in \fancyB$.

The numbers $n$ which give rise to non-trivial solutions are grouped
as follows.  For $0\le j\le L$, let $\fancyB_j$ 
be the set of $n\in \fancyB$ such that \eqref{phi eq} holds for some $n_1$
with $q_i = p_i$ ($0\le i\le j-1$) and $p_j\ne q_j$, and such that 
\eqref{phi eq} does not hold for any $n_1$ with $n\nmid n_1$
and $q_i=p_i$ ($0\le i\le j$).
We then have
\be\label{Vl lower}
V_\kappa(x) \ge |\fancyB| - \sum_{j=0}^L |\fancyB_j|. 
\ee

For $n\in\fancyB_j$ with $j\ge 1$, write $n=p_0 n_2 n_3$, where
$n_2=p_1 \cdots p_{j-1}$ 
and $n_3=p_j \cdots p_L$.  When $j=0$, set $n_3=n$.  
If $q_{j-1}=q_j$, then $p_{j-1}|d\phi(n_3)$, which is impossible.
Therefore $q_{j-1}>q_j$ and $\phi(n_1)=\phi(p_0 \cdots p_{j-1})\phi(q_j\cdots)$
and
\be\label{n3n4}
d\phi(n_3) = \phi(n_4)
\ee
has a nontrivial solution $n_4$ (that is, with $n_3\nmid n_4$).  In addition, all such solutions satisfy $P^+(n_4) \ne P^+(n_3)$.
Fix $j$ and let $\curly{A}_j$ be the set of such $n_3$.
It will be useful to associate a particular $n_4$ to each 
$n_3\in \curly{A}_j$ as follows.
Let $v=\phi(n_3)$ for some $n_3\in \curly{A}_j$.
If there is only one such $n_3$, then
take $n_4$ to be the smallest nontrivial solution of \eqref{n3n4}.
Otherwise, suppose there are exactly $k\ge 2$ members of $\curly{A}_j$,
$n_{3,i}$ with $\phi(n_{3,i})=v$ ($1\le i\le k$).
Take a permutation $\sigma$ of
$\{1,\ldots,k\}$ with no fixed point and associate $n_4=d_1 n_{3,\sigma(i)}$
with $n_{3,i}$.  Since $i\ne \sigma(i)$, $n_{3,i}\nmid n_4$, so the associated
$n_4$ is a nontrivial solution of \eqref{n3n4}.
In addition, distinct $n_3\in \curly{A}_j$ are associated
with distinct $n_4$.

For $x$ large, \eqref{vx} and \eqref{xi spacing} imply $p_0>x^{3/4}$. 
By the prime number theorem,
for each fixed $n_2n_3$, the number of choices for $p_0$ is
$O(x/(d\phi(n_2n_3)\log x))$.  Hence
\[
|\fancyB_j| \ll \frac{x}{d \log x} \sum_{n_2} \frac1{\phi(n_2)} \sum_{n_3}
\frac1{\phi(n_3)} \quad (1\le j\le L).
\]
Since $n_2\in \curly{R}_{j-1}(\fancyS_{j-1};x)$ when $j\ge 2$,
Lemma \ref{RL} gives
\[
\sum_{n_2} \frac1{\phi(n_2)} \ll (\log_2 x)^{j-1} T_{j-1} \qquad (1\le j\le L).
\]
To attack the sum on $n_3$, let $B_j(y)$ denote the number of possible
$n_3$ with $\phi(n_3) \le y$.  In particular, $|\fancyB_0| = B_0(x/d)$.  When 
$j\ge 1$, by partial summation,
\be\label{Bj}
|\fancyB_j| \ll \frac{x(\log_2 x)^{j-1} T_{j-1}}{d\log x} 
\( \sum_{\log_3 \phi(n_3) \le M/10} \frac{1}{\phi(n_3)} +
\sum_{\log_3 y > M/10} \frac{B_j(y)}{y^2} \).
\ee

If $M_2$ is large enough, then
\be\label{smally}
\sum_{\log_3 \phi(n_3) \le M/10} \frac{1}{\phi(n_3)} \le \Biggl( 
\sum_{\log_2 p\le e^{M/10}+1} \frac{1}{p-1} \Biggr)^{L-j+1} \le e^{(L-j+1)M/9}.
\ee
We will show below that
\be\label{B_j(y)}
B_j(y) \ll \frac{y}{\log y (\log_2 y)^2} 
\qquad (\log_3 y \ge M/10, 0\le j\le L).
\ee
In particular, $|\fancyB_0| = B_0(x/d) \ll x/(d\log x)$.
Combining \eqref{B_j(y)} with
\eqref{M}, \eqref{Bj}, 
Corollary \ref{TL cor} and \eqref{smally}, we obtain for $j\ge 1$,
\begin{align*}
|\fancyB_j| &\ll \frac{x}{d\log x} (\log_2 x)^{j-1} T_{j-1}
   \exp\{ (L-j+1)M/9 \} \\
&\ll \frac{x}{d\log x} (\log_2 x)^L T_L \exp \{ 
(L-j+1)(M/9 - M/2C-(L-j+1)/4C) \}.
\end{align*}
Summing over $j$ and using Corollary \ref{TL cor}, \eqref{V upper},
\eqref{B lower} and \eqref{Vl lower} gives
$$
V_\kappa(x) \ge \frac{|\fancyB|}{2} \gg_{\e} d^{-1-\e} V(x).
$$
This completes the proof of Theorem \ref{V_k(x)}.  The lower bound
in Theorem \ref{V(x)} follows by taking $d=1$, $\kappa=2$.

We now prove \eqref{B_j(y)}.  For $j\le L-2$, $p_j\le y$, hence by
\eqref{vx},
\be\label{SL-j}
\( \frac{\log_2 p_{j+1}}{\log_2 y}, \cdots, \frac{\log_2 p_L}{\log_2 y} \)
\in \fancyS_{L-j}((\xi_j, \ldots, \xi_{L-1})).
\ee
Thus, by Lemma \ref{xz} and \eqref{pL lower} (and trivially
when $j\ge L-1$),
$$
1\le \log_2 p_L \le 3 \rho^{L-j} \log_2 y,
$$
which implies
\be\label{hupper}
h:=\omega(n_3)=L-j+1 \le 2C \log_3 y + 3.
\ee
Next define
\be\label{SR}
S = \exp\exp\{(\log_3 y)^{10} \}.
\ee

We remove from consideration those $n_3$ satisfying (i)
 $n_3 \le y/\log^2 y$, (ii) $p^2|\phi(n_3)$ for some prime $p>\log^2 y$,
(iii) there is some $m|n_3$ with $m>\exp((\log_2 y)^2)$ and 
$P^+(m)< m^{1/\log_2 y}$; (iv) $n_3$ is divisible by a prime which is
not $S$-normal.  If $p^2|\phi(n_3)$, then either $p^2|n_3$ or $n_3$ is divisible
by two primes $\equiv 1\pmod{p}$.  Thus, the number of $n_3$ satisfying (ii)
is
\[
\le \sum_{p>\log^2 y} \left[ \frac{y}{p^2} + y \( 
\sum_{q<y,q\equiv 1\!\!\!\pmod{p}} \frac{1}{q} \)^2 \right] \ll
\sum_{p>\log^2 y} \frac{y (\log_2 y)^2}{p^2} \ll \frac{y(\log_2 y)^2}{\log^2 y}
\]
by the Brun-Titchmarsh inequality and partial summation.
By Lemma \ref{large prime small}, the number of $n_3$ satisfying (iii)
is $O(y/\log^2 y)$.   By the Hardy-Ramanujan inequality \cite{HRa},
the number of integers $\le t$ which have $h-1$ prime factors
is $O(t(\log_2 t+O(1))^{h-2}/((h-2)!\log t))$ uniformly for $h\ge 2$.  Thus,
the number of $n_3$ satisfying (iv) is
$$
\ll \sum_{\substack{p\le y \\p\text{ not } S-\text{normal}}}
 \frac{y \exp( 2 (\log_3 y)^2 )}{p\log(2y/p)} \ll
\frac{y}{(\log y)(\log_2 y)^2}
$$
by Lemma \ref{normal lem} and partial summation (if $j=L$, then $h=1$
and we use Lemma \ref{normal lem} directly).

For the remaining $n_3$,
since $\log_3 y \ge M/10$, by \eqref{M} we have
\be\label{dineq}
\log d \le (10\log_3y)^9.
\ee 
Let $n_4$ be the unique number associated with $n_3$.
As $\phi(n_4)\le dy$, we have $n_4\ll y(\log y)^{1/3}$.
Now remove from consideration those $n_3$ with (v) 
$p^2|n_4$ or $p^2|\phi(n_4)$ for
some prime $p>\log^2 y$.  The number of such $n_3$ is 
$O(y/\log^{3/2} y)$.
Also remove from consideration those $n_3$ such that (vi)
$n_4$ is divisible by a prime which is not $S$-normal.  
By the way we chose $n_4$, the only way this is possible is if 
$d_1$ has a prime factor which is not $S$-normal, or if
$\phi(n_3) \ne \phi(n_3')$ for $n_3'\in\curly{A}_j$, $n_3'\ne n_3$.   The first case is not possible,  since by \eqref{dineq},
$d_1 \ll d\log_2 d \ll \log S$,
hence for $p|d_1$, $\om(p-1)\le 2\log p\le 2\log d_1\le \log_2 S+O(1)$.
For $n_3$ in the latter category, the numbers $\phi(n_4)$ are distinct
 totients.  Hence, by Lemma \ref{divisor normal} and
\eqref{V upper}, the number of such $n_3$ is
\[
\ll \frac{y(\log_2 y)^6 W(y)}{\log y} (\log S)^{-1/6}
\ll \frac{y}{\log y (\log_2 y)^2}.
\]
Let $B_j^*(y)$ denote the number of remaining $n_3$
(those not satisfying any of conditions (i)--(vi) above), so that
\be\label{BjBj*}
B_j(y) \ll \frac{y}{\log y (\log_2 y)^2} + B_j^*(y).
\ee
If $j\le L-1$, then
$p_{j+1} \cdots p_L \le p_{j+1}^h$, so by  \eqref{M},
\eqref{xi spacing}, \eqref{hupper}, and $M\le 10\log_3 y$,
\[
\log_2 (n_3/p_j) \le \frac{\log_2 p_j}{1+\frac1{10}(h+M-1)^{-3}} + \log h 
\le \log_2 y - 2\log_3 y \le \log_2 y -10.
\]
In particular, since $n_3>y/\log^2 y$, this shows that
\be\label{pjpj+1}
p_j > y^{9/10}, \qquad p_{j+1}<y^{1/(100\log_2 y)}.
\ee
When $j=L$, the first inequality in \eqref{pjpj+1} holds since $n_3>y/\log^2 y$,
and the second inequality is vacuous.  Note that $p$ is $S-$normal for all $p|n_3n_4$, and hence by \eqref{normal},
\be\label{Pp-1}
P^+(p-1) \ge (p-1)^{1/\Omega(p-1)} \ge p^{1/(4\log_2 y)}.
\ee

We now group the $n_3$ counted in $B_j^*(y)$ according to the sizes
of $P^+(p_i-1)$.  Let $J$ be the largest index with $\log_2 P^+(p_J-1)
> (\log_2 y)^{2/3}$.  By \eqref{pjpj+1}, $J\ge j$.
Set $\eps=1/\log_2 y$.
For each $n_3$, there are numbers $\zeta_{j+1},\ldots, \zeta_{J}$, 
each an integral multiple of $\e$, and with $\zeta_i-\eps \le 
\frac{\log_2 P^+(p_i-1)}{\log_2 y} \le \zeta_i$ for each $i$.
Also  set $\zeta_j=1$ and 
\be\label{zetaJ}
\zeta_{J+1} = \min\( \frac{\zeta_J}{1+\omega_J} + \frac{\log_3 y+\log 4}{\log_2 y}, (\log_2 y)^{-1/3} \).
\ee
By \eqref{Pp-1},
\be\label{Ppi-1}
\log_2 P^+(p_i-1) \le \zeta_{J+1} \qquad (i>J).
\ee

By \eqref{vx} and \eqref{hupper},
\be\label{zetasum}
\sum_{i=1}^{J-j} a_i \zeta_{j+i} \le 1 - \omega_j + h^2 \e
\le 1 - \omega_j/2.
\ee

Let $\d=\sqrt{\log_2 S/\log_2 y}$. 
We claim that
\be\label{piqibounds}
\left| \frac{\log_2 P^+(p_i-1) - \log_2 P^+(q_i-1)}{\log_2 y} \right|
\le (2(i-j)+1)\d \qquad (j\le i\le J).
\ee
To see this, fix $i$, let $k=i-j$ and
$$
\a=\frac{\log_2 P^+(p_{i}-1)}{\log_2 y}, \qquad 
\b=\frac{\log_2 P^+(q_{i}-1)}{\log_2 y}.
$$
By \eqref{normal}, if $\b>\a+(2k+1)\d$, then 
$$
(k+1) (\b-\a-\d) \le \frac{\om(\phi(n_3),P^+(p_i-1),P^+(q_i-1)}{\log_2 y} \le k(\b-\a+\d),
$$
a contradiction.  Assuming $\b<\a-(2k+1)\d$ likewise leads to a 
contradiction.  This establishes \eqref{piqibounds}.
In particular, \eqref{piqibounds} implies that $q_{j+1},\ldots,q_J$ exist.

By \eqref{xi spacing}, \eqref{hupper}, \eqref{Pp-1} and $\log_3 y \ge M/10$,
for $j\le i\le J$,
\be\label{zetagap}
\begin{split}
\zeta_i &\ge \frac{\log_2 p_i-\log_3 y-\log 4}{\log_2 y} \ge
(1+\omega_i)(\zeta_{i+1}-\eps)-\frac{\log_3 y+\log 4}{\log_2 y} \\ &\ge 
\zeta_{i+1} + \frac{(\log_2 y)^{-1/3}}
{10(M+h)^3} - 2\eps\log_3 y \ge \zeta_{i+1} + (\log_2 y)^{-0.35}.
\end{split}\ee

We make a further subdivision of the numbers $n_3$, counting
separately those
with $(p_j \cdots p_{J}, q_j \cdots q_{J})=m$.  Let
$B_j(\bz;m;y)$ be the number of $n_3$ counted by $B_j^*(y)$ satisfying
$$
y^{9/10}\le p_j\le y,\quad
\zeta_i-\e \le \frac{\log_2 P^+(p_i-1)}{\log_2 y} < \zeta_i \qquad
(j+1\le i\le J).
$$
Fix $m,\bz$ and  suppose $n_3$ is counted in $B_j(\bz;m;y)$.
Let $p_j\cdots p_J/m = p_{j_0} \cdots p_{j_{k-1}}$, where
$$
j=j_0<j_1< \cdots <j_{k-1} \le J.
$$
Let $\nu_0=1$, for $1\le i\le k-1$ let $\nu_i=\zeta_{j_i}+(2L+1)\d$, and for 
$0\le i\le k-1$ let $\mu_i=\nu_i-(4L+2)\d - \e$.   Also, put 
$\nu_k = \zeta_{J+1} + (2L+3)\delta.$
For brevity, for $0\le i\le k-1$ set $u_i=\exp [ (\log y)^{\mu_i}]$ and 
for $0\le i\le k$ set $v_i=\exp [ (\log y)^{\nu_i} ]$.
By \eqref{Ppi-1}, $P^+(p_i-1)\le v_k$ for $i>J$.  We also claim that
$P^+(q_i-1)\le v_k$ for $i<J$.  If not, then by the $S-$normality of the primes
$p_i$ and $q_i$,
$$
(J-j+2)(\nu_k-\zeta_{J+1}+\delta\log_2 y) \le \Omega(\phi(n_3),
\exp [ (\log y)^{\zeta_{J+1}}) ],v_k) \le (J-j+1)(\nu_k-\zeta_{J+1}+\delta\log_2 y),
$$
a contradiction.
Hence, $B_j(\bz;m;y)$ is at most the number of
solutions of
\be\label{main m}
(p_{j_0}-1) \cdots (p_{j_{k-1}}-1)(p_{J+1}-1)\cdots(p_L-1)d = (q_{j_0}-1) \cdots
(q_{j_{k-1}}-1)e \le y/\phi(m),
\ee
where $P^+((p_{J+1}-1)\cdots(p_L-1)e)\le v_k$,
 and $p_{j_i}$ and $q_{j_i}$
are $S$-normal primes satisfying
\be\label{piqi ranges}
u_i \le P^+(p_{j_i}-1), P^+(q_{j_i}-1) \le v_i \qquad
(0\le i\le k-1).
\ee
By \eqref{pjpj+1}, $\phi(m)\le y^{1/10}$.  Also, $p_j-1$ cannot be divisible by
a factor $b>y^{1/3}$ with $P^+(b)<y^{1/9\log_2 y}$.
Further, \eqref{zetagap} and the definition of $\nu_k$ imply that $\nu_{i-1}-\nu_i \ge 2\delta$ for $2\le i\le k$.
By Lemma \ref{prodpiqi},
\[
B_j(\bz;m;y) \ll \frac{y}{d\phi(m)} (c_4 \log_2 y)^{6L+6} (L+2)^{\om(d)}
(\log v_k)^{20(L+1)^2}(\log y)^{-2+\sum_{i=1}^{k-1}a_i \zeta_{j_i}+E},
\]
where $E\ll \del L^2 \log L$.  By \eqref{zetasum}, the exponent of $\log y$ 
is at most $-1-\omega_j/2+E$.  By \eqref{dineq}, 
$\Omega(d)\ll \log d \ll (\log_3 y)^9$, hence
\[
 B_j(\bz;m;y) \ll \frac{y}{d\phi(m)}  (\log y)^{-1-\omega_j/2}
\exp \{ O( (\log_2 y)^{2/3}(\log_3 y)^2) \}.
\]
Also,
$$
\sum_{m} \frac1{\phi(m)} \le (\log_2 y + O(1))^{L-j} \ll
\exp\{O((\log_3 y)^2 )\}.
$$
The number of possibilities for $\bz$ is at most
$\e^{-L} \le \exp\{ 2(\log_3 y)^2 \}$.  Summing over all
possible $m$ and $\bz$, and applying $\log_3 y\ge M/10$, we have
\begin{align*}
B_j^*(y) &= \sum_{\bz,m} B_j(\bz;m;y)
\ll \frac{y}{\log y} (\log y)^{-\omega_j/2+ (\log_2
y)^{-1/4}}
\\ &\ll \frac{y}{\log y} \exp \left\{ \frac{-\log_2 y}{20(2C\log_3 y+M+3)^3}
 + (\log_2 y)^{3/4} \right\} \\
&\ll \frac{y}{\log y} \exp \{ - (\log_2 y)^{9/10}  \}.
\end{align*}
Combining this with \eqref{BjBj*} completes the proof of
\eqref{B_j(y)}.

%
%
\section{The normal multiplicative structure of totients}\label{sec:structure}
%
%

The proofs of Theorems \ref{V(x)} and \ref{V_k(x)} suggest that for most
totients $m\le x$, all the pre-images $n$ of $m$ satisfy
$(x_1,x_2,\ldots, x_L) \in S_L(\vxi)$
with $L$ near $L_0$ and $\vxi$ defined as in section 4.
We prove such a result below in Theorem \ref{vx norm}, which is an
easy consequence of Theorem \ref{V(x)} and the machinery created for
its proof.  From this, we deduce the normal size of the numbers $q_i(n)$
and establish Theorems \ref{qi normal} and \ref{normal structure}. 
Using these bounds, we deduce the normal order of $\om(m)$
(Theorem \ref{Omega normal} and Corollary \ref{Omega normal cor}).
 
\begin{thm}\label{vx norm}  Suppose $0\le \Psi < L_0(x)$,
$L=L_0-\Psi$ and let
\be\label{xi def}
\xi_i=\xi_i(x) =1+\frac1{10000}e^{-(L_0-i)/40} \qquad (0\le i\le L-1).
\ee
The number of totients $m\le x$ with a pre-image $n$ satisfying
\be\label{x not SL}
(x_1(n;x), \ldots, x_L(n;x)) \not\in \fancyS_L(\vxi) \quad \text{or} \quad x_L(n;x)\le 
\frac{2}{\log_2 x}
\ee
is $\ll V(x) \exp \{ -\Psi^2/4 \}.$
\end{thm}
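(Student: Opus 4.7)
The plan is to replay the proof of the upper bound \eqref{V upper} from Section 4, but now with the smaller dimension $L=L_0-\Psi$ in place of $L_0$, extracting via Corollary \ref{TL cor} the savings $\exp\{-\Psi^2/(4C)\}$ that arise from the simplex $\fancyS_L(\vxi)$ having smaller volume than $\fancyS_{L_0}$. A totient $m\le x$ with a preimage $n$ satisfying \eqref{x not SL} falls into at least one of two categories: (Type 1) some preimage fails at least one of the inequalities $(I_0),\ldots,(I_{L-1})$ defining $\fancyS_L(\vxi)$, or (Type 2) some preimage lies in $\fancyS_L(\vxi)$ but also satisfies $x_L(n;x)\le 2/\log_2 x$. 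Since $1/(4C) > 1/4$, it suffices to bound each type by $V(x)\exp\{-\Psi^2/(4C)\}$.

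For the Type 1 count I split according to the smallest $j$ for which $(I_j)$ fails, producing analogs $M_j^\ast(x)$ of the quantities $M_j(x)$ from Section 4. The case $j=0$ is handled directly by Lemma \ref{Yk}, which gives $M_0^\ast(x)\ll x/\log x$; this is admissible because the hypothesis $\Psi<L_0$ forces $\Psi^2/4$ to be dominated by $\log Z(x)$, so $x/\log x\ll V(x)\exp\{-\Psi^2/4\}$. For $1\le j\le L-1$, I mimic the derivation of \eqref{Mj} verbatim: decompose $n=(q_0\cdots q_{j-1})w$, count the leading primes via Lemma \ref{sumvol} in terms of $(\log_2 x)^{j-1}T_{j-1}(\xi_0,\ldots,\xi_{j-2})$, bound the number of totients $\phi(w)$ for which $(I_j)$ fails by Lemma \ref{Yk} applied with $k=L-j$, and control the auxiliary sum $\sum 1/\phi(w)$ via Lemma \ref{sum 1/v}. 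The only change from Section 4 is that Corollary \ref{TL cor} is now applied to $(\log_2 x)^{j-1}T_{j-1}$ with offset $L_0-(j-1)=\Psi+k+1$, producing a factor $\exp\{-(\Psi+k+1)^2/(4C)\}$. Summing the resulting geometric series over $k\ge 1$ (which converges since $1/40 < 1/(4C)$) telescopes to $\sum_j M_j^\ast(x)\ll V(x)\exp\{-\Psi^2/(4C)\}$.

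For Type 2 I adapt the $N_b(x)$ analysis of Section 4 to the two cases $b\in\{0,1\}$. Writing $n=(q_0\cdots q_L)w$, the primes dividing $w$ are all $\le q_L \le e^{e^2}$, so $\sum 1/\phi(w)=O(1)$ by a bounded Euler product. Combining Lemma \ref{sumvol} with the global bound $R_L(\fancyS_L(\vxi);x)\ll (\log_2 x)^L T_L(\vxi)$ yields
\[
 N_0(x)+N_1(x)\ll \frac{x}{\log x}(\log_2 x)^L T_L(\vxi),
\]
and a second appeal to Corollary \ref{TL cor} shows that $(\log_2 x)^L T_L(\vxi)\ll Z(x)\exp\{-\Psi^2/(4C)\}$, so this contribution is also $\ll V(x)e^{-\Psi^2/4}$.

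The main technical obstacle is uniformity in $\Psi$: the various positive error terms (the $k^2/40$ coming from Lemma \ref{sum 1/v}, the logarithmic slack in Lemma \ref{Yk}, the factor $H(\vxi)$ in Corollary \ref{TLxi}) must all be shown to be swamped by the gain $\Psi^2/(4C)$ available from working in the lower-dimensional simplex. The arithmetic inequality $1/40 < 1/(4C)$ makes the $k$-sum in the Type 1 estimate only barely convergent, so the bookkeeping in comparing the exponents $-(\Psi+k+1)^2/(4C)$ and $k^2/40$ must be executed carefully; once this is done, the required bound $V(x)\exp\{-\Psi^2/4\}$ follows.
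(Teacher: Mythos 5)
Your Type 1 analysis (failure of some $(I_j)$) is exactly what the paper does: re-running \eqref{Mj} with $j\le L-1$ so that $k=L_0-j\ge \Psi+1$, and summing the tail. Your observation that $M_0^\ast(x)\ll x/\log x$ is absorbed because $\Psi^2/4\le C^2(\log_3x)^2<\log Z(x)$ is also the correct way to dispatch that boundary case.

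Your Type 2 analysis ($\vx\in\fancyS_L(\vxi)$ with $x_L\le 2/\log_2 x$) departs from the paper's, and this is the interesting difference. The paper observes that $q_L\le e^{e^2}$ forces the number of prime factors of $n$ exceeding $e^{e^2}$ to be $L_0-k+1$ for some $k>\Psi$, and for each such $k$ it bounds the count via $R_{L_0-k}(\vxi_k;x)\ll (\log_2 x)^{L_0-k}T_{L_0-k}\ll Z(x)\exp\{-k^2/4C\}$, then sums the geometric-style series over $k>\Psi$. You instead keep all $L$ coordinates $q_1,\ldots,q_L$ (allowing trailing ones), absorb the remaining small primes into $w$ with $\sum 1/\phi(w)=O(1)$ since $P^+(w)\le e^{e^2}$, and bound the whole thing once via the unconstrained $R_L(\fancyS_L(\vxi);x)\ll(\log_2 x)^L T_L(\vxi)$, which Corollary \ref{TL cor} converts into the same $Z(x)\exp\{-\Psi^2/4C-(D/2C-1)\Psi\}$ saving. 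Both routes work; yours is a bit more compact and avoids the parameter split entirely, at the price of being slightly less sharp (you recover $\exp\{-\Psi^2/4C\}$ only up to an $O(\Psi)$ slack, but that slack is dominated by the linear term from the Corollary, so the required $\exp\{-\Psi^2/4\}$ bound, which uses $1/(4C)>1/4$, still comes out). One thing worth making explicit in a final write-up: the preliminary reductions from Section 4 ($x/\log^2 x<n$, $\Omega(n)\le 10\log_2 x$, $q_0^2\nmid n$) are needed to legitimise the PNT count of $q_0$ and the bound $q_1\cdots q_L w\ll x^{2/3}$; their exceptions are $\ll x/\log^2 x$, which is $\ll V(x)e^{-\Psi^2/4}$ uniformly for $\Psi<L_0$, but this should be stated rather than folded into "adapt the $N_b$ analysis."
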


\begin{proof}  As in Section 4, define $M_j(x)$ to be the number of totients
$m\le x$ with a pre-image satisfying ($I_i$) for $i<j$, but not satisfying
$(I_j)$, where
$\vx=(x_1(n;x), \ldots, x_{L}(n;x)).$
By  Theorem \ref{V(x)}, Corollary \ref{TL cor}, and
 \eqref{Mj}, the number of totients
$m\le x$ with a pre-image $n$ satisfying 
$\vx \not\in \fancyS_L(\vxi)$ is at most
\[
\sum_{j\le L-1} M_j(x) \ll \frac{x}{\log x} Z(x) e^{-\Psi^2/4} \ll 
V(x) e^{-\Psi^2/4}.
\]
Now suppose that $\vx \in \fancyS_L(\vxi)$ and $x_L\le
2/\log_2 x$.  Then $q_L(n) \le e^{e^2}$.
We can assume that $x/\log{x} \leq n \leq 2x\log_2{x}$ and that $n$ is $S$-nice, where
$S=\exp\{(\log_2 x)^{100} \}$, the number of exceptions being $\ll V(x)/\log_2 x$.
By Lemma \ref{Omega lem},
we can also assume that $\Omega(n) \leq 10\log_2{x}$. Put $p_i := q_i(n)$. 
 Lemma \ref{xz} gives $x_3 < 5 \rho^3 < 0.9$, and so $p_2 \leq \exp((\log{x})^{0.9})$. Thus,
\[ 
n/(p_0 p_1 p_2) = p_3 p_4 \cdots \leq \exp(10 (\log_2{x}) (\log{x})^{0.9}) \ll x^{1/100} 
\]
and so $p_0 \geq x^{1/4}$  for large $x$. In particular, $p_0^2 \nmid n$.

Suppose now that $n$ has exactly $L_0-k+1$  prime factors $> e^{e^2}$, 
where we fix $k > \Psi$. Then
\[ 
v = (p_0-1)\phi(p_1 p_2 \cdots p_{L_0-k}) w 
\]
for some integer $w$ satisfying $P^+(w)\le e^{e^2}$. 
Using the prime number theorem to estimate the number of 
choices for $p_0$ given $p_1 \cdots p_{L_0-k}$ and $w$, we obtain that the number of $v$ 
of this form is
\[ 
\ll \frac{x}{\log{x}} \sum_{p_1, \ldots, p_{L_0-k}}\frac{1}{\phi(p_1 \cdots p_{L_0-k})} 
\sum_{w} \frac{1}{w} \ll  \frac{x}{\log{x}} R_{L_0-k}(\vxi_k; x),
\]
since
$p_1 \cdots p_{L_0-k} \in \curly R_{L_0-k}(\vxi_k; x)$, 
where $\vxi_k:= (\xi_0, \dots, \xi_{L_0-k-1})$. 
By Lemma \ref{sumvol},  Corollary \ref{TL cor}, and Lemma \ref{xL most} (ii),
\[  
R_{L_0-k}(\vxi_k; x) \ll (\log_2{x})^{L_0-k} T_{L_0-k} 
\ll Z(x) \exp(-k^2/4C),
\]
hence the number of totients is
\[ \ll \frac{x}{\log{x}} Z(x) \exp(-k^2/4C) \ll V(x) \exp(-k^2/4C).  \]
Summing over $k > \Psi$ gives the required bound.
\end{proof}

We show below that for most of $\fancyS_L$, 
$x_j \approx \rho^j (1-j/L)$ for $1\le j\le L$.
Let $T_L^*(\curly R) = \vvol\left( \fancyS_L^* : \curly R \right)$, recall
definition
\eqref{g def} and Lemma \ref{gh}. Define
\be\label{lambda def}
\lambda_i=\rho^i g_i \quad (i\ge 0),
\qquad\lambda = \lim_{i\to\infty} \lambda_i =\frac
{1}{\rho F'(\rho)} < \frac13.
\ee
By Lemma \ref{gh} and explicit calculation of $g_i$ for small $i$, we have for large $L$
\be\label{gi13}
\frac15 \le \lam_i \le \frac13, \qquad,
\frac{g_i g_{L-i}}{g_L} \le \frac13, \qquad \frac{g_i g_{L-i}^*}{g_L^*} \le \frac13.
\ee

\begin{lem} \label{xi lem}  Suppose $i\le L-2$, $\b>0$, $\a \ge 0$ and define $\theta$ by
\be\label{beta def}
\b = \frac{\rho^i(1-i/L)}{1+\theta}.
\ee
If $\theta>0$, then
\be\label{xi small}
T_L^*(x_i\le\b,x_L\ge \a) \ll T_L \frac{i}{\theta L} \frac{(1+\theta L/i)^i}
{(1+\theta)^L} e^{-L\a g_L}.
\ee
For $-\lam_i \le \th \le 0$,
\be\label{xi big1}
T_L^*(x_i\ge\b,x_L\ge \a) 
\ll T_L  e^{-\frac23 L \a g_L} \exp\left\{ Ki+\frac{\lam_i}{1-\lam_i} L\theta+L(\theta-\log(1+\theta))
\right\},
\ee
where $K=\frac{\lam}{1-\lam}+\log(1-\lam) = 0.0873\ldots$.
If $-i\lambda_i/L<\theta < 0$, then
\be\label{xi big2}
T_L^*(x_i\ge\b,x_L\ge\a) 
\ll  T_L  e^{- \frac23 L\a g_L} \frac{i}{|\theta|L} \exp\left\{
-\frac{L(L-i)}{2i}\th^2 \right\}.
\ee
\end{lem}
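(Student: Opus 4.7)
The plan is to reduce the $\alpha\neq 0$ case to $\alpha=0$ via the substitution used in the proof of Lemma \ref{xL most}(i), and then analyze the $\alpha=0$ problem by deriving a slice formula for $T_L^*(x_i=t)$.

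For the reduction, set $y_j = x_j - \alpha g_{L-j}$. Using the recursion $g_L = \sum_j a_j g_{L-j}$, one finds $\vv_j\cdot\vy \le 0$ for $1\le j\le L-1$ and $\vv_0\cdot\vy \le 1-\alpha g_L$, so $\vy$ lies in the scaled copy $(1-\alpha g_L)\fancyS_L^*$. The constraint $x_i\le\beta$ (resp.\ $\ge$) becomes $y_i\le \beta-\alpha g_{L-i}$ (resp.\ $\ge$), and rescaling $\vy$ by $(1-\alpha g_L)^{-1}$ reduces to the same problem with $\alpha=0$ and threshold $\beta^\sharp = (\beta-\alpha g_{L-i})/(1-\alpha g_L)$. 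The Jacobian $(1-\alpha g_L)^L \le e^{-L\alpha g_L}$ accounts exactly for the $e^{-L\alpha g_L}$ in (a); for (b) the induced shift $\theta\mapsto\theta^\sharp$ costs a bounded factor in the linear part of the exponent, absorbed by relaxing the exponent to $\tfrac{2}{3}L\alpha g_L$.

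For the $\alpha=0$ problem I would derive a slice formula via the change of variables $\tilde x_{i+k}=x_{i+k}/t$ for $1\le k\le L-i$ together with $x_j = y_j + tB_j(\tilde\vx)$ for $1\le j\le i-1$, where the $B_j$ are specified by the reverse recursion $B_{i-m} = \sum_{l=1}^m g_{m-l}A_l(\tilde\vx)$ with $A_l = a_l + \sum_{k=1}^{L-i} a_{l+k}\tilde x_{i+k}$, chosen to cancel the $t$-dependent inhomogeneous terms in $(I_0),\ldots,(I_{i-1})$. A direct check shows $(\tilde x_{i+1},\ldots,\tilde x_L)\in\fancyS_{L-i}^*$ and, for each fixed $\tilde\vx$, $(y_1,\ldots,y_{i-1})\in (1-tB_0(\tilde\vx))\fancyS_{i-1}^*$, with Jacobian $t^{L-i}$; hence
\[
T_L^*(x_i=t) = T_{i-1}^*\,t^{L-i}\int_{\fancyS_{L-i}^*}(1-tB_0(\tilde\vx))_+^{i-1}\,d\tilde\vx,
\]
where $B_0(\tilde\vx) = g_i + \sum_{k=1}^{L-i} C_k\tilde x_{i+k}$ and $C_k = \sum_{l=1}^i a_{l+k}g_{i-l}>0$. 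Since $B_0\ge g_i$ pointwise, one obtains the clean upper bound $T_L^*(x_i=t)\le T_{i-1}^*T_{L-i}^*\,t^{L-i}(1-tg_i)_+^{i-1}$.

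Integrating over $t$ in the appropriate tail and substituting $s = tg_i$ reduces each of (a) and (b) to estimating an incomplete beta integral $\int s^{L-i}(1-s)^{i-1}\,ds$ at the critical value $x = \beta g_i = \lambda_i(1-i/L)/(1+\theta)$. For (a), $\theta>0$ places $x$ strictly below the peak $s^*=(L-i)/(L-1)$, so a near-endpoint Laplace estimate gives $\int_0^x s^{L-i}(1-s)^{i-1}\,ds\asymp x^{L-i+1}(1-x)^{i-1}/[(L-i)-(L-1)x]$; combining with the Stirling-type ratio $T_{i-1}^*T_{L-i}^*/T_L$ from Lemma \ref{TL} and the identity $\lambda_i = \rho^i g_i$, a short manipulation produces the claimed $\frac{i}{\theta L}(1+\theta L/i)^i/(1+\theta)^L$. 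For (b) I would split on $|\theta|$: for $|\theta|\le i\lambda_i/L$, a Taylor expansion of $\log(1+\theta)$ recovers the sub-Gaussian tail \eqref{xi big2}; for $i\lambda_i/L\le|\theta|\le \lambda_i$, I would keep $L(\theta-\log(1+\theta))$ in closed form and isolate the discrepancy between $\lambda_i$ and $\lambda$ via $\lambda_i = \rho^i g_i$, extracting the factor $\exp\{Ki + \lambda_i L\theta/(1-\lambda_i)\}$ in \eqref{xi big1}. The main obstacle will be this bookkeeping of constants: recovering $K = \lambda/(1-\lambda)+\log(1-\lambda)$ exactly requires balancing the $(1-x)^{i-1}$ contribution of the beta integrand against the $\lambda^{-i}$-type factor from $T_{i-1}^*T_{L-i}^*/T_L$, and carefully tracking the correction $\lambda_i - \lambda$ to produce the linear $\lambda_i L\theta/(1-\lambda_i)$ term.
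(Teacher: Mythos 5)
The approach is genuinely different from the paper's: you slice $\fancyS_L^*$ at $x_i=t$ and reduce to an incomplete beta integral, whereas the paper forms the combination $\vv_0' = \vv_0 + (g_i+A)\vv_i$ for a free parameter $A$, applies Lemma \ref{volume} directly, and then optimizes $A$. Your slice formula is a nice structural observation (essentially that $g_i x_i$ should behave like a $\mathrm{Beta}(L-i+1,\,i)$ variable), but the execution has a fatal flaw in the step replacing $B_0(\tilde\vx)$ by its pointwise minimum $g_i$.

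The bound $B_0(\tilde\vx)\ge g_i$ is attained only on the zero set $\tilde\vx=\vz$ and is far from tight on the bulk of $\fancyS_{L-i}^*$. Quantitatively: integrating your inequality $T_L^*(x_i=t)\le T_{i-1}^*T_{L-i}^*\,t^{L-i}(1-tg_i)_+^{i-1}$ over all $t\in[0,1/g_i]$ gives $T_{i-1}^*T_{L-i}^*\,g_i^{-(L-i+1)}B(L-i+1,i)$. Using \eqref{V*} and Lemma \ref{gh} (i.e.\ $g_j\approx \lambda\rho^{-j}$), this evaluates to $\asymp \lambda^{-(L-i)}T_L^*$, not $T_L^*$. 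So the slice bound overshoots the total volume by a factor $\lambda^{-(L-i)}$, which grows exponentially when $i=o(L)$. The incomplete beta tail cannot recover this: e.g.\ at $\theta=0$, $i=L/2$, the threshold is $\beta g_i\approx\lambda(1-i/L)$, and a large-deviation computation gives $P[\mathrm{Beta}(L-i+1,i)\le\beta g_i]\approx e^{-(L+1)D}$ with $D\approx 0.24$, whereas $\lambda^{-(L-i)}\approx e^{0.55L}$. The product is $\gg e^{0.3L}$, i.e.\ still exponentially larger than the correct answer $\asymp T_L$. To make the slicing approach work you would need to keep the full dependence on $B_0(\tilde\vx)$, but then the clean beta structure is lost and the analysis becomes as intricate as (likely more so than) the paper's. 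The paper avoids this loss because Lemma \ref{volume} is applied to a simplex whose bounding hyperplanes retain the coupling between $x_i$ and the remaining coordinates (encoded in the coefficients $g_j + A g_{j-i}$), rather than decoupling them through a pointwise bound.

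Two secondary issues. First, your slice formula is not an equality: after the change of variables, the constraint inherited from $(I_{i-2})$ is $a_1 y_{i-1}\le y_{i-2}$, not $y_{i-1}\le y_{i-2}$, so the $\vy$-region is a (constant-factor) relaxation of $(1-tB_0)\fancyS_{i-1}^*$, and the formula is a lower bound on the slice rather than an equality; the chain of inequalities needed for an upper bound on $T_L^*(x_i\le\beta)$ does not close. Second, the final Laplace estimates and the recovery of the constant $K=\lambda/(1-\lambda)+\log(1-\lambda)$ are explicitly left undone; given the exponential loss above, these cannot be completed so as to reach the stated bounds.
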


\begin{proof}
For each inequality, we show that the region in question lies inside
a simplex for which we may apply Lemma \ref{volume}.  The volume is then
related to $T_L$ via Lemma \ref{TL}.  By Lemma \ref{xz}, $x_L \le 1/g_L$.  
Hence, we may assumer $\a \ge 1/g_L$, else the volumes are all zero.
Also by Lemma \ref{xz}, $x_i \ge \a g_{L-i}$, so we may assume that
$\b > \a g_{L-i}$ in showing \eqref{xi small}.  Also, if $\b \le \a g_{L-i}$,
then $T_L^*(x_i\ge\b,x_L\ge \a)=T_L(x_L\ge \a)$ (i.e., doesn't depend on $\b$),
while the right sides of \eqref{xi big1} and \eqref{xi big2} are each increasing in $\theta$.
Thus, for \eqref{xi big1} and \eqref{xi big2}, we may assume also that $\b > \a g_{L-i}$ as well.

All three inequalities are proved by a common method.
Consider $\vx\in \fancyS_L$ with $x_L\ge \a$
and let $y_j = x_j - \a g_{L-j}$ for each $j$.
Then $\vv_j \cdot \vy = \vv_j\cdot \vx \le 0$  ($1\le j\le L$) and $\vv_0 \cdot \vy \le 1 - \a
g_L$.  Let $\xi=1-\a g_L$ and $\b' = \b-\a g_{L-i}$. 
Set $z_j=y_j-\b'g_{i-j}$
for $j\le i$ and $z_j=y_j$ for $j>i$. By \eqref{g def},
\be\begin{split}\label{z ineq}
\vv_j \cdot \vzz &\le 0 \qquad (1\le j\le L, j\ne i), \\
\vv_i \cdot \vzz &\le \b', \\
\vv_0 \cdot \vzz &\le \xi-\b'g_i.
\end{split}\ee
With these definitions, $x_i\lesseqgtr\b \iff z_i \lesseqgtr 0$.
Hence, for any $A\ge -g_i$, we have 
\be\label{vv0}\begin{split}
\vv_0' \cdot \vzz &\le \xi + A\b', \quad \vv_0' = (\vv_0 + (g_i+A)\vv_i),\\
\vv_j \cdot \vzz &\le 0 \quad (1\le j\le L, j\ne i), \\
\pm \ve_i \cdot \vzz &\le 0.
\end{split}\ee
In the last inequality, we take $+$ for \eqref{xi big1} and \eqref{xi big2}, 
and $-$ for \eqref{xi small}.
By \eqref{g def}, \eqref{e rep} and \eqref{v0 rep}, 
\be\label{Av}
\vv_0' + \sum_{j<i} g_j \vv_j + A \ve_i + \sum_{j=i+1}^{L-1}
(g_j+Ag_{j-i}) \vv_j + (g_L^*+Ag_{L-i}^*)\vv_L = \vz.
\ee
To ensure that each vector on the left of \eqref{vv0} has a positive
coefficient, we assume that $A>0$ for proving \eqref{xi small}, and $A<0$
otherwise.  We may also assume that $\xi-\b' g_i >0$, else the volume in question
is zero by \eqref{z ineq} (each coordinate of $\vzz$ is non-negative).
By Lemma \ref{volume}, together with \eqref{V*}, Lemma \ref{gh} and \eqref{gi13},
\be\label{main TL}\begin{split}
T_L(x_i \lesseqgtr \b,x_L\ge \a) &\le
 T_L^* \frac{g_i}{|A|} (\xi+A\b')^L \prod_{j=i+1}^{L-1}
\( 1 + A \frac{g_{j-i}}{g_j} \)^{-1}
\( 1 + A \frac{g_{L-i}^*}{g_L^*} \)^{-1} \\
&\ll T_L \frac{g_i}{|A|} \frac{(\xi+A\b')^L}{(1+A\rho^i)^{L-i}}.
\end{split}\ee
Since $\b \le \a g_{L-i} \le g_{L-i}/g_L$, if $A>0$ then
\[
 \xi+A\b' = (1+A\b)\(1-\a g_L \frac{1+Ag_{L-i}/g_L}{1+A\b}\) 
\le (1+A\b)(1-\a g_L)\le (1+A\b)e^{-\a g_L}.
\]
Taking $A = \frac{L\th}{i\rho^i}$ gives \eqref{xi small}.
If $-g_i \le A <0$, then by \eqref{gi13},
\[
 \xi+A\b' \le (1+A\b)\(1-\a g_L(1-g_i g_{L-i}/g_L) \) \le (1+A\b)e^{-\frac23 \a g_L}.
\]
 For \eqref{xi big1}, we take
$A=-g_i$, then use 
\[
(1-\lam_i)^{i-L}(1-\b g_i)^L =  \frac{(1-\lam_i)^i}{(1+\th)^L}  
\( 1+\frac{\th+i\lam_i/L} {1-\lam_i} \)^L 
\le \frac{(1-\lam)^i}{(1+\th)^L} \exp\left\{\frac{\th L+i\lam_i}{1-\lam_i}
\right\}
\]
together with $\frac{i\lam_i}{1-\lam_i}=\frac{i\lam}{1-\lam}+O(1)$ 
(a corollary of Lemma \ref{gh}).
Taking  $A = \frac{L\th}{i\rho^i}$ gives  \eqref{xi big2}, since
\be\label{thLi}
\frac{(1+\th L/i)^i}{(1+\th)^L} = \exp\left\{ \frac{L(L-i)}{i}\th^2 \( -\frac12 -
\sum_{j=1}^\infty (-\th)^{j} \frac{L^j+iL^{j-1} + \cdots+i^j}{(j+2)i^j} \) \right\}
\ee
and all summands in the sum on $j$ are positive.
\end{proof}

We  apply Lemma \ref{xi lem} to determine the
size of $q_i(n)$ when $n$ is a pre-image of a ``normal'' totient.
Recall that
$V(x;\fancyC)$ is the number of totients $m\le x$ with
 a pre-image $n$ satisfying $\fancyC$.  An inequality we will use is
\be\label{sum1v-2}
\sum_{\substack{v\in \fancyV \\ P^+(v)\le y}} \frac{1}{v} \ll e^{C(\log_3 y)^2},
\ee
coming from the first part of Lemma \ref{sum 1/v} and Theorem \ref{V(x)}.


\begin{lem} \label{qi size}
Suppose $x$ is large, $\b>0$, and $1\le i\le L_0=L_0(x)$. 
Define $\th$ by $(1+\th)\b=\rho^i(1-i/L_0)$.

(a) If $0<\th \le \frac{i}{3L_0}$, then $\ds
 V\(x;\tfrac{\log_2 q_i(n)}{\log_2 x}\le \b\) \ll 
V(x) \frac{i}{\th L_0} \exp \left\{ - \frac{L_0(L_0-i)}{4i}\th^2 \right\}.$

(b) If $\frac{i}{3L_0} \le \th \le \frac18$, then
$\ds V\(x;\tfrac{\log_2 q_i(n)}{\log_2 x}\le \b\)  \ll V(x) e^{-\th L_0/13}$.

(c) If $-\frac13 \le \th < -0.29 \frac{i}{L_0}$, then
$\ds V\(x;\tfrac{\log_2 q_i(n)}{\log_2 x}\ge \b\)  \ll V(x) e^{\th L_0/10}.$

(d) If $-\frac{i\lam_i}{L_0} \le \th < 0$, then
$\ds V\(x;\tfrac{\log_2 q_i(n)}{\log_2 x}\ge \b\) \ll V(x) \tfrac{i}{|\th|L_0}
\exp \left\{ - 0.49\tfrac{L_0(L_0-i)}{i}\th^2 \right\}.$
\end{lem}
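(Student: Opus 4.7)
The plan is to reduce to the volume estimates of Lemma \ref{xi lem} via Theorem \ref{vx norm} and the standard PNT decomposition. Fix a parameter $\Psi>0$ to be chosen at the end, and set $L=\lfloor L_0-\Psi\rfloor$. By Theorem \ref{vx norm}, apart from $O(V(x)e^{-\Psi^2/4})$ exceptions, every totient $v\in\fancyV(x)$ admits a preimage $n$ with $\vx(n;x)\in\fancyS_L(\bxi)$ and $x_L(n;x)\ge 2/\log_2 x$, where $\bxi$ is as in \eqref{xi def}. For such a preimage, $p_0=q_0(n)\ge x^{1/4}$ (as in the proof of Theorem \ref{vx norm}, using Lemma \ref{xz} to show $x_3<0.9$). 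Counting the prime $p_0$ by the prime number theorem and noting that the small-prime tail $w$ of $n$ contributes an $O(1)$ sum (after splitting by the number of prime factors of $n$ above $e^{e^2}$, exactly as in the proof of Theorem \ref{vx norm}) yields
\[
V(x;\fancyC) \;\ll\; V(x)e^{-\Psi^2/4} \;+\; \frac{x}{\log x}\, R_L(\fancyS;x),
\]
where $\fancyC$ is the hypothesis of the relevant case and $\fancyS$ is the corresponding sub-region of $\fancyS_L(\bxi)\cap\{x_L\ge 2/\log_2 x\}$ cut out by either $x_i\le\b$ or $x_i\ge\b$. Apply Lemma \ref{sumvol} with $\eps=1/\log_2 x$ to get $R_L(\fancyS;x)\ll(\log_2 x)^L\vvol(\fancyS^{+\eps})$, and Lemma \ref{SLxieps} to absorb the $\eps$-fattening into a perturbation $\bxi'$ of $\bxi$ with $H(\bxi')=O(1)$.

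Now apply Lemma \ref{xi lem} in each regime, setting $\a=2/\log_2 x$ so that the factors $e^{-Lg_L\a}$ and $e^{-\frac23 Lg_L\a}$ are $O(1)$. In (a), use \eqref{xi small} together with the identity \eqref{thLi}: since $\th L/i\le 1/3$, the cubic and higher-order terms in the exponent are at most $\tfrac14 L(L-i)\th^2/i$ in absolute value, yielding the claimed Gaussian factor. In (b), still use \eqref{xi small}: for $\th L/i\ge 1/3$ an elementary calculus estimate gives $\log[(1+\th L/i)^i/(1+\th)^L]\le -\th L/12$. In (c), use \eqref{xi big1} and bound the exponent $Ki+\tfrac{\lambda_i}{1-\lambda_i}L\th+L(\th-\log(1+\th))$ using $\tfrac15\le \lambda_i\le \tfrac13$ (Lemma \ref{gh}) and convexity of $u\mapsto u-\log(1+u)$: for $\th\in[-1/3,-0.29\,i/L_0)$ this is at most $\th L_0/10$. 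In (d), \eqref{xi big2} gives the bound directly. In each case, Theorem \ref{V(x)} and Corollary \ref{TL cor} convert $\frac{x}{\log x}(\log_2 x)^L T_L$ into $\asymp V(x)\exp\{-\Psi^2/(4C)+O(\Psi)\}$. Choose $\Psi$ as a small multiple of the square root of the target exponent --- for instance $\Psi \asymp \sqrt{L_0(L_0-i)\th^2/i}$ in cases (a), (d), and $\Psi \asymp \sqrt{|\th|L_0}$ in (b), (c) --- which keeps $\Psi\ll L_0$ (so $L\sim L_0$) while making the Theorem \ref{vx norm} error dominated by the main term.

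The main obstacle is Case (a): one must simultaneously control the Theorem \ref{vx norm} error $V(x)e^{-\Psi^2/4}$, the simplex loss $e^{-\Psi^2/(4C)}$ coming from Corollary \ref{TL cor}, and the cubic tail from \eqref{thLi}, against the target $V(x)\tfrac{i}{\th L_0}\exp\{-L_0(L_0-i)\th^2/(4i)\}$. Since $C<1$, the Theorem \ref{vx norm} error is the binding constraint on $\Psi$, and once $\Psi$ is tuned against it, the simplex loss and cubic tail are automatically dominated. Case (d) is analogous but slightly cleaner because \eqref{xi big2} delivers the Gaussian directly. Cases (b) and (c) are easier still, since the target decay is stronger and the bounds \eqref{xi small} and \eqref{xi big1} already encode it.
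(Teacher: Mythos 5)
Your plan is structurally the same as the paper's --- reduce via Theorem \ref{vx norm} to a volume count, pass through Lemma \ref{sumvol} and Lemma \ref{SLxieps}, invoke Lemma \ref{xi lem}, and optimize $\Psi$ --- but there is a genuine gap in the treatment of the small-prime tail. You assert that the tail $w$ of a preimage $n$ contributes an $O(1)$ sum ``exactly as in the proof of Theorem \ref{vx norm}''. In that proof the $O(1)$ bound was available only because the case being bounded had $q_L(n)\le e^{e^2}$, so the tail satisfied $P^+(w)\le e^{e^2}$. Here you have already used Theorem \ref{vx norm} to force the \emph{opposite} regime, $x_L(n;x)>2/\log_2 x$, i.e., $q_L(n)>e^{e^2}$; the tail $w$ therefore has $P^+(w)\le q_L(n)$, which is unbounded. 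The correct bound on the tail sum is $\sum_v 1/v\ll e^{C(\log_3 q_L)^2}$ by \eqref{sum1v-2}, and at the typical scale $x_L\asymp \rho^L/L$ this is of size $\exp\{\Psi^2/(4C)+O(\Psi)\}$, which is far from $O(1)$ and in fact comparable to the entire Gaussian saving in case (a). Relatedly, $R_L(\fancyS;x)$ only sums over integers with at most $L$ prime factors and so does not by itself dominate the preimages; the inequality $V(x;\fancyC)\ll V(x)e^{-\Psi^2/4}+\tfrac{x}{\log x}R_L(\fancyS;x)$ is false as written.

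The remedy, as in the paper, is the shell decomposition $b/\log_2 x\le x_L<(b+1)/\log_2 x$ from the argument leading to \eqref{Nb}: you must keep the $e^{-\frac23\a'Lg_L}$ decay in Lemma \ref{xi lem} with $\a=b/\log_2 x$, rather than freezing $\a=2/\log_2 x$. The resulting geometric sum $\sum_{b\ge 2}e^{C\log^2 b-\frac23\a'Lg_L}=\exp\{\Psi^2/(4C)+O(\Psi)\}$ exactly cancels the factor $\exp\{-\Psi^2/(4C)+O(\Psi)\}$ coming from $(\log_2 x)^LT_L\ll Z(x)\exp\{-\Psi^2/(4C)+O(\Psi)\}$ in Corollary \ref{TL cor}, leaving $V(x)\bigl(e^{-\Psi^2/4}+Be^{O(\Psi)}\bigr)$. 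It is the surviving $e^{O(\Psi)}$ --- not $B$ alone --- that determines the numerical constants in the exponents of (a)--(c) and forces the WLOG lower bounds on $|\th|$ (the paper's \eqref{prelim}); your proposal should state that reduction explicitly before choosing $\Psi$, and your intermediate constants in (a) and (b) need revisiting to absorb the $e^{O(\Psi)}$ loss.
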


\begin{proof}
Let $A$ be a sufficiently large, absolute constant.  We may assume that
\be\label{prelim}
\begin{split}
A\le i\le L_0-A, \; |\th| \ge A \pfrac{i}{L_0(L_0-i)}^{1/2} &\qquad \text{ for (a) and (d),}\\
|\th| \ge \frac{A}{L_0} &\qquad   \text{ for (b) and (c)},
\end{split}
\ee
for otherwise the claims are trivial.  Put $\Psi=\cl{|\th|\sqrt{\frac{2L_0(L_0-i)}{i}}}$
for (a) and (d), and put $\Psi=\cl{\sqrt{2|\th| L_0}}$ for parts (b) and (c).
Let $L=L_0-\Psi$.
By \eqref{prelim}, for the range of $\th$ given in each part, we have $i\le L-2$.
Define $\xi_i$ by \eqref{xi def}.  By Theorem \ref{vx norm}, the number
of totients $m\le x$ with a preimage $n$ satisfying $\vx\not \in \fancyS_L(\bxi)$,
 $x_L\le \frac{2}{\log_2 x}$ or $m<\frac{x}{\log x}$ is $O(V(x)e^{-\frac14 \Psi^2})$.
Let $\fancyS=\fancyS_L(\bxi) \cap \{x_i\le \b\}$ for (a) and (b), and
$\fancyS=\fancyS_L(\bxi) \cap \{x_i\ge \b\}$ for (c) and (d).
As in the proof of \eqref{Nb}, for $b\ge 2$ let $N_b(x)$ be the number of totients
 for which $n>\frac{x}{\log x}$, 
$\vx \in \fancyS$, and $\frac{b}{\log_2 x} \le x_L < \frac{b+1}{\log_2 x}$.
By the argument leading to \eqref{Nb} and using \eqref{sum1v-2},
\be\label{Vb1}
V\(x;\frac{\log_2 q_i(n)}{\log_2 x} \lesseqgtr \b \) \ll V(x) e^{-\Psi^2/4} + \frac{x}{\log x}
\sum_{b\ge 2} e^{C\log^2 b} R_L\( \fancyS \cap \left\{x_L\ge \frac{b}{\log_2 x} \right\};x\).
\ee
By Lemma \ref{sumvol}, 
\[
 R_L\( \fancyS \cap \left\{x_L\ge \frac{b}{\log_2 x} \right\};x\) \ll (\log_2 x)^L \vvol \left[
\fancyS \cap \{x_L\ge b/\log_2 x\} \right]^{+\eps}, \quad \eps= \frac1{\log_2 x}.
\]
Let $\a=\frac{b}{\log_2 x}$.  By Lemma \ref{SLxieps} ($\a'$, $y_j$ and $\xi_j'$ defined here), $\vy \in \fancyS_L^*$, 
$y_i \lesseqgtr \b'$ and $y_L \ge \a'$, where
\be\label{beta'}
\b'=\frac{\b}{\xi_0'\cdots \xi_{i-1}'}=\b \( 1 - O \( e^{-(L_0-i)/40} \)\).
\ee
By Lemma \ref{xi1} and Corollary \ref{TLxi},
\be\label{STL}
  \vvol \left[\fancyS \cap \{x_L\ge b/\log_2 x\} \right]^{+\eps} \ll 
T_L^* \( x_i \lesseqgtr \b', x_L \ge \a' \).
\ee
Define $\th'$ by $1+\th'=(1+\th)\xi_0'\cdots \xi_{i-1}'$, so that
$\b'(1+\th') = \rho^i(1-i/L)$.
By \eqref{beta'}, $\th'-\th=(1+\th)(\xi_0'\cdots \xi_{i-1}'-1)\ll e^{-\frac1{40}(L_0-i)}$.
By \eqref{prelim}, if $A$ is large enough then
\be\label{theta'}
0 < \th'-\th \le A e^{-\frac1{40}(L_0-i)} \le \frac{|\th|}{1000}.
\ee
We now apply  Lemma \ref{xi lem} (with $\b,\th$
replaced by $\b',\th'$).  For parts (a) and (b), \eqref{theta'} implies
$0<\th'\le \frac17$ and we may apply \eqref{xi small}.  For (c), \eqref{theta'} implies
$-\frac18 \le \th' \le -0.288 \frac{i}{L_0}$ and we apply \eqref{xi big1}.  For (d),
\eqref{theta'} gives $-\frac{i\lam_i}{L_0} \le \th' < 0$ and we apply \eqref{xi big2}.
Combining these estimates with \eqref{STL}, we arrive at
\be\label{Vb2}
 R_L\( \fancyS \cap \left\{x_L\ge \frac{b}{\log_2 x} \right\};x\) \ll (\log_2 x)^L T_L
B e^{-\frac23 \a' g_L},
\ee
where
\[
 B=\begin{cases}
     \frac{i}{\th' L} \frac{(1+\th' L/i)^i}{(1+\th')^L} & \text{ for (a),(b)} \\
\exp \left\{ Ki + \frac{\lam_i}{1-\lam_i}\th' L +L(\th'-\log(1+\th')) \right\} & \text{ for (c)} \\
\frac{i}{(-L\th')} \exp \left\{ -\frac{L(L-i)}{2i}(\th')^2  \right\}
& \text{ for (d)}.
   \end{cases}
\]
By \eqref{L_0} and Lemma \ref{gh}, we have $\a' L g_L \gg \a L \rho^{-L} \gg \rho^{-\Psi}$.
Hence, for some absolute constant $C_1>0$,
\[
 \sum_{b\ge 2} e^{C\log^2 b-\frac23\a'Lg_L} \ll \rho^{-\Psi} \sum_{k\ge 0} e^{C\log^2 ((k+1)\rho^{-\Psi})
-C_1 k}= \exp \left\{ \frac{\Psi^2}{4C}+O(\Psi) \right\}.
\]
Since Corollary \ref{TL cor} implies that $(\log_2 x)^L T_L \ll Z(x) \exp \{ -\Psi^2/(4C)+O(\Psi)
\}$, inequalities \eqref{Vb1} and \eqref{Vb2} now imply
\[
 V\(x;\frac{\log_2 q_i(n)}{\log_2 x} \lesseqgtr \b \) \ll V(x) \left[ e^{-\frac14 \Psi^2} +
B e^{O(\Psi)} \right].
\]
To complete part (a), observe that the absolute value of the summands in \eqref{thLi} 
(with $\th$ replaced by $\th'$) are decreasing.  From the definition of $\Psi$ and \eqref{theta'},
we obtain $O(\Psi) \le \frac{L_0(L_0-i)}{100i}\th^2+O(1)$ and
\begin{align*}
 B &\le \exp \left\{ \frac{L(L-i)}{i}(\th')^2 \( -\frac12
+ \frac{L+i}{3i}\th' \) \right\} \le \exp \left\{ - \frac{5(\th')^2 L(L-i)}{18i} \right\} \\
&\le  \exp \left\{ - 0.27 \frac{L(L-i)}{i} \th^2 \right\} \ll
 \exp \left\{ -0.26 \frac{L_0(L_0-i)}{i}\th^2 \right\}.
\end{align*}
this gives part (a) of the lemma.  For (b), \eqref{theta'} implies $\th'L/i \ge 0.33$,
so $i\log(1+\th'L/i) \le 0.08642 L \th'$.  Also, $\log(1+\th')\ge 0.9423\th'$.  Therefore,
$B \le e^{-0.0781L\th'} \ll e^{-0.077L_0\th},$
whence $Be^{O(\Psi)} \ll e^{-\frac{1}{13}L_0 \th}$.  For (c), we use
$\th'-\log(1+\th') \le 0.0683\th'$.  If $i\le 100$, $Ki=O(1)$ and
$\frac{\lam_i}{1-\lam_i} \ge \frac{\lam_1}{1-\lam_1}\ge 0.265$, and for $i>100$,
$Ki\le 0.302(-L\th')$ and $\frac{\lam_i}{1-\lam_i} \ge 0.4781$.  In either case,
$B \ll e^{0.106L\th'}$ and therefore $Be^{O(\Psi)}\ll e^{\frac1{10}L_0 \th}$ by \eqref{theta'}.
Finally, part (d) follows from \eqref{theta'} by similar calculations to those in part (a).
\end{proof}

\begin{proof}[Proof of Theorem \ref{qi normal}]
Let $x_i=\frac{\log_2 q_i(n)}{\log_2 x}$.
Consider first the case $0\le \eps \le \frac{i}{3L_0}$.
If $x_i\le (1-\eps)\b_i \le \frac{\b_i}{1+\eps}$, take $\th=\eps$ in Lemma \ref{qi size} (a).
If $x_i\ge (1+\eps)\b_i$, take $\th=-\frac{\eps}{1+\eps}\in [-\eps,-\frac34 \eps]$.
Use  Lemma \ref{qi size} (d) if $\th \ge -\frac{i\lam_i}{L_0}$ and  Lemma \ref{qi size} (c)
otherwise.  This yields the desired bounds, since in the latter case $\th \ge -\frac{4i}
{10(L_0-i)}$.

Next, assume $\frac{i}{3L_0} \le \eps \le \frac18$. 
If $x_i\le (1-\eps)\b_i$, take $\th=\eps$ in  Lemma \ref{qi size} (b).  If $x_i\ge (1+\eps)\b_i$,
take $\th=-\frac{\eps}{1+\eps}\in [-\eps,-\frac89 \eps]$ in Lemma \ref{qi size} (c).
We may do so since $\th \le -0.29 \frac{i}{L_0}$.
\end{proof}

\begin{proof}[Proof of Theorem \ref{normal structure}]
Assume $g\ge 10$ and $h\ge 10$, for otherwise
the conclusion is trivial.  Let
$$
\e_i = g \sqrt{\frac{i\log(L_0-i)}{L_0(L_0-i)}} \qquad (1\le i\le L_0-h)
$$
and let $N_i$ be the number of totients $\le x$ with a preimage satisfying
$|\frac{\log_2 q_i(n)}{\b_i\log_2 x} - 1| \ge \eps_i$.
First, suppose that $\eps_i \le \frac{i}{3L_0}$, and let $k=L_0-i$.
We have $\frac{k}{\log k} \ge 4g^2$, for if not, then $k<4g^2\log L_0 < \frac12 L_0$
and consequently $\eps_i > g\sqrt{\frac{\log k}{2k}} > g^2 > 10$.  By Theorem
\ref{qi normal},
\[
 N_i \ll V(x) \exp \left[ - \frac{g^2 \log (L_0-i)}{4} + \frac12 \log\pfrac{i(L_0-i)}{g^2 L_0}
\right] \ll V(x) (L_0-i)^{\frac12-\frac14 g^2}.
\]
Summing over $i\le L_0-4g^2$ and using $g\ge 10$, we obtain
\be\label{normal-1}
\sum_{\eps_i\le i/(3L_0)} N_i \ll V(x) (4g^2)^{\frac32 - \frac14 g^2} \ll V(x) g^{-\frac12 g^2}.
\ee

Next, suppose that $\frac{i}{3L_0} < \eps_i \le \frac18$.  
Since $i \le 9g^2 \frac{L_0 \log(L_0-i)}{L_0-i} \le 18g^2\log L_0$, 
Theorem \ref{qi normal} gives
\be\label{normal-2}
\sum_{i/(3L_0)<\eps_i\le 1/8} N_i \ll V(x) g^2 (\log L_0) e^{-\frac{g}{13}\sqrt{\log L_0}}
\ll V(x) e^{-\frac{g}{14}\sqrt{\log L_0}}.
\ee

Finally, if $\eps_i>\max(\frac{i}{3L_0},\frac18)$, then
$|\frac{\log_2 q_i(n)}{\b_i\log_2 x} - 1| \ge \eps_i' := \max(\frac{i}{3L_0},\frac18)$.  By Theorem \ref{qi normal},
\be\label{normal-3}
\begin{split}
\sum_{\eps_i>\max(i/(3L_0)),1/8)} N_i &\ll V(x) \Bigg( L_0 e^{-\frac{1}{104}L_0} +
\sum_{\frac38L_0<i\le L_0-h} \exp\left[ - \frac{L_0(L_0-i)}{4i}\pfrac{i}{3L_0}^2\right] \Bigg)\\
&\ll V(x) \( e^{-\frac{1}{105}L_0} + \sum_{i\le L_0-h} e^{-\frac{1}{96}(L_0-i)} \)
\ll V(x) e^{-\frac{h}{96}}.
\end{split}\ee
Together, inequalities \eqref{normal-1}--\eqref{normal-3} give Theorem \ref{normal structure}.
\end{proof}


\begin{proof}
[Proof of Theorem \ref{Omega normal}] Assume $\eta \ge \frac{1000}{\log_3 x}$, 
for otherwise the theorem is trivial. 
Let $\Psi=\Psi(x)=\cl{\sqrt{\eta\log_3 x}}$, $L=L_0(x)-\Psi$, 
define $\xi_i$ by \eqref{xi def} and set
$S=\exp\{(\log_2 x)^{100}\}$.
Let $n$ be a generic pre-image of a totient $m\le x$, and set
$q_i=q_i(n)$ and $x_i=x_i(n;x)$ for $0\le i\le L$. Also, define $r$ by
$m=\phi(q_0\cdots q_L)r$.  Let $\eps_i=\max(0.82\eta, \frac{i}{3L_0})$.
Let $U$ be the set of totients $m\le x$ satisfying one of four
conditions:
\begin{enumerate}
\item $(x_1, x_2, \ldots, x_L) \not\in \fancyS_L(\vxi)$,
\item $m$  is not $S$-nice,
\item $\exists i\le \frac{L_0}3 : \;\; \left| \frac{x_i}{\b_i}-1\right| \ge \eps_i$,
\item $\om(r) \ge (\log_2 x)^{1/2}$. 
\end{enumerate}
By Theorem \ref{vx norm} and Lemma \ref{divisor normal},
 the number of totients $m\le x$ satisfying (1) or (2) is $O(V(x)(\log_2 x)^{-\frac14 \eta})$.
Theorem \ref{qi normal} implies that the number of totients satisfying (3) is
\[
 \ll V(x) \Bigg[(\eta L_0) e^{-0.82\eta L_0/13} + \sum_{i\ge 2.46\eta L_0} e^{-i/39}\Bigg]
 \ll V(x) e^{-\frac1{16}\eta L_0} \ll \frac{V(x)}{(\log_2 x)^{\eta/10}}.
\]
Consider now totients satisfying (4), but neither (1), (2) nor (3).
By (3), $q_1\cdots q_L \le x^{1/3}$.  By Lemma \ref{xz},
\[
 \log_2 P^+(r) \le x_L \log_2 x \le 10\rho^L \log_2 x \le 20\rho^{-\Psi}\log_3 x <
\exp(\sqrt{\log_3 x}).
\]
By Lemma \ref{large prime small}, the number of totients with
$r\ge R := \exp \exp (\frac1{10}\sqrt{\log_2 x})$ is $O(\frac{x}{\log x})$.
Now suppose $r<R$.  Given $q_1,\ldots,q_L$ and $r$, the number of possibilities for $q_0$
is 
\[
 \ll \frac{x}{q_1\cdots q_L r \log x}.
\]
Applying Lemma \ref{sumvol}, followed by Lemmas \ref{TL} and \ref{xL most}, gives
\[
 \sum \frac{1}{q_1\cdots q_L} \le R_L(\bxi) \ll Z(x)e^{-\frac14 \Psi^2}\ll Z(x)
(\log_2 x)^{-\frac14 \eta}.
\]
For $r\le y\le R$, we have $\om(r)\ge 10\log_2 R\ge 10\log_2 y$.  Hence, the number
of possible $r\le y$ is $O(y/\log^2 y)$ by Lemma \ref{Omega lem}.  Therefore,
$\sum_r 1/r = O(1)$ and we conclude that
\be\label{U(x)}
|U| \ll V(x) (\log_2 x)^{-\frac1{10} \eta}.
\ee

Assume now that a totient $m\not\in U$.  Since every prime factor of a
preimage $n$ is $S$-normal,
\[
 \Omega(m) = (1+x_1+\cdots+x_L)\log_2 x + O\( (\log_2 x)^{\frac12} (\log_3 x)^{\frac32} \).
\]
Since (3) fails, Lemma \ref{xz} implies
\[
 \sum_{1\le i\le L} x_i \le \sum_{i\le L_0/3} \rho^i(1+0.82\eta)+\sum_{L_0/3<i\le L}
5\rho^{\fl{L_0/3}}\le \frac{\rho}{1-\rho}+0.98\eta
\]
and
\begin{align*}
  \sum_{1\le i\le L} x_i &\ge \sum_{i\le L_0/3} \b_i (1-\eps_i) \ge
\sum_{i\le L_0/3} \rho^i(1-0.82\eta)-\sum_{i\ge 1}\frac{i\rho^i}{L_0}-\sum_{i\ge 2.46L_0\eta}
\frac{i\rho^i}{3L_0} \\
&\ge \frac{\rho}{1-\rho}(1-0.82\eta) - 4\rho^{L_0/3-1} - \frac{5}{L_0} \ge  
\frac{\rho}{1-\rho}-0.98\eta.
\end{align*}
Therefore, if $x$ is large then $|\om(m)-\frac{1}{1-\rho}\log_2 x| \le 0.99 \eta\log_2 x$
for $m\not\in U$.
This proves the first part of Theorem \ref{Omega normal}.
The second part follows easily, since a totient $m\not \in U$ is  $S$-nice and hence
\[
\om(m) - \omega(m) \le  \sum_{i=0}^L \om(q_i-1,1,S) + \om(r)
\ll (\log_2 x)^{1/2}. \qedhere
\]
\end{proof}


\begin{proof}[Proof of Corollary \ref{Omega normal cor}]
It suffices to prove the theorem with $g(m)=\om(m)$.  Divide the totients
$m\le x$ into three sets, $S_1$, those with $\om(m)\ge 10\log_2 x$, $S_2$,
those not in $S_1$ but with $| \om(m) - \log_2 x/(1-\rho)| \ge \frac13
\log_2 x$, and $S_3$, those not counted in $S_1$ or $S_2$.  By Lemma
\ref{Omega lem}, 
$|S_1| \ll \frac{x}{\log^2 x}$
and by Theorem \ref{Omega normal},
$|S_2| \ll V(x) (\log_2 x)^{-1/30}.$  Therefore
\be\label{S3 size}
|S_3| = V(x) (1 - O((\log_2 x)^{-1/30}) )
\ee
and also
\be\label{sum S1S2}
\sum_{m\in S_1\cup S_2} \om(m) \ll |S_1| \log x + |S_2| \log_2 x \ll
V(x)(\log_2 x)^{2/3}.
\ee
For each $m\in S_3$, let
$$
\e_m = \frac{\om(m)}{\log_2 x} - \frac{1}{1-\rho}
$$
and for each integer $N\ge 0$,
let $S_{3,N}$ denote the set of $m\in S_3$ with $N\le |\e_m|\log_3 x< N+1$.
By Theorem \ref{Omega normal}, \eqref{S3 size} and \eqref{sum S1S2},
\begin{align*}
\sum_{m\in \fancyV(x)} \om(m) &= O(V(x)\sqrt{\log_2 x}) +
\sum_{0\le N\le \frac12\log_3 x}\,\, \sum_{m\in S_{3,N}} \om(m) \\
&= \frac{\log_2 x}{1-\rho} |S_3| +
O\( V(x) \frac{\log_2 x}{\log_3 x} \sum_N (N+1) e^{-N/10} \) \\
&= \frac{V(x)\log_2 x}{1-\rho} \( 1 + O \( \frac{1}{\log_3 x} \) \).
\qedhere
\end{align*}
\end{proof}

%
%
%
%
\section{The distribution of $A(m)$}

\subsection{Large values of $A(m)$}

\begin{proof}[Proof of Theorem \ref{A(m) bounded}]
First we note the trivial bound
\[
 | \{ m\le x: A(m)\ge N \}| \ll \frac{x \log_2 x}{N} \ll V(x) \frac{\log x}{N},
\]
which implies the theorem when $N\ge \log^2 x$.  Suppose next that $N<\log^2 x$.
Suppose $x$ is sufficiently large and set $\Psi=\lceil \log\log N \rceil$ and
$L=L_0(x)-\Psi$.  Note that $\Psi < \frac34 L_0(x)$.
Define $\xi_i$ by \eqref{xi def}.
By Theorem \ref{vx norm}, the number of totients $m\le x$
with a pre-image $n$ satisfying $\vx(n)\not\in \fancyS_L(\vxi)$ is
$O(V(x)e^{-\frac14 \Psi^2})$ (here $\vx(n) = (x_1(n;x), \ldots, x_L(n;x))$). 
For other totients $m$, all preimages $n$ satisfy $\vx(n)\in \fancyS_L(\vxi)$.
By Lemma \ref{xz}, $x_L=x_L(n) \le 1/g_L$.  For integer $b\in \{0,1,\ldots,L-1\}$,
let $N_b$ be the number of these remaining totients $m\le x$ with a preimage $n$
satisfying 
$$
\frac{b}{L g_L} \le x_L < \frac{b+1}{Lg_L}.
$$
Put $Y_b=\frac{b+1}{Lg_L}\log_2 x$.
Write $n=q_0 \cdots q_{L} t$, so that $\log_2 P^+(t)\le Y_b$, and let $r=\phi(t)$.
 Also note that $\log_2 Y_b \ll b\rho^{_M}$.  
As in the proof of \eqref{Nb}, using Lemmas \ref{sumvol} and \ref{xL most}, together with
\eqref{sum1v-2} and Corollary \ref{TL cor}, gives
\begin{align*}
N_b(x) &\ll \frac{x}{\log x} R_{L}(\fancyS_L(\vxi) \cap \{x_L\ge b/(Lg_L)\};x) \sum_r
\frac{1}{r} \\
&\ll \frac{x}{\log x} e^{-C_0 b}T_L e^{C(\log Y_b)^2}
\ll  V(x) \exp \left\{ -C_0 b + \Psi \log b  + O(\Psi+\log^2 b) \right\}.
\end{align*}
Put $b_0=\lceil \Psi^2/C_0 \rceil$.  The number of totients with $x_L \ge b_0/(Lg_L)$ 
is therefore $ \ll V(x)e^{-\Psi^2+O(\Psi\log \Psi)} \ll V(x)e^{-\frac12 \Psi^2}$.
The remaining totients have all of their preimages of the form $n=q_0\cdots q_L t$ with
 $\log_2 P^+(t) \le Y_{b_0}$.  The number of such preimages is
\[
 \ll \frac{x}{\log x} R_L(\fancyS_L(\bxi);x) \sum_{\log_2  P^+(t)\le Y_{b_0}} \frac{1}{\phi(t)}
\ll V(x) e^{-C_0 b - \frac{1}{4C}\Psi^2+Z_{b_0}}.
\]
Hence, the number of totients $m$ having at least $N$ such preimages is
\[
 \ll \frac{V(x)}{N} e^{-C_0 b - \frac{1}{4C}\Psi^2+Z_{b_0}} \ll \frac{V(x)}{N^{1/2}}.
\qedhere
\]
\end{proof}

\subsection{Sierpi\'nski's Conjecture}

Schinzel's argument for deducing Sierpi\'nski's Conjecture for a given $k$ from
Hypothesis H requires the simultaneous primality of $\gg k$ polynomials
of degrees up to $k$.  Here we preset a different approach, which is
considerably simpler and requires only the simultaneous primality
of three linear polynomials.
We take a number $m$ with $A(m)=k$ and
construct an $l$ with $A(lm)=k+2$.  Our method is motivated
by the technique used in Section 5 where many numbers with multiplicity
$\kappa$ are constructed from a single example.

\begin{lem} \label{k to k+2} Suppose $A(m)=k$ and $p$ is a prime
satisfying
\begin{enumerate}
\item[(i)] $p > 2m+1$,
\item[(ii)] $2p+1$ and $2mp+1$ are prime,
\item[(iii)] $dp+1$ is composite for all $d|2m$ except
$d=2$ and $d=2m$.
\end{enumerate}
Then $A(2mp) = k+2$.
\end{lem}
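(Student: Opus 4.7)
My plan is to enumerate $\phi^{-1}(2mp)$ completely and show it has exactly $k+2$ elements. The key observation is that condition (i) gives $p-1 > 2m$, so $p \nmid 2m$ and hence the exponent of $p$ in $2mp$ is exactly one. This single power of $p$ in $\phi(n)$ forces the structure of any preimage $n$ to be very rigid.

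First I would rule out $p \mid n$: if $p^a \parallel n$ with $a \ge 1$, then $p^{a-1}(p-1) \mid \phi(n) = 2mp$, which after extracting $p$-parts gives $p-1 \mid 2m$, contradicting (i). So $p \nmid n$, and since $p \mid \phi(n)$, some prime $q \mid n$ satisfies $p \mid q-1$. Writing $q = tp+1$, the divisibility $(q-1) \mid \phi(n) = 2mp$ together with the bound $t \le 2m < p$ yield $t \mid 2m$. A $p$-adic valuation count then shows this $q$ is unique and appears only to the first power in $n$ (any further contribution to $v_p(\phi(n))$ would force it to exceed $1$). By hypothesis (ii) both $2p+1$ and $2mp+1$ are prime, and by (iii) these are the only primes of the form $tp+1$ with $t \mid 2m$, so $q \in \{2p+1,\ 2mp+1\}$.

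From here the enumeration is direct. If $q = 2mp+1$, then $\phi(n/q) = 1$, so $n/q \in \{1,2\}$, producing the two preimages $2mp+1$ and $2(2mp+1)$. If $q = 2p+1$, then $\phi(n/q) = m$, so $n/q$ ranges over $\phi^{-1}(m) = \{n_1, \ldots, n_k\}$; the required coprimality $\gcd(2p+1, n_i) = 1$ is automatic since every prime $r \mid n_i$ satisfies $r-1 \mid m$, hence $r \le m+1 < 2p+1$. This produces $k$ further preimages $(2p+1)n_i$.

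Finally I would check pairwise distinctness of these $k+2$ preimages: the two from the first case are distinguished by parity, the $k$ preimages from the second case are distinct because the $n_i$ are, and no preimage belongs to both cases because the (unique) prime factor $q$ of $n$ with $p \mid q-1$ differs between the cases ($2p+1$ versus $2mp+1$). The main obstacle is entirely bookkeeping: verifying the uniqueness of $q$ via the valuation argument and applying the exception clauses in (iii) tightly enough to close off all other possibilities. No nontrivial analytic input is needed beyond the elementary inequality $p-1 > 2m$ coming from (i).
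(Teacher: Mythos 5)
Your proof is correct and follows essentially the same route as the paper's: rule out $p\mid n$ via $p-1\nmid 2m$, locate the unique prime $q\equiv 1\pmod p$ dividing $n$, use (ii) and (iii) to pin $q$ down to $\{2p+1,\,2mp+1\}$, and reduce to enumerating $\phi^{-1}(m)$ and $\phi^{-1}(1)$. The explicit coprimality and distinctness checks you add are worthwhile and are left implicit in the paper.

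One slip worth correcting: the parenthetical about $v_p(\phi(n))$ does establish \emph{uniqueness} of $q$, but it does not give $q\parallel n$. Indeed $\phi(q^b)=q^{b-1}(q-1)$ has the same $p$-adic valuation for every $b\ge 1$, so raising the power of $q$ adds nothing to $v_p(\phi(n))$. The clean argument --- which is what the paper uses --- is $q$-adic rather than $p$-adic: if $q^2\mid n$ then $q\mid\phi(n)=2mp$, which is impossible because $q=dp+1$ with $d\ge 2$ gives $q>2p>2m$ and $q\ne p$. Hence $q\parallel n$.
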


\begin{proof}  Suppose $\phi^{-1}(m) = \{x_1, \ldots, x_k\}$ and
$\phi(x)=2mp$.  Condition (i) implies $p\nmid x$, hence $p|(q-1)$ for some
prime $q$ dividing $x$.  Since $(q-1)|2mp$, we have $q=dp+1$ for some divisor
$d$ of $2m$.  We have $q>2p$, so $q^2 \nmid x$ and $\phi(x)=
(q-1)\phi(x/q)$.
By conditions (ii) and (iii), either $q=2p+1$ or $q=2mp+1$.
In the former case, $\phi(x/q)=m$, which has solutions $x=(2p+1)x_i$
$(1 \le i \le k)$.  In the latter case, $\phi(x/q)=1$, which has solutions
$x=q$ and $x=2q$.
 \end{proof}

Suppose $A(m)=k$, $m\equiv 1\pmod{3}$,
 and let $d_1, \ldots, d_j$ be the divisors of $2m$
with $3\le d_i<2m$.
Let $p_1, \ldots, p_j$ be distinct primes satisfying $p_i>d_i$ for each $i$.
Using the Chinese Remainder Theorem, let $a \mod b$ denote the
intersection of the residue classes $-d_i^{-1} \mod p_i$ $(1\le i\le j)$.
For every $h$ and $i$, $(a+bh)d_i+1$ is divisible by $p_i$,
hence composite for large enough $h$.
The Prime $k$-tuples
Conjecture implies that there are infinitely many numbers $h$ so that
$p=a+hb$, $2p+1$ and $2mp+1$ are simultaneously prime.
By Lemma \ref{k to k+2}, $A(2mp)=k+2$.
As $p\equiv 2\pmod{3}$, $2mp \equiv 1\pmod{3}$.
Starting with $A(1)=2$, $A(2)=3$, and $A(220)=5$,
Sierpi\'nski's Conjecture follows by induction on $k$.


\afterpage{\clearpage}
{\tiny
\begin{table}
\begin{tabular}{|lr|lr|lr|lr|lr|lr|lr|lr|}
\hline
 $ k$ & $ m_k$\hfil
& $ k$ & $ m_k$\hfil
& $ k$ & $ m_k$\hfil
& $ k$ & $ m_k$\hfil
& $ k$ & $ m_k$\hfil
& $ k$ & $ m_k$\hfil
& $ k$ & $ m_k$\hfil
& $ k$ & $ m_k$\hfil\\
\hline
    2 & 1 &   77 & 9072 &  152 & 10080 &  227 & 26880 &  302 & 218880 &  377 & 165888 &  452 & 990720 &  527 & 2677248 \\
    3 & 2 &   78 & 38640 &  153 & 13824 &  228 & 323136 &  303 & 509184 &  378 & 436800 &  453 & 237600 &  528 & 5634720 \\
    4 & 4 &   79 & 9360 &  154 & 23760 &  229 & 56160 &  304 & 860544 &  379 & 982080 &  454 & 69120 &  529 & 411840 \\
    5 & 8 &   80 & 81216 &  155 & 13440 &  230 & 137088 &  305 & 46080 &  380 & 324000 &  455 & 384000 &  530 & 2948400 \\
    6 & 12 &   81 & 4032 &  156 & 54720 &  231 & 73920 &  306 & 67200 &  381 & 307200 &  456 & 338688 &  531 & 972000 \\
    7 & 32 &   82 & 5280 &  157 & 47040 &  232 & 165600 &  307 & 133056 &  382 & 496800 &  457 & 741888 &  532 & 2813184 \\
    8 & 36 &   83 & 4800 &  158 & 16128 &  233 & 184800 &  308 & 82944 &  383 & 528768 &  458 & 86400 &  533 & 3975552 \\
    9 & 40 &   84 & 4608 &  159 & 48960 &  234 & 267840 &  309 & 114048 &  384 & 1114560 &  459 & 1575936 &  534 & 368640 \\
   10 & 24 &   85 & 16896 &  160 & 139392 &  235 & 99840 &  310 & 48384 &  385 & 1609920 &  460 & 248832 &  535 & 529920 \\
   11 & 48 &   86 & 3456 &  161 & 44352 &  236 & 174240 &  311 & 43200 &  386 & 485760 &  461 & 151200 &  536 & 2036736 \\
   12 & 160 &   87 & 3840 &  162 & 25344 &  237 & 104832 &  312 & 1111968 &  387 & 1420800 &  462 & 1176000 &  537 & 751680 \\
   13 & 396 &   88 & 10800 &  163 & 68544 &  238 & 23040 &  313 & 1282176 &  388 & 864864 &  463 & 100800 &  538 & 233280 \\
   14 & 2268 &   89 & 9504 &  164 & 55440 &  239 & 292320 &  314 & 239616 &  389 & 959616 &  464 & 601344 &  539 & 463680 \\
   15 & 704 &   90 & 18000 &  165 & 21120 &  240 & 93600 &  315 & 1135680 &  390 & 1085760 &  465 & 216000 &  540 & 2042880 \\
   16 & 312 &   91 & 23520 &  166 & 46656 &  241 & 93312 &  316 & 274560 &  391 & 264960 &  466 & 331776 &  541 & 3018240 \\
   17 & 72 &   92 & 39936 &  167 & 15840 &  242 & 900000 &  317 & 417600 &  392 & 470016 &  467 & 337920 &  542 & 2311680 \\
   18 & 336 &   93 & 5040 &  168 & 266400 &  243 & 31680 &  318 & 441600 &  393 & 400896 &  468 & 95040 &  543 & 1368000 \\
   19 & 216 &   94 & 26208 &  169 & 92736 &  244 & 20160 &  319 & 131040 &  394 & 211200 &  469 & 373248 &  544 & 3120768 \\
   20 & 936 &   95 & 27360 &  170 & 130560 &  245 & 62208 &  320 & 168480 &  395 & 404352 &  470 & 559872 &  545 & 1723680 \\
   21 & 144 &   96 & 6480 &  171 & 88128 &  246 & 37440 &  321 & 153600 &  396 & 77760 &  471 & 228096 &  546 & 1624320 \\
   22 & 624 &   97 & 9216 &  172 & 123552 &  247 & 17280 &  322 & 168000 &  397 & 112320 &  472 & 419328 &  547 & 262080 \\
   23 & 1056 &   98 & 2880 &  173 & 20736 &  248 & 119808 &  323 & 574080 &  398 & 1148160 &  473 & 762048 &  548 & 696960 \\
   24 & 1760 &   99 & 26496 &  174 & 14400 &  249 & 364800 &  324 & 430560 &  399 & 51840 &  474 & 342720 &  549 & 1889280 \\
   25 & 360 &  100 & 34272 &  175 & 12960 &  250 & 79200 &  325 & 202752 &  400 & 152064 &  475 & 918720 &  550 & 734400 \\
   26 & 2560 &  101 & 23328 &  176 & 8640 &  251 & 676800 &  326 & 707616 &  401 & 538560 &  476 & 917280 &  551 & 842400 \\
   27 & 384 &  102 & 28080 &  177 & 270336 &  252 & 378000 &  327 & 611520 &  402 & 252000 &  477 & 336000 &  552 & 874368 \\
   28 & 288 &  103 & 7680 &  178 & 11520 &  253 & 898128 &  328 & 317952 &  403 & 269568 &  478 & 547200 &  553 & 971520 \\
   29 & 1320 &  104 & 29568 &  179 & 61440 &  254 & 105600 &  329 & 624960 &  404 & 763776 &  479 & 548352 &  554 & 675840 \\
   30 & 3696 &  105 & 91872 &  180 & 83520 &  255 & 257040 &  330 & 116640 &  405 & 405504 &  480 & 129600 &  555 & 4306176 \\
   31 & 240 &  106 & 59040 &  181 & 114240 &  256 & 97920 &  331 & 34560 &  406 & 96768 &  481 & 701568 &  556 & 1203840 \\
   32 & 768 &  107 & 53280 &  182 & 54432 &  257 & 176256 &  332 & 912000 &  407 & 1504800 &  482 & 115200 &  557 & 668160 \\
   33 & 9000 &  108 & 82560 &  183 & 85536 &  258 & 264384 &  333 & 72576 &  408 & 476928 &  483 & 1980000 &  558 & 103680 \\
   34 & 432 &  109 & 12480 &  184 & 172224 &  259 & 244800 &  334 & 480000 &  409 & 944640 &  484 & 1291680 &  559 & 2611200 \\
   35 & 7128 &  110 & 26400 &  185 & 136800 &  260 & 235872 &  335 & 110880 &  410 & 743040 &  485 & 1199520 &  560 & 820800 \\
   36 & 4200 &  111 & 83160 &  186 & 44928 &  261 & 577920 &  336 & 1259712 &  411 & 144000 &  486 & 556416 &  561 & 663552 \\
   37 & 480 &  112 & 10560 &  187 & 27648 &  262 & 99360 &  337 & 1350720 &  412 & 528000 &  487 & 359424 &  562 & 282240 \\
   38 & 576 &  113 & 29376 &  188 & 182400 &  263 & 64800 &  338 & 250560 &  413 & 1155840 &  488 & 1378080 &  563 & 3538944 \\
   39 & 1296 &  114 & 6720 &  189 & 139104 &  264 & 136080 &  339 & 124416 &  414 & 4093440 &  489 & 2088000 &  564 & 861120 \\
   40 & 1200 &  115 & 31200 &  190 & 48000 &  265 & 213120 &  340 & 828000 &  415 & 134400 &  490 & 399168 &  565 & 221760 \\
   41 & 15936 &  116 & 7200 &  191 & 102816 &  266 & 459360 &  341 & 408240 &  416 & 258048 &  491 & 145152 &  566 & 768000 \\
   42 & 3312 &  117 & 8064 &  192 & 33600 &  267 & 381024 &  342 & 74880 &  417 & 925344 &  492 & 2841600 &  567 & 2790720 \\
   43 & 3072 &  118 & 54000 &  193 & 288288 &  268 & 89856 &  343 & 1205280 &  418 & 211680 &  493 & 1622880 &  568 & 953856 \\
   44 & 3240 &  119 & 6912 &  194 & 286848 &  269 & 101376 &  344 & 192000 &  419 & 489600 &  494 & 1249920 &  569 & 7138368 \\
   45 & 864 &  120 & 43680 &  195 & 59904 &  270 & 347760 &  345 & 370944 &  420 & 1879200 &  495 & 2152800 &  570 & 655200 \\
   46 & 3120 &  121 & 32400 &  196 & 118800 &  271 & 124800 &  346 & 57600 &  421 & 1756800 &  496 & 2455488 &  571 & 3395520 \\
   47 & 7344 &  122 & 153120 &  197 & 100224 &  272 & 110592 &  347 & 1181952 &  422 & 90720 &  497 & 499200 &  572 & 3215520 \\
   48 & 3888 &  123 & 225280 &  198 & 176400 &  273 & 171360 &  348 & 1932000 &  423 & 376320 &  498 & 834624 &  573 & 2605824 \\
   49 & 720 &  124 & 9600 &  199 & 73440 &  274 & 510720 &  349 & 1782000 &  424 & 1461600 &  499 & 1254528 &  574 & 1057536 \\
   50 & 1680 &  125 & 15552 &  200 & 174960 &  275 & 235200 &  350 & 734976 &  425 & 349920 &  500 & 2363904 &  575 & 1884960 \\
   51 & 4992 &  126 & 4320 &  201 & 494592 &  276 & 25920 &  351 & 473088 &  426 & 158400 &  501 & 583200 &  576 & 3210240 \\
   52 & 17640 &  127 & 91200 &  202 & 38400 &  277 & 96000 &  352 & 467712 &  427 & 513216 &  502 & 1029600 &  577 & 1159200 \\
   53 & 2016 &  128 & 68640 &  203 & 133632 &  278 & 464640 &  353 & 556800 &  428 & 715392 &  503 & 2519424 &  578 & 4449600 \\
   54 & 1152 &  129 & 5760 &  204 & 38016 &  279 & 200448 &  354 & 2153088 &  429 & 876960 &  504 & 852480 &  579 & 272160 \\
   55 & 6000 &  130 & 49680 &  205 & 50688 &  280 & 50400 &  355 & 195840 &  430 & 618240 &  505 & 1071360 &  580 & 913920 \\
   56 & 12288 &  131 & 159744 &  206 & 71280 &  281 & 30240 &  356 & 249600 &  431 & 772800 &  506 & 3961440 &  581 & 393120 \\
   57 & 4752 &  132 & 16800 &  207 & 36288 &  282 & 157248 &  357 & 274176 &  432 & 198720 &  507 & 293760 &  582 & 698880 \\
   58 & 2688 &  133 & 19008 &  208 & 540672 &  283 & 277200 &  358 & 767232 &  433 & 369600 &  508 & 1065600 &  583 & 2442240 \\
   59 & 3024 &  134 & 24000 &  209 & 112896 &  284 & 228480 &  359 & 40320 &  434 & 584640 &  509 & 516096 &  584 & 6914880 \\
   60 & 13680 &  135 & 24960 &  210 & 261120 &  285 & 357696 &  360 & 733824 &  435 & 708480 &  510 & 616896 &  585 & 695520 \\
   61 & 9984 &  136 & 122400 &  211 & 24192 &  286 & 199584 &  361 & 576576 &  436 & 522720 &  511 & 639360 &  586 & 497664 \\
   62 & 1728 &  137 & 22464 &  212 & 57024 &  287 & 350784 &  362 & 280800 &  437 & 884736 &  512 & 4014720 &  587 & 808704 \\
   63 & 1920 &  138 & 87120 &  213 & 32256 &  288 & 134784 &  363 & 63360 &  438 & 1421280 &  513 & 266112 &  588 & 2146176 \\
   64 & 2400 &  139 & 228960 &  214 & 75600 &  289 & 47520 &  364 & 1351296 &  439 & 505440 &  514 & 2386944 &  589 & 2634240\\
   65 & 7560 &  140 & 78336 &  215 & 42240 &  290 & 238464 &  365 & 141120 &  440 & 836352 &  515 & 126720 &  590 & 4250400 \\
   66 & 2304 &  141 & 25200 &  216 & 619920 &  291 & 375840 &  366 & 399360 &  441 & 60480 &  516 & 2469600 &  591 & 2336256 \\
   67 & 22848 &  142 & 84240 &  217 & 236160 &  292 & 236544 &  367 & 168960 &  442 & 1836000 &  517 & 2819520 &  592 & 1516320 \\
   68 & 8400 &  143 & 120000 &  218 & 70560 &  293 & 317520 &  368 & 194400 &  443 & 866880 &  518 & 354816 &  593 & 268800 \\
   69 & 29160 &  144 & 183456 &  219 & 291600 &  294 & 166320 &  369 & 1067040 &  444 & 1537920 &  519 & 1599360 &  594 & 656640 \\
   70 & 5376 &  145 & 410112 &  220 & 278400 &  295 & 312000 &  370 & 348480 &  445 & 1219680 &  520 & 295680 &  595 & 1032192 \\
   71 & 3360 &  146 & 88320 &  221 & 261360 &  296 & 108864 &  371 & 147840 &  446 & 349440 &  521 & 1271808 &  596 & 4743360 \\
   72 & 1440 &  147 & 12096 &  222 & 164736 &  297 & 511488 &  372 & 641520 &  447 & 184320 &  522 & 304128 &  597 & 4101120 \\
   73 & 13248 &  148 & 18720 &  223 & 66240 &  298 & 132480 &  373 & 929280 &  448 & 492480 &  523 & 3941280 &  598 & 2410560 \\
   74 & 11040 &  149 & 29952 &  224 & 447120 &  299 & 354240 &  374 & 1632000 &  449 & 954720 &  524 & 422400 &  599 & 9922560 \\
   75 & 27720 &  150 & 15120 &  225 & 55296 &  300 & 84480 &  375 & 107520 &  450 & 1435200 &  525 & 80640 &  600 & 427680 \\
   76 & 21840 &  151 & 179200 &  226 & 420000 &  301 & 532800 &  376 & 352512 &  451 & 215040 &  526 & 508032 & & \\
\hline
\end{tabular}
\caption{Smallest solution to $A(m)=k$}
\end{table}
}  

Table 2 of \cite{SW} lists
the smallest $m$, denoted $m_k$, for which $A(m)=k$ for $2\le k\le 100$.  We extend the
computation to $k\le 600$, listing $m_k$ for $k\le 600$ in Table 2.

\subsection{Carmichael's Conjecture}
\medskip
The basis for computations of lower bounds for a counterexample to
Carmichael's Conjecture is the following Lemma of Carmichael \cite{C2},
as refined by Klee \cite{K}.  For short, let $s(n)=\prod_{p|n} p$ denote the
square-free kernel of $n$.

\begin{lem}\label{CK} Suppose $\phi(x)=m$ and $A(m)=1$.  If $d|x$, $(d,x/d)=1$,
$s(\phi(d))|x$, $e | \frac{x/d}{s(x/d)}$ and $P=1+e\phi(d)$ is prime, then
$P^2|x$.
\end{lem}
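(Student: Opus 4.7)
My plan is to argue by contradiction: assume $A(m)=1$, $\phi(x)=m$, the hypotheses on $d,e,P$ hold, but $v_P(x)\le 1$. I will exhibit an $x'\ne x$ with $\phi(x')=m$, contradicting $A(m)=1$.

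The engine is a pair of identities. First, the condition $e\mid y/s(y)$ (with $y=x/d$) forces $v_p(e)\le v_p(y)-1$ for every prime $p\mid e$, so $y/e$ is a positive integer with the same prime support as $y$; applying $\phi(n)=n\prod_{p\mid n}(1-1/p)$ then gives $\phi(y/e)=\phi(y)/e$. Second, $\phi(P)=P-1=e\phi(d)$ by construction. Together these suggest the candidate $x':=P\cdot(y/e)$, an integer since the same multiplicity check shows $ed\mid x$. In the cleanest subcase $\gcd(d,y)=1$ and $P\nmid x$, multiplicativity yields
\[
\phi(x')=(P-1)\phi(y/e)=e\phi(d)\cdot\phi(y)/e=\phi(d)\phi(y)=\phi(x),
\]
and $x'\ne x$ except when $P=de$, an equation that forces $e=1$ and $d\in\{1,2\}$; in those small degenerate cases a direct appeal to the identity $\phi(n)=\phi(2n)$ for odd $n$ produces a second preimage, since $v_2(x)=1$ would make $x/2$ an alternative preimage of $m$.

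The main obstacle is removing the two simplifying assumptions used above, namely $\gcd(d,y)=1$ and $P\nmid x$. For the first, I plan to factor $y=y_1y_2$ with $y_1$ supported on the primes of $d$ and $\gcd(y_2,d)=1$, split $e=e_1e_2$ with $e_i\mid y_i/s(y_i)$, and reorganize the construction around the unitary block $dy_1$, exploiting $\phi(dy_1)=y_1\phi(d)$ (its prime support is just that of $d$) so that $\phi(x)=y_1\phi(d)\phi(y_2)$; the cancellations coming from $e_1\mid y_1/s(y_1)$ should then recover $\phi(x')=\phi(x)$ on the nose. For the subcase $v_P(x)=1$, the same construction produces $v_P(x')=2$ without altering the prime support, so a direct recomputation of $\phi(x')$ via the product formula, combined with $P\ne e$ (immediate from $P=1+e\phi(d)$), still furnishes a distinct preimage $x'$ of $m$. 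The central subtlety throughout is arranging that the multiplicative contributions of $d$ and $\gcd(d,y)$ in $\phi(x)$ match precisely the factor $e\phi(d)$ that $P-1$ supplies to $\phi(x')$.
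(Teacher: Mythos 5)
The paper does not actually prove this lemma — it is the classical Carmichael--Klee criterion, cited in the text to \cite{C2} and \cite{K} — so your argument has to stand on its own. Your ``cleanest subcase'' ($\gcd(d,y)=1$, $P\nmid x$) is essentially right, and $x'=Py/e=Px/(de)$ is the natural candidate. A small slip: $P=de$ together with $P=1+e\phi(d)$ gives $e(d-\phi(d))=1$, hence $e=1$ and $d-\phi(d)=1$, i.e.\ $d$ prime (not $d\in\{1,2\}$); but since $d\mid x$ this would force $P=d\mid x$, contradicting $P\nmid x$, so that degeneracy cannot even arise in this subcase. The real problems are the two extensions, and in both cases the very computation you sketch, once carried out, exposes the gap rather than closing it.

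For the non-coprime case: with $y=y_1y_2$, $e=e_1e_2$ as you set up, and $x'=P(y_1/e_1)(y_2/e_2)$, the three factors are pairwise coprime and $s(y_i/e_i)=s(y_i)$, so $\phi(x')=(P-1)\frac{\phi(y_1)}{e_1}\frac{\phi(y_2)}{e_2}=\phi(d)\phi(y_1)\phi(y_2)$, while (as you correctly note) $\phi(x)=y_1\phi(d)\phi(y_2)$. Thus $\phi(x')/\phi(x)=\phi(y_1)/y_1<1$ whenever $y_1>1$; the hoped-for cancellation does not occur, and nothing in your plan supplies the missing factor. (Re-centering on the unitary block $D=dy_1$ changes the prime, since $1+e_2\phi(D)=1+e_2y_1\phi(d)\ne P$, so that route does not rescue the \emph{same} $P$.) For the subcase $v_P(x)=1$: the same $x'$ has $v_P(x')=2$, and a direct evaluation shows $\phi(x')\ne\phi(x)$. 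Already with $d=1$ one has $s(x')=s(x)$ and $\phi(x')=\frac{Px}{e}\prod_{p\mid x}(1-1/p)=\frac{P}{e}\phi(x)=\frac{P}{P-1}\phi(x)$; raising the power of $P$ inserts a factor $P$ rather than $P-1$ into $\phi$, so the identity $P-1=e\phi(d)$ no longer matches. A genuinely different second preimage is needed here (for $d=1$ the correct choice is $x'=e\cdot(x/P)$, giving $\phi(x')=e\,\phi(x/P)=(P-1)\phi(x/P)=\phi(x)$); it is not a matter of ``recomputing'' the same construction, and the appeal to $\phi(2n)=\phi(n)$ does not address the general $d$.
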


From Lemma \ref{CK} it is easy to deduce $2^2 3^2 7^2 43^2|x$.  Here,
following Carmichael, we break into two cases: (I) $3^2\parallel x$ and
(II) $3^3|x$.  In case (I) it is easy to show that $13^2|x$.  From this point
onward Lemma \ref{CK} is used to generate a virtually unlimited set of
primes $P$ for which $P^2|x$.  In case (I) we search for $P$ using
$d=1,e=6k$ or $d=9,e=2k$, where $k$ is a product of distinct primes
(other than 2 or 3) whose squares we already know divide $x$.  That is,
if $6k+1$ or $12k+1$ is prime, its square divides $x$.  In case (II) we try
$d=1,e=6k$ and $d=1,e=18k$, i.e. we test whether or not $6k+1$ and $18k+1$
are prime.


As in \cite{SW}, certifying that a number $P$ is prime is accomplished
with the following lemma of Lucas, Lehmer, Brillhart and Selfridge.

\begin{lem}\label{LLBS} Suppose, for each prime $q$ dividing $n-1$,
there is a number $a_q$ satisfying $a_q^{n-1} \equiv 1$ and 
$a_q^{(n-1)/q} \not\equiv 1 \pmod{n}$.  Then $n$ is prime.
\end{lem}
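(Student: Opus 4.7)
The plan is to show that $n-1$ divides $\phi(n)$; combined with the trivial upper bound $\phi(n) \le n-1$ (with equality iff $n$ is prime), this forces $n$ to be prime. First I would fix a prime $q$ dividing $n-1$ and write $q^{e_q} \| n-1$. The first hypothesis $a_q^{n-1} \equiv 1 \pmod{n}$ already implies $\gcd(a_q, n) = 1$, so the multiplicative order $d_q$ of $a_q$ modulo $n$ is well defined, divides $\phi(n)$ by Euler's theorem, and also divides $n-1$ by hypothesis.

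Next I would extract the $q$-part of $d_q$. Writing $d_q = q^{f_q} m$ with $\gcd(m,q) = 1$, the divisibility $d_q \mid n-1$ forces $f_q \le e_q$, while the second hypothesis $a_q^{(n-1)/q} \not\equiv 1 \pmod{n}$ says $d_q \nmid (n-1)/q$, which rules out $f_q \le e_q - 1$. Hence $f_q = e_q$, and so $q^{e_q} \mid d_q \mid \phi(n)$.

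Applying this for each prime $q \mid n-1$ and multiplying the resulting (pairwise coprime) prime power divisibilities yields $n - 1 = \prod_{q\mid n-1} q^{e_q} \mid \phi(n)$, which combined with $\phi(n) \le n-1$ completes the proof. The argument is short and essentially group-theoretic; there is no serious obstacle beyond the observation that each witness $a_q$ is designed precisely to force the full $q$-part of $n-1$ into $\phi(n)$.
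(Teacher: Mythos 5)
Your proof is correct; note, though, that the paper itself does not supply a proof of this lemma. It is stated as a known primality criterion attributed to Lucas, Lehmer, Brillhart and Selfridge, and is invoked as a black box to certify the primes arising in the computation for Theorem~\ref{CC lower}. Your argument is the standard one: each witness $a_q$ has order divisible by the full power $q^{e_q}\,\|\,n-1$, so multiplying over the primes $q\mid n-1$ gives $n-1 \mid \phi(n)$, and since $\phi(n)\le n-1$ for $n\ge 2$ with equality exactly when $n$ is prime, the conclusion follows. The only detail worth making explicit is the implicit standing assumption $n\ge 2$ (for $n=2$ the hypothesis is vacuous and the conclusion holds trivially), but this is not a gap of substance.
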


The advantage of using Lemma \ref{LLBS} in our situation is that for a
given $P$ we are testing, we already know the prime factors of $P-1$
(i.e. 2,3 and the prime factors of $k$).

Our overall search strategy differs from \cite{SW}.  In each case, we first
find a set of 32 ``small'' primes $P$ (from here on, $P$ will represent a
prime generated from Lemma \ref{CK} for which $P^2|x$, other than 2 or 3).
Applying Lemma \ref{CK}, taking $k$ to be all possible products of
1,2,3 or 4 of these 32 primes yields a set $S$ of 1000 primes $P$, 
which we order $p_1< \cdots < p_{1000}$.  This set will be our base set.
In particular, $p_{1000}=796486033533776413$ in case (I)
 and $p_{1000}=78399428950769743507519$ in case
(II).  The calculations are then divided into ``runs''.  For run \#0, we take
for $k$ all possible combinations of 1,2 or 3 of the primes in $S$.
For $j\ge 1$, run \#$j$ tests every $k$ which is the product of $p_j$ and
three larger primes in $S$.   Each candidate $P$ is
first tested for divisibility by small primes and must pass the strong
pseudoprime test with bases 2,3,5,7,11 and 13 before attempting to
certify that it is prime.
There are two advantages to this approach.  First, the candidates $P$
are relatively small (the numbers tested in case (I) had an average of
40 digits and the numbers tested in case (II) had an average of 52 digits).
Second, $P-1$ has at most 6 prime factors, simplifying the certification
process. 
To achieve $\prod P^2 > 10^{10^{10}}$,
13 runs  were required in case (I) and 14 runs were
required in case (II).  Together these runs give Theorem \ref{CC lower}.
A total of 126,520,174 primes were found in case (I), and 
104,942,148 primes were found in case (II).
The computer program was written in GNU C, utilizing Arjen Lenstra's
Large Integer Package, and run on a network of 200MHz
Pentium PCs running LINUX O/S in December 1996 (4,765 CPU hours total). 

In 1991, Pomerance (see \cite{P2} and \cite{M}) showed that
\be\label{liminf1/2}
\liminf_{x\to\infty} \frac{V_1(x)}{V(x)} \le \frac12.
\ee
A modification of his argument, combined
with the above computations, yields the much stronger
bound in Theorem \ref{liminf V_1/V}.
Recall that $V(x;k)$ counts the totients $\le x$, all of whose preimages 
are divisible by $k$.

\begin{lem} \label{V a^2}
We have $V(x;a^2) \le V(x/a).$
\end{lem}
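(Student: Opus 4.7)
The plan is to exhibit an injection from the set counted by $V(x;a^2)$ into $\mathcal{V}(x/a)$, based on the following simple multiplicativity observation: if $a^2 \mid n$, then $\phi(n) = a\,\phi(n/a)$.

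To see the identity, write $a = \prod_{p} p^{a_p}$ and $n = \prod_p p^{n_p}$, where $n_p \ge 2a_p$ whenever $a_p \ge 1$. Then $n/a = \prod_p p^{n_p - a_p}$, and for each prime $p \mid a$ the exponent $n_p - a_p \ge a_p \ge 1$, so
\[
 \frac{\phi(p^{n_p})}{\phi(p^{n_p-a_p})} = \frac{p^{n_p-1}(p-1)}{p^{n_p-a_p-1}(p-1)} = p^{a_p},
\]
while for $p\nmid a$ the corresponding factors of $\phi(n)$ and $\phi(n/a)$ agree. Multiplying over all primes gives $\phi(n)/\phi(n/a) = a$.

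Now suppose $m \in \mathcal{V}(x;a^2)$. Then $m \le x$ and every preimage of $m$ is divisible by $a^2$; pick any such preimage $n$ and set $n' = n/a$. By the identity above, $\phi(n') = m/a$, so $m/a$ is a totient, and $m/a \le x/a$, i.e. $m/a \in \mathcal{V}(x/a)$. The map $m \mapsto m/a$ is obviously injective on the integers, so
\[
 V(x;a^2) \le \bigl|\{m/a : m \in \mathcal{V}(x;a^2)\}\bigr| \le V(x/a),
\]
as claimed. There is no real obstacle here; the only subtle point is verifying that the hypothesis $a^2 \mid n$ (rather than merely $a \mid n$) is precisely what makes the multiplicative identity $\phi(n)=a\phi(n/a)$ go through.
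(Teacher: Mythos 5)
Your proof is correct, and it is in fact more direct than the paper's. Both arguments hinge on the same multiplicative identity (that $a^2 \mid n$ implies $\phi(n) = a\,\phi(n/a)$), but they deploy it differently. You map each totient $m$ counted by $V(x;a^2)$ straight to $m/a$: picking any preimage $n$ of $m$ (necessarily divisible by $a^2$) and using the identity shows $m/a = \phi(n/a)$ is itself a totient, and since $m/a \le x/a$ the map $m \mapsto m/a$ injects $\{m : m \text{ counted by } V(x;a^2)\}$ into $\fancyV(x/a)$, giving the bound immediately. The paper instead argues by complementation: it shows that every totient $n \in (x/a, x]$ gives rise, by repeatedly dividing a preimage by $a$ until it is no longer divisible by $a^2$, to a totient $a^{-s}n \le x$ possessing a preimage \emph{not} divisible by $a^2$; this map is injective because two totients in $(x/a, x]$ cannot differ by a power of $a$, whence $V(x) - V(x;a^2) \ge V(x) - V(x/a)$. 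Your version avoids both the iteration (a single application of the identity suffices) and the detour through the complement, so it is the cleaner route; the paper's version has the mild advantage of naturally producing, along the way, the observation that every totient has some representative in its $\{\cdot a^{\pm 1}\}$-orbit with a preimage not divisible by $a^2$, but that extra information is not needed for the lemma.
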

\begin{proof}  The lemma is trivial when $a=1$ so assume $a\ge 2$.
Let $n$ be a totient with $x/a<n\le x$.  First we show that
for some integer $s\ge 0$, $a^{-s}n$ is a
 totient with an pre-image not divisible by $a^2$.
Suppose $\phi(m)=n$.  If $a^2\nmid m$, take $s=0$.  Otherwise
we can write $m=a^tr$, where $t\ge 2$ and $a\nmid r$.  Clearly
$\phi(ar)=a^{1-t}n$, so we take $s=t-1$.
Next, if $n_1$ and $n_2$ are two distinct totients in $(x/a,x]$,
then $a^{-s_1}n_1 \ne a^{-s_2}n_2$ (since $n_1/n_2$ cannot be
 a power of $a$), so the mapping from totients in $(x/a,x]$
to totients $\le x$ with a pre-image not divisible by $a^2$ is one-to-one.
Thus
$V(x) - V(x;a^2) \ge V(x) - V(x/a).$
 \end{proof}

The above computations show that if $\phi(x)=n$ and $A(n)=1$,
then $x$ is
divisible by either $a^2$ or $b^2$, where $a$ and $b$ are
numbers greater than $10^{5,001,850,000}$.
Suppose $a\le b$. By Lemma \ref{V a^2}, we have
\be\label{V1 upper}
V_1(x) \le V(x/a) + V(x/b) \le 2V(x/a).
\ee

\begin{lem} \label{liminf}  Suppose $a>1$, $b>0$ and
$V_1(x) \le b V(x/a)$ for all $x$.  Then
$$
\liminf_{x\to\infty} \frac{V_1(x)}{V(x)} \le \frac{b}{a}.
$$
\end{lem}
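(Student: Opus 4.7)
The plan is to argue by contradiction: suppose $\liminf_{x\to\infty} V_1(x)/V(x) > b/a$. Then there is some $c > b/a$ and some $x_0$ such that $V_1(x) \ge c\,V(x)$ holds for all $x \ge x_0$. Combined with the hypothesis $V_1(x) \le b\,V(x/a)$, this yields
\[
 V(x/a) \;\ge\; \frac{c}{b}\, V(x) \qquad (x \ge x_0),
\]
and by construction the ratio $r := ac/b$ satisfies $r > 1$.

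Next I would iterate this inequality. Given $x$ large, let $k$ be the largest integer with $x/a^{k-1} \ge x_0$, so that $k \asymp \log x / \log a$. Applying the inequality $k$ times gives
\[
 V(x) \;\le\; \pfrac{b}{c}^{k} V(x/a^{k}) \;\le\; \pfrac{b}{c}^{k} \cdot \frac{x}{a^{k}} \;=\; x \cdot r^{-k},
\]
using the trivial bound $V(y) \le y$ at the last step. Since $r > 1$ and $k \gg \log x$, this is an estimate of the form $V(x) \ll x^{1 - \eta}$ for some fixed $\eta = \eta(a,c) > 0$.

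But by the prime number theorem $V(x) \ge \pi(x) + 1 \gg x / \log x$, which contradicts the polynomial savings just obtained once $x$ is sufficiently large. Hence the supposition $\liminf V_1/V > b/a$ is untenable, and the lemma follows. The main (and essentially only) obstacle is picking out the iteration cleanly; the rest is just the crude lower bound $V(x) \gg x/\log x$ together with $V(y) \le y$, both of which are immediate.
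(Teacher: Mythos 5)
Your proof is correct and is essentially the same iteration argument the paper uses: combine $V_1(x)\le bV(x/a)$ with $V_1(x)\ge cV(x)$ to get $V(x)\le (b/c)V(x/a)$, iterate $\asymp \log x/\log a$ times, and contradict $V(x)\gg x/\log x$. The only cosmetic difference is that you close the iteration with the trivial bound $V(y)\le y$ whereas the paper terminates at $V(x/a^n)\le V(ax_0)=O(1)$; both yield the same polynomial savings and the same contradiction.
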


\begin{proof}
Suppose
$c = \liminf_{x\to\infty} \frac{V_1(x)}{V(x)}>0$.
For every $\e>0$ there is a number $x_0$ such that $x\ge x_0$ implies
$V_1(x)/V(x) \ge c-\e$.  For large $x$, set $n=[\log(x/x_0)/\log a]$.
Then
\begin{align*}
V(x) &= \frac{V(x)}{V(x/a)} \frac{V(x/a)}{V(x/a^2)} \cdots \frac{V(x/a^{n-1})}
{V(x/a^n)} V(x/a^n) \\
&\le b^n \frac{V(x)}{V_1(x)} \frac{V(x/a)}{V_1(x/a)} \cdots \frac{V(x/a^{n-1})}
{V_1(x/a^{n-1})} V(ax_0) \\
&\le b^n(c-\e)^{-n} (ax_0) = O(x^{-\log((c-\e)/b)/\log a}).
\end{align*}
This contradicts the trivial bound $V(x)\gg x/\log x$ if $c>\frac{b}{a}+\e$.
Since $\e$ is arbitrary, the lemma follows.
\end{proof}

Theorem \ref{liminf V_1/V} follows immediately.
Further improvements in the lower bound for a counterexample to Carmichael's
Conjecture will produce corresponding upper bounds on $\liminf_{x\to \infty}
V_1(x)/V(x)$.
Explicit bounds for the $O(1)$ term appearing in Theorem \ref{V(x)}
(which would involve considerable work to obtain) combined with
\eqref{V1 upper} should give  a strong upper bound for
$\limsup_{x\to \infty} V_1(x)/V(x)$.

Next, suppose $d$ is a totient, all of whose
pre-images $m_i$ are divisible by $k$.
The lower bound argument given in Section
5 shows that for at least half of the numbers $b\in \fancyB$, the totient
$\phi(b)d$ has only the pre-images $bm_i$.  In particular, all of the
pre-images of such totients are divisible by $k$ and Theorem \ref{V(x;k)}
follows.

It is natural to ask for which $k$ do there exist totients, all
of whose pre-images are divisible by $k$.
A short search reveals examples for each $k\le 11$ except $k=6$ and $k=10$.
For $k\in\{2,4,8\}$, take $d=2^{18} \cdot 257$, for $k\in\{3,9\}$, take
$d=54=2\cdot 3^3$,
for $k=5$ take $d=12500=4\cdot 5^5$, for $k=7$, take $d=294=6\cdot 7^2$
and for $k=11$, take $d=110$.  It appears that there might not be any
totient, all of whose pre-images are divisible by 6, but I cannot prove this.
Any totient with a unique pre-image must have that pre-image
divisible by 6, so the non-existence of such numbers
implies Carmichael's Conjecture.

I believe that obtaining the asymptotic formula for $V(x)$ will require
simultaneously determining the asymptotics of $V_k(x)/V(x)$ 
(more will be said in section 8) and $V(x;k)/V(x)$ for each $k$.  It may even
be necessary to classify totients more finely.  For instance, taking
$d=4, k=4$ in the proof of Theorem 2 (section 5), the totients $m$ constructed
have $\phi^{-1}(m) = \{ 5n,8n,10n,12n \}$ for some $n$.  On the other hand,
taking $d=6, k=4$ produces a different set of totients $m$, namely those
with $\phi^{-1}(m) = \{ 7n, 9n, 14n, 18n \}$ for some $n$.
Likewise, for any given $d$ with $A(d)=k$, the construction
of totients in Section 5 may miss whole classes of totients with multiplicity
$k$.  There is much further work to be done in this area.

%
%
\section{Generalization to other multiplicative functions}
%
%

The proofs of our theorems easily generalize to
a wide class of multiplicative arithmetic functions with similar behavior on primes,
such as $\sigma(n)$, the sum of divisors function.
If $f:\NN \to \NN$ is a multiplicative arithmetic function,
we analogously define
\be\label{f basic def}\begin{split}
\fancyV_f &= \{ f(n): n\in \NN \}, \quad V_f(x) = |\fancyV_f \cap [1,x]|, \\
f^{-1}(m) &= \{n:f(n)=m\}, \; A_f(m) = |f^{-1}(m)|, \;
V_{f,k}(x) = |\{m \le x : A_f(m)=k \}|.
\end{split}\ee
We now indicate the modifications to the previous argument needed to
prove Theorem \ref{general f}.
By itself, condition \eqref{fp} is enough to prove the lower bound for
$V_f(x)$.  
Condition \eqref{f lower} is used only for the
upper bound argument and the lower bound for $V_{f,k}(x)$.

The function $f(n)=n$, which takes all positive integer values, is an example
of why zero must be excluded from the set in \eqref{fp}.
Condition \eqref{f lower} insures that the values of
$f(p^k)$ for $k\ge 2$ are not too small too often, and thus
have little influence on the size of $V_f(x)$.
It essentially forces $f(h)$ to be a bit larger than $h^{1/2}$ on average.
It's probable that \eqref{f lower} can be relaxed, but not too much.
For
example, the multiplicative function defined by $f(p)=p-1$ for prime $p$,
and $f(p^k)=p^{k-1}$ for $k\ge 2$ clearly takes all integer values, while
$$
\sum_{h\ge 4, \text{ square-full}} \frac1{f(h)(\log_2 h)^2} \ll 1.
$$
Condition \eqref{f lower} also insures that $A(m)$ is finite for each $f$-value
$m$.  For example, a function satisfying $f(p^k)=1$ for infinitely many
prime powers $p^k$ has
the property that $A(m)=\infty$ for every $f$-value $m$.

In general, implied constants will depend on the function $f(n)$.
One change that must be made throughout is to replace every occurrence
of ``$p-1$'' (when referring to $\phi(p)$) with ``$f(p)$'',
for instance in the definition
of $S$-normal primes in Section 2.  Since the possible values of $f(p)-p$
is a finite set, Lemma \ref{normal lem} follows easily
with the new definitions.  The most substantial change to be made in
Section 2, however, is to Lemma \ref{divisor squares}, since we no longer
have the bound $n/f(n) \ll \log_2 n$ at our disposal.

\newtheorem*{lemds*}{Lemma \ref{divisor squares}$^*$}
\begin{lemds*}
  The
number of $m \in \fancyV_f(x)$ for which either
$d^2|m$ or $d^2|n$ for some $n \in f^{-1}(m)$ and  $d>Y$
is $O(x(\log_2 x)^K/Y^{2\del})$, where $K=\max_p (p-f(p))$.
\end{lemds*}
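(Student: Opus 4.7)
The plan is to mimic the argument of the original Lemma \ref{divisor squares}, but replace the bound $n \ll x \log_2 x$ (which is special to $\phi$) with an averaging argument over the squarefull part of $n$, powered by hypothesis \eqref{f lower}. I would split the count according to whether the offending square $d^2$ divides $m$ itself, or only a preimage $n \in f^{-1}(m)$.

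For the first contribution, the bound is trivial: the number of $m \le x$ divisible by some $d^2$ with $d > Y$ is at most $\sum_{d > Y} x/d^2 \ll x/Y$, which in the regime $2\d \le 1$ (the only one of interest, since \eqref{f lower} automatically restricts $\d$) fits comfortably inside $O(x(\log_2 x)^K/Y^{2\d})$. For the second contribution, I would write each preimage $n = hk$ with $h$ the squarefull part and $k$ squarefree, $\gcd(h,k) = 1$. The crucial observation is that $k$ being squarefree forces any $d$ with $d^2 \mid n$ to satisfy $d \mid h$, and hence $d^2 \mid h$; in particular $h \ge d^2 > Y^2$. Then $m = f(h)f(k)$, so $f(k) \le x/f(h)$, and the number of distinct totients with squarefull part $h$ is at most $V_f(x/f(h)) \le x/f(h)$ (trivial bound). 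Summing and using \eqref{f lower},
\[
\sum_{\substack{h \text{ squarefull} \\ h > Y^2}} \frac{x}{f(h)} \;\le\; x \, Y^{-2\d} \sum_{h \text{ squarefull}} \frac{h^{\d}}{f(h)} \;\ll\; \frac{x}{Y^{2\d}}.
\]
Combined with Part~1 this yields the desired bound, with the factor $(\log_2 x)^K$ serving only as a slack term.

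The one step that requires vigilance is the decomposition $n = hk$: we must use that $f$ is multiplicative and $\gcd(h,k) = 1$ to conclude $m = f(h)f(k)$, and we need the trivial bound $V_f(y) \le y$ to hold uniformly — but this is immediate since $f$-values are positive integers. The main obstacle is really more conceptual: avoiding a circular use of better bounds on $V_f$ (which are what we are ultimately proving in the generalization of Theorem~\ref{V(x)}), so the proof must rely only on the crude $V_f(y) \le y$ together with the structural hypothesis \eqref{f lower}. A mild additional check is that the constant hidden in $V_f(y) \le Cy$ arising from counting squarefree $k$ with $f(k) \le y$ (using $f(p) \ge p - K$ for $p$ large) is independent of $x$ and $Y$, so it can be swept into the $O$-constant; this is where any residual logarithmic factor in $x$ would enter, justifying the harmless $(\log_2 x)^K$ in the statement.
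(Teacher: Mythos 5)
Your proof is correct, and it takes a genuinely different and in fact sharper route than the paper's. The paper bounds the number of \emph{bad preimages} $n$: writing $n = hk$ with $h$ the squarefull part, it uses $f(p)\ge p-K$ (from hypothesis \eqref{fp}) to derive $f(k)\gg k(\log_2 k)^{-K}$, hence ``number of $k \ll x(\log_2 x)^K/f(h)$'' for fixed $h$; the $(\log_2 x)^K$ comes precisely from this lower bound on $f$ along squarefree arguments. You instead bound the number of distinct bad $m$ directly: for fixed $h$ the map $m\mapsto m/f(h)=f(k)$ is an injection into the positive integers $\le x/f(h)$, so the count is trivially $\le x/f(h)$, and summing with \eqref{f lower} gives $x/Y^{2\delta}$ outright. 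This avoids any use of \eqref{fp} and eliminates the $(\log_2 x)^K$ factor entirely, so your bound is actually stronger than what the lemma asserts. Your closing caveat about the trivial bound $V_f(y)\le y$ possibly hiding a logarithm is unfounded hedging: the count of integers $\le y$ is exactly $\lfloor y\rfloor$, no lower bound on $f(k)$ in terms of $k$ is ever invoked, and the factor $(\log_2 x)^K$ is pure slack in your argument (it is genuinely used in the paper's proof but is harmless in the application in either case). The only small imprecision is the remark that \eqref{f lower} ``automatically restricts'' $\delta\le 1/2$; strictly it does not, but one may always replace $\delta$ by $\min(\delta,1/2)$ without loss, so the first part $O(x/Y)$ is indeed absorbed.
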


\begin{proof}
The number of $m$ with $d^2|m$ for some $d>Y$ is $O(x/Y)$.  Now suppose $d^2|n$
for some $d>Y$, and let $h=h(n)$ be the square-full part of $n$ (the largest squarefull
divisor of $n$).
In particular, $h(n)>Y^2$.  From the fact that $f(p)\ge p-K$ for all primes $p$,
we have
$$
f(n) = f(h) f(n/h) \gg \frac{f(h) n}{h} (\log_2 (n/h))^{-K}.
$$
Thus, if $f(n)\le x$, then
$$
\frac{n}{h} \( \log_2 \frac{n}{h} \) \ll \frac{x}{f(h)}.
$$
Therefore, the number of possible $n$ with a given $h$ is crudely
$\ll x(\log_2 x)^K/f(h).$
By \eqref{f lower}, the total number of $n$ is at most
\[
\ll x(\log_2 x)^K \sum_{h\ge Y^2} \frac{1}{f(h)}
\ll \frac{x(\log_2 x)^K}{Y^{2\del}} \sum_{h} \frac{h^{\del}}{f(h)}
\ll \frac{x(\log_2 x)^K}{Y^{2\del}}.\qedhere
\]
\end{proof}

Applying Lemma \ref{divisor squares}$^*$ in the proof of Lemma \ref{divisor normal}
with $Y=S^{1/2}$ 
yields the same bound as claimed, since $S>\exp\{ (\log_2 x)^{36} \}$.

In Section 3, the only potential issue is with Lemma \ref{sumvol}, but
the analog of $t_m$ is $\ll \exp \{ -\del e^{m-1} \}$.

The only modification needed in Section 4 comes from 
the use of $\phi(ab)\ge \phi(a)\phi(b)$ in the
argument leading to \eqref{Nb}.  If $q_L\nmid w$, the existing argument
is fine.  If $q_L | w$, let $j=\max\{i\le L: q_i<q_{i-1} \}$.  Since
$q_{L-2}>q_{L}$, $j\in \{L-1,L\}$.  Write $f(q_1\cdots q_L w)=f(q_1\cdots q_{j-1})
f(w')$, where $w'=q_j\cdots q_L w$ and $(x_1,\ldots,x_j)\in{\curly R}_j(\fancyS_j((\xi_1,\ldots, \xi_{j-1}))$.  Put $v=f(w')$, use the analog of \eqref{crude} to bound $\sum 1/v$, and otherwise follow the argument leading to
\eqref{Nb}.

In Section 5, there are several changes.  For Lemma \ref{prodpiqi},
the equation \eqref{piqi} may have trivial solutions coming from pairs
$p,p'$ with $f(p)=f(p')$.
We say a prime $p$ is ``bad''
if $f(p)=f(p')$ for some prime $p'\ne p$ and say
$p$ is ``good'' otherwise.  By \eqref{fp} and Lemma
 \ref{sieve linear factors},
the number of bad primes $\le y$ is $O(y/\log^2 y)$, so
$\sum_{p\, bad} 1/p$ converges.  In Lemma \ref{prodpiqi}, add the hypothesis
that the $p_i$ and $q_i$ are all ``good''. 
Possible small values of $f(p^k)$ for some 
$p^k$ with $k\ge 2$ are another complication.  For each prime $p$, define
\be\label{Qp def}
Q(p) := \min_{k\ge 2} \frac{f(p^k)}{f(p)}.
\ee
Introduce another parameter $d$ (which will be the same $d$ as in Theorem
\ref{V_k(x)}) and suppose $L\le L_0-M$ where $M$ is a sufficiently large
constant depending on $P_0$ and $d$. 
If follows from \eqref{f lower} and \eqref{Qp def} that 
$$
\sum_{Q(p)\le d} \frac1{p} = O(d).
$$
In the definition of $\fancyB$, add the hypothesis that all primes
$p_i$ are ``good'' and replace \eqref{pL lower} by $Q(p_i) \ge \max(d+K+1,17)$
for every $i$.
Of course, \eqref{n range} is changed to $f(n) \le x/d$.
Fortunately, the numbers in $\fancyB$ are square-free by definition.
Consider the analog of \eqref{phi eq}. 
Since $Q(p_i)>d+K$ for each $p_i$,
if $n|n_1$ and one of the primes $q_i$ ($0\le i\le L$)
occurs to a power greater than 1, then
$\phi(n_1) > d\phi(n)$.  Therefore, the $L+1$ largest prime factors of
$n_1$ occur to the first power only, which forces $n_1=nm_i$ for
some $i$ (the trivial solutions).  For nontrivial solutions, we have
at least one index $i$ for which $p_i \ne q_i$, and hence $f(p_i)\ne f(q_i)$
(since each $p_i$ is ``good'').  Other changes are more obvious.: In \eqref{sigma1},
 the phrase
``$rt+1$ and $st+1$ are unequal primes'' is replace by ``$rt+a$ and
$st+a'$ are unequal primes for some pair of numbers $(a,a')$ with $a,a'
\in \curly P$.''   Here $\curly P$ denotes the set of possible
values of $f(p)-p$.
As $\curly P$ is finite, this poses
no problem in the argument.  Similar changes are made in several places in
the argument leading to \eqref{siti}.

Only small, obvious changes are needed for Theorem \ref{vx norm}.  The rest
of Section 6 needs very little attention, as the bounds ultimately rely on
Lemma \ref{sumvol} and the volume computations (which are independent of $f$).

It is not possible to prove analogs of Theorems 
\ref{Car Conj}--\ref{Sierp Conj} for general $f$ 
satisfying the hypotheses of Theorem
\ref{general f}.  One reason is that there might not be
any ``Carmichael Conjecture'' for $f$, e.g. $A_\sigma(3)=1$, where 
$\sigma$ is the sum of divisors function.
Furthermore, the proof of Theorem \ref{Sierp Conj} depends on the identity
$\phi(p^2)=p\phi(p)$ for primes $p$.  If, for some $a\ne 0$,
 $f(p)=p+a$ for all primes $p$, then the argument of \cite{FK} shows that if
the multiplicity $k$ is possible and $r$ is a positive integer, then the
multiplicity $rk$ is possible.  For functions such as $\sigma(n)$, for which
the multiplicity 1 is possible, this completely solves the problem of the
possible multiplicities.  For other functions, it shows at least that a
positive proportion of multiplicities are possible.  If multiplicity 1 is
not possible, and $f(p^2)=pf(p)$, the argument in \cite{F99} shows that
all multiplicities beyond some point are possible.
 
We can, however, obtain information
about the possible multiplicities for more general $f$ by an induction
argument utilizing the next lemma.  Denote by $a_1, \ldots, a_K$
the possible values of $f(p)-p$ for prime $p$.

\newtheorem*{lemkk*}{Lemma \ref{k to k+2}$^*$}
\begin{lemkk*}
Suppose $A_f(m)=k$.  Let $p,q,s$ be
primes and $r\ge 2$ an integer so that
\begin{enumerate}
\item{(i)} $s$ and $q$ are ``good'' primes,
\item{(ii)} $mf(s) = f(q)$,
\item{(iii)} $f(s)=rp$,
\item{(iv)} $p\nmid f(\pi^b)$ for every prime $\pi$, integer $b\ge 2$
with $f(\pi^b)\le mf(s)$,
\item{(v)} $dp-a_i$ is composite for $1\le i\le K$ and $d|rm$ except
$d=r$ and $d=rm$.
\end{enumerate}
Then $A_f(mrp)=k+A_f(1)$.
\end{lemkk*}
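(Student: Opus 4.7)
The plan is to imitate the proof of Lemma \ref{k to k+2}, with conditions (iv) and (v) taking over the role played there by the size hypothesis $p>2m+1$ and the primality conditions on $2p+1$, $2mp+1$, and $dp+1$. The goal is to show that each preimage $x$ of $mrp$ has a unique ``large'' prime factor $\pi\in\{s,q\}$ accounting for the prime $p$ in $f(x)$, and that stripping off this factor reduces $x$ either to a preimage of $m$ (giving $k$ solutions) or to a preimage of $1$ (giving $A_f(1)$ solutions).

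Let $x$ satisfy $f(x)=mrp$ and write $x=\prod\pi_i^{b_i}$. Since $p\mid f(x)=\prod f(\pi_i^{b_i})$, some $\pi^b\parallel x$ satisfies $p\mid f(\pi^b)$; because $f(\pi^b)\mid f(x)=mf(s)$, condition (iv) forces $b=1$. Then $f(\pi)=dp$ for some $d\mid mr$, and writing $\pi=dp-a_i$, the primality of $\pi$ combined with (v) forces $d\in\{r,rm\}$. The ``good'' hypothesis on $s$ and $q$, together with (ii)--(iii), then pins down $\pi=s$ when $d=r$ and $\pi=q$ when $d=rm$. Finally, $sq\nmid x$, for otherwise $f(s)f(q)=mr^2p^2$ would need to divide $f(x)=mrp$.

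This splits the preimages of $mrp$ into two disjoint families. In Case A ($s\parallel x$, $q\nmid x$), writing $x=sy$ with $(s,y)=1$ gives $f(y)=m$; conversely each $y\in f^{-1}(m)$ coprime to $sq$ produces a valid preimage. Coprimality with $q$ is immediate from $f(q)=mrp\nmid m$, while coprimality with $s$ holds under the natural assumption that $p$ exceeds $m$, since $s\mid y$ would force $f(s)=rp\mid m$. Hence Case A contributes exactly $k$ preimages. In Case B ($q\parallel x$, $s\nmid x$), writing $x=qz$ gives $f(z)=1$; each $z\in f^{-1}(1)$ automatically satisfies $(z,sq)=1$ since $f(z)=1<\min(f(s),f(q))$, yielding $A_f(1)$ further preimages. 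Summing the two disjoint cases gives $A_f(mrp)=k+A_f(1)$.

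The principal obstacle is verifying in Case A that no $y\in f^{-1}(m)$ is divisible by $s$. In the original lemma this is immediate from $p>2m+1$; in the present generalized setting it reflects an implicit size hypothesis (that $p$ is large relative to $m$), which is natural in any iterative application of the lemma and ensures that $y\mapsto sy$ is a bijection from $f^{-1}(m)$ onto the Case A preimages of $mrp$.
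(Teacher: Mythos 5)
Your proof takes the same route as the paper's: use condition (iv) to locate a first-power prime $\pi$ with $p\mid f(\pi)$, use (v) to force $d\in\{r,rm\}$, use (i)--(iii) to pin down $\pi\in\{s,q\}$, then peel off $s$ or $q$ to reduce to $f^{-1}(m)$ or $f^{-1}(1)$ respectively. The subtlety you flag is genuine: for the map $y\mapsto sy$ to carry $f^{-1}(m)$ injectively into $f^{-1}(mrp)$ one needs $s\nmid y$ for every $y\in f^{-1}(m)$, and conditions (i)--(v) as written do not by themselves enforce this. The paper's own proof passes over the point silently in the step ``which gives solutions $x=sx_i$.'' In the unstarred Lemma \ref{k to k+2} this is exactly what the hypothesis $p>2m+1$ guarantees ($\phi(s)=2p>m$, so $s$ cannot divide any preimage of $m$), and that size hypothesis was apparently dropped in the generalization; in every application $p$ is supplied by the Prime $k$-tuples Conjecture and can be taken arbitrarily large, forcing $f(s)=rp>m$ and restoring $s\nmid y$, so the loss is harmless in context. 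Your remaining checks — that higher powers $s^b$, $q^b$ are excluded via (iv), that $sq\nmid x$ by a size comparison, and that $(q,z)=1$ automatically for $z\in f^{-1}(1)$ since $f(q)>1$ — all agree with what the paper's terse argument implicitly relies on.
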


\begin{proof}  Let $f^{-1}(m)=\{ x_1, \ldots, x_k \}$ and suppose $f(x)=mrp$.
By condition (iv), $p|f(\pi)$ for some prime $\pi$ which divides $x$ to the
first power.  Therefore, $f(\pi)=dp$ for some divisor $d$ of $mr$.  Condition
(v) implies that the only possibilities for $d$ are $d=r$ or $d=rm$.  If
$d=r$, then $f(\pi)=rp=f(p)$ which forces $\pi=s$ by condition (i).
By conditions (ii) and (iii), we have $f(x/s)=m$, which gives solutions
$x=sx_i$\, $(1\le i\le k)$.  Similarly, if $d=rm$, then $\pi=q$ and
$f(x/q)=1$, which has $A_f(1)$ solutions.
\end{proof}

By the Chinese Remainder Theorem, there is an arithmetic progression
$\fancyA$ so that
condition (v) is satisfied for each number $p\in \fancyA$, while still
allowing each $rp+a_i$ and $rmp+a_i$ to be prime.  To eliminate
primes failing condition (iv), we need the asymptotic form of the Prime
$k$-tuples Conjecture due to Hardy and Littlewood \cite{HL} (actually only the
case where $a_i=1$ for each $i$ is considered in \cite{HL}; the conjectured
asymptotic for $k$ arbitrary polynomials can be found in \cite{BHo}).

\begin{conj}[Prime $k$-tuples Conjecture (asymptotic version]
Suppose $a_1, \ldots,
a_k$ are positive integers and $b_1, \ldots, b_k$ are integers so that no
prime divides 
$(a_1 n+b_1) \cdots (a_k n + b_k)$
for every integer $n$.  Then for some constant $C(\mathbf a,\mathbf b)$,
the number of $n\le x$ for which $a_1n+b_1, \ldots ,a_k n + b_k$ are
simultaneously prime is
$$
\sim C(\mathbf a,\mathbf b) \frac{x}{\log^k x} \qquad (x\ge x_0(\mathbf a,
\mathbf b)).
$$
\end{conj}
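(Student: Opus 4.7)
The statement is the Hardy--Littlewood prime $k$-tuples conjecture in its full quantitative form, and I must be honest up front: this lies far beyond the reach of current methods, and I cannot propose an unconditional proof. Even the special case $k=2$, $(a_1,b_1)=(1,0)$, $(a_2,b_2)=(1,2)$ --- the twin prime conjecture --- is open, as is the case $k=1$ with $(a_1,b_1)=(1,1)$ at fixed quadratic polynomial rate. What I can sketch is the classical heuristic that produces both the asymptotic density and the explicit constant $C(\mathbf a,\mathbf b)$, together with a summary of what genuinely can be proved.

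The plan for the heuristic is as follows. First I would set, for each prime $p$,
\[
\omega(p) = \#\{ n \bmod p : p \mid (a_1 n + b_1)\cdots(a_k n + b_k)\}.
\]
The admissibility hypothesis (no prime divides the product for every $n$) is exactly $\omega(p)<p$ for all $p$, and one checks easily that $\omega(p)=k$ for all primes $p$ larger than $\prod_i a_i \prod_{i<j}(a_i b_j - a_j b_i)$. For a ``random'' integer $n$ the probability that $p$ avoids the product is $1-\omega(p)/p$; if the $k$ linear forms evaluated at $n$ behaved as independent integers of size $\asymp x$, each would be prime with probability $\sim 1/\log x$ (prime number theorem), and the joint probability would be $\sim (\log x)^{-k}$ under the independent model $(1-1/p)^k$ at each prime. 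Correcting the independent model to the true local densities gives the singular series
\[
C(\mathbf a,\mathbf b) = \prod_p \frac{1-\omega(p)/p}{(1-1/p)^k},
\]
which converges because $\omega(p)=k$ for all but finitely many $p$, so each factor is $1+O(1/p^2)$. Summing the local-probability heuristic over $n\le x$ yields the predicted count $C(\mathbf a,\mathbf b)\,x/\log^k x$.

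The next step, if one wanted to turn heuristic into proof, would be to implement this by sieve methods. Here I would invoke the Selberg upper bound sieve, or equivalently the Brun sieve (as in Lemma \ref{sieve linear factors}), which does yield the upper bound $\ll_k C(\mathbf a,\mathbf b)\,x/\log^k x$ of the correct order of magnitude; this is the content of Halberstam--Richert's treatment of ``linear'' sieves applied to products of linear forms. The main obstacle --- the parity barrier of Selberg --- is that no sieve method can produce a matching \emph{lower} bound, because sieves cannot distinguish integers with an even number of prime factors from those with an odd number. Overcoming parity to obtain a single prime (let alone $k$ simultaneous primes) in a sparse set is precisely what is missing. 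The deepest partial results in this direction are Chen's theorem (one of $n$, $n+2$ is $P_2$), Zhang--Maynard--Tao on bounded gaps (giving infinitely many $n$ for which at least two of a bounded $k$-tuple of shifts are prime), and Green--Tao on arithmetic progressions in the primes --- but none of these supply an asymptotic lower bound of the form $\gg x/\log^k x$ for any admissible $k$-tuple with $k\ge 2$. Consequently, the proof proposal here must remain: assume the conjecture and proceed, exactly as the paper does in its application of Lemma \ref{k to k+2}$^*$ via the Chinese remainder theorem.
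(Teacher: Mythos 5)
You have correctly recognized that the statement is an unproved conjecture --- the paper itself labels it as such (in a \texttt{conj} environment) and uses it only as a \emph{hypothesis} in deducing Sierpi\'nski-type results via Lemma \ref{k to k+2}$^*$, citing Hardy--Littlewood \cite{HL} and Bateman--Horn \cite{BHo} for the conjectured asymptotic; no proof is offered or expected. Your heuristic derivation of the singular series and your summary of what sieve methods can and cannot deliver (parity obstruction, upper bounds of the right order, Chen/Zhang--Maynard--Tao partial results) is accurate and consistent with how the conjecture is treated in the literature the paper relies on.
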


Using \eqref{f lower}, we readily obtain
$|\{\pi^b: f(\pi^b)\le y,b\ge 2 \}| \ll y^{1-\del}.$
If $s$ is taken large enough, the number of possible $p\le x$ satisfying
condition (iv) (assuming $r$ and $m$ are fixed and noting condition
(iii)) is $o(x/\log^3 x)$.  The procedure for determining the set of
possible multiplicities with
this lemma will depend on the behavior of the particular
function.  Complications can arise, for instance, if $m$ is even and all of
the $a_i$ are even (which makes condition (ii) impossible) or if the number
of ``bad'' primes is $\gg x/\log^3 x$.

\bigskip\bigskip


\end{document}